\theoremstyle{plain}
\newtheorem{theorem}{Theorem}[section]
\newtheorem{proposition}[theorem]{Proposition}
\newtheorem{lemma}[theorem]{Lemma}
\theoremstyle{definition}
\newtheorem{definition}[theorem]{Definition}
\theoremstyle{remark}
\newcommand{\mtodo}[1]{\todo[color=blue!30, inline]{\emph{Marwa}: #1}}
\newcommand{\R}{\mathbb{R}}
\newcommand{\Rbar}{\overline{\mathbb R}}
\newcommand{\Z}{\mathbb{Z}}
\newcommand{\Zbar}{\overline{\mathbb Z}}
\newcommand{\E}{\mathbb{E}}
\newcommand{\I}{\bold{I}}
\newcommand{\X}{\mathcal{X}}
\newcommand{\C}{\mathcal{C}}
\newcommand{\B}{\mathcal{B}}
\newcommand{\Nc}{\mathcal{N}}
\newcommand{\Uc}{\mathcal{U}}
\newcommand{\N}{\mathbb{N}}
\newcommand{\p}{\partial}
\newcommand{\fext}{f_{\downarrow}}
\newcommand{\gext}{g_{\downarrow}}
\newcommand{\hext}{h_{\downarrow}}
\newcommand{\EO}{\textnormal{EO}}
\newcommand{\snr}
{\textnormal{SNR}_{\textnormal{dB}}}
\newcommand{\supp}{\textnormal{supp}}
\newcommand{\ber}{\textnormal{BER}}
\newcommand{\1}{\mathds{1}}
\newcommand{\0}{\mathbf{0}}
\newcommand{\round}{\mathrm{Round}_F}
\def\argmin{ \mathop{{\rm argmin}}}
\DeclareMathOperator{\inter}{int}
\newcommand{\set}[2]{\left\{#1\,\left\vert\; #2\right.\right\}}
\newcommand{\ip}[2]{\langle #1,\, #2\rangle}
\newcommand{\dom}{\mathrm{dom}\,}
\DeclareDocumentCommand{\seq}{m o}   
  {%
    [#1\IfValueT{#2}{: #2}]
  }
\theoremstyle{definition}
\icmltitlerunning{Discrete and Continuous Difference of Submodular Minimization}
\begin{document}

\twocolumn[
\icmltitle{Discrete and Continuous Difference of Submodular Minimization}

% It is OKAY to include author information, even for blind
% submissions: the style file will automatically remove it for you
% unless you've provided the [accepted] option to the icml2025
% package.

% List of affiliations: The first argument should be a (short)
% identifier you will use later to specify author affiliations
% Academic affiliations should list Department, University, City, Region, Country
% Industry affiliations should list Company, City, Region, Country

% You can specify symbols, otherwise they are numbered in order.
% Ideally, you should not use this facility. Affiliations will be numbered
% in order of appearance and this is the preferred way.
\icmlsetsymbol{equal}{*}

\begin{icmlauthorlist}
\icmlauthor{George Orfanides}{McGill}
\icmlauthor{Tim Hoheisel}{McGill}
\icmlauthor{Marwa El Halabi}{Samsung}
\end{icmlauthorlist}

\icmlaffiliation{McGill}{Department of Mathematics and Statistics, McGill University, Montreal}
\icmlaffiliation{Samsung}{Samsung AI Lab, Montreal}

\icmlcorrespondingauthor{George Orfanides}{george.orfanides@mail.mcgill.ca}
\icmlcorrespondingauthor{Marwa El Halabi}{marwa.elhalabi@gmail.com}

% You may provide any keywords that you
% find helpful for describing your paper; these are used to populate
% the "keywords" metadata in the PDF but will not be shown in the document
\icmlkeywords{Machine Learning, ICML, continuous and discrete submodular functions, Difference of submodular minimization, non-convex optimization, DC programming, DCA, Integer least squares, Integer Compressive sensing}

\vskip 0.3in
]

% this must go after the closing bracket ] following \twocolumn[ ...

% This command actually creates the footnote in the first column
% listing the affiliations and the copyright notice.
% The command takes one argument, which is text to display at the start of the footnote.
% The \icmlEqualContribution command is standard text for equal contribution.
% Remove it (just {}) if you do not need this facility.

\printAffiliationsAndNotice{}  % leave blank if no need to mention equal contribution
%\printAffiliationsAndNotice{\icmlEqualContribution} % otherwise use the standard text.

\begin{abstract}
Submodular functions, defined on continuous or discrete domains, arise in numerous applications. We study the minimization of the difference of two submodular (DS) functions, over both domains, extending prior work restricted to set functions.
We show that all functions on discrete domains and all smooth functions on continuous domains are DS. 
For discrete domains, we observe that DS minimization is equivalent to minimizing the difference of two convex (DC) functions, as in the set function case. We propose a novel variant of the DC Algorithm (DCA) and apply it to the resulting DC Program, obtaining comparable theoretical guarantees as in the set function case. The algorithm can be applied to continuous domains via discretization. Experiments demonstrate that our method outperforms baselines in integer compressive sensing and integer least squares.

%Abstracts must be a single paragraph, ideally between 4--6 sentences long.

\end{abstract}

\mtodo{Page limit: 9 pages excluding references, impact statement, and appendices.}
\mtodo{use citet when the author's name should be read as part of the sentence, and citep if not. Don't use cite.}
\section{Introduction}

Many problems in machine learning require solving a non-convex problem, with potentially mixed discrete and continuous variables. 
%These problems are generally very hard to solve.
In this paper, we investigate a broad class of such problems that possess a special
structure, namely the minimization of the \emph{difference of two submodular (DS)} functions, over both continuous and discrete domains of the form $\X = \prod_{i=1}^n \X_i$, where each $\X_i \subseteq \R$ is compact:
\begin{equation}\label{eq:dsmin}
     \min_{x \in \mathcal{X}} F(x) := G(x) - H(x), 
\end{equation}
and where $G$ and $H$ are \emph{normalized submodular} functions. % (see \cref{sec:prelim} for definitions).

 Submodularity is an important property which naturally occurs in a variety of machine learning applications \citep{Bilmes2022, Bach2013}. Most of the submodular optimization literature focuses on \emph{set functions}, which can be equivalently viewed as functions on $\X = \{0,1\}^n$.  Throughout, we make this identification and refer to them as such.
 %We make this identification throughout and refer to set-functions as functions defined on ${0, 1}^n$.
Submodular set functions can be minimized in polynomial time exactly %\cite{1 combinatorial work}, 
or up to arbitrary accuracy  \citep{Axelrod2020}, and %SOTA for approximate submin
maximized approximately in polynomial time \citep{Buchbinder2015}. %SOTA for unconstrained submax

%Here we consider
Beyond set functions, submodularity extends to general lattices like $\X$ \citep{Topkis1978}. 
%\cite{Topkis book and Fujishige's book}
%On continuous domains, submodular functions can be convex, concave or neither \citep[Section 2.2]{Bach2019}. 
\citet{Bach2019} extended results from submodular set function minimization to general domains $\X$. In particular, he provided two polynomial-time algorithms for submodular minimization with arbitrary accuracy on discrete domains, which are also applicable to continuous domains via discretization. 
%These algorithms are also applicable to continuous domains via discretization. 
\citet{Axelrod2020} later developed a faster algorithm for this problem.
\citet{Bian2017b, Niazadeh2020} extended results from submodular set function maximization to continuous domains, %submodular functions on the product of closed intervals. 
with \citet{Bian2017b} giving a deterministic $1/3$-approximation, and \citet{Niazadeh2020} an optimal randomized $1/2$-approximation, both in polynomial-time.

In this paper, we extend this line of inquiry by generalizing DS set function minimization results to general domains $\X$. 
However, unlike the special case where $F$ is submodular, 
Problem \eqref{eq:dsmin} in general cannot be solved efficiently, neither exactly nor approximately. Indeed, even for set functions, any constant-factor multiplicative approximation requires exponential time, and any  positive polynomial-time computable multiplicative approximation is NP-Hard \citep[Theorems 5.1, 5.2]{Iyer2012a}. Even obtaining a local minimum (\cref{def:localmin}) is PLS complete \citep[Section 5.3]{Iyer2012a}. 
Prior work \citep{Narasimhan2005a, Iyer2012a, ElHalabi2023} developed descent algorithms for Problem \eqref{eq:dsmin} in the special case of set functions, which
%monotonically decrease the objective value 
converge to a local minimum of $F$. 

%\citet{ElHalabi2023} exploited the fact that 
Minimizing DS set functions is equivalent to minimizing the \emph{difference of two convex (DC)} functions, obtained by replacing each submodular function by its \emph{Lov\'asz extension} \citep{Lovasz1983}. DC programs 
%are a well studied class of problems, which 
capture most well-behaved non-convex problems on continuous domains. The DC algorithm (DCA) is a classical method to solve them \citep{le2018dc}. %\cite{Tao1997, Tao1988}; see \cite{le2018dc} for a detailed survey. 
\citet{ElHalabi2023} leverage this connection to minimize DS set functions by %introducing and 
applying DCA variants to the equivalent DC program. They also show that the submodular-supermodular (SubSup) method of \citet{Narasimhan2005a} is a special case of one of their variants.

\mtodo{Even though we do have a result showing that Problem  \eqref{eq:dsmin} is equivalent to a DC program for general $\X$ we're not gonna be including it here because it requires using the cumulative extension, which we don't have the space to cover.}

\looseness=-1 We adopt a similar approach here to solve Problem  \eqref{eq:dsmin} for general discrete domains. We use the convex continuous extension introduced in \citet{Bach2019} to reformulate the problem as a DC program, then apply a DCA variant,
%We show that the problem can still be equivalently formulated as a DC program %for general $\X$, 
%in this case, using the convex continuous extension introduced in \cite{Bach2019}.  
%which reduces to the Lov\'asz extension when $\X = \{0,1\}^n$. 
%We then apply a variant of DCA to the resulting DC program, 
obtaining theoretical guarantees comparable to the set function case. As in \citet{Bach2019}, the algorithm is applicable to continuous domains via discretization.
%This allows us to also apply a variant of DCA to solve \eqref{eq:dsmin}, and obtain theoretical guarantees comparable to the set function case. 
Our key contributions are:
\begin{itemize}
\item We show that any function on a discrete domain and any smooth function on a continuous domain can be expressed as a DS function, 
%The discrete case generalizes the result by \cite{Narasimhan2005a, Iyer2012a} for set functions. 
%In all cases, 
though finding a \emph{good} DS decomposition can be challenging. %(\cref{sec:representability}).
%decided to avoid saying best here to avoid confusion
%showing that any set function can be written as a DS set function. 
%\mtodo{say that in both cases the decomposition is DR-DS? we decided not to discuss DR-sub results}

\item We highlight applications with natural DS objectives, %but not of continuous or discrete DC programs,
such as quadratic programming and sparse learning. %(\cref{sec:applications}). %, and price optimization with start-up costs.
%We study the relation of DS functions to other related classes of non-convex functions, such as DC functions and discrete DC functions \cite{Maehara2015}. 
%We show that DS functions on continuous domains are not a subclass of DC functions, and that DS functions on discrete domains are equivalent to discrete DC functions considered in \cite{Maehara2015}.

\mtodo{Decided to not include the relation to DC and discrete DC classes as contribution, since the 1st one follows from existing results, and 2nd is already known for $\X \subseteq \Z^n$. The generalization to any finite $\X$ is trivial. This allows us also to delay discussing \cite{Maehara2015} until later.}

\item For discrete domains, we introduce an efficient DCA variant %than the ones proposed in \cite{ElHalabi2023} 
for Problem \eqref{eq:dsmin}, which is a descent method
%monotonically decreases the objective value %at every iteration, 
and converges to a local minimum at rate $O(1/T)$. % (\cref{sec:DSMin}). 

%\item Give a simpler interpretation of the continuous extension provided by Bach

\item We demonstrate that our method empirically outperforms baselines on two applications: 
integer compressed sensing and integer least squares. % (\cref{sec:exps}).
\end{itemize}

\mtodo{Decided not to include the interpretation of Bach's extension as LE as a contribution, since it's not really just the LE, given the restriction on the permutation being row-stable + we don't actually use this interpretation to get any results.}

%\section
\paragraph{Additional Related Work}

\citet{Maehara2015} 
proposed a discrete analogue of DCA for minimizing the difference of two \emph{discrete} convex (\emph{discrete DC}) functions %on $\Z^n$. 
$G, H: \Z^n \to \Z \cup \{+\infty\}$. 
They also show that any function $F: D \to \Z$ on a finite $D \subset \Z^n$
%\emph{integer valued} function defined on a finite subset of $\Z^n$ 
is the restriction of a discrete DC function to $D$, making discrete DC functions essentially equivalent to \emph{integer valued} DS functions on discrete domains. %$\X \subseteq \Z^n$. 
%\mtodo{They show that $F(x) = G(x) - H(x)$ for all $x \in D$ where $G, H$ are $L^\natural$-convex. But $F(x) \not = G(x) - H(x)$ for $x \not \in D$.}
However, obtaining a DS decomposition is easier, since a discrete DC function is a special case of DS functions.
%than a discrete DC decomposition, since the latter is a special case of the former. % (see \cref{ } for more details).
%Indeed, we provide 
In \cref{sec:applications}, we provide examples of applications which naturally have the form of Problem \eqref{eq:dsmin}, but not of a discrete DC program, even if we ignore the integer valued restriction.
%We further discuss the relation between the two classes in 
For further details, see \cref{sec:relationDiscreteDC}.
%In particular, they consider two classes of discrete convex functions $f: \Z^n \to \Z \cup \{+ \infty\}$, $M^\natural$ and $L^\natural$-convex functions. They show that discrete DCA  monotonically decreases the objective and returns a solution which is a discrete critical point.
%\mtodo{Discrete convex functions is more general than $M^\natural$ and $L^\natural$-convex functions. We can specify the classes considered in \cite{Maehara2015} later when we discuss the relation between discrete DC and DS functions.}
%\mtodo{mention that we use different techniques than them (discrete convex analysis vs DC results + cont sub)? }
\mtodo{Discrete DCA was shown to obtain a discrete critical point. Would that imply that we get a discrete local min? I don't think so, but probably doing local minimality check would guarantee that. Can we show that discrete DCA is a special case of our DCA (without the local search)?}

%Most literature on submodular optimization on general domains beyond set functions focus maximizing submodular and DR-submodular

\looseness=-1 Several works studied optimizing \emph{DR-submodular} functions, a subclass of submodular functions which are concave along non-negative directions. For set functions, DR-submodularity is equivalent to submodularity, but not for more general domains such as the integer lattice. 
\citet{Ene2016a} reduced DR-submodular functions on a bounded integer lattice $\X = \prod_{i=1}^n \{0, \ldots, k_i - 1 \}$, to submodular set functions on a ground set of size $O(\log(k))$, enabling the application of set-function results to this setting. 
%Using this reduction, one can translate the results for submodular set function minimization and maximization to integer-lattice DR-submodular functions. 
Approximation algorithms with provable guarantees for maximizing continuous DR-submodular functions on continuous domains %the product of closed intervals 
have also been proposed \citep{Bian2017, Bian2017b, Hassani2017, Mokhtari2017, Fazel2023, Mualem2023}. %, or on integer lattice \cite{Soma2017,Soma2018} <- no need to mention these since in this case one can use the reduction from \cite{Ene2016a}
\citet{Yu2024} also gave a polynomial-time algorithm for DR-submodular 
 minimization with mixed-integer variables over box and monotonicity constraints, with no dependence on $k$. 
%\citet{Han2022} gave a polynomial time algorithm for a class of \emph{convex} submodular optimization problems with mixed-integer variables over box constraints.
%\mtodo{mention here also the reduction for general submodular functions? no need, we discuss it later where relevant.}

%\vspace{-5pt}

\section{Preliminaries}\label{sec:prelim}

We introduce here our notation and relevant background. 
\mtodo{Throughout let's refer to $\X_i = \{c_0, \ldots, c_{k-1}\}$ as finite sets (not just discrete, since discrete does not need to be finite in general, though a discrete compact subset of $\R$ is finite), but when it's not important to be precise we can refer to this as discrete case or domains, and let's refer to $\X_i = \seq{0}[k-1]$ as integer lattice or bounded integer lattice.}
%\paragraph{Notation:} 
Given $n \in \N$, we let $[n] = \{1, \ldots, n\}$ and $\seq{0}[n] = \{0\} \cup \seq{n}$. We use 
% $\R_+$ for the set of non-negative real numbers, 
$\Rbar = \R \cup \{ + \infty \}$ and $\Zbar = \Z \cup \{ + \infty \}$. 
%We use $\R_{\ge 0}$ and $\R_{> 0}$ for the set of non-negative and positive real numbers, respectively. 
The standard basis vectors for $\R^n$ and  $\R^{n \times m}$ are denoted by $e_i, \ i \in \seq{n}$ and $E_{ij}, \ i \in \seq{n}, \ j \in \seq{m}$, respectively. %The vector of all ones in $\R^n$ is denoted by $\1$. 
The vectors of all ones and all zeros in $\R^n$ are denoted by $\1$ and $\0$ respectively. 
The support of a vector $x \in \R^n$ is defined as $\supp(x) = \{i \ | \ x_i \neq 0\}$. For $q>0$, the $\ell_q$-(quasi)-norm is given by $\|x\|_q = (\sum_{i=1}^n x_i^q)^{1/q}$ and the $\ell_0$-pseudo-norm by $\| x \|_0 = |\supp(x)|$. We conflate $\|x\|_q^q$ with $\| x \|_0$ for $q=0$. Given $x,y \in \R^n$, $x \leq y$ means that $x_i \leq y_i$ for all $i \in \seq{n}$. 
%\mtodo{Change $E_{ij}$ to $J_{(i,j)}$ to be consistent with $J_S$ being the matrix with 1 at indices in $S$, $0$ elsewhere (which we need when talking about reductions to set functions)? but $E_{ij}$ is the standard notation for basis matrices. <- we didn't end up using $J_S$ anywhere}

\mtodo{Move the definition of the map $\Theta$  to make it clear why we're talking about the domain $\{0,1\}_{\downarrow}^{n \times (k-1)}$. Move this paragraph then to after the one where we define the domain of $F$.}

We denote by $\langle \cdot, \cdot \rangle_F$ and $\|\cdot\|_F$ the Frobenius inner product and norm respectively.
We call $X \in \R^{n \times m}$ \emph{row non-increasing} if its rows are all non-increasing, i.e.,  $X_{i,j} \geq X_{i,j+1}$ for all $i \in \seq{n}, \ j \in \seq{m-1}$. %, where $X_{i,j}$ denotes the entry in row $i$, column $j$. 
We denote by  $\{0,1\}_{\downarrow}^{n \times m}$, $[0,1]_{\downarrow}^{n \times m}$, and $\R_{\downarrow}^{n \times m}$ the sets of row non-increasing matrices in $\{0,1\}^{n \times m}$, $[0,1]^{n \times m}$, and $\R^{n \times m}$, respectively.
Given a vector $x \in \R^m$, a \emph{non-increasing} permutation $p = (p_1,  \ldots, p_m)$ of $x$ satisfies $x_{p_i} \geq x_{p_{i+1}}$ for all $i \in \seq{m}$.
Similarly, a non-increasing permutation $(p,q) = \bigl ((p_1,q_1), \ldots, (p_{nm},q_{nm})\bigr )$ of a matrix $X \in \R^{n \times m}$ satisfies $X_{p_i,q_i} \geq X_{p_{i+1},q_{i+1}}$ for all $i \in \seq{nm}$. %We let $(p,q)[i] = \bigl ((p_1,q_1), \ldots, (p_i,q_i)\bigr )$ for any $i \in \seq{nm}$, with $(p,q)[0] = \emptyset$.
A non-increasing permutation $(p,q)$ of $X \in \R^{n \times m}$ is called \emph{row-stable} if it preserves the original order within rows when breaking ties, i.e., for any $i,j \in \seq{nm}$ with $i < j$, if $p_i = p_j$, then $q_i < q_j$. 
%\mtodo{I don't think we use the notation $(p,q)[i]$ in the main text. If not move its definition to the appendi if used there. Also might be nicer/more compact to use $S^{p,q}_i$ as notation for this.}

 Throughout this paper, we consider a function $F: \X \to \R$ defined on $\X = \prod_{i=1}^n \X_i$, the product of $n$ compact subsets $\X_i \subset \R$. The set $\X_i$ is typically a finite set such as $\X_i = \{0, \ldots, k_i -1 \}$ (\emph{discrete domain}) %(in such a case the notion of continuity is vacuous), 
or a closed interval (\emph{continuous domain}).%a countable union of closed intervals $\cup_{j=1}^m [a^i_j, b^i_j]$ (which is referred to as the \emph{continuous domain}). 
We assume access to an \emph{evaluation oracle} of $F$, which returns $F(x)$ for any $x \in \X$ in time $\EO_F$.
%i.e., an oracle which we can query at any input $x \in \X$ to get the value $F(x)$, in time $\EO_F$. 
%We denote the time cost of an oracle call as $\EO_F$.
Whenever we state that $F$ is differentiable on $\X$, we mean that $F$ is differentiable on an open set containing $\X$.
We say that $F$ is $L$-Lipschitz continuous on $\X$ with respect to $\| \cdot \|$, if $|F(x) - F(y)| \leq L \|x - y\|$ for all $x, y \in \X$. When not specified, $\| \cdot \|$ is the $\ell_2$-norm. %A function $F$ is locally Lipschitz continuous at $x \in \X$ if there exists a neighborhood $U$ of $x$ such that $F$ is Lipschitz continuous on $U \cap \X$.
%Similarly, $f$ is $L$-Lipschitz gradient with respect to $\| \cdot \|$ if $\|f(x) - f(y)\|_* \leq L \|x - y\|$ <-- we ony use this in one place so no need to define here
% We say that $F$ is coordinate-wise Lipschitz continuous on $\X$, if for any $i \in \seq{n}$, there exists $L_i \geq 0$ such that $|F(x + a e_i) - F(x) | \leq L_i |a|$ for any $x \in \X, a \in \R$. This is equivalent to $F$ being Lipschitz continuous w.r.t any norm.
%\mtodo{remove definition of coordinate-wise Lipschitz if not used}

A function $F$ is \emph{normalized} if $F(x^{\min}) = 0$ where $x^{\min}$ is the smallest element in $\X$, non-decreasing (non-increasing) if $F(x) \leq F(y)$ ($F(x) \geq F(y)$) for all $x \leq y$, and monotone if it is non-decreasing or non-increasing. We assume without loss of generality that $F$ is normalized. 
%\mtodo{move this assumption to later?}
\paragraph{Submodularity}
A function $F$ is said to be \emph{submodular} if 
    \begin{equation}
       \ F(x) + F(y) \geq F(\min\{x,y\}) + F(\max\{x,y\}), \label{eq:submod}
    \end{equation}
for all $x, y \in \X$, where the min and max are applied component-wise. 
We call $F$ \emph{modular} if Eq. \eqref{eq:submod} holds with equality,  \emph{strictly submodular} if the inequality is strict whenever $x$ and $y$ are not comparable, and \emph{(strictly) supermodular} if $-F$ is (strictly) submodular. 
A function $F$ is modular iff it is separable \citep[Theorem 3.3]{Topkis1978}. 

An equivalent diminishing-return characterization of submodularity is that $F$ is submodular iff for all $x \in \X$, $i, j \in \seq{n}, i \neq j$, $a_i,a_j > 0$ such that $x+a_i e_i, x+a_j e_j \in \X$, 
\begin{equation}
    F(x+a_ie_i)\! - \!F(x) \!\geq\! F(x+a_ie_i+a_je_j)\!-\!F(x+a_je_j).\label{eq:submod-antitone}
\end{equation}
Strict submodularity then corresponds to a strict inequality in \eqref{eq:submod-antitone} \citep[Theorem 3.1-3.2]{Topkis1978}.  %whenever $a_i,a_j > 0$.
\mtodo{So far we're only using this in the \cref{ex:lqnormNotDC}. Move it there if we don't use it elsewhere.}
%Definition of submodular set function <- not necessary
% An equivalent diminishing-return-style characterization of submodularity, called \emph{weak DR} property, was introduced in \citep[Definition 2, Proposition 1]{Bian2017b}.
% \begin{proposition}[weak DR property]\label{prop: weak-DR}
%     A function $F: \X \to \R$ is submodular iff it satisfies the \emph{weak DR property}; that is, for all $x \leq y \in \X$, $\ i \in [n]$ such that $x_i = y_i$, and $a \geq 0$ such that $x_i + a \in \X_i$, it holds that
%     \begin{equation}
%         F(x + a e_i) - F(x) \geq F(y + a e_i) - F(y). \label{eq:weakDRProperty}
%     \end{equation}
%     $F$ is strictly submodular iff Eq. \eqref{eq:weakDRProperty} holds as a strict inequality whenever $x \not = y$ and $a > 0$.
% \end{proposition}
%The following proposition gives 
Other equivalent characterizations of submodularity hold for differentiable and twice-differentiable $F$   \citep[Section 3]{Topkis1978}. %$F$ in the case where each $\X_i$ is a closed interval:  
\begin{proposition}\label{prop:1st2ndOrderSub} Given $F: \X \to \R$, where each $\X_i$ is a closed interval, we have:
\vspace{-5pt}
\begin{enumerate}[label=\alph*), ref=\alph*, leftmargin=1em]%[leftmargin=1em]
\item \label{itm:1stOrder} If $F$ is differentiable on $\X$, then $F$ is submodular iff for all $x \in \X$, $i,j \in [n]$, $i \neq j$, $a>0$ such that $x+ae_j \in \X$:
\begin{equation}
        \frac{\partial F}{\partial x_i} (x) \geq \frac{\partial F}{\partial x_i} (x+ae_j). \label{eq: first-order-submod}
\end{equation}
 
\item \label{itm:2ndOrder} If $F$ is twice-differentiable on $\X$, then $F$ is submodular iff %all off-diagonal entries of its Hessian are non-positive \cite{Topkis1978}, i.e., 
for all $x \in \X, i \neq j$:
\begin{equation}
        \frac{\partial^2 F}{\partial x_i \partial x_j} (x) \leq 0, \ i \neq j. \label{eq: second-order-submod}
\end{equation}
\end{enumerate}
In both cases, $F$ is also strictly submodular if the inequalities are strict.
\mtodo{strict submodularity is likely not sufficient to have strict inequalities in the above propositions (taking limit of strict inequalities might not stay strict) - double check by showing a counter-ex}
\end{proposition}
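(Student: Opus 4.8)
The plan is to derive both characterizations from the diminishing-returns form of submodularity in \eqref{eq:submod-antitone}, using the fundamental theorem of calculus in one direction and a limiting argument in the other. Throughout I would exploit that $\X = \prod_{i} \X_i$ is a box: whenever $x + a_ie_i$ and $x + a_je_j$ lie in $\X$, so does $x + a_ie_i + a_je_j$ (the two displaced coordinates are independent), and every segment $\{x + te_i : t \in [0,a_i]\}$ stays in $\X$ by convexity of the interval $\X_i$. This guarantees that all points appearing below are legal arguments of $F$.

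For part \ref{itm:1stOrder} I would prove the two implications separately. ($\Rightarrow$) Assuming $F$ submodular, rewrite \eqref{eq:submod-antitone} with $y := x + a_je_j$ as $F(x+a_ie_i) - F(x) \geq F(y+a_ie_i) - F(y)$, divide by $a_i > 0$, and let $a_i \to 0^+$. Differentiability turns the two difference quotients into $\frac{\partial F}{\partial x_i}(x)$ and $\frac{\partial F}{\partial x_i}(x+a_je_j)$, and since a non-strict inequality survives a limit, \eqref{eq: first-order-submod} follows. ($\Leftarrow$) Assuming \eqref{eq: first-order-submod}, apply it at each $z = x + te_i$ with the same indices and step $a_j$ to get $\frac{\partial F}{\partial x_i}(x+te_i) \geq \frac{\partial F}{\partial x_i}(x+te_i+a_je_j)$ for every $t \in [0,a_i]$. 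Integrating over $t \in [0,a_i]$ and recognizing each side, via the fundamental theorem of calculus, as $F(x+a_ie_i)-F(x)$ and $F(x+a_ie_i+a_je_j)-F(x+a_je_j)$ recovers \eqref{eq:submod-antitone}, hence submodularity. If the inequalities in \eqref{eq: first-order-submod} are strict, the integrand inequality is strict on $[0,a_i]$ with $a_i > 0$, so the integral is strict, giving the strict form of \eqref{eq:submod-antitone} and therefore strict submodularity.

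For part \ref{itm:2ndOrder} I would reduce to part \ref{itm:1stOrder}. By \ref{itm:1stOrder}, $F$ is submodular iff for every $x$ and $i \neq j$ the map $a \mapsto \frac{\partial F}{\partial x_i}(x+ae_j)$ is non-increasing; as $F$ is twice-differentiable this one-variable map is non-increasing iff its derivative $\frac{\partial^2 F}{\partial x_j \partial x_i}$ is $\leq 0$ throughout $\X$. Invoking symmetry of mixed partials to replace $\frac{\partial^2 F}{\partial x_j \partial x_i}$ by $\frac{\partial^2 F}{\partial x_i \partial x_j}$ yields \eqref{eq: second-order-submod}. For strictness, if $\frac{\partial^2 F}{\partial x_i \partial x_j} < 0$ everywhere then each such map is strictly decreasing, so \eqref{eq: first-order-submod} holds strictly, and the strict case of part \ref{itm:1stOrder} gives strict submodularity.

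The delicate points are threefold. First, keeping all evaluation points inside $\X$, which the box structure handles cleanly. Second, the strictness claim is genuinely one-directional: the limit $a_i \to 0^+$ in the forward direction can collapse a strict diminishing-returns gap into a non-strict derivative inequality, which is precisely why the proposition asserts strictness only as a sufficient (the \emph{if}) condition and why I obtain it from the integration step rather than the limiting step. Third, the appeal to symmetry of mixed partials in part \ref{itm:2ndOrder} is immediate under $C^2$ but would need a brief justification (Schwarz's theorem, or direct symmetry of the functions in play) under bare twice-differentiability; I expect this to be the main technical caveat to state carefully.
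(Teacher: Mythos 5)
The paper does not prove this proposition at all: it is quoted from \citet[Section 3]{Topkis1978}, so there is no internal proof to compare against. Your argument is the standard one for this classical result and is essentially correct, but two technical points deserve care beyond the one you already flagged about mixed partials. First, in the forward direction of part \ref{itm:1stOrder}, if $x_i$ is the right endpoint of $\X_i$ there is no admissible $a_i>0$ with $x+a_ie_i\in\X$; you must instead apply \eqref{eq:submod-antitone} at $x-a_ie_i$ and use that the left and right difference quotients share the same limit (the paper's convention that $F$ is differentiable on an open set containing $\X$ makes the derivative at such boundary points well defined). Second, the integration step in the reverse direction of part \ref{itm:1stOrder} silently assumes the partial derivative is integrable along the segment, which bare differentiability does not guarantee (Volterra-type examples). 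The clean fix is to set $\phi(t)=F(x+te_i)-F(x+te_i+a_je_j)$, note $\phi'(t)=\frac{\partial F}{\partial x_i}(x+te_i)-\frac{\partial F}{\partial x_i}(x+te_i+a_je_j)\geq 0$ by hypothesis, and invoke the mean value theorem to conclude $\phi(a_i)\geq\phi(0)$, which is exactly \eqref{eq:submod-antitone}; strict positivity of $\phi'$ gives the strict version. With these repairs the proof is complete, and part \ref{itm:2ndOrder}, including the reduction to part \ref{itm:1stOrder} and the strictness claim, is sound as written.
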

% \begin{proposition}
%  If each $\X_i$ is a closed interval and $F$ is differentiable on $\X$, then $F$ is submodular iff for all $x \in \X$
% \begin{equation}
%         \frac{\partial F}{\partial x_i} (x) \geq \frac{\partial F}{\partial x_i} (x + ae_j) \label{eq: first-order-submod}
% \end{equation}
%     whenever $i \neq j$ and $a > 0$, $x + ae_j \in \X$. It is also strictly submodular iff Eq. \eqref{eq: second-order-submod} holds with a strict inequality.
% \end{proposition}

% \begin{proposition}
%      If each $\X_i$ is a closed interval and $F$ is twice-differentiable on $\X$, then $F$ is submodular iff for all $x \in \X$
% \begin{equation}
%         \frac{\partial^2 F}{\partial x_i \partial x_j} (x) \leq 0, \ i \neq j. \label{eq: second-order-submod}
% \end{equation}
% It is also strictly submodular iff Eq. \eqref{eq: second-order-submod} holds with a strict inequality.
% \end{proposition}
On continuous domains, submodular functions can be convex, concave, or neither \citep[Section 2.2]{Bach2019}. 
For $n=1$, any function is modular and strictly submodulars since any $x, y \in \R$ are comparable. 
(Strict) submodularity is preserved %under various operations , including 
under addition, positive scalar multiplication, and restriction \citep[Section 2.1]{Bach2019}.
%The set of submodular functions is a convex cone, i.e., it is closed under addition and positive scalar multiplication. Submodularity is also preserved under restriction, i.e., the restriction of a submodular function on $\X$ to a product of subsets $\X_i$ is submodular \citep[Section 2.1]{Bach2019}. 
%Strict submodularity is also preserved under these operations.  

\paragraph{Submodular minimization} Minimizing a submodular \emph{set function} $F$ is equivalent to minimizing a convex continuous extension of $F$, called the Lov\'asz extension
\citep{Lovasz1983}, on the hypercube $[0, 1]^n$. 
\citet{Bach2019} extended this result to general submodular functions on $\X$. To that end, he introduced a continuous extension of functions defined on $\X$ to the set of Radon probability measures on $\X$. %, which generalizes the Lov\'asz extension.
When $\X = \prod_{i=1}^n \seq{0}[k_i-1]$,
this extension has a simple, efficiently computable form \citep[Section 4.1]{Bach2019}, given below. For simplicity, we assume all $k_i$'s are equal to $k$.

\mtodo{Add an explanation that $[0,1]_{\downarrow}^{n \times (k-1)}$ is in bijection with the set of product distributions on $\seq{0}[k-1]^n$, characterized by their reverse cumulative distribution function?}

\begin{definition}\label{def: cts-extension} %[Continuous extension]
    Given a normalized function $F: \seq{0}[k-1]^n \to \R$, its \emph{continuous extension} $\fext: \R^{n \times (k-1)} \to \overline{\R}$ is defined as follows: Given $X \in \R_{\downarrow}^{n \times (k-1)}$ and $(p,q)$ a non-increasing permutation of $X$,  define $y^i \in \seq{0}[k-1]^n$ 
    %$y^0, \ldots, y^{(k-1)n} \in \seq{0}[k-1]^n$ 
    as  $y^0 = \0$ and $y^i = y^{i-1} + e_{p_i}$ for $i \in \seq{(k-1)n}$. Then, %$\fext$ is given by
\begin{equation}
        \fext(X) =  \sum_{i=1}^{(k-1)n} X_{p_{i},q_{i}} \big ( F(y^i) - F(y^{i-1}) \big )
    % \begin{aligned}
    %     \fext(X) = & \sum_{i=1}^{(k-1)n - 1} (X_{p_{i},q_{i}} - X_{p_{i+1},q_{i+1}}) F(y^i) \\
    %     & + X_{p_{(k-1)n},q_{(k-1)n}} F(y^{(k-1)n}).
    % \end{aligned}
\end{equation}
For $X \notin \R_{\downarrow}^{n \times (k-1)}$, we set $\fext(X) = + \infty$.
\mtodo{We need the normalized assumption because otherwise we would have an additional term $F(0)$} %$(1 - X_{p_1,q_1}) F(0)$
\end{definition}
Evaluating $\fext(x)$ takes $O(r \log r + r ~\EO_F)$ 
%for sorting and $r$ function calls of $F$, 
with $r= n k$. The value of $\fext(X)$ is invariant to the permutation choice.
%several valid permutations can be chosen if $X$ has non-distinct entries, all leading to the same value $\fext(X)$.

%\mtodo{define LE if we need it}

%Note that $\{0,1\}_{\downarrow}^{n \times (k-1)}$ is in bijection with $\seq{0}[k-1]^n$. Indeed, 
We can define a bijection map %between $\seq{0}[k-1]^n$ and $\{0,1\}_{\downarrow}^{n \times (k-1)}$ given by 
$\Theta : \{0,1\}_{\downarrow}^{n \times (k-1)} \to \seq{0}[k-1]^n$ and its inverse as %$\Theta(X) = X \1$. 
\begin{subequations}\label{eq:ThetaBijection}
\begin{align}
  & \Theta(X) = X \1,  \label{eq:Theta}\\  
    %&x^{t+1} \text{ is an $\epsilon_x$-solution of } \min_{x \in \E} g(x) - \ip{y^t}{x}.
  & \Theta^{-1}(x) = X \text{ where }  X_{i,j} = 
                \begin{cases}
                     1, & \text{if } j \leq x_i, \\
                     0, & \text{if } j > x_i
                \end{cases}. \label{eq:ThetaInv}
              %   \textnormal{for any } i \in [n], \ j \in [k-1].
\end{align}
\end{subequations}
% \begin{equation}
%     \Theta(X) = X \1,  \label{eq:Theta}
% \end{equation}
%with inverse
%The corresponding inverse map $\Theta^{-1}: \seq{0}[k-1]^n \to \{0,1\}_{\downarrow}^{n \times (k-1)}$ is given by $\Theta^{-1}(x) = X$ where $  X_{i,j} = 1$ for all $j \leq x_i$, and $0$ otherwise. 
% \begin{equation}
%      \Theta^{-1}(x) = X \text{ where }  X_{i,j} = 
%                 \begin{cases}
%                      1, & \text{if } j \leq x_i, \\
%                      0, & \text{if } j > x_i
%                 \end{cases}.  \label{eq:ThetaInv}
% \end{equation}
In the set function case, i.e., when $k=2$, we have $\{0,1\}_{\downarrow}^{n \times (k-1)} = \{0,1\}^n$ and $[0,1]_{\downarrow}^{n \times (k-1)} = [0,1]^n$, so $\Theta$ becomes the identity map, and $\fext$ reduces to the Lov\'asz extension; %for set functions, %i.e., $\X = \{0,1\}^n$, 
see e.g. \citet[Definition 3.1]{Bach2013}. 

\mtodo{Move the definition of the map earlier right after introducing $\{0,1\}_{\downarrow}^{n \times (k-1)}$ to make it clear why we're talking about this domain?}

We list below some key properties of the extension.  
Items \ref{itm:extension}-\ref{itm:subgradient}, \cref{itm:Lip} (general bound) are from \citet{Bach2019}, \cref{itm:sum} follows directly from the definition of $\fext$, and we prove the bound for non-decreasing $F$ in \cref{itm:Lip} in \cref{app:prelim-proofs}. 
%based on \citep[Lemma 5.4]{Axelrod2020} with the proof given in \cref{app:prelim-proofs}\footnote{Lemma 5.4 in \cite{Axelrod2020} yields a looser upper bound $L_{\fext} \leq 4 (k-1) M$.}. %We show in \cref{lem:SubgradBd} how to obtain the improved general bound, and the bound for non-decreasing $F$. 

\begin{proposition}\label{prop:LEproperties} For a normalized function $F: \seq{0}[k-1]^n \to \R$ and its continuous extension $\fext$, we have:
% following hold:
\begin{enumerate}[label=\alph*), ref=\alph*] %leftmargin=1em, itemindent=1em
\item \label{itm:extension} For all $X \in \{0,1\}_{\downarrow}^{n \times (k-1)}, \fext(X) = F(\Theta(X))$. %\citep[Lemma 1 and discussion below Def 1]{Bach2019}
\mtodo{this is not explicitly given in \cite{Bach2019} in this form, but follows from Lemma 1 therein ($f_{\downarrow}(\rho) = f_{cumulative}(\mu)$ if $\rho_i(x_i) = F_{\mu_i}(x_i)$)  + the fact that $f_{cumulative}(\mu) = F(x)$ if $\mu$ is a Dirac measure (stated below Def 1).}

\item \label{itm:equiv} If $F$ is submodular, then $\min_{x \in \seq{0}[k-1]^n} F(x) = \min_{x\in [0,1]_{\downarrow}^{n \times (k-1)}} \fext(X)$. %\citep[Theorem 2 and Lemma 1]{Bach2019} 
\mtodo{This result in Bach requires submodularity. It follows from Theorem 2 which is based on the equivalence of min $F$ and $f_{closure}$ (which does not require submodularity) and Lemma 1 which shows that $f_{\downarrow}(\rho) = f_{cumulative}(\mu) = f_{closure}(\mu)$ when $f$ is submodular. Note that $f_{closure} \not = f_{\downarrow}$ without submodularity, and $f_{closure} \not = g_{closure} - h_{closure}$.}
\item \label{itm:round} Given $X \in [0,1]_{\downarrow}^{n \times (k-1)}$ and %$y^0, \ldots, y^{(k-1)n} \in \seq{0}[k-1]^n$ 
$\{y^i\}_{i=0}^{(k-1)n}$ as defined in \cref{def: cts-extension}, let $i^* \in \argmin_{ \ i \in \seq{0}[(k-1)n]}F(y^i)$ and $x = y^{i^*}$, then
$
F(x) \leq \fext(X). 
$
We refer to this as rounding and  denote it by $x = \round(X)$.%\citep[Section 4.1 and 5.1]{Bach2019} 
\mtodo{This is stated without an explicit proof in Bach in Section 5.1. It follows directly from the definition of $\fext$.}
\item \label{itm:conv} $\fext$ is convex iff $F$ is submodular. %\citep[Theorem 1}{Bach2019} + Prop 5 if we want to consider the domain $\R_\downarrow^{n \times (k-1)}$
\mtodo{Theorem 1 in Bach assumes continuity but he does state in beginning of Section 2 that continuity is vacuous for finite sets}
\item \label{itm:subgradient} Given $X \in  \R_{\downarrow}^{n \times (k-1)}$, a non-increasing permutation $(p,q)$ of $X$, and $\{y^i\}_{i=1}^{(k-1)n}$ defined as in \cref{def: cts-extension}, define $Y \in \R^{n \times (k-1)}$ as $Y_{p_i,q_i} = F(y^i) - F(y^{i-1})$. If $F$ is submodular, then $Y$ is a subgradient of $\fext$ at $X$.

%\citep[Section 4.1 and Proposition 7.b]{Bach2019} 
\mtodo{This follows from Proposition 7.b in Bach + Danskin's theorem - we need the constraint to be compact, so we need to use $B_F$ not $W_F$. }
\item \label{itm:sum} If $F = G - H$, then $\fext = \gext - \hext$.
\mtodo{This is not stated in Bach, but it follows directly from the definition.}
\item \label{itm:Lip} If $F$ is submodular, then $\fext$ is $L_{\fext}$-Lipschitz continuous with respect to the Frobenius norm, 
with $L_{\fext} \leq \sqrt{(k-1) n} \max_{x, i} |F(x + e_i) - F(x)|$. 
%$L_{\fext} \leq \min\{\sqrt{n} B, 4 M \} \sqrt{k-1} $, where $B= \max_{x, i} |F(x + e_i) - F(x)|$ and $M= \max_{x \in \seq{0}[k-1]^n} |F(x)|$. 
If $F$ is also non-decreasing, then $L_{\fext} \leq F((k-1)\1)$. 

\mtodo{I am not sure if Lemma 5.4 in \cite{Axelrod2020} (which the bound on $L$ is based on) is correct. Its current proof is wrong. I contacted authors and they're also not sure if it is correct in general (it is correct for DR-submodular). For now decided to replace this by the bound from Bach (mentioned in Section 5.1, trivial to prove): $L_{\fext}  \leq \sqrt{(k-1) n} \max_{x, i} |F(x + e_i) - F(x)|$.}
%This would also imply that the complexity of the algorithm from \cite{Axelrod2020} is incorrect (not only in terms of the bound on $L$ but also the bound on $\E[\|z\|_2^2]$ in Lemma 5.7 therein) --> actually no the bound on $\E[\|z\|_2^2]$ is exactly the bound on $L$ used in the complexity. The average number of oracle calls per iteration doesn't seem to depend on $L$.
\end{enumerate}
\end{proposition}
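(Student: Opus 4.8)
The cited items (\ref{itm:extension}--\ref{itm:subgradient} and the general bound in \cref{itm:Lip}) are taken directly from \citet{Bach2019}, so the plan is to establish only \cref{itm:sum} and the non-decreasing refinement of \cref{itm:Lip}; the other claims require no new argument.

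For \cref{itm:sum}, I would first note that the permutation $(p,q)$ and the resulting chain $\{y^i\}$ in \cref{def: cts-extension} depend only on the matrix $X$, not on the function being extended. Fixing $X \in \R_{\downarrow}^{n \times (k-1)}$ and one non-increasing permutation $(p,q)$, I would substitute $F = G - H$ into the defining sum and split it: since each increment satisfies $F(y^i) - F(y^{i-1}) = (G(y^i) - G(y^{i-1})) - (H(y^i) - H(y^{i-1}))$, linearity of the finite sum gives $\fext(X) = \gext(X) - \hext(X)$. Outside $\R_{\downarrow}^{n \times (k-1)}$ all three extensions equal $+\infty$, so the identity is understood on the common effective domain $\R_{\downarrow}^{n \times (k-1)}$, which is all that is used downstream. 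This step is routine.

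For the Lipschitz bound, the plan is to exploit the explicit subgradient of \cref{itm:subgradient} together with the standard fact that a convex function is $L$-Lipschitz with respect to a norm as soon as it admits, at \emph{every} point of its domain, \emph{one} subgradient whose dual norm is at most $L$. Since the Frobenius norm is self-dual with respect to $\langle \cdot, \cdot \rangle_F$, it suffices to bound $\|Y\|_F$ uniformly, where $Y_{p_i,q_i} = F(y^i) - F(y^{i-1})$. Writing $d_i := F(y^i) - F(y^{i-1})$, monotonicity of $F$ together with $y^i = y^{i-1} + e_{p_i} \geq y^{i-1}$ forces $d_i \geq 0$. Each row index appears exactly $k-1$ times among $p_1, \dots, p_{(k-1)n}$, so the chain terminates at $y^{(k-1)n} = (k-1)\1$ and the increments telescope: $\sum_i d_i = F((k-1)\1) - F(\0) = F((k-1)\1)$ by normalization. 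The decisive inequality is then $\|Y\|_F^2 = \sum_i d_i^2 \leq \big(\sum_i d_i\big)^2 = F((k-1)\1)^2$, which holds precisely because the $d_i$ are non-negative; taking square roots yields $\|Y\|_F \leq F((k-1)\1)$ at every $X$, hence $L_{\fext} \leq F((k-1)\1)$.

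The main obstacle is not any single computation but keeping the subgradient argument airtight: \cref{itm:subgradient} exhibits only one subgradient per point, and on the boundary of the cone $\R_{\downarrow}^{n \times (k-1)}$ the subdifferential is strictly larger, so I must invoke the one-subgradient-suffices lemma rather than claim $L_{\fext} = \sup_X \|Y\|_F$. I also want to verify the telescoping bookkeeping carefully — that the chain starts at $\0$ and ends exactly at $(k-1)\1$ so that $\sum_i d_i = F((k-1)\1)$ — and that $F((k-1)\1) \geq F(\0) = 0$ by normalization and monotonicity, so the resulting constant is a genuine, non-negative Lipschitz bound.
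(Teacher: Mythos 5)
Your proposal is correct and follows essentially the same route as the paper: items (a)--(e) and the general bound in (g) are cited from Bach (2019), item (f) is obtained by splitting the defining sum using a common permutation, and the non-decreasing bound in (g) is proved by bounding the Frobenius norm of the canonical subgradient $Y$ via non-negativity of the increments plus telescoping (your inequality $\sum_i d_i^2 \leq (\sum_i d_i)^2$ is exactly the paper's $\|Y\|_F \leq \|Y\|_{1,1}$ step). The only cosmetic difference is that you make explicit the ``one bounded subgradient per point implies Lipschitz'' lemma, which the paper leaves implicit.
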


\citet[Section 5]{Bach2019} provided two polynomial-time algorithms for minimizing submodular functions on discrete domains up to arbitrary accuracy $\epsilon \geq 0$; one using projected subgradient method and the other using Frank-Wolfe (FW) method. When $\X = \seq{0}[k-1]^n$, both obtain an $\epsilon$-minimum in $\tilde{O}((nk L_{\fext}/\epsilon)^2 ~\EO_F)$ time.
%have a complexity of $O(n k \log(n k) + n k ~\EO_F)$ per iteration, and obtain an $\epsilon$-minimum in $O(nk(L_{\fext}/\epsilon)^2)$ iterations.
%$O(( n k L_F /\epsilon)^2)$ iterations, where $L_F = \max_{x \in X, x_i < k-1} |F(x + e_i) - F(x)|$. %so Lipschitz constant wrt any norm.
Though FW is empirically faster; especially the pairwise FW variant  of \citep{Lacoste-Julien2015}, which we use in our experiments (\cref{sec:exps}).
\citet[Theorem 9]{Axelrod2020} gave a faster randomized algorithm with runtime $\tilde{O}( n (k L_{\fext}/\epsilon)^2 ~\EO_F)$, based on projected stochastic subgradient method. %which obtains an $\epsilon$-minimum with high probability
\mtodo{define $\epsilon_x$-minimum somewhere earlier?}
\mtodo{Decided to give complexity in terms of $L_{\fext}$ to be independent on the specific bound on it. Also giving total complexity since the cost per iteration of Axelrod's algorithm is amortized over iterations.}

\paragraph{DC Programming} 
%Let $\Gamma_0$ be the set of proper lower semi-continuous convex functions on $\R^n$. 
Given %an $n$-dimensional 
a Euclidean space $\mathbb{E}$, 
let $\Gamma_0(\mathbb{E})$ be the set of proper lower semi-continuous convex functions from $\mathbb{E}$ into $\overline{\R}$.  For a set $C \subseteq \E, \delta_C$ is the indicator function of $C$ taking value $0$ on $C$ and $+\infty$ outside it.
Given a function $f: \E \to \overline{\R}$, its domain is defined as $\dom f = \set{x \in \E}{f(x) < + \infty}$. For $f \in \Gamma_0(\mathbb{E})$, $\epsilon \geq 0$, and $x^0 \in \dom f$, the $\epsilon$-subdifferential of $f$ at $x^0$ is defined by $\p_\epsilon f(x^0) = \set{ y \in \E}{f(x) \geq f(x^0)+ \ip{y}{x - x^0} - \epsilon, \forall x \in \E},$ while $\p f(x^0)$ stands for the exact subdifferential ($\epsilon=0$). An element of the $\epsilon$-subdifferential is called an $\epsilon$-subgradient, or simply a subgradient when $\epsilon=0$.

A standard DC program takes the form
\begin{equation}
    \min_{x \in \E} f(x):= g(x) - h(x) \label{eq: dc-program}
\end{equation}
where $g,h \in \Gamma_0(\E)$. We adopt the convention $+\infty - (+\infty) = +\infty$. % \citep[Section 2]{Tao1997}.  
The function $f$ is called a DC function. 
%\mtodo{remove citation here if short on space - not very necessary}

DC-programs are generally non-convex and non-differentiable. 
%We are thus interested in obtaining stationary points. 
A point $x^*$ is a global minimum of Problem \eqref{eq: dc-program} iff $\p_\epsilon h(x) \subseteq \p_\epsilon g(x)$ for all $\epsilon \geq 0$ \citep[Theorem 4.4]{HiriartUrruty1989}. However, checking this condition is generally infeasible \citep{Tao1997}. 
\mtodo{skip above two sentences if short on space}
Instead, we are interested in notions of approximate stationarity. In particular, we say that a point $x$ is an $\epsilon$-critical point of $g - h$ if $\partial_{\epsilon} g(x) \cap \partial h(x) \not = \emptyset$. Criticality depends on the choice of the DC decomposition $g - h$ of $f$ \citep[Section 1.1]{le2018dc} and implies local minimality over a restricted set \citep[Proposition 2.7]{ElHalabi2023}. %\citep[Theorem 4]{le1997solving}, 
\mtodo{\citep[Theorem 4]{le1997solving} has the result for $\epsilon=0$. Remove citation if short on space.}
\begin{proposition}\label{prop:criticality} 
     Let $\epsilon \geq 0$ and $f = g-h$ with $g,h \in \Gamma_0(\E)$. If $x,x' \in \E$ satisfy $\partial_{\epsilon} g(x) \cap \partial h(x') \neq \emptyset$, then ${f(x) \leq f(x') + \epsilon}$ 
\end{proposition}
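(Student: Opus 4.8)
The plan is to extract a common element of the intersection and then play off the two subgradient inequalities against each other at well-chosen test points. Concretely, by hypothesis there exists some $y \in \partial_{\epsilon} g(x) \cap \partial h(x')$. Unpacking the two definitions from the preliminaries, membership $y \in \partial_{\epsilon} g(x)$ gives
\begin{equation*}
    g(z) \geq g(x) + \ip{y}{z - x} - \epsilon \quad \text{for all } z \in \E,
\end{equation*}
and membership $y \in \partial h(x')$ gives
\begin{equation*}
    h(z) \geq h(x') + \ip{y}{z - x'} \quad \text{for all } z \in \E.
\end{equation*}
The key move is to evaluate each inequality at the ``other'' point: I would set $z = x'$ in the first to bound $g(x)$ from above, and $z = x$ in the second to bound $h(x)$ from below.

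From the first inequality at $z = x'$ I get $g(x) \leq g(x') + \ip{y}{x - x'} + \epsilon$, and from the second at $z = x$ I get $-h(x) \leq -h(x') - \ip{y}{x - x'}$. Adding these two bounds, the inner-product terms $\pm\ip{y}{x - x'}$ cancel exactly, leaving
\begin{equation*}
    g(x) - h(x) \leq g(x') - h(x') + \epsilon,
\end{equation*}
which is precisely $f(x) \leq f(x') + \epsilon$. So the whole argument is a two-line manipulation once the right test points are chosen; there is no deep obstacle, just the bookkeeping of which inequality is evaluated where.

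The one point that needs a word of care is the handling of infinite values, since $f = g - h$ is only finite where both are finite and the paper uses the convention $+\infty - (+\infty) = +\infty$. I would first note that $\partial_{\epsilon} g(x) \neq \emptyset$ forces $x \in \dom g$ (otherwise the $\epsilon$-subgradient inequality would read $+\infty \leq g(z) + \epsilon$ for all $z$, contradicting properness of $g$), and likewise $\partial h(x') \neq \emptyset$ forces $x', x' \in \dom h$ together with $g(x')$ finite via the same reasoning on $g$; thus $g(x), g(x'), h(x')$ are all finite and the addition step above involves no indeterminate differences. The only remaining term that could be infinite is $h(x)$: if $h(x) = +\infty$ then $f(x) = g(x) - h(x) = -\infty$ and the claimed inequality holds trivially, while if $h(x)$ is finite the displayed computation applies verbatim. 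Hence the conclusion holds in all cases, and the main ``difficulty'' is merely to record this edge case rather than any substantive estimate.
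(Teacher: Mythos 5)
Your proof is correct and matches the standard argument: the paper states \cref{prop:criticality} without proof, importing it from \citet[Proposition 2.7]{ElHalabi2023}, and the computation you give---evaluating the $\epsilon$-subgradient inequality of $g$ at $x'$ and the subgradient inequality of $h$ at $x$, then adding so the terms $\pm\ip{y}{x-x'}$ cancel---is exactly the proof behind that reference. One small slip in your edge-case bookkeeping: $\partial h(x')\neq\emptyset$ does not force $g(x')$ to be finite; however, if $g(x')=+\infty$ then $f(x')=+\infty$ under the paper's convention and the claimed inequality is trivial, so the conclusion is unaffected.
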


%The DC algorithm (DCA) 
DCA iteratively minimizes a convex majorant of \eqref{eq: dc-program}, obtained by replacing $h$ at iteration $t$ by its affine minorization $h(x^t) + \ip{y^t}{x - x^t}$, with $y^{t}\in \p h(x^t)$.
Starting from $x^0 \in\dom \p g $, DCA iterates are given by
%an approximate version of DCA with inexact primal iterates is given by
\begin{subequations}\label{eq:DCA}
\begin{align}
    y^t &\in \partial h(x^t)   \label{eq:DCA-y}\\  
    %&x^{t+1} \text{ is an $\epsilon_x$-solution of } \min_{x \in \E} g(x) - \ip{y^t}{x}.
    x^{t+1} &\in  \argmin_{x \in \E} g(x) - \ip{y^t}{x}. \label{eq:DCA-x}
\end{align}
\end{subequations}
% \begin{algorithm}[h] 
% \caption{DCA} \label{alg:DCA}
% \begin{algorithmic}[1]
%  \STATE  $\epsilon \geq 0, x^0 \in\dom \p g $, $t \gets 0$.
%  \WHILE{$f(x^t) - f(x^{t+1}) > \epsilon$}
%  \STATE $y^t \in \partial h(x^t)$ 
%  \STATE $x^{t+1} \in  \argmin_{x \in \E} g(x) - \ip{y^t}{x}$
%  \STATE $t \gets t+1$ 
%  \ENDWHILE
% \end{algorithmic}
%  \end{algorithm}

We list now some properties of DCA \citep[Theorem 3]{Tao1997},  \citep[Theorem 3.1]{ElHalabi2023}.
%proved in \citep[Theorem 3]{Tao1997} for $\epsilon = \epsilon_x = 0$, and extended in \citep[Theorem 3.1]{ElHalabi2023} to any $\epsilon, \epsilon_x  \geq 0$.
%and DC-functions which will be of later use.
\mtodo{again remove citation to \citep[Theorem 3]{Tao1997} if short on space}
\begin{proposition}\label{prop:DCAproperties}
    Given $f = g-h$ with $g,h \in \Gamma_0(\E)$ and a finite minimum $f^*$, let $\epsilon, \epsilon_x  \geq 0$, and $\{x^t\}, \{y^t\}$ be generated by DCA \eqref{eq:DCA}, where subproblem \eqref{eq:DCA-x} is solved up to accuracy $\epsilon_x$. Then for all $t, T \in \N$, we have:
    \begin{enumerate}[label=\alph*), ref=\alph*]
        \item \label{itm: DCA-descent} $f(x^{t+1}) \leq f(x^t) + \epsilon_x$. 
        \item \label{itm:DCA-epsilon-critical} If $f(x^t)  - f(x^{t+1})  \leq \epsilon$, then $x^t$ is an $\epsilon + \epsilon_x$-critical point of $g-h$ with $y^t \in \partial_{\epsilon + \epsilon_x } g(x^t) \cap \partial h(x^t)$.
        \item \label{itm:DCA-convergence} %For any $T \in \N$, After $T$ iterations, we have
        $
            \min_{t \in [T-1]} f(x^{t}) - f(x^{t+1}) \leq \frac{f(x^0) - f^*}{T}
        $
    \end{enumerate}  
\end{proposition}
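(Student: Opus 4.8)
The plan is to derive all three claims from two elementary facts about the DCA iteration, recorded once at the outset. First, by construction $y^t \in \partial h(x^t)$, so the subgradient inequality for $h$ gives $h(x) \geq h(x^t) + \ip{y^t}{x - x^t}$ for every $x$, which I will use at $x = x^{t+1}$. Second, solving \eqref{eq:DCA-x} up to accuracy $\epsilon_x$ means precisely that $x^{t+1}$ is an $\epsilon_x$-minimizer of the convex function $g - \ip{y^t}{\cdot}$; since the $\epsilon$-subdifferential is insensitive to adding a linear term, this is equivalent to the $\epsilon_x$-subgradient statement $y^t \in \partial_{\epsilon_x} g(x^{t+1})$, i.e. $g(x) \geq g(x^{t+1}) + \ip{y^t}{x - x^{t+1}} - \epsilon_x$ for all $x$.

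For \cref{itm: DCA-descent}, I would evaluate the $\epsilon_x$-subgradient inequality for $g$ at $x = x^t$ to get $g(x^{t+1}) \leq g(x^t) + \ip{y^t}{x^{t+1} - x^t} + \epsilon_x$, and the subgradient inequality for $h$ at $x = x^{t+1}$ to get $h(x^{t+1}) \geq h(x^t) + \ip{y^t}{x^{t+1} - x^t}$. Subtracting, the inner-product terms cancel and $f(x^{t+1}) = g(x^{t+1}) - h(x^{t+1}) \leq g(x^t) - h(x^t) + \epsilon_x = f(x^t) + \epsilon_x$.

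The core of the argument, and the step I expect to be most delicate, is \cref{itm:DCA-epsilon-critical}. Here $y^t \in \partial h(x^t)$ is immediate, so it remains to upgrade the $\epsilon_x$-subgradient of $g$ at $x^{t+1}$ into an $(\epsilon+\epsilon_x)$-subgradient of $g$ at $x^t$. Starting from $g(x) \geq g(x^{t+1}) + \ip{y^t}{x - x^{t+1}} - \epsilon_x$, I would split $\ip{y^t}{x - x^{t+1}} = \ip{y^t}{x - x^t} + \ip{y^t}{x^t - x^{t+1}}$, so that reaching $g(x) \geq g(x^t) + \ip{y^t}{x - x^t} - (\epsilon + \epsilon_x)$ reduces to proving the scalar bound $g(x^t) - g(x^{t+1}) - \ip{y^t}{x^t - x^{t+1}} \leq \epsilon$. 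Its left-hand side is controlled by the same two inequalities as before: the $h$-subgradient inequality gives $-\ip{y^t}{x^t - x^{t+1}} \leq h(x^{t+1}) - h(x^t)$, so the left-hand side is at most $(g(x^t) - h(x^t)) - (g(x^{t+1}) - h(x^{t+1})) = f(x^t) - f(x^{t+1}) \leq \epsilon$ by hypothesis. This yields $y^t \in \partial_{\epsilon + \epsilon_x} g(x^t) \cap \partial h(x^t)$, establishing $(\epsilon+\epsilon_x)$-criticality directly from the containment, without invoking \cref{prop:criticality}.

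Finally, \cref{itm:DCA-convergence} follows from a telescoping argument that does not even use the descent property. Summing the differences $f(x^t) - f(x^{t+1})$ for $t = 0, \ldots, T-1$ yields the exact value $f(x^0) - f(x^T)$, which is at most $f(x^0) - f^*$ since $f(x^T) \geq f^*$. As the minimum of $T$ reals never exceeds their average, $\min_t \big(f(x^t) - f(x^{t+1})\big) \leq \tfrac{1}{T}\big(f(x^0) - f(x^T)\big) \leq \tfrac{f(x^0) - f^*}{T}$, with finiteness of $f^*$ entering only to make the bound meaningful.
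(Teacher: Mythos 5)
Your proof is correct and is essentially the standard argument: the paper does not prove \cref{prop:DCAproperties} itself but imports it from \citet[Theorem 3]{Tao1997} and \citet[Theorem 3.1]{ElHalabi2023}, whose proofs rest on exactly the two facts you isolate ($y^t \in \partial h(x^t)$ and $y^t \in \partial_{\epsilon_x} g(x^{t+1})$ via the translation-invariance of the $\epsilon$-subdifferential). Your handling of \cref{itm:DCA-convergence} by telescoping against $f(x^T) \geq f^*$ is precisely the observation the paper makes immediately after the statement, and your direct verification of the containment in \cref{itm:DCA-epsilon-critical} is exactly what $(\epsilon+\epsilon_x)$-criticality means by definition, so nothing further is needed there.
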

DCA is thus a descent method (up to $\epsilon_x$) which converges ($f(x^t)  - f(x^{t+1})  \leq \epsilon$) to a critical point with rate $O(1/T)$.
We note that \cref{itm:DCA-convergence} is not specific to DCA, but instead follows from $f(x^T) \geq f^*$. %the sequence $\{f(x^t)\}$ being bounded from below by $f^*$.
% \begin{proposition}\label{prop:DCAproperties}
%     Let $\epsilon \geq 0$ and $f = g-h$ with $g,h \in \Gamma_0(\E)$ then:
%     \begin{enumerate}[label=\alph*), ref=\alph*]
%         \item If $x,y \in \E$ satisfy $\partial_{\epsilon} g(x) \cap \partial h(y) \neq \emptyset$, then ${f(x) \leq f(y) + \epsilon}$ \label{itm:subgrad-intersection}.
%         \item DCA is a descent method without line-search, i.e. $f(x^{t+1}) \leq f(x^t) + \epsilon_x$. \label{itm: DCA-descent}
%         \item If DCA iterates $x^t, \ x^{t+1}$ satisfy $f(x^t)  - f(x^{t+1})  \leq \epsilon$, then $y^t \in \partial_{\epsilon} g(x^t) \cap \partial h(x^t)$ \label{itm:DCA-epsilon-critical}.
%         \item Suppose $f^* = \min_{x \in \E} f(x) > - \infty$. After $T$ iterations of DCA we have
%         \begin{equation*}
%             \min_{t \in [T-1]} f(x^{t}) - f(x^{t+1}) \leq \frac{f(x^0) - f^*}{T}
%         \end{equation*}\label{itm:DCA-convergence}
%     \end{enumerate}
% \end{proposition}

%\mtodo{If short on space add: "In what follows, we defer all full proofs to the \cref{app:prelim-proofs}-\ref{app:DSfcts-proofs}." or add such sentence in each section separately}
%In what follows, we defer all full proofs to the appendix.
\section{Difference of Submodular functions}\label{sec:DSfcts}  %\label{sec:DSMinGeneral}

We start by identifying which functions are expressible as DS functions, then  highlight important applications where they naturally occur.

\subsection{Representability}\label{sec:representability} 
The structure assumed in Problem \eqref{eq:dsmin} may seem arbitrary, but it is in fact very general. In particular, we prove that any function on a discrete domain and any smooth function on a continuous domain can be expressed as a DS function. 

 \mtodo{Give the statement for DR-DS decomposition if we decide to include DR-sub results}
 % The following proposition generalizes the result by \cite{Narasimhan2005a, Iyer2012a} showing that any set function can be written as a DS set function. 

 \mtodo{Note that the result about discrete DC decomposition  in \citep[Corollary 3.9-1]{Maehara2015} implies this result (if we drop the integer valued constraint therein which I don't think is needed, also the resulting $G, H$ will be defined on $\Z^n$ not $\X$, but since submodularity is preserved by restriction, we can restrict them to $\X$). We can acknowledge that in the appendix when we discuss the relation to \citet{Maehara2015}. No need to do that here.
 If we give the decomposition with non-decreasing $G, H$ this would be at least new. We can also argue that the decomposition we give has better Lipschitz constant which would lead to faster optimization.
Note also that our result can be generalized in a similar style as in \citep[Corollary 3.9-1]{Maehara2015}  to any function $F: D \to \R$ where $D$ is any finite subset of $\R^n$ (not necessarily product set). In this case we can show that $F = \bar{F}_{|D}$ where $\bar{F}: D' \to \R$ can be decomposed into  $G,H:  D' \to \R$, with $D'$ any lattice containing $D$, e.g., $D' = \R^n$. $D'$ has to be a lattice, otherwise we can't define a submodular function on it.
But for simplicity let's keep the current statement.}

\begin{restatable}{proposition}{DiscreteDSDecomp}\label{prop:DiscreteDSDecomp}
 Given any normalized $F: \X \to \R$ where each $\X_i \subset \R$ is finite, there exists a decomposition $F = G - H$, where $G,H: \X \to \R$ are normalized (strictly) submodular functions. %DR-submodular.
% Moreover, if $F$ is normalized, then $G$ and $H$ can also be chosen to be normalized.
\end{restatable}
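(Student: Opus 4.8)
The plan is to fix once and for all an explicit normalized \emph{strictly} submodular ``regularizer'' $S : \X \to \R$, and then produce the decomposition by the shift $G = F + \lambda S$ and $H = \lambda S$ for a sufficiently large scalar $\lambda > 0$. Since positive scalar multiplication preserves (strict) submodularity, $H = \lambda S$ is automatically strictly submodular; normalization of both $G$ and $H$ is immediate from $F(x^{\min}) = S(x^{\min}) = 0$. Thus the only substantive point is showing that $G = F + \lambda S$ is strictly submodular once $\lambda$ is large enough.

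For this scaling step I would introduce, for any $\Phi : \X \to \R$, the quantity $\Delta_\Phi(x,y) = \Phi(x) + \Phi(y) - \Phi(\min\{x,y\}) - \Phi(\max\{x,y\})$, which is additive in $\Phi$, so $\Delta_G = \Delta_F + \lambda \Delta_S$. For comparable $x,y$ one has $\{\min\{x,y\}, \max\{x,y\}\} = \{x,y\}$, hence $\Delta_F = \Delta_S = 0$ and there is nothing to check. For incomparable $x,y$, strict submodularity of $S$ gives $\Delta_S(x,y) > 0$. Because each $\X_i$ is finite, $\X$ itself is finite and there are only finitely many incomparable pairs; hence $\delta := \min \Delta_S(x,y) > 0$ and $M := \max |\Delta_F(x,y)|$ (both over incomparable pairs) are finite, and any $\lambda > M/\delta$ forces $\Delta_G(x,y) \geq \lambda\delta - M > 0$. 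This is precisely where finiteness of $\X$ is essential.

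The crux is therefore to exhibit a concrete normalized strictly submodular $S$. I would try $S(x) = -\sum_{1 \le i < j \le n} (x_i - m_i)(x_j - m_j)$ with $m_i := \min \X_i$, which vanishes at $x^{\min}$ and is thus normalized. Since the sets $\X_i$ need not be intervals, the second-order criterion of \cref{prop:1st2ndOrderSub} is unavailable, so I would verify strict submodularity directly: expand $\Delta_S$ as a sum over coordinate pairs $(i,j)$ and analyze one pair via $a = x_i - m_i$, $b = y_i - m_i$, $c = x_j - m_j$, $d = y_j - m_j$. A short case analysis shows the $(i,j)$-contribution to $-\Delta_S$ equals $(a-b)(c-d) = (x_i - y_i)(x_j - y_j)$ when the two coordinates are ``discordant'' and $0$ otherwise, so every term is $\le 0$ and $\Delta_S \ge 0$, with a strictly negative contribution exactly when some pair satisfies $x_i > y_i$ and $x_j < y_j$. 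Incomparability of $x,y$ guarantees that such indices $i,j$ exist, yielding $\Delta_S(x,y) > 0$.

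I expect the main obstacle to be this verification that $S$ is strictly submodular: it requires the careful per-coordinate-pair bookkeeping above, and one must confirm that incomparability forces a \emph{strict} sign rather than merely a non-strict one. A cleaner-looking alternative worth checking is whether the smooth extension of $S$ to $\prod_i [\,m_i,\, \max \X_i\,]$, whose mixed partials are identically $-1$, can be invoked instead; but since the paper itself flags that a strict second-order condition may fail to transfer to strict submodularity, I would present the direct discrete computation as the safe route.
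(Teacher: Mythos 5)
Your proposal is correct and follows essentially the same route as the paper: both take $G = F + \lambda \tilde{H}$, $H = \lambda \tilde{H}$ for a normalized strictly submodular quadratic $\tilde{H}$ (your $S$ differs from the paper's $-\tfrac{1}{2}x^{\top}Jx$ choice only by modular terms) with $\lambda$ chosen so that the regularizer's strict submodularity margin dominates the worst violation by $F$, finiteness of $\X$ guaranteeing both constants exist. The only differences are in bookkeeping: the paper quantifies the violation through the diminishing-returns inequality \eqref{eq:submod-antitone} rather than the lattice inequality over incomparable pairs, and certifies strict submodularity of $\tilde{H}$ via the second-order criterion on an enclosing box followed by restriction instead of your direct per-coordinate-pair expansion (which is also valid).
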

\begin{proof}[Proof Sketch] 
We give a constructive proof, similar to the one in \citep[Lemma 3.1]{Iyer2012a} for set functions. We choose a normalized \emph{strictly} 
submodular function $\tilde{H}: \X \to \R$, 
then define $G = F + \frac{|\alpha|}{\beta}\tilde{H}$ and $H = \frac{|\alpha|}{\beta}\tilde{H}$, where $\alpha \leq 0$ and $\beta > 0$ are lower bounds on the difference between the left and right hand sides of Ineq. \eqref{eq:submod-antitone} for $F$ and $\tilde{H}$ respectively. We verify that $G$ and $H$ are normalized and submodular, using Ineq. \eqref{eq:submod-antitone}. Choosing $\alpha < 0$ as a strict lower bound further guarantees strict submodularity.
%$F(x + ae_i) - F(x) - F(y + ae_i) + F(y) \geq \alpha.$ 
The full proof is given in \cref{app:DiscreteDSDecompProof}. %\cref{app:repres-proofs}.
\end{proof}
\mtodo{Add to the above proposition that we can also choose $G$ and $H$ to be non-decreasing, in a similar way as done for set functions; see \citep[Lemma 5.1]{Iyer2012a}.

Also can we generalize my result from PGM paper, choosing G to be both submodular and beta-supermodular?}

Obtaining \emph{tight} lower bounds $\alpha$ and $\beta$ in the above proof requires exponential time in general, even for set functions \citep{Iyer2012a}. 
%Indeed, computing $\alpha = \min_{i, j \in V, S \subseteq V \setminus \{i, j\}} F(S \cup i) - F(S) - F(S \cup \{i, j\}) + F(S \cup j)$ and checking if $\alpha \geq 0$ allows us to test if $F$ is submodular, which has exponential complexity \citep[Theorem 1.7]{seshadhri2014submodularity}. 
\mtodo{This is claimed in \cite{Iyer2012a} under Lemma 2.1 with no proof. Theorem 1.7 \cite{seshadhri2014submodularity} states that there exists a function $F$ which is not extendable to a submodular function, while modifying the function at just one set $S_0$ is extendable to a submodular function $F'$. If we define $F''$ such that $F''(S) = F'(S)$ for all sets $S \not = S_0$, and $F''(S_0) = F'(S_0)$. Then any algorithm which can test if a function is submodular or not, cannot distinguish between submodular $F'$ and non-submodular $F''$ if it does not query $S_0$. So by randomization, we should be able to show that the algorithm then need to query exponentially many sets. Decided not to state that since I'm not fully sure about this, and I didn't come up with argument for $\beta$ too..}
We provide in \cref{ex:strictDRsub} a valid choice for $\tilde{H}$, for which a tight $\beta$ can be easily computed. 
%A lower bound valid 
For any $F$, we can use $\alpha = - 4 \max_{x \in \X} |F(x)|$, which is often easy to lower bound. 
Though loose bounds $\alpha$ and $\beta$ result in slower optimization, as explained in \cref{app:looseBds}.

%This still requires exponential time in general, but we often can easily compute lower bounds.  
%When $G$ and $H$ are chosen to be non-decreasing, we have $\max_{x \in \X} |F(x)| \leq \max\{G(x^{\max}), H(x^{\max})\}$, so we can use $\alpha = - 4 \max\{G(x^{\max}), H(x^{\max})\}$, which is easy to compute.  
%$-H(x^{\max}) \leq - H(x) \leq G(x) - H(x) \leq G(x) \leq G(x^{\max})$
 
\begin{restatable}{example}{StrictDRSubEx}\label{ex:strictDRsub}
Let  $H: \R^n \to \R$ be the quadratic function $H(x) = -\frac{1}{2} x^{\top}J x$,  where $J$ is the matrix of all ones, and define $\tilde{H}: \X \to \R$ as $\tilde{H}(x) = H(x) - H(x^{\min})$. % for all $x \in \X$. 
Then, $\tilde{H}$ is a normalized strictly submodular function.
\end{restatable} 
\mtodo{Note that $H$ is submodular on $\R^n$. But since we defined submodularity on $\X$ and not any lattice, we had to consider a domain $\prod_{i=1}^n [a_i, b_i]$ containing $\X$.}
\begin{proof}[Proof Sketch]
 By  \cref{prop:1st2ndOrderSub}-\ref{itm:2ndOrder}, $H$ is strictly submodular on any product of closed intervals.  Since strict submodularity is preserved by restriction, $\tilde{H}$ is also strictly submodular on $\X$. The full proof is given in \cref{app:DiscreteDSDecompProof}. 
\end{proof}
In the above example, we have $ \tilde{H}(x + a_ie_i) - \tilde{H}(x) - \tilde{H}(x + a_i e_i + a_je_j) + \tilde{H}(x+a_je_j) = a_ia_j$ for any $x \in \X, a_i,a_j > 0$. 
When $\X$ is discrete, we obtain a tight lower bound $\beta = d_i d_j$ where $d_i d_j$ are the distances between the closest two points in $\X_i, \X_j$ respectively, for some $j \not = i$. 
%$\beta %= \min\{a_ia_j : a_i,a_j > 0, i \neq j, \}$ 
%over all $a_i,a_j > 0, i \neq j$ such that for some $i \neq j$ and $x \in \X$ we have $x + a_ie_i$, $x + a_je_j \in \X$.
%, i.e., $\delta = \min_{j \not = i, x_j < y_j \in \X_j}  (y_j - x_j)$.
%$\min_{x < y, x_i = y_i, a > 0, x_i + a \in \X_i}  a \|y - x\|_1$

\looseness=-1 The decomposition given in the proof of \cref{prop:DiscreteDSDecomp} cannot be applied to continuous domains. Indeed, if $\tilde{H}$ is a continuous function, taking $a_i \to 0$ or $a_j \to 0$ in inequality \eqref{eq:submod-antitone} yields $\beta = 0$. However, a similar decomposition can be obtained in this case, if $F$ satisfies some smoothness condition. 
%The proof is similar to that of \cref{prop:DiscreteDSDecomp} and is given in \cref{app:LsmoothDSDecompProof}.

%The following proposition shows that smoothness is a sufficient condition for a function defined on a continuous domain to admit a DS decomposition.

\mtodo{Modify following proposition to simply assume $F(x + a_i e_i)  - F(x) - F(x + a_i e_i + a_j e_j) + F(x + a_j e_j) \geq - L_F a_i a_j$ for any $a_i, a_j>0$ (i.e., . \cref{prop:DiscreteDSDecomp} would then become a corollary of it.
The property in Prop 3.3 is a sufficient condition for this property to hold (proof is the same as the reverse direction of Proposition 2.10 in our notes).
Can we show that actually any Lipschitz continuous function has a DS decomposition? In this case we have $F(x + a_i e_i)  - F(x) - F(x + a_i e_i + a_j e_j) + F(x + a_j e_j) \geq - 2 L \min\{a_i, a_j\}$. With the example  of $H$ we used in current proof this wouldn't work, but maybe for another choice?}

\mtodo{I think we can prove this under weaker assumptions. What we really need is a strictly submodular function $H$ for which $H(x + a_i e_i)  - H(x) \geq H(x + a_i e_i + a_j e_j) - H(x + a_j e_j)$ goes to zero as $a_i \to 0$ or $a_j \to 0$ slower than the rate for which this quantity goes to zero for $F$. So ideally we want to choose to have this quantity go to zero as slow as possible. With our current choice of $H$ this quantity is equal to $a_i a_j$. For example, can we get $\log(a_i a_j)$?
Maybe simpler option: assume the gradient of $F$ is $\nu$-Hölder continuous for some $\nu \in (0,1)$, and choose $H$ to satisfy "reverse Hölder continuouty" i.e., $\|\nabla H(x) - \nabla H(y)\| \geq L_H \|x - y\|^\nu$.}
\begin{restatable}{proposition}{LsmoothDSDecomp}\label{prop:LsmoothDSDecomp} 
 Given any normalized $F: \X \to \R$ where each $\X_i$ is a closed interval, if $F$ is differentiable and there exist $L_F \geq 0$ such that  $ \frac{\partial F}{\partial x_i} (x) - \frac{\partial F}{\partial x_i} (x + a e_j) \geq -L_F a, $ 
 for all $x \in X, i \not = j, a >0, x_j + a \in \X_j$,
 %with $L_F$-Lipschitz gradient with respect to the $\ell_1$-norm, i.e., $\| \nabla F(x) - \nabla F(y)\|_\infty \leq L_F \| x - y \|_1$ for all $x, y \in \X$, 
 then there exists a decomposition $F= G -H$, where $G,H: \X \to \R$ are normalized (strictly) submodular functions.
\end{restatable}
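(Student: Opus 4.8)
The plan is to reproduce the construction behind \cref{prop:DiscreteDSDecomp} and \cref{ex:strictDRsub}, but to certify submodularity through the first-order characterization (\cref{prop:1st2ndOrderSub}-\ref{itm:1stOrder}) rather than the diminishing-returns inequality \eqref{eq:submod-antitone}; the latter is useless on continuous domains since letting $a \to 0$ forces its gap to vanish. I would let $\tilde H$ be the normalized strictly submodular quadratic from \cref{ex:strictDRsub}, and set $H = L' \tilde H$ and $G = F + L' \tilde H$ for a constant $L' \geq L_F$ to be chosen, so that $F = G - H$ holds identically. Both functions are normalized, since $F$ and $\tilde H$ are and hence $G(x^{\min}) = H(x^{\min}) = 0$, and both are differentiable on an open set containing $\X$ because $F$ is and $\tilde H$ is a polynomial, so \cref{prop:1st2ndOrderSub} is applicable.

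The key step is the submodularity of $G$. Since $\tilde H$ differs from $H(x) = -\tfrac12 x^\top J x$ only by a constant, its partial derivatives are $\frac{\partial \tilde H}{\partial x_i}(x) = -\sum_k x_k$, so for every $i \neq j$, $a > 0$ with $x + a e_j \in \X$ I obtain the first-order gap
\begin{equation*}
  \frac{\partial \tilde H}{\partial x_i}(x) - \frac{\partial \tilde H}{\partial x_i}(x + a e_j) = a.
\end{equation*}
By hypothesis the corresponding gap for $F$ is at least $-L_F a$, so linearity of differentiation yields
\begin{equation*}
  \frac{\partial G}{\partial x_i}(x) - \frac{\partial G}{\partial x_i}(x + a e_j) \geq -L_F a + L' a = (L' - L_F)\, a.
\end{equation*}
Choosing $L' = L_F$ makes the right-hand side nonnegative, so $G$ is submodular by \cref{prop:1st2ndOrderSub}-\ref{itm:1stOrder}, while $H = L_F \tilde H$ is submodular as a nonnegative multiple of the submodular $\tilde H$; this proves the non-strict claim.

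For the strict statement I would instead take any $L' > L_F$ (for instance $L' = L_F + 1$), which turns the displayed bound into the strict inequality $(L' - L_F)\, a > 0$ and hence makes $G$ strictly submodular by the strict part of \cref{prop:1st2ndOrderSub}-\ref{itm:1stOrder}; simultaneously $H = L' \tilde H$ is strictly submodular because $L' > 0$ and strict submodularity is preserved under positive scaling. I do not anticipate a genuine obstacle: the whole argument hinges on the observation that the hypothesis is exactly the statement that $F$ violates first-order submodularity by at most $L_F a$, which the quadratic $\tilde H$, with its constant first-order gap $a$, is tailored to absorb. The only care needed is to keep the differentiability domains open so that \cref{prop:1st2ndOrderSub} genuinely applies, and to note that \cref{prop:DiscreteDSDecomp} then becomes the special case where the discrete analogue of this gap condition holds automatically.
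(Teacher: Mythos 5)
Your proposal is correct and follows essentially the same route as the paper: the paper also takes $G = F + \tfrac{L_F}{L_{\tilde H}}\tilde H$ and $H = \tfrac{L_F}{L_{\tilde H}}\tilde H$ for a normalized strictly submodular $\tilde H$ whose first-order gap is bounded below by $L_{\tilde H}a$, verifies submodularity of $G$ via \cref{prop:1st2ndOrderSub}-\ref{itm:1stOrder}, and obtains strictness by making the first-order inequality strict (your choice $L' > L_F$ is equivalent to the paper's choice of a strictly valid constant). The only cosmetic difference is that you instantiate $\tilde H$ as the quadratic from \cref{ex:strictDRsub} with $L_{\tilde H} = 1$ rather than keeping it generic.
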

\begin{proof}[Proof Sketch]
The proof is similar to that of \cref{prop:DiscreteDSDecomp}. We choose a normalized \emph{strictly} %DR-submodular 
submodular function $\tilde{H}: \X \to \R$, which is differentiable and satisfies $ \frac{\partial \tilde{H}}{\partial x_i} (x) - \frac{\partial \tilde{H}}{\partial x_i} (x + a e_j) \geq L_{\tilde{H}} a,$ for some  $L_{\tilde{H}} > 0$, 
  then define $G = F + \tfrac{L_F}{L_{\tilde{H}}}\tilde{H}$ and $H = \tfrac{L_F}{L_{\tilde{H}}}\tilde{H}$. We verify that $G$ and $H$ are normalized and submodular, using \cref{prop:1st2ndOrderSub}-\ref{itm:1stOrder}. Choosing $L_F > 0$ as a strict lower bound further guarantees strict submodularity.
The full proof is in \cref{app:DSfcts-proofs}.
\end{proof}
\looseness=-1 One sufficient condition for $F$ to satisfy the assumption in \cref{prop:LsmoothDSDecomp} is for its gradient to be $L_F$-Lipschitz continuous with respect to the $\ell_1$-norm, i.e., $\| \nabla F(x) - \nabla F(y)\|_\infty \leq L_F \| x - y \|_1$ for all $x, y \in \X$.
\mtodo{Note that if $F$ is $L_F$-Lipschitz gradient wrt any $\ell_p$-norm it is also wrt the $\ell_1$-norm. We could have also assumed that $F$ is coordinate-wise $L$-Lipschitz gradient, but that's equivalent.}
It follows then that any twice continuously differentiable function on $\X$ is also a DS function, with $L_F= \max_{x\in \X} \| \nabla^2 F(x) \|_{1, \infty}$ \citep[Theorem 5.12]{beck2017first}. 
In both cases, computing a \emph{tight} Lipschitz constant $L_F$ has exponential complexity in general \citep[Theorem 2.2]{Huang2023}. Though often one can easily derive a bound on $L_F$. The function in \cref{ex:strictDRsub} is again a valid choice for $\tilde{H}$, where $L_{\tilde{H}} = 1$ is tight.

Like DC functions, DS functions admit infinitely many DS decompositions. For the specific decompositions in \cref{prop:DiscreteDSDecomp,prop:LsmoothDSDecomp}, the ``best'' one is arguably the one with the tightest $\alpha, \beta$ and $L_F, L_{\tilde{H}}$, respectively. Finding the ``best" DS decomposition in general is even more challenging, as it is unclear how to define ``best". 
This question is explored for set functions in \citet{brandenburg2024decomposition}, who study the complexity of decomposing a set function into a difference of submodular set functions such that their Lov\'asz extensions have as few pieces as possible. 
%While having a Lov\'asz extension with few pieces is desirable, it may not necessarily lead to faster overall optimization or better solutions. <- decided to omit this as i'm not fully sure.

%M: Decided not to include this as it's trivial, and we already state these properties for submodular functions in the preliminaries. The only non-trivial part is the invariance to monotone separable reparametrization, which we discuss in \cref{prop:submod-composition}. 
%We conclude this section with a few remarks. Like submodular functions, the class of DS functions is closed under addition, positive scalar multiplication, and restriction. It is also invariant to monotone separable reparametrization. Unlike submodular functions, it is also closed under negative scalar multiplication. 

In \cref{sec:ClassesRelations}, we discuss the connection between DS functions and related non-convex function classes. In particular, we note that DS functions on continuous domains are not necessarily DC, and that the discrete DC functions considered in \citet{Maehara2015} are essentially equivalent to  \emph{integer valued} DS functions on discrete domains.

\subsection{Applications} \label{sec:applications}
As discussed in \cref{sec:representability}, Problem \eqref{eq:dsmin} covers most well behaved non-convex problems over discrete and continuous domains. Though finding a good DS decomposition can be expensive in general.
We give here examples of applications which naturally have the form of Problem \eqref{eq:dsmin}. %, but not of continuous or discrete DC programs.
\paragraph{Quadratic Programming} %We consider 
Quadratic programs (QP) of the form 
%\begin{equation}\label{eq:QP}
   $\min_{x \in \mathcal{X}}\tfrac{1}{2} x^\top Q x + c^\top x,$
%\end{equation}
%which 
arise in numerous applications. 
\mtodo{list some specific applications?}
This form includes box constrained QP (BCQP), 
where $\X_i$'s are all closed intervals, as well as integer and mixed-integer BCQP, where some or all $\X_i \subseteq \Z^n$. 
\mtodo{I think in mixed-integer BCQP $\X_i$'s are consecutive integers, that's why i'm not calling the general problem mixed-integer BCQP.}
Such problems are NP-Hard if the objective is non-convex \citep{de1997quadratic}, %for example max-cut can be written as a non-convex QP with box constraints [0,1]^n
or if some of the variables are discrete \citep{Dinur1998}. %for example Integer Least squares which is equivalent to the closest vector problem studied in  \cite{Dinur1998}

%BCQP problems of the form $\min_{x \in \mathcal{X}}\tfrac{1}{2} x^\top Q x + c^\top x$, where $\X_i$'s are closed intervals or finite sets
%From  \cite{de1997quadratic}: Box Constrained QP (BCQP) naturally arise in a number of different applications. Moreover BCQP is a basic subroutine for many nonlinear programming codes (e.g. Lancelot \cite{conn2013lancelot}) and every linear complementarity problem can be written in the above form \cite{gupta1988note}. Problem is NP-hard [50, 59], and even checking local optimality for a feasible point is NP-hard [37, 49]. Actually, even the complexity of finding a stationary point for (1) is an open question [22] (in the concave case this problem is PLS-complete [25]). 

%Note that any quadratic function has a natural DS decomposition $x^\top Q x = x^\top Q^- x - x^\top (- Q^+) x$, where $Q^- = \min\{Q, 0\}$ and $Q^+ = \max\{Q, 0\}$. By \cref{prop:1st2ndOrderSub}-\ref{itm:2ndOrder}, the functions $x^\top Q^- x, x^\top (- Q^+) x$ are submodular. 
%Hence, these QPs are instances of Problem \eqref{eq:dsmin}, with $G(x) = x^\top Q^- x + c^\top x$ and $H(x) = x^\top (- Q^+) x$, since $c^\top x$ is modular. 
The objective in these QPs has a natural DS decomposition $F = G - H$, with $G(x) = x^\top Q^- x + c^\top x$ and $H(x) = x^\top (- Q^+) x$, where $Q^- = \min\{Q, 0\}$ and $Q^+ = \max\{Q, 0\}$. By \cref{prop:1st2ndOrderSub}-\ref{itm:2ndOrder}, $G$ and $H$ are submodular, since $c^\top x$ is modular. 
%Integer least squares is a special case which we consider in \cref{sec:ILS}
When $\X$ is continuous, these QPs can also be written as DC programs, but this requires computing the minimum eigenvalue of $Q$. 

\paragraph{Sparse Learning}  
\looseness=-1 Optimization problems of the form  $\min_{x \in \mathcal{X}} \ell(x) + \lambda \|x\|_q^q$, where $q \in [0, 1)$, $\lambda \geq 0$, and $\ell$ is a smooth function, arise in sparse learning, where the goal is to learn a \emph{sparse} parameter vector from data. There, the loss $\ell$ is often smooth and convex (e.g., square or logistic loss), and the non-convex regularizer $\|x\|_q^q$ promotes sparsity. The domain is often unbounded so $\X$ can be set to $\|x\|_\infty \leq R$ for some $R \geq 0$, or $\X \subseteq \Z^n$ as in the integer compressed sensing problems we consider in \cref{sec:ICS}.
%in cases where the parameter vector we want to learn is 
Using $q < 1$ makes the problem NP-Hard, even for continuous $\X$ \citep{chen2017strong}, 
\mtodo{Technically \citep{chen2017strong} shows NP-Hardness for unconstrained case. But it should be easy to show that this is still the case with box constraints}
but can be preferable to the convex $\ell_1$-norm, as %it has a stronger sparsity-inducing effect and 
it leads to fewer biasing artifacts \citep{Fan2001}.%than the $\ell_1$-norm 
 %The problem is NP-Hard in this case, even for continuous $\X$.
%the use of a nonconvex sparse prior leads to fewer biasing artefacts than the usual ℓ1-norm

Note that the regularizer $\|x\|_q^q$ is modular since it is separable. Hence, these problems are instances of Problem \eqref{eq:dsmin}, where a DS decomposition of $\ell$ can be obtained as in \cref{prop:LsmoothDSDecomp}. If $\ell(x) = \|A x - b\|_2^2$ , we can use the same decomposition as in the QPs above, i.e., $G(x) = x^\top Q^- x + c^\top x + \lambda \|x\|_q^q$ and $H(x) = x^\top (- Q^+) x$, with $Q = A^{\top}A$ and $c = -2 A^{\top} b$. These problems cannot be written as DC programs even when $\X$ is continuous, since $\|x\|_q^q$ is not DC, as we prove in \cref{prop:lqnormNotDC}. \looseness=-1

%We show in \cref{sec:relationDiscreteDC} that both applications do not have a natural discrete DC decomposition like the ones considered in \cite{Maehara2015}, for general discrete domains, even if we ignore the integer valued restriction.
We show in \cref{sec:relationDiscreteDC} that the natural DS decomposition in both applications is not a discrete DC decomposition as defined in \citet{Maehara2015} and cannot easily be adapted into one for general discrete domains, even when ignoring the integer-valued restriction.

%state that the applications are not DC (if discrete domain obvious, if not, $\ell_q$-norm regularizer not DC, and even for quadratic programming the DC decomposition requires computing the minimum eigenvalue which is more expensive than simply computing the Lipschitz constant of a quadratic) or discrete DC programs (Even if we ignore the requirement that the objective is integer valued, which doesn't hold for any of the examples. A quadratic function on a non-uniform grid does not admit an easy discrete DC decomposition. Simply because mapping to the integer lattice, the function will not remain quadratic. Also the function $|x|^q$ is neither L-natural nor M-natural convex (example 11.3 in our notes). So again no easy discrete DC decomposition. Of course, in both cases, we can still use the generic decomposition but this is expected to perform poorly. For example, even for l0-norm for which $|H_{ij}| \leq 1$ (0 for $i\not = j$, -1 otherwise, to be checked), we still have the added function with Lipschitz constant $n$. No need to say that since i'm not fully sure of the details yet). 

%\input{DSMinGeneral}
\section{Difference of Submodular Minimization}\label{sec:DSMin}

In this section, we show that most well behaved instances of problem \eqref{eq:dsmin} can be reduced to DS minimization over a bounded integer lattice $\prod_{i=1}^n \seq{0}[k_i-1]$ for some $k_i \in \N$. Then, we address solving this special case.
%we address solving \eqref{eq:dsmin}, by first reducing it to the special case where the domain is a bounded integer lattice, then solving this special case.

\subsection{Reduction to Integer Lattice Domain}\label{sec:reduction}
%In this section, 
%We show here that most well-behaved instances of Problem \eqref{eq:dsmin} can be reduced to DS minimization over a bounded integer lattice $\prod_{i=1}^n \seq{0}[k_i-1]$ for some $k_i \in \N$.

%To that end, 
We first obverse that submodularity is preserved by any separable monotone reparametrization.
A special case of the following proposition is stated in \citet[Section 2.1]{Bach2019} with $\X' = \X$ and $m$ strictly increasing.
%The following proposition shows that (sub)modular functions are invariant by separable strictly monotone reparameterizations. 
\mtodo{removed modular case from this proposition, since in this case monotonicity is not needed, any separable map works for the forward direction, and any invertible map for the reverse direction. This follows trivially from the fact that a function is modular iff it is separable}
\begin{restatable}{proposition}{SubmodComposition}\label{prop:submod-composition}
 Given $\X = \prod_{i=1}^n \X_i, \X' = \prod_i \X'_i$, with compact sets $\X_i, \X'_i \subset \R$, let $F: \X \to \R$ and $m: \X' \to \X$ be a monotone function such that $[m(x)]_i = m_i(x_i)$.
 If $F$ is submodular, then the function $F': \X' \to \R$ given by $F'(x) = F(m(x))$ is submodular. Moreover, if $m$ is strictly monotone, then $F$ is submodular iff $F'$ is submodular.
\end{restatable}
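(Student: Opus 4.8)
The plan is to reduce the whole statement to one algebraic observation: a \emph{separable} monotone map interacts cleanly with the coordinatewise lattice operations. Fix $x,y\in\X'$ and set $a=m(x),\,b=m(y)\in\X$. Since $[m(\cdot)]_i=m_i(\cdot)$ and global monotonicity of $m$ forces every $m_i$ to be monotone in the \emph{same} direction, I would first check coordinatewise that
\[
\{\,m(\min\{x,y\}),\ m(\max\{x,y\})\,\}=\{\,\min\{a,b\},\ \max\{a,b\}\,\}
\]
as an unordered pair: when $m_i$ is non-decreasing it sends $\min\{x_i,y_i\}\mapsto\min\{a_i,b_i\}$ and $\max\{x_i,y_i\}\mapsto\max\{a_i,b_i\}$, and when $m_i$ is non-increasing these two roles are swapped. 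This is exactly where \emph{global} monotonicity (rather than a coordinatewise mix of directions) is needed: mixing directions would produce an intermediate corner that is neither $\min\{a,b\}$ nor $\max\{a,b\}$, breaking the identity.

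For the forward implication I would then compute, using the unordered-pair identity together with submodularity \eqref{eq:submod} of $F$ applied at the points $a,b\in\X$,
\[
F'(\min\{x,y\})+F'(\max\{x,y\})=F(\min\{a,b\})+F(\max\{a,b\})\le F(a)+F(b)=F'(x)+F'(y).
\]
As $x,y$ were arbitrary, $F'$ is submodular; note this direction uses only monotonicity, not strictness. For the converse under strict monotonicity, I would apply the forward implication to the inverse map: strict monotonicity makes each $m_i$ injective, hence a bijection onto its image with $m_i^{-1}$ again separable and strictly monotone in the same direction, and $F=F'\circ m^{-1}$, so submodularity of $F'$ transfers to $F$.

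The step needing the most care is the converse. To deduce submodularity of $F$ on \emph{all} of $\X$ rather than only on $m(\X')$, every pair $a,b\in\X$ must be realizable as $m(x),m(y)$, i.e.\ $m$ must be onto $\X$; in the reduction of \cref{sec:reduction} the map $m$ is in fact a strictly monotone bijection onto $\X$, which supplies exactly this. I would therefore make the surjectivity of $m$ explicit in the converse, after which the inverse-map argument closes the equivalence. Everything else is routine, and compactness of the $\X_i,\X'_i$ plays no role in the argument.
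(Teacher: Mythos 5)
Your proposal is correct and follows essentially the same route as the paper's proof: the key identity that $m(\min\{x,y\})$ and $m(\max\{x,y\})$ coincide, as an unordered pair, with $\min\{m(x),m(y)\}$ and $\max\{m(x),m(y)\}$, then direct verification of the submodularity inequality, and the converse via applying the forward direction to $m^{-1}$. Your additional observations—that global (same-direction) monotonicity of all the $m_i$ is what makes the identity hold, and that the converse implicitly needs $m$ to be surjective onto $\X$ (which the paper's proof glosses over but which does hold in the reductions where the result is used)—are legitimate refinements rather than a different argument.
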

\begin{proof}[Proof Sketch]
We observe that $\min\{m(x), m(y)\} + \max\{m(x), m(y)\} = m(\min\{x, y\}) + m(\max\{x, y\})$. The claim then follows by verifying that 
Eq. \eqref{eq:submod} holds. The full proof is given in \cref{app:SubCompositionProof}
%we can verify that \cref{eq:submod} holds for $H(M(x))$ when $H$ is submodular
\end{proof}

\paragraph{Discrete case} \label{sec:DiscreteCase} \looseness=-1  For discrete domains, the desired reduction  follows from \cref{prop:submod-composition}, by noting that in this case elements in $\X$ can be uniquely mapped to elements in $\prod_{i=1}^n \seq{0}[k_i-1]$ with $k_i = |\X_i|$ via a strictly increasing map. 
%$\X$ is in bijection with $\prod_{i=1}^n \seq{0}[k_i-1]$ for $k_i = |\X_i|$ 
%The desired reduction then directly follows from \cref{prop:submod-composition}. 
\begin{restatable}{corollary}{DSreductionDiscrete}\label{corr:DSreductionDiscrete} %Given a DS function $F: \X \to \R$, where each $\X_i \subseteq \R$ is a finite set, $\min_{x \in \X} F(x)$ is equivalent to $\min_{x \in \X'} F'(x)$ where 
 Minimizing any DS function $F: \X \to \R$, where each $\X_i \subseteq \R$ is a finite set, is equivalent to minimizing a DS function on $\prod_{i=1}^n \seq{0}[k_i-1]$ with $k_i = |\X_i|$.
\end{restatable}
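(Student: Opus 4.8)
The plan is to realize the bounded integer lattice $\prod_{i=1}^n \seq{0}[k_i-1]$ as a strictly increasing relabelling of $\X$ and then invoke \cref{prop:submod-composition} componentwise on the DS decomposition of $F$. Concretely, for each $i$ I would write the finite set $\X_i$ in increasing order as $\X_i = \{c^i_0 < c^i_1 < \cdots < c^i_{k_i-1}\}$ with $k_i = |\X_i|$, and define $m_i : \seq{0}[k_i-1] \to \X_i$ by $m_i(j) = c^i_j$. Each $m_i$ is a strictly increasing map between two finite sets of the same cardinality $k_i$, hence a bijection, so the separable map $m : \prod_{i=1}^n \seq{0}[k_i-1] \to \X$ with $[m(z)]_i = m_i(z_i)$ is a strictly monotone bijection. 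This $m$ is precisely the reparametrization required by \cref{prop:submod-composition}, with $\X' = \prod_{i=1}^n \seq{0}[k_i-1]$.

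Next I would set $F' := F \circ m$ and, writing $F = G - H$ with $G,H$ normalized submodular, define $G' := G \circ m$ and $H' := H \circ m$, so that $F' = G' - H'$ on the lattice. Since $m$ is separable and monotone, \cref{prop:submod-composition} applied to $G$ and $H$ individually yields that $G'$ and $H'$ are submodular (only the forward implication of the proposition is needed here). Normalization transfers for free: the least element of $\prod_{i=1}^n \seq{0}[k_i-1]$ is $\0$, and $m(\0) = (c^1_0, \ldots, c^n_0)$ is exactly the least element $x^{\min}$ of $\X$, so $G'(\0) = G(x^{\min}) = 0$ and likewise $H'(\0) = 0$. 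Hence $F' = G' - H'$ is a genuine normalized DS function on $\prod_{i=1}^n \seq{0}[k_i-1]$. Finally, because $m$ is a bijection, $\min_{x \in \X} F(x) = \min_{z} F'(z)$, and any minimizer $z^\star$ of $F'$ yields the minimizer $m(z^\star)$ of $F$ (with $m^{-1}$ carrying minimizers back), establishing the claimed equivalence.

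The argument is essentially bookkeeping once \cref{prop:submod-composition} is available, so I do not expect a substantive obstacle. The only points requiring care are (i) using \emph{strict} monotonicity of the relabelling to guarantee that $m$ is invertible, which is what makes the two minimization problems genuinely equivalent rather than merely one-directional, and (ii) verifying that the least elements correspond under $m$ so that the normalization of $G'$ and $H'$ is inherited from that of $G$ and $H$. Neither presents any real difficulty.
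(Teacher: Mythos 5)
Your proposal is correct and follows essentially the same route as the paper's proof: order each finite $\X_i$ increasingly, define the strictly increasing bijection $m_i(j) = x^i_j$, and apply \cref{prop:submod-composition} to transfer the DS decomposition to the integer lattice. You spell out a few details the paper leaves implicit (applying the proposition to $G$ and $H$ separately, normalization, and the bijection giving equivalence of minimizers), but there is no substantive difference.
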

\begin{proof}
For each $i \in \seq{n}$, let $\X_i = \{x^i_0, \ldots, x^i_{k_i-1}\}$ where elements are ordered in non-decreasing order, i.e., $x^i_{j} < x^i_{j+1}$ for all $j \in \seq{0}[k_i - 2]$. 
Define the map $m_i : \seq{0}[k_i-1] \to \X_i$ as $m_i(j) = x^i_j$. Then $m_i$ is a strictly increasing bijection.  By \cref{prop:submod-composition}, then
the function $F': \prod_{i=1}^n \seq{0}[k_i-1] \to \R$ defined as $F'(x) = F(m(x))$ with $[m(x)]_i = m_i$ is a DS function. 
\end{proof}

\paragraph{Continuous case} \label{sec:ContCase}
We can convert continuous domains into discrete ones using discretization. We consider $\X = [0,1]^n$ wlog, since by \cref{prop:submod-composition} any DS function $F$ defined on $\prod_{i=1}^n [a_i, b_i]$ can be reduced to a DS function $F'$ on $[0,1]^n$ by translating and scaling. %In particular, we  define the map $m: [0,1]^n \to \prod_{i=1}^n[a_i, b_i]$ as $[m(x)]_i = (b_i - a_i) x_i + a_i$ and $F'(x) = F(m(x))$. By \cref{prop:submod-composition}, $F'$ is also a DS function.

As done in \citet[Section 5.1]{Bach2019}, %and \citep[Section 5.3]{Axelrod2020}, 
given a function $F: [0,1]^n \to \R$ which is $L$-Lipschitz continuous with respect to the $\ell_\infty$-norm and $\epsilon > 0$, we define $k = \lceil L/\epsilon \rceil + 1$ and the function $F': \seq{0}[k-1]^n \to \R$ as $F'(x) = F(x/(k-1))$. Then we have
\[ \min_{x \in \seq{0}[k-1]^n} F'(x) - \epsilon / 2 \leq \min_{x \in [0,1]^n} F(x) \leq \min_{x \in \seq{0}[k-1]^n} F'(x).\]
%Since the map $x \to x/(k-1)$ is monotone, 
Again by \cref{prop:submod-composition}, if $F$ is DS then $F'$ is also DS.
Moreover, given any minimizer $x^*$ of $F'$, $x^*/(k-1)$ is an $\tfrac{\epsilon}{2}$-minimizer of $F$. %, i.e., $F(x^*/(k-1)) \leq \min_{x \in [0,1]^n} F(x)  + \epsilon / 2$. 
It is worth noting that Lipschitz continuity is not necessary for bounding the discretization error. We show in \cref{app:NonLipDiscretization} how to handle the function $F(x) = \|x\|_q^q$, with $q \in [0,1)$, on a domain where it is not Lipschitz continuous. %$[-B, B]^n$ 

\mtodo{The current statement of following proposition (commented out) does not apply to $\|x\|_q^q$ because the Lipschitz constant is still unbounded for $x> 0$. Also the current proof is incorrect (see remark in appendix). One way we can generalize the result for $\|x\|_q^q$ is to exclude from $\X_i$ not only points in $\B_i$ but also their $\delta$-neighbourhoods, i.e., $(x_i - \delta, x_i + \delta)$ for every $x_i \in \B_i$. Then the resulting domain will be a union of closed intervals, which we can easily handle by simply discretizing each closed interval separately. The proof would follow in a similar way as in the case of one closed interval. The error when we do have non-empty $\B_i$'s would then be the discretization error incurred on the part of the domain that was kept + $2 \delta$ (in case $x_i \pm \delta \not \in [0,1]$, otherwise additional error is $\delta$).
Check also the references sent by Reza on non-uniform smoothness.
If we don't end up including this generalization, simply add the result for the special case of $\|x\|_q^q$.}
\mtodo{generalize result for case where $\X_i$ is the union of a finite number of closed intervals.
If we don't end up including this result. Specify in preliminaries that we use continuous domains to refer to product of closed intervals.}

\mtodo{Add discussion on how to handle a domain with mixed discrete and continuous $\X_i$'s}

\subsection{Optimization over Integer Lattice}\label{sec:DSMinIntegerLattice}

%For the remainder of the paper, we will thus consider $\X = \prod_{i=1}^n \seq{0}[k_i-1]$. %is a bounded integer lattice
%In this section, 
We now address solving Problem \eqref{eq:dsmin} for $\X = \prod_{i=1}^n \seq{0}[k_i-1]$. 
%As discussed above, %in \cref{sec:reduction}, other discrete domains can be reduced to this case via a simple mapping, while continuous domains can be approximated by discretization.
For simplicity, we assume $k_i = k$ for all $i \in \seq{n}$, for some $k \in \N$,  i.e., $\X = \seq{0}[k-1]^n$. The results can be easily extended to unequal $k_i$'s.
\mtodo{throughout this section, let's use $\X$ whenever it not important to remind the reader that $\X = \seq{0}[k]^n$, otherwise use $\seq{0}[k]^n$ explicitly.}
Since Problem \eqref{eq:dsmin} is inapproximable, we will focus on obtaining approximate local minimizers. Given $x \in \X$, we define the set of neighboring points of $x$ in $\X$ as $N_{\X}(x) := \set{x' \in \X}{\exists i \in \seq{n}, x' = x \pm e_i}$. %{ \|x - x'\|_1=1} <- use this if short on space

\begin{definition}\label{def:localmin} %[Local minimum] 
Given $\epsilon \geq 0$ and $x\in \X$, we call $x$ an  \emph{$\epsilon$-local minimum} of $F$ if $F(x) \leq F(x') + \epsilon$ for all $x' \in N_{\X}(x)$.
%for all $j \in \seq{n}$, we have $F(x) \leq F(x+ e_j)$ if $x_j \leq k-2$ and $F(x) \leq F(x - e_j)$ if $x_j \geq 1$.
\end{definition}
If $\X = \{0,1\}^n$, we recover the definition of a local minimum of a set function. 
%see e.g., \cite[Definition 2.1]{ElHalabi2023}.
\mtodo{Add here some implications for special cases of $F$ if any.}

\looseness=-1 A natural approach to solve Problem \eqref{eq:dsmin} is to reduce it to DS \emph{set} function minimization, using the same reduction as in submodular minimization %; see e.g., 
\citep[Section 4.4]{Bach2019}, and then 
%The resulting problem can then be solved using 
apply the algorithms of \citet{Narasimhan2005a, Iyer2012a, ElHalabi2023}. 
However, this strategy is more expensive than solving the problem directly, even when $F$ is submodular.
%in the submodular minimization case. %Indeed, even for submodular minimization, the resulting iteration complexity is larger by $O(k^2)$ factor
We discuss this in \cref{app:SetReduction}. 

\mtodo{Check if the theoretical guarantees we would get with the reduction are the same. Add that in the appendix with a remark here about it..}

\paragraph{Continuous relaxation} We adopt a more direct approach to solve Problem \eqref{eq:dsmin}, which generalizes the approach of \citet{ElHalabi2023} for set functions. In particular, we relax the DS problem to an equivalent DC program, using the continuous extension (\cref{def: cts-extension}) introduced in \citet{Bach2019}, then apply a variant of DCA to it.

\mtodo{we might want to move this discussion right after \cref{prop:LEproperties}, for easier readability, and since anw this result follows trivially from these properties and is not an important contribution..}
%\paragraph{continuous relaxation and rounding}
Recall that minimizing a submodular function $F$ is equivalent to minimizing its continuous extension $\fext$ (\cref{prop:LEproperties}-\ref{itm:equiv}). We now observe that this equivalence continues to hold even when $F$ is not submodular. 
 
\begin{restatable}{proposition}{EquivContRel}\label{prop:equivContRel}
    For any normalized function $F: \seq{0}[k-1]^n \to \R$, we have
    \begin{equation}
        \min_{x \in \seq{0}[k-1]^n} F(x) = \min_{X \in [0,1]_{\downarrow}^{n \times (k-1)}} \fext(X).
    \end{equation}
Moreover, if $x^* $ is a minimizer of $F$ then $\Theta^{-1}(x^*)$ is a minimizer of $\fext$, and if $X^*$ is a minimizer of $\fext$ then $\round(X^*)$ is a minimizer of $F$.
\end{restatable}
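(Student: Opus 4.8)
The plan is to establish the claimed equality of optimal values by proving the two inequalities separately, and then to read off the two ``moreover'' statements as direct consequences. The one thing to keep in mind throughout is that the equivalence in \cref{prop:LEproperties}-\ref{itm:equiv} is stated only for submodular $F$, whereas here $F$ is an arbitrary normalized function. I will therefore avoid that item entirely and build the whole argument out of the extension identity \cref{prop:LEproperties}-\ref{itm:extension} and the rounding bound \cref{prop:LEproperties}-\ref{itm:round}, neither of which requires submodularity.

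First I would prove $\min_{X \in [0,1]_{\downarrow}^{n \times (k-1)}} \fext(X) \le \min_{x \in \seq{0}[k-1]^n} F(x)$. For any $x \in \seq{0}[k-1]^n$, the matrix $\Theta^{-1}(x)$ defined in \eqref{eq:ThetaInv} is binary and row non-increasing (within each row the entries drop from $1$ to $0$ at the threshold $x_i$), so $\Theta^{-1}(x) \in \{0,1\}_{\downarrow}^{n \times (k-1)} \subseteq [0,1]_{\downarrow}^{n \times (k-1)}$ is feasible for the relaxation. Since $\Theta(\Theta^{-1}(x)) = x$, \cref{prop:LEproperties}-\ref{itm:extension} gives $\fext(\Theta^{-1}(x)) = F(x)$. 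Taking the minimum over $x$ on both sides yields the inequality, because the relaxed minimum ranges over a superset of these feasible points.

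Next I would prove the reverse inequality $\min_{x} F(x) \le \min_{X} \fext(X)$. For any feasible $X \in [0,1]_{\downarrow}^{n \times (k-1)}$, the rounding bound \cref{prop:LEproperties}-\ref{itm:round} produces a point $\round(X) \in \seq{0}[k-1]^n$ with $F(\round(X)) \le \fext(X)$, hence $\min_x F(x) \le \fext(X)$; minimizing the right-hand side over $X$ gives the claim. Combining the two inequalities establishes the equality of optimal values.

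The two ``moreover'' assertions then follow by chaining these relations with the equality just proved. If $x^*$ minimizes $F$, then $\fext(\Theta^{-1}(x^*)) = F(x^*) = \min_x F(x) = \min_X \fext(X)$, so $\Theta^{-1}(x^*)$ attains the relaxed minimum and is a minimizer of $\fext$. If $X^*$ minimizes $\fext$, then $\min_x F(x) \le F(\round(X^*)) \le \fext(X^*) = \min_X \fext(X) = \min_x F(x)$, forcing equality throughout and showing that $\round(X^*)$ minimizes $F$. I do not expect a genuine technical obstacle here: the only subtle point is the one flagged at the outset, namely resisting the temptation to invoke \cref{prop:LEproperties}-\ref{itm:equiv} and instead noticing that the rounding inequality alone supplies exactly the direction for which submodularity would otherwise be needed.
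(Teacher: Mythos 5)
Your proposal is correct and matches the paper's own proof: both directions are obtained exactly as in the paper, using \cref{prop:LEproperties}-\ref{itm:extension} applied to $\Theta^{-1}(x^*)$ for one inequality and \cref{prop:LEproperties}-\ref{itm:round} for the other, with the ``moreover'' claims following immediately. Your explicit chaining of equalities for the minimizer statements is slightly more detailed than the paper's, but the argument is the same.
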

Recall that $\Theta$ is the bijection between $\{0,1\}_{\downarrow}^{n \times (k-1)}$ and $\seq{0}[k-1]^n$ defined in $\eqref{eq:ThetaBijection}$.
The proof follows from the two properties of $\fext$ in \cref{prop:LEproperties}-\ref{itm:extension},\ref{itm:round}, and is given in \cref{app:equivContRelProof}. 
%Given $F = G - H$, we have $\fext = \gext - \hext$ by \cref{prop:LEproperties}-\ref{itm:sum}. 
By \cref{prop:LEproperties}-\ref{itm:sum}, Problem \eqref{eq:dsmin} is then equivalent to
    \begin{equation}\label{eq:dsminCont}
       \min_{X \in [0,1]_{\downarrow}^{n \times (k-1)}} f_{\downarrow}(X) = g_{\downarrow}(x) - h_{\downarrow}(x),
    \end{equation}
where $\gext, \hext \in \Gamma_0(\R^{n \times (k-1)})$ by \cref{prop:LEproperties}-\ref{itm:conv},\ref{itm:Lip}, since $G, H$ are submodular. 

%\subsection{DS Minimization via DCA}

\begin{algorithm}%[h!]
\caption{DCA with local search \label{alg:DCALocalSearch}} 
\begin{algorithmic}[1]
 \STATE  $\epsilon \geq 0$, $T \in \N$,  $x^0 \in \X$, $X^{0} = \Theta^{-1}(x^{0})$
 \FOR{$t = 1, \ldots, T$}
    %\STATE $\bar{X}^{t} = \Theta^{-1}(\bar{x}^t)$ where 
    \STATE $\bar{x}^t \in \argmin_{x \in N_{\X}(x^{t})} F(x)$
  %  \STATE $d = \|\bar{X}^{t}\|_0$.
  \STATE Choose a common non-increasing permutation $(p,q)$ of $X^{t}$ and $\Theta^{-1}(\bar{x}^t)$ (preferably row-stable)
 % \STATE Choose a row-stable non-increasing permutation $(p,q)$ of $X^{t}$ such that $(p_{d + 1}, q_{d+1}) = (i, x^t_i + 1)$ if $\bar{x}^t =  x^t + e_i$ and  $(p_{d - 1}, q_{d-1}) = (i, x^t_i - 1)$ if $\bar{x}^t = x^t - e_i$. 
  % or we can say $y^{d+1}$ or $y^{d-1} = \bar{x}^t$ or that $(p, q)$
  %$\bar{X}^{t} \in \{\Phi((p,q)[d+1]), \Phi((p,q)[d-1])\}$.
 \STATE Choose $Y^t \in \p h(X^t)$ corresponding to $(p,q)$
 \vspace{5pt}
 \STATE  \label{line: submod-min}  
 $\tilde{X}^{t+1}\in \argmin_{X \in [0,1]_{\downarrow}^{n \times (k-1)}} \gext(X) - \ip{Y^t}{X}_F$
  \vspace{5pt}
% \STATE  $x^{t+1} = \round(\tilde{X}^{t+1})$
 % \STATE \label{line:round} Optional if $\tilde{X}^{t+1} \in \{0,1\}_{\downarrow}^{n \times (k-1)}$: \\% ($X^{t+1} = \tilde{X}^{t+1}$ if skipped): 
  % $X^{t+1} = \Theta^{-1}(x^{t+1})$ with $x^{t+1} = \round(\tilde{X}^{t+1})$
 \IF{$\tilde{X}^{t+1} \in \{0,1\}_{\downarrow}^{n \times (k-1)}$}
\STATE $x^{t+1} = \Theta(\tilde{X}^{t+1})$, $X^{t+1} = \tilde{X}^{t+1}$
\ELSE
\STATE $x^{t+1} = \round(\tilde{X}^{t+1})$, $X^{t+1} = \Theta^{-1}(x^{t+1})$
\ENDIF  
\IF{$F(x^t) - F(x^{t+1}) \leq \epsilon$}
\STATE Stop.
\ENDIF 
\ENDFOR
\end{algorithmic}
\end{algorithm}
%\mtodo{Note that simply replacing $x^t$ by $\bar{x}^t$ would not necessarily yield a local min since $\bar{x}^t$ is not necessarily a local min.}

\paragraph{Algorithm}\label{sec:algorithm} %Algorithm 
Problem \eqref{eq:dsminCont} is a DC program on $\E = \R^{n \times (k-1)}$, with $f =  \fext + \delta_{[0,1]_{\downarrow}^{n \times (k-1)}} = g - h$, where 
$g = \gext + \delta_{[0,1]_{\downarrow}^{n \times (k-1)}}$ and  $h = \hext$.
Applying the standard DCA \eqref{eq:DCA} to it gives a descent method %monotonically decreases the objective $f$ 
(up to $\epsilon_x$) which converges to a critical point $X^T$ of $g - h$ by \cref{prop:DCAproperties}. A feasible solution $x^T$ to Problem \eqref{eq:dsmin} can then be obtained by rounding, i.e., $x^T = \round(X^T) \in \seq{0}[k-1]^n$, which satisfies $F(x^T) \leq \fext(X^T)$, as shown in \cref{prop:LEproperties}-\ref{itm:round}. 

However, even in the set functions case, $x^T$ is not necessarily an approximate local minimum of $F$, as shown by \citet[Example F.1]{ElHalabi2023}. To address this, %\citet{ElHalabi2023} 
the authors proposed two variants of standard DCA that do obtain an approximate local minimum of $F$. One variant, which generalizes the SubSup method of \citet{Narasimhan2005a}, %rounds the iterate $X^{t+1}$ at each iteration, and
 is too expensive, as it requires trying $O(n)$ subgradients %$Y^t$ and computing their corresponding $X^{t+1}$ at each 
 per iteration. %\footnote{In practice, \citet{Narasimhan2005a} uses one subgradient corresponding to a random valid permutation, while \citet{ElHalabi2023, Iyer2012a} use three; on random and two based on marginals of $F$ and $H$. These do not guarantee local minimality.}.
 %subgradients corresponding to a random valid permutation and two others based on marginals of $F$ and $H$. 
 While the other variant checks at convergence if the rounded solution is an approximate local minimum, and if not restarts DCA from the best neighboring point. Both can be generalized to our setting.  We provide an extension of the second, more efficient, variant and its theoretical guarantees in \cref{app:DCAVariants}. 

We introduce a new efficient variant of DCA, DCA with local search (DCA-LS), in \cref{alg:DCALocalSearch}, which
%which uses a single carefully chosen subgradient $Y^t$, and obtains an approximate local minimum of $F$ directly at convergence. 
selects a single subgradient $Y^t$ using a local search step (lines 3-5), ensuring direct convergence to an approximate local minimum of $F$ without restarts.
%As in the first variant of \cite{ElHalabi2023}, 
The algorithm maintains a feasible solution $x^t$ to Problem \eqref{eq:dsmin} by rounding $\tilde{X}^{t+1}$ or applying the map $\Theta$ if $\tilde{X}^{t+1}$ is integral. Rounding can optionally be applied in the latter case as well. 
%We also round $\tilde{X}^{t+1}$ at each iteration if $\tilde{X}^{t+1} \notin \{0,1\}_{\downarrow}^{n \times (k-1)}$. 
% This is optional if $\tilde{X}^{t+1}$ is %already chosen to be 
% integral; %i.e., $\tilde{X}^{t+1} \in \{0,1\}_{\downarrow}^{n \times (k-1)}$; 
% line 7 can then be replaced by $X^{t+1} = \tilde{X}^{t+1}, x^{t+1} = \Theta(X^{t+1})$. % in this case. 
Finding a common permutation on line 4 is always possible. Indeed, the binary matrix $\Theta^{-1}(\bar{x}^t)$ differs from $X^t$ at only one element: $(i, x^t_i + 1)$ if $\bar{x}^t =  x^t + e_i$ and $(i, x^t_i - 1)$ if $\bar{x}^t = x^t - e_i$. 
Thus, we can choose any row-stable non-increasing permutation $(p,q)$ of $X^{t}$ such that $(p_{d + 1}, q_{d+1}) = (i, x^t_i + 1)$ if $\bar{x}^t =  x^t + e_i$ and  $(p_{d - 1}, q_{d-1}) = (i, x^t_i - 1)$ if $\bar{x}^t = x^t - e_i$, where $d = \|X^t\|_0$.  %In fact, these are the only valid permutations.

\vspace{-5pt}

%Additionally, we define the operator $\Phi$ which maps a subset $(u,v) = \{(u_1,v_1), (u_2,v_2), \ldots, (u_{\ell}, v_{\ell})\}$ of $[n] \times [k-1]$ to vectors in $\X$ according to $[\Phi((u,v))]_i = \max \{v_j \ | \ j \in [\ell], \ u_j = i\}$, $i \in [n]$.
  
\paragraph{Computational complexity}\label{sec:complexity} \looseness=-1
The cost of finding the best neighboring point $\bar{x}^t$ is $2n~\EO_F$. %$$O(n~\EO_F)$.
Finding a valid permutation $(p,q)$ on line 4 as discussed above costs $O(n k)$. % we just need to arrange the ones before the zeros, while preserving the order in each row and making sure the new element added/removed is in the right order. This can probably be done faster using some data-structure.
The corresponding subgradient $Y^t$ can then be computed as described in \cref{prop:LEproperties}-\ref{itm:subgradient} in $O(n k ~\EO_H)$.
\mtodo{Note that we don't actually need to sort ${X}^{t}$ like we're doing in our implementation, since it's binary. Though not sure it would speed things up much since the bottleneck is not sorting but the function evaluations.}
The subproblem on line 6 is a convex problem which can be solved using projected subgradient method or FW methods. %since $\gext$ is convex and the term $\ip{Y^t}{X}_F$ is linear. 
Furthermore, like in the set function case, this subproblem is %in fact 
equivalent to a submodular minimization problem. 
Indeed, we show in \cref{prop: modular-extension} that the term $\langle Y^t, X \rangle_F$ corresponds to the continuous extension of the normalized modular function $H^t(x) = \sum_{i=1}^n \sum_{j=1}^{x_i} Y^t_{ij}$.
Hence, by \cref{prop:LEproperties}-\ref{itm:equiv},\ref{itm:sum}, the subproblem is equivalent to minimizing the submodular function $F^t = G - H^t$.
%equivalent to the submodular minimization problem $\min_{x \in \seq{0}[k]^n} G(x) - H^t(x)$. 
We can thus obtain an integral $\epsilon_x$-solution $\tilde{X}^{t+1} \in \{0,1\}_{\downarrow}^{n \times (k-1)}$, for any $\epsilon_x\geq 0$, %solve it up to $\epsilon_x\geq 0$ accuracy in  
in $\tilde{O}( n (k L_{f^t_\downarrow}/\epsilon)^2 ~\EO_{F^t})$ time using the algorithm of \citet{Axelrod2020}. Rounding can be skipped in this case, and $\tilde{X}^{t+1}$ can be directly mapped to $x^{t+1}$ via $\Theta$ in $O(n k)$. % just need to check where the largest one in each row - again can be done faster with some data-structure.
So the total cost per iteration of DCA-LS is $\tilde{O}( n (k L_{f^t_\downarrow}/\epsilon)^2 ~\EO_{F^t} + n k ~\EO_H)$. %We will show below that the number of iterations needed to converge is also $O((F(x^0) - F^*)/\epsilon)$
%Cost of rounding is $O(n k \log(n k) + n k \EO_F)$. But we don't actually need to round. 

The choice of DS decomposition for $F$ affects the runtime of DCA-LS. For instance, looser bounds $\alpha$ and $\beta$ in the generic decomposition from \cref{prop:DiscreteDSDecomp} lead to a larger Lipschitz constant $L_{f^t_\downarrow}$ and thus a longer runtime (\cref{app:looseBds}). %particularly for solving the submodular minimization subproblem at each iteration $t$, 
%as we show in \cref{app:looseBds}. 

%\vspace{-5pt}
\paragraph{Theoretical guarantees} \looseness=-1 
Let $F^*$ be the minimum of Problem \eqref{eq:dsmin}. Note that the minimum $f^* = F^*$ of the DC program \eqref{eq:dsminCont} is finite, and that $\{Y^t\}, \{\tilde{X}^{t+1}\}$ are standard DCA iterates. % for the DC decomposition $g - h$. 
\cref{prop:DCAproperties}-\ref{itm: DCA-descent},\ref{itm:DCA-epsilon-critical} then apply to them. % (\cref{prop:DCALocalSeachProperties}). 
The following theorem relates DCA properties on Problem \eqref{eq:dsminCont} to ones on Problem \eqref{eq:dsmin}, showing that DCA-LS is a descent method %monotonically decreases the objective $F$ 
(up to $\epsilon_x$) which converges to an $(\epsilon + \epsilon_x)$-local minimum of $F$ in $O( 1 / \epsilon)$ iterations.

%\cref{alg:DCALocalSearch} is a variant of DCA which incorporates rounding at the end of each iteration. As seen, for example in \citep[Theorem D.5.]{ElHalabi2023}, this does not alter some of the basic results pertaining to DCA. In particular, a version of \cref{prop:DCproperties} Items \ref{itm: DCA-descent}-\ref{itm:DCA-convergence} holds for the iterates $\{x^t\}$ of \cref{alg:DCALocalSearch}. The full statement is given in \cref{prop:DCALocalSeachProperties}.

\begin{restatable}{theorem}{DCALocalSearch}\label{theorem:DCALocalSearch}
    Let %$\{X^t\}$, $\{x^t\}$, and $\{Y^t\}$ 
    $\{x^t\}$ be generated by \cref{alg:DCALocalSearch}, where the subproblem on line 6 is solved up to accuracy $\epsilon_x \geq 0$. For all $t \in \seq{T}, \epsilon \geq 0$, we have:
    \begin{enumerate}[label=\alph*), ref=\alph*, leftmargin=1em]
        \item \label{itm: iterate-monotonicity} %The iterates $\{x^t\}$ satisfy
        $F(x^{t+1}) \leq F(x^t) + \epsilon_x$. 
        \item \label{itm: permutation-min} Let $\{y^i\}_{i=0}^{(k-1)n}$ be the vectors corresponding to the permutation $(p,q)$ from line 4, defined as in \cref{def: cts-extension}. If $(p,q)$ is row-stable and $F(x^t) - F(x^{t+1}) \leq \epsilon$, then
        \begin{equation*}
            F(x^t) \leq F(y^i) + \epsilon + \epsilon_x \ \textnormal{for all } i \in \seq{0}[(k-1)n].  % F \big ( \Phi((p,q)[i]) \big )
        \end{equation*}
        %In particular, $x^t$ is an $(\epsilon + \epsilon_x)$-local minimum of $F$.
        %is the row-stable non-increasing permutation of $X^t$ used to compute $Y^t$. 
        \item \cref{alg:DCALocalSearch} converges to an $(\epsilon + \epsilon_x)$-local minimum of $F$ after at most $(F(x^0) - F^*) / \epsilon$ iterations. 
        %\cref{alg:DCALocalSearch} stops after at most $(F(x^0) - F^*) / \epsilon$ iterations. %, where $F^* = \min_{x \in [0:k-1]^n} F(x)$.
        \label{itm: local-search-convergence}
    \end{enumerate}
\end{restatable}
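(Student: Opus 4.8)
The plan is to transfer the single-step DCA guarantees for the relaxed DC program \eqref{eq:dsminCont}, which apply to the iterates $\{Y^t,\tilde{X}^{t+1}\}$ by \cref{prop:DCAproperties}, back to the discrete iterates $\{x^t\}$ using the extension properties of \cref{prop:LEproperties} and the local-search construction of the permutation $(p,q)$. The two facts I would set up first and reuse throughout are that the algorithm maintains $X^t = \Theta^{-1}(x^t)\in\{0,1\}_{\downarrow}^{n\times(k-1)}$, so $\fext(X^t)=F(x^t)$ by \cref{prop:LEproperties}-\ref{itm:extension}, and that in both branches of lines 7--11 one has $F(x^{t+1})\le\fext(\tilde{X}^{t+1})$ (with equality via $\Theta$ when $\tilde{X}^{t+1}$ is integral, and the rounding inequality of \cref{prop:LEproperties}-\ref{itm:round} otherwise). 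Part \ref{itm: iterate-monotonicity} is then immediate: the descent bound \cref{prop:DCAproperties}-\ref{itm: DCA-descent} gives $\fext(\tilde{X}^{t+1})\le\fext(X^t)+\epsilon_x$, and combining this with the two facts above yields $F(x^{t+1})\le\fext(\tilde{X}^{t+1})\le\fext(X^t)+\epsilon_x=F(x^t)+\epsilon_x$.

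Part \ref{itm: permutation-min} is the crux. Let $F^t=G-H^t$, where $H^t(x)=\sum_{i}\sum_{j=1}^{x_i}Y^t_{ij}$ is the modular function of \cref{prop: modular-extension}, so that line 6 returns an $\epsilon_x$-minimizer $\tilde{X}^{t+1}$ of $f^t_{\downarrow}=\gext-\ip{Y^t}{\cdot}_F$ over $[0,1]_{\downarrow}^{n\times(k-1)}$. The decisive structural identity, and the only place where row-stability is used, is that $H^t$ agrees with $H$ along the whole chain: $H^t(y^i)=H(y^i)$, hence $F^t(y^i)=F(y^i)$, for every $i$. To prove it I would note that row-stability forces each row to be filled in increasing column order, so $\Theta^{-1}(y^i)$ is exactly the $0$--$1$ matrix supported on the first $i$ permutation positions; then, using the definition of $Y^t$ in \cref{prop:LEproperties}-\ref{itm:subgradient}, $H^t(y^i)=\sum_{l\le i}Y^t_{p_l,q_l}=\sum_{l\le i}\bigl(H(y^l)-H(y^{l-1})\bigr)=H(y^i)$ telescopes. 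With this, and writing $d=\|X^t\|_0$ so that $y^d=x^t$, I would run the chain $F(x^t)\le F(x^{t+1})+\epsilon\le\fext(\tilde{X}^{t+1})+\epsilon\le f^t_{\downarrow}(\tilde{X}^{t+1})+\epsilon\le F^t(y^i)+\epsilon+\epsilon_x=F(y^i)+\epsilon+\epsilon_x$. The first step is the hypothesis $F(x^t)-F(x^{t+1})\le\epsilon$; the second is the rounding bound; the third, $\fext(\tilde{X}^{t+1})\le f^t_{\downarrow}(\tilde{X}^{t+1})$, follows from the subgradient inequality for $Y^t\in\partial\hext(X^t)$ together with $\ip{Y^t}{X^t}_F=H^t(x^t)=H(x^t)=\hext(X^t)$ (chain agreement at $y^d$); and the last step uses that $\Theta^{-1}(y^i)$ is feasible with $f^t_{\downarrow}(\Theta^{-1}(y^i))=F^t(y^i)$ by \cref{prop:LEproperties}-\ref{itm:extension}, so the $\epsilon_x$-minimality of $\tilde{X}^{t+1}$ applies.

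For part \ref{itm: local-search-convergence} I would combine a telescoping bound with part \ref{itm: permutation-min}. Whenever the stopping test fails at iteration $t$ we have $F(x^{t+1})<F(x^t)-\epsilon$ by definition, so after $T$ non-terminating iterations $F(x^T)<F(x^0)-T\epsilon$; since $F(x^T)\ge F^*$ this forces $T<(F(x^0)-F^*)/\epsilon$, giving the iteration bound. At the terminating iterate the hypothesis of part \ref{itm: permutation-min} holds, and the local-search step (lines 3--5) places the best neighbor $\bar{x}^t=\argmin_{x\in N_{\X}(x^t)}F(x)$ on the chain (as $y^{d-1}$ or $y^{d+1}$). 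Part \ref{itm: permutation-min} then gives $F(x^t)\le F(\bar{x}^t)+\epsilon+\epsilon_x\le F(x')+\epsilon+\epsilon_x$ for all $x'\in N_{\X}(x^t)$, i.e.\ $x^t$ is an $(\epsilon+\epsilon_x)$-local minimum by \cref{def:localmin}.

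The main obstacle is the chain-agreement identity $H^t(y^i)=H(y^i)$ in part \ref{itm: permutation-min}: it is precisely what lets the modular subproblem ``see'' the true values of $F$ along the search path and makes the whole inequality chain close. It relies essentially on row-stability, since otherwise the support of $\Theta^{-1}(y^i)$ would no longer coincide with the first $i$ permutation positions and the telescoping would break down.
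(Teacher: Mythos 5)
Your proposal is correct, and its overall skeleton matches the paper's: both proofs transfer the DCA step guarantees to the discrete iterates via \cref{prop:LEproperties}-\ref{itm:extension},\ref{itm:round} and the relation $F(x^t)-F(x^{t+1})\geq \fext(X^t)-\fext(\tilde{X}^{t+1})$, and both obtain part~\ref{itm: local-search-convergence} by telescoping plus the observation that line~4 places $\bar{x}^t$ adjacent to $x^t$ on the chain. Where you genuinely diverge is part~\ref{itm: permutation-min}: the paper argues abstractly --- $f(X^t)-f(\tilde{X}^{t+1})\leq\epsilon$ yields $Y^t\in\partial_{\epsilon+\epsilon_x}g(X^t)\cap\partial h(X^t)$ by \cref{prop:DCAproperties}-\ref{itm:DCA-epsilon-critical}, row-stability yields $Y^t\in\partial \hext(\Theta^{-1}(y^i))$ via \cref{prop:LEproperties}-\ref{itm:subgradient}, and \cref{prop:criticality} closes the loop --- whereas you unroll this into an explicit inequality chain through the modular surrogate $F^t=G-H^t$ of \cref{prop: modular-extension}, with the chain-agreement identity $H^t(y^i)=H(y^i)$ doing the work. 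The two arguments are equivalent: your identity is precisely the tightness of the affine minorant $\ip{Y^t}{\cdot}_F$ of $\hext$ at $\Theta^{-1}(y^i)$, i.e.\ the subgradient membership the paper invokes, and your final chain is \cref{prop:criticality} instantiated in this setting. Your version is more self-contained and makes the combinatorial role of row-stability transparent; the paper's is shorter because it reuses its general lemmas. One small caveat: in part~\ref{itm: local-search-convergence} you invoke part~\ref{itm: permutation-min} to get $F(x^t)\leq F(\bar{x}^t)+\epsilon+\epsilon_x$, but part~\ref{itm: permutation-min} assumes row-stability while the algorithm only prefers it; as the paper notes, the explicit placement of $\Theta^{-1}(\bar{x}^t)$ at position $d\pm 1$ makes the argument valid for $\bar{x}^t$ without row-stability, and your chain-agreement computation at $y^{d\pm 1}$ already establishes this --- it would be worth stating explicitly rather than citing part~\ref{itm: permutation-min}.
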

\begin{proof}[Proof Sketch]
%\vspace{-15pt}
\Cref{itm: iterate-monotonicity} follows from \cref{prop:DCAproperties}-\ref{itm: DCA-descent} and %the extension and rounding properties of $\fext$ 
\cref{prop:LEproperties}-\ref{itm:extension},\ref{itm:round}.
   % monotonicity of the DCA iterates (\cref{prop:DCAproperties}-\ref{itm: DCA-descent}) 
   \cref{itm: permutation-min} follows from  \cref{prop:DCAproperties}-\ref{itm:DCA-epsilon-critical} and \cref{prop:criticality}, by observing that if $(p,q)$ is row-stable, then it is a common non-increasing permutation for $X^t$ and $\{\Theta^{-1}(y^i)\}_{i=0}^{(k-1)n}$, and thus $Y^t$ is a common subgradient of $h$ at all these points. The choice of $(p,q)$ on line 4 also ensures the same holds for $\Theta^{-1}(\bar{x}^t)$ even when $(p,q)$ is not row-stable.
   %The choice of $(p,q)$ ensures that $\bar{x}^t \in \{y^{d+1}, y^{d-1}\}$, and thus the inequality applies to it too. 
   %$F(x^t) \leq F(\bar{x}^t) + \epsilon + \epsilon_x$. %, where $d = \|x^t \|_0$
   % 
%The choice of $(p,q)$ in \cref{alg:DCALocalSearch} ensures that an $\epsilon$-local minimum is reached. 
\Cref{itm: local-search-convergence} is then obtained by telescoping sums. The full proof is given in \cref{app:DCALocalSearchProof}.
\end{proof}

Restricting %the non-increasing permutation $(p,q)$ of $X^t$ 
$(p,q)$ to be row-stable is necessary to ensure that it is also a non-increasing permutation of $\{\Theta^{-1}(y^i)\}_{i=0}^{(k-1)n}$, which is needed for \cref{itm: permutation-min} to hold, but not for \cref{itm: iterate-monotonicity,itm: local-search-convergence}, as discussed in the proof sketch. 
%This is essential to guarantee the property in \cref{itm: permutation-min}, but This is not required for approximate local minimality. 
In the set function case ($k=2$), this restriction is unnecessary, since any non-increasing permutation of $X \in \mathbb{R}_\downarrow^{n \times (k-1)}$ is trivially row-stable. 
By a similar argument, 
DCA-LS can be modified to return a solution with a stronger local minimality guarantee, where $F(x^T) \leq F(x^T \pm z) + \epsilon + \epsilon_x$ for all $z \geq 0, \|z\|_1 \leq c$ such that $x^T \pm z \in \X$, for some $c \in \N$. This can be done by setting $\bar{x}^t \in \argmin_{x \in N^c_{\X}(x^t)} F(x)$ on line 3 with ${N^c_{\X}(x^t) := \{x' \in \X \ | \ x' = x^t \pm z, \ z\geq 0,  \ \|z\|_1 = c\}}$ and choosing $(p,q)$ to be row-stable on line 4. %if not row stable we need to take min over ||z||<= c which is more costly
This increases the cost of computing $\bar{x}^t$ to $O(n^c ~\EO_F)$.
% $F(x^T) \leq F(x) + \epsilon + \epsilon_x$ for all $x \in \X$ such that $\|x - x^T\|_1 \leq c$, for some $c \in \N$, 
% %This can correspond to changing one coordinate value by $c$, or up to $c$ different coordinates by $1$.
% %that the returned solution cannot be improved, by more than $\epsilon + \epsilon_x$, 
% %This can be done 
% by setting $\bar{x}^t \in \argmin_{\|x - x^t\|_1 \leq c} F(X)$. This increases the cost of computing $\bar{x}^t$ to $O(n^c ~\EO_F)$. 

\cref{theorem:DCALocalSearch} generalizes the theoretical guarantees of \citet{ElHalabi2023} to general discrete domains; recovering the same guarantees in the set function case. 
%When $k=2$, \cref{theorem:DCALocalSearch} recovers the theoretical guarantees given in \cite{ElHalabi2023}.
In \cref{app:DCAVariants}, we compare DCA-LS to an extension of the more efficient DCA variant from \citet{ElHalabi2023}, theoretically and empirically.  We show that both variants have similar theoretical guarantees and computational complexity. In practice, however, DCA-LS performs better in some settings, sometimes by a large margin, but is often slower than DCA-Restart. 

Other variants (with regularization, acceleration, complete DCA) explored in \citet{ElHalabi2023} are also applicable here. When regularization is used, rounding becomes necessary, as explained in Section 3 therein. %\citep[Section 3]{ElHalabi2023}.

\mtodo{Explain why we need a row-stable permutation here? It's to ensure that $(p,q)$ will also be a valid non-increasing permutation to all $\Theta^{-1}(y^i)$. For example, if $X = [1, 1, 0]$ and $(p,q) = \{(1,2), (1,1), (1,3)\}$ then it won't be a non-increasing permutation for $\Theta^{-1}(y^1) = [1, 0 ,0]$.  This is not needed in the set function case ($k=2$), where any non-increasing permutation of $X \in \mathbb{R}_\downarrow^{n \times (k-1)}$ is row-stable. }

\mtodo{not sure abt complete DCA being applicable, we need to verify that we can still get a stationary point of the concave problem. It should be the case since worst case we can reduce to set function and use the algorithm for that. }

\paragraph{Implications for non-integer domains}
When $\X$ is a general discrete or continuous domain, we can apply DCA-LS to the function $F'$ % \prod_{i=1}^n \seq{0}[k_i-1] \to \R
obtained via the reductions discussed in \cref{sec:reduction}. \cref{theorem:DCALocalSearch} then hold for $F'$. 
%, which also implies similar properties on $F$, after reparameterization of the iterates, 
%since $F'(x) = F(m(x))$, where $m$ . 

In the discrete case, recall that $F'(x) = F(m(x))$, where $m$ is the map defined in the proof of \cref{corr:DSreductionDiscrete}. 
The guarantee that $x^t$ is an $(\epsilon + \epsilon_x)$-local minimum of $F'$ implies that $m(x^t)$ is an $(\epsilon + \epsilon_x)$-local minimum of $F$, in the sense that modifying any coordinate $i \in \seq{n}$ to its previous or next value on the grid $\X_i$ does not reduce $F$ by more than $\epsilon + \epsilon_x$. 
In the continuous case, assuming again $\X = [0,1]^n$, recall that 
$F'(x) = F(x/(k-1))$, where $k = \lceil L/\epsilon' \rceil + 1$ for some $\epsilon'>0$, and $L$ is the Lipschitz constant of $F$ with respect to the $\ell_\infty$-norm. Let $\tilde{x}^t = x^t/(k-1)$. Then the local minimality guarantee on $F'$ implies that:
$$F(\tilde{x}^t) \leq F(\tilde{x}^t \pm \tfrac{e_i}{k-1}) + \epsilon + \epsilon_x.$$
This is only meaningful if $\epsilon' > \epsilon + \epsilon_x$, since the Lipschitz continuity of $F$ already ensures $F(\tilde{x}^t) - F(\tilde{x}^t \pm \tfrac{e_i}{k-1}) \leq L / (k-1) \leq \epsilon'$.

\mtodo{Can we show that the solution is a stationary point of $F$ if it is differentiable?}

\section{Experiments}\label{sec:exps}

\begin{figure*}
\centering
\begin{subfigure}{.33\linewidth}
 \centering
    \includegraphics[scale=0.29]{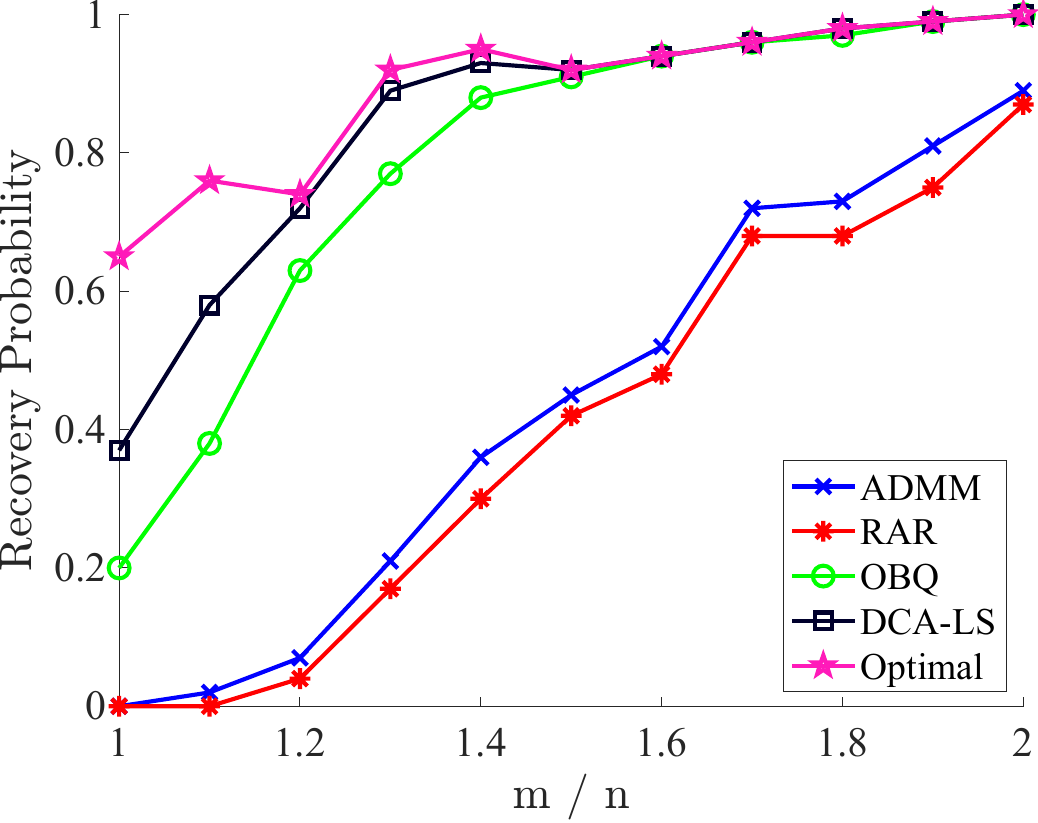}\hfill
\end{subfigure}
\begin{subfigure}{.33\linewidth}
 \centering
    \includegraphics[scale=0.29]{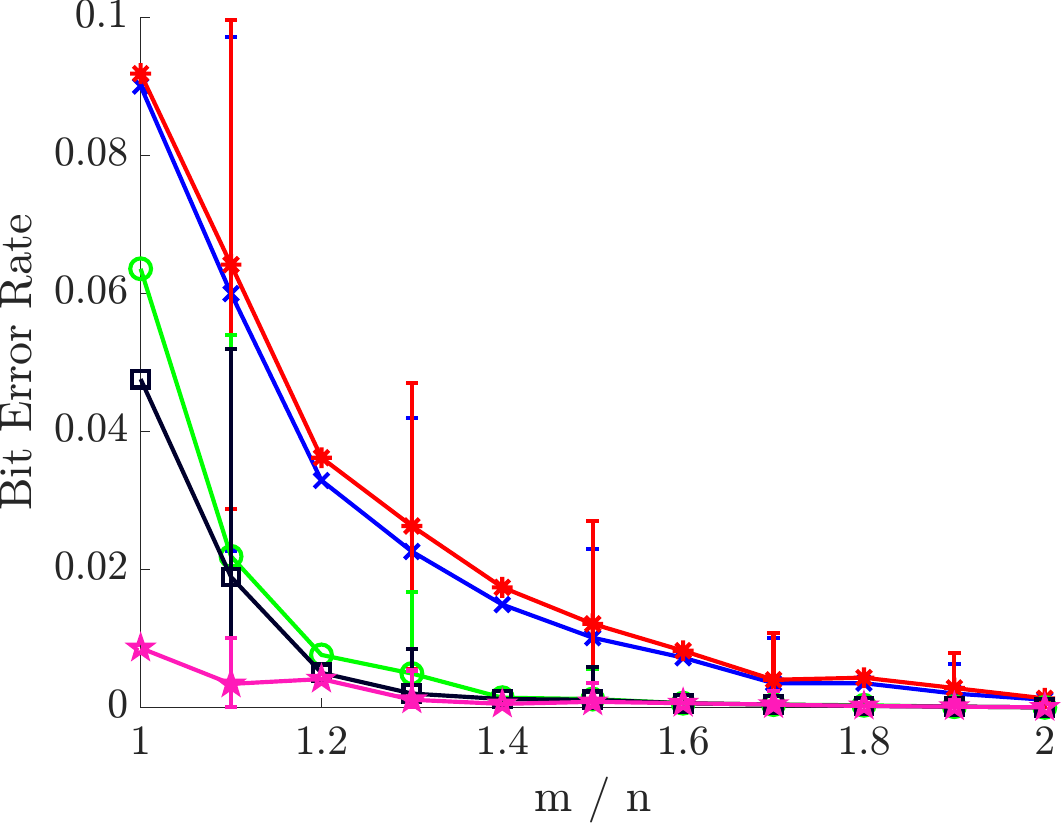}\hfill
\end{subfigure}
\begin{subfigure}{.33\linewidth}
 \centering
    \includegraphics[scale=0.29]{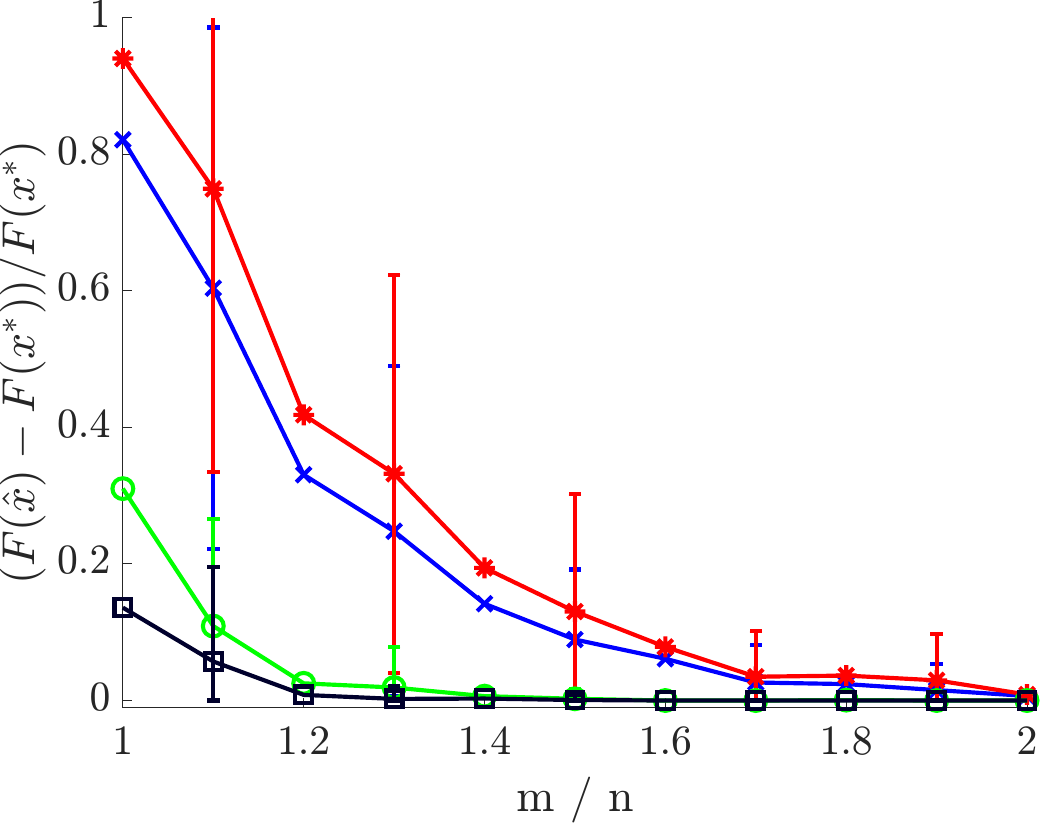}\hfill
\end{subfigure}\\
%\caption{Varying $m$ with $n=100$.}
\begin{subfigure}{.33\linewidth}
 \centering
    \includegraphics[scale=0.29]{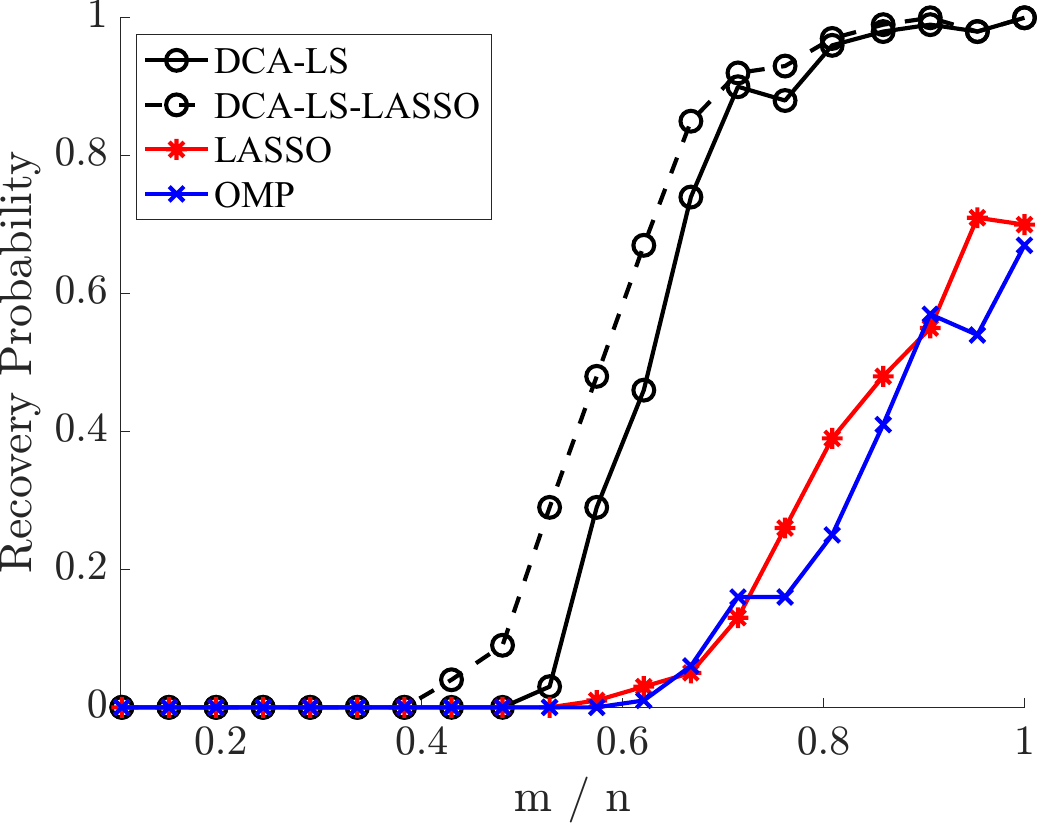}\hfill
\end{subfigure}
\begin{subfigure}{.33\linewidth}
 \centering
    \includegraphics[scale=0.29]{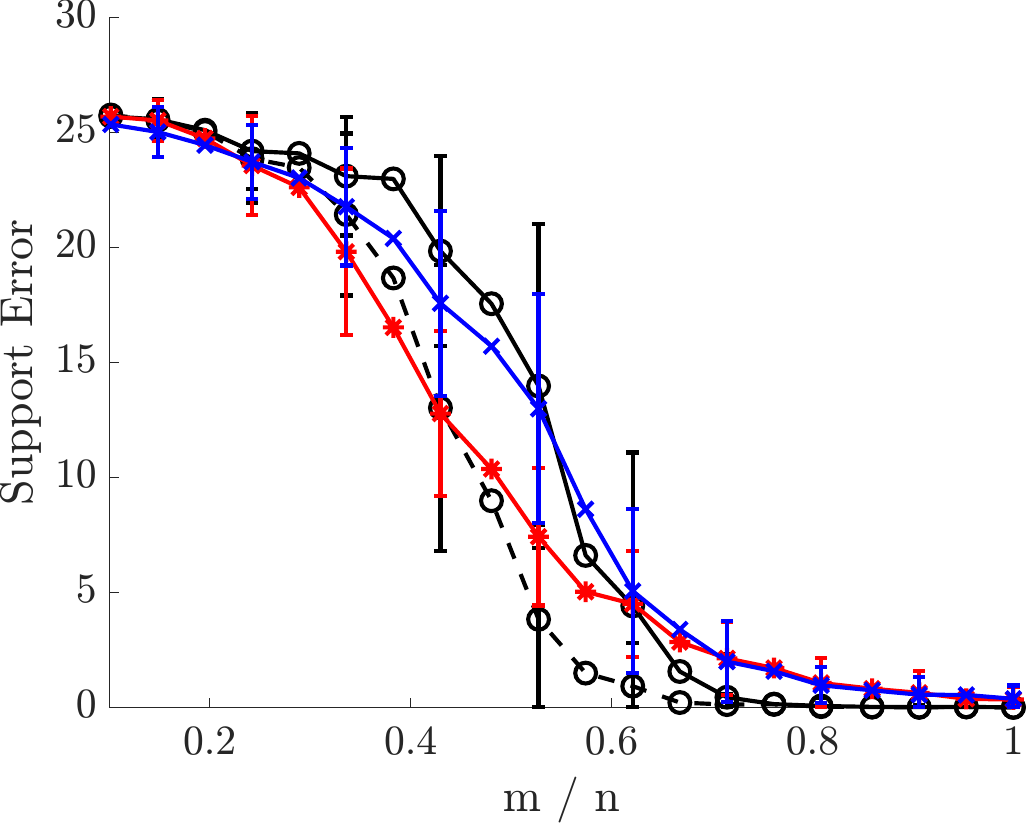}\hfill
\end{subfigure}
\begin{subfigure}{.33\linewidth}
 \centering
    \includegraphics[scale=0.29]{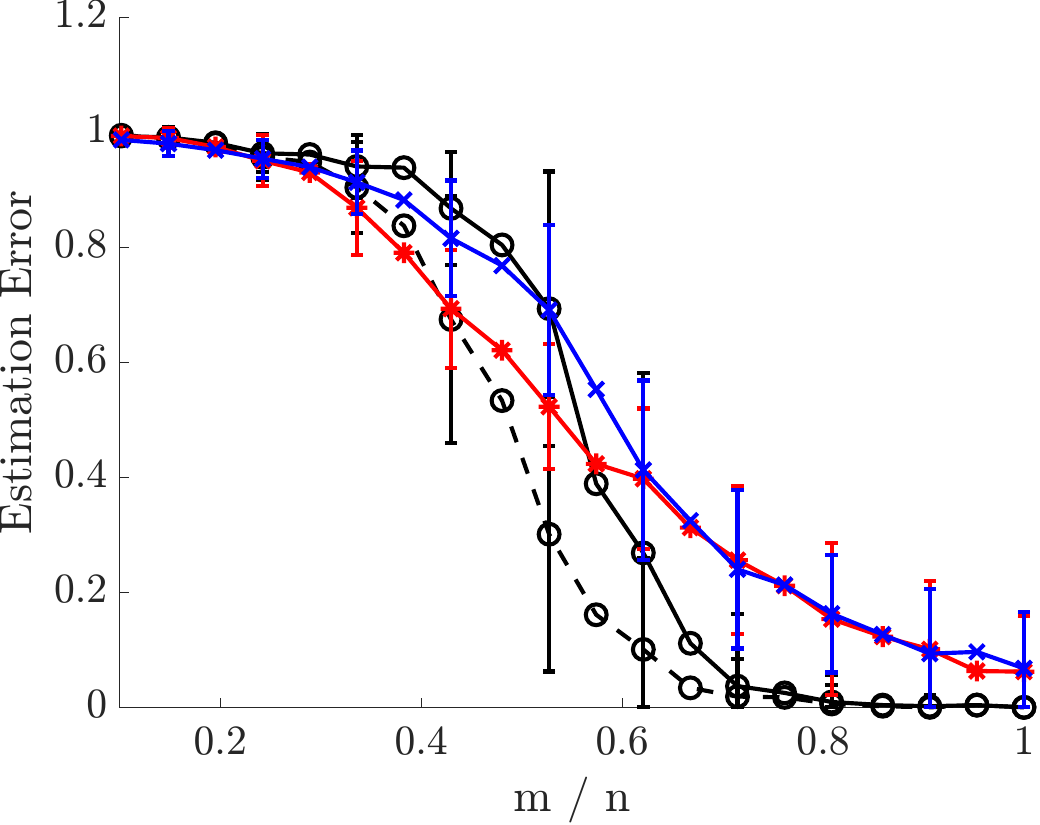}\hfill
\end{subfigure}
%\caption{Varying $m$ with $s = 26$}
\vspace{-15pt}
\caption{\label{fig:results} Performance results, averaged over 100 runs, for the integer least squares with $\snr = 20$ dB, $n=100$ (top) and for integer compressed sensing with $\snr = 8$ dB, $n=256$ and $s=26 = \lceil 0.1n \rceil$  (bottom).}
\end{figure*}
\mtodo{TODO: for $n=400$, compare with $\min(F)$ the smallest objective achieved by all compared method and by $x^\natural$, instead of just comparing with $x^\natural$. Switch y-axis in plots to log scale?}

\looseness=-1 We evaluate our proposed method, DCA-LS (\cref{alg:DCALocalSearch}), on two applications: integer least squares and integer compressive sensing, where $\X \subseteq \Z^n$.
We compare it with state-of-the-art baselines. We use Pairwise-FW \citep{lacoste2015global} to solve the submodular minimization subproblem at line 6. 
% and skip the rounding step at line 7, since $\tilde{X}^{t+1}$ is integral. 
Results averaged over 100 runs are shown in \cref{fig:results}, with error bars for standard deviations. Experimental setups for each application are described below, with additional details given in \cref{app:exps-setup}. The code is available at \url{https://github.com/SamsungSAILMontreal/cont-diffsubmin}.
\mtodo{We use the default order in Matlab's sort to break ties in $(p,q)$ in DCA-LS. It would be good to check performance with random permutations.}
% We compare our proposed DCA variant (\cref{alg:DCALocalSearch}) to state-of-the-art baselines for each application. We use pairwise FW \cite{lacoste2015global} to solve the submodular minimization subproblem at line 6, as described in \citep[Section 5.2]{Bach2019}. We skip rounding at line 7, since $\tilde{X}^{t+1}$ is integral. \cref{fig:results} displays the results averaged over $100$ runs, with standard deviations shown as error bars.
% We describe below the experimental setup of each application and discuss the corresponding results.
% For more details on the experimental setup, see \cref{app:exps-setup}. 

\subsection{Integer Least Squares} \label{sec:ILS}

\looseness=-1 We consider the problem of recovering an integer-valued vector $x^{\natural} \in \X$ from noisy linear measurements $b = A x^{\natural} + \xi$, where 
$A \in \R^{m \times n}$ is a given measurement matrix with $m \geq n$, and $\xi \in \R^m$ is a Gaussian noise vector $\xi \sim \Nc(\0, \sigma^2 \I_m)$.
%Given a matrix $A \in \R^{m \times n}$ and lattice-valued vector $x^{\natural} \in \X$, consider the following linear model $b = A x^{\natural} + \xi$. Here, $b \in \R^m$ is the vector of measurements and $\xi \in \R^m$ is a vector of additive Gaussian noise $\xi \sim \Nc(\bold{0}, \sigma^2 \I_m)$. This experiment considers the recovery of $x^{\natural}$ given the measurement matrix $A$ and the vector of measurements $b$. 
% This problem includes many applications from wireless communications and image processing \citep{damen2003maximum, blasinski2012per} to neural network quantization \citep{Frantar2022}.  
One approach for recovering $x^{\natural}$ is to solve an integer least squares (ILS) problem
%\begin{equation}
   $ \min_{x \in \X} F(x) = \|Ax - b\|_2^2.$ %\label{eq: ls-objective}
%\end{equation}
This is an integer BCQP % special case of the QPs discussed in \cref{sec:applications}, 
with $Q = A^{\top}A$ and $c = -2 A^{\top} b$. Thus, we use the DS decomposition given in \cref{sec:applications}. ILS problems arise in many applications, including wireless communications \citep{damen2003maximum}, image processing \citep{blasinski2012per}, and neural network quantization \citep{Frantar2022}.

%Using the Hessian characterization of submodularity \eqref{eq: second-order-submod}, we can write $F$ as a difference of two submodular functions. We write the objective as $F(x) = x^{\top} Q x + c^{\top}x + b^{\top}b$ where $Q = A^{\top}A$ and $c = -2 A^{\top} b$. We can split the term $x^{\top} Q x$ as $x^{\top} Q^- x - (x^{\top} (-Q^+) x)$, where the matrix $Q^- \in \R^{m \times n}$ is defined according to $Q^-_{i,j} = Q_{i,j}$ if $Q_{i,j} \leq 0$, else 0. $Q^+ \in \R^{m \times n}$ is then defined analogously, except $Q^+_{i,j} = Q_{i,j}$ if $Q_{i,j} \geq 0$. Since the term $c^{\top}x$ is modular, we can write an equivalent program $\min_{x \in \X} F(x) = G(x) - H(x)$ with submodular $G,H$ given by $G(x) = x^{\top} Q^- x + c^{\top}x + b^{\top}b$ and $H(x) = x^{\top} (-Q^+) x$. 

We sample $x^{\natural}$ uniformly from $\X = \{-1, 0, 2, 3\}^n$ with $n=100$, draw the entries of $A$ i.i.d from $\Nc(0,1)$, and vary $m$ from $n$ to $2n$. The noise variance $\sigma^2$ is set to achieve a target signal-to-noise ratio $(\snr)$ of 20 dB. We include as baselines ADMM \citep{takapoui2020simple}, Optimal Brain Quantizer (OBQ) \citep{Frantar2022}, and the relax-and-round (RAR) heuristic which solves the relaxed problem on $[-1,3]^n$, then rounds the solution to the nearest point in $\X$. DCA-LS and ADMM are initialized with the RAR solution, and OBQ with the relaxed solution. We obtain an optimal solution $x^*$ using Gurobi \citep{gurobi} by rewriting the problem as a binary QP. 
%though we only compute $x^*$ in the case where $n=100$ and $m \geq n$, as it requires a significant amount of time to compute in the case $n=400$, or in the underdetermined case $m < n$.
We report in \cref{fig:results} (top) three evaluation metrics; recovery probability (ratio of exactly recovered signals over number of runs), bit error rate $\ber(\hat{x}) = \|\hat{x} - x^{\natural}\|_0 / n$, and relative objective gap with respect to $x^*$.
We also compared with Babai’s
nearest plane algorithm \citep[2nd procedure]{Babai1986}, but excluded it here as it had worse objective gap and $\ber$ than all baselines, and similar recovery probability to ADMM.\looseness=-1

% or $x^\natural$ when computing $x^*$ is infeasible.

%$(F(\hat{x}) - F(y))/ F(y)$, where $y$ is either $x^{\natural}$ or a minimizer $x^*$ of $F$ over $\X$.  
%The recovery probability is the ratio of correctly recovered signals to number of random experiments. The bit error rate of a recovered signal $\hat{x} \in \X$ is defined as $\ber(\hat{x}) = \|\hat{x} - x^{\natural}\|_0 / n$ while the relative gap for $\hat{x}$ is given by $(F(\hat{x}) - F(y))/ F(y)$, where $y$ is either $x^{\natural}$ or a minimizer $x^*$ of $F$ over $\X$. 

% \begin{figure}[h]
% \includegraphics[width=1\linewidth]{figures/ILSQ-100-measurements/52833977rate.pdf}\par
% \includegraphics[width=.5\linewidth]{figures/ILSQ-100-measurements/52833977ber.pdf}\hfill
% \includegraphics[width=.5\linewidth]{figures/ILSQ-100-measurements/52833977f.pdf}
% \caption{Varying $m$ with $n=100$.}
% \label{fig:ILS-mvary100}
% \end{figure}

% \begin{figure}[h]
% \includegraphics[width=1\linewidth]{figures/ILSQ-400-measurements/52834055rate.pdf}\par
% \includegraphics[width=.5\linewidth]{figures/ILSQ-400-measurements/52834055ber.pdf}\hfill
% \includegraphics[width=.5\linewidth]{figures/ILSQ-400-measurements/52834055f.pdf}
% \caption{Varying $m$ with $n=400$.}
% \label{fig:ILS-mvary400}
% \end{figure}

\looseness=-1 We observe that DCA-LS outperforms baselines on all metrics, and 
% For $n=100$ (\cref{fig:results}-top), DCA performs on par with the optimal solution $x^*$ starting from $m/n = 1.2$. OBQ matches DCA's performance from $m/n = 1.5$.
%in terms of recovery probability from $m/n = 1.5$ and in terms objective gap and BER from $m/n = 1.4$.
performs on par with the optimal solution starting from $m/n = 1.2$. OBQ matches the performance of DCA-LS from $m/n = 1.5$, while ADMM and RAR are both considerably worse across all metrics.
% For $n=400$ (\cref{fig:results}-middle), estimating $x^\natural$ seems easier as all methods perform similarly in terms of objective gap and BER from $m/n=1.5$, %m/n=1.2 for OBQ!
% but not in terms of recovery probability, where only OBQ matches DCA's recovery in that range. 
%DCA begins to recover signals once $m=n$, which does not occur with the other baselines. 
%However, once $m/n=1.5$, we see that the recovery probability of DCA is essentially matched by OBQ. %In this setup, we see a more noticeable separation in the relative gaps of DCA and OBQ, particularly when $m/n < 1$. In this range, the relative gap of DCA is approximately 50\% smaller than the relative gap obtained by OBQ.
In \cref{sec:ILS-noise}, we include results using larger $n=400$, as well as another setup where we fix $m=n$ and vary the $\snr$. Gurobi cannot be used for $n=400$ as it is too slow. DCA-LS outperforms other baselines on all metrics in these settings too.
%the  scaling of $\sigma^2$ to achieve different signal-to-noise ratios. 
%These experiments again highlight a number of benefits when using DCA over the other baselines, including improved recovery probabilities, and lower bit errors rates and relative gaps.  
\mtodo{maybe we add in the appendix separate plots for $m/n < 1$, where we can say that recovery is not possible in not range, but if the goal is simply to minimize $F$, then we can see that DCA is doing the best.}

\subsection{Integer Compressed Sensing} \label{sec:ICS} 
We consider an integer compressed sensing problem, where the goal is to recover an $s$-sparse integer vector $x^\natural$ from noisy linear measurements with $m \leq n$ and $s \ll n$. This problem arises in applications like wireless communications, collaborative filtering, and error correcting codes \citep{fukshansky2019algebraic}. The signal can be recovered by solving $ \min_{x \in \X} F(x) = \|Ax - b\|_2^2 + \lambda \|x\|_0$ with $\lambda > 0$. This is a special case of the sparse learning problem in \cref{sec:applications}, with $q = 0$. We use the same DS decomposition therein.
% We consider an integer compressive sensing problem, where the goal is again to recover an integer-valued vector $x^{\natural} \in \X$ from noisy linear measurements,  but  here $x^{\natural}$ is assumed to be $s$-sparse with $s \ll n$, and the number of measurements $m \leq n$. %A classical result in classical (non-integer) compressive sensing is $O(s \log(n/s))$ \cite{Donoho2006}.
% This problem appears in practice in wireless communications, collaborative filtering, and error correcting codes \cite{fukshansky2019algebraic}.

%In signal processing, one often encounters signals which are either quantized (and thus lattice-valued) and/or sparse (i.e. few nonzero elements) \cite{boufounos2015quantization}. In the case of sparse signals, theory and algorithms from compressed sensing demonstrate that signal recovery is possible even when the number of measurements is only logarithmic in the ambient signal dimension \cite{Foucart2013, eldar2012compressed}. However, until recently, the question of how lattice-valued signals factor into sparse recovery has remained largely unexplored \cite{fukshansky2019algebraic}. 

%Here, we consider the same linear model as \cref{sec:ILS} except now with the additional assumption that $\|x^{\natural}\|_0 = s$ with $s << n$. 
% The signal can be recovered by solving $ \min_{x \in \X} F(x) = \|Ax - b\|_2^2 + \lambda \|x\|_0$ with $\lambda > 0$. This is a special case of the sparse learning problem in \cref{sec:applications}, with $q = 0$. We use the same DS decomposition therein.

%$\X = \{-1, 0, 1\}^n$
We set $\X = \{-1, 0, 1\}^n$, $n=256$, $s = 26 =\lceil 0.1n \rceil$, and draw $A$ i.i.d from $\Nc(0,1/m)$, with $m$ varied from $26$ to $n$. We choose a random support for $x^{\natural}$, then sample its non-zero entries i.i.d uniformly from $\{-1, 1\}$.  The noise variance $\sigma^2$ is set to achieve an $\snr$ of 8 dB.
%We choose $s=26$ for the signal $x^{\natural}$, which approximately represents a signal with $10 \%$ sparsity relative to the ambient signal dimension. 
%After choosing a random support for $x^{\natural}$, we draw $x^{\natural}_i$ i.i.d. from a Rademacher distribution for each $i \in \supp(x^{\natural})$. In each experiment the entries of $A \in \R^{m \times n}$ are drawn according to $A_{i,j} \sim \Nc(0, \frac{1}{m})$, $i \in \seq{m}$, $j \in \seq{n}$. The variance of the additive noise $\sigma^2$ is scaled to achieve a target signal-to-noise ratio $(\snr)$ of 8 dB.
% Taking $\X = \{-1, 0, 1\}^n$ and $\lambda > 0$, we apply \cref{alg:DCALocalSearch} to the recovery problem 
% \begin{equation}
%     \min_{x \in X} F(x) = \|Ax - b\|_2^2 + \lambda \|x\|_0. \label{eq: cs-objective}
% \end{equation}
% Such a problem is an instance of DS-minimization, as the quadratic term can be split in the same way as \cref{sec:ILS}, while the $\ell_0$-norm is modular. This gives the equivalent DS program $\min_{x \in \{-1, 0, 1\}^n} G(x) - H(x)$ with $G(x) = x^{\top} Q^- x + c^{\top}x + b^{\top}b + \lambda\|x\|_0$ and $H(x) = x^{\top} (-Q^+) x$. 
We consider as baselines orthogonal matching pursuit (OMP) \citep{pati1993orthogonal} and a variant of LASSO  \citep{tibshirani1996regression} with the additional box constraint $x \in [-1,1]^n$, %\cite{atitallah2017box}
%$\min_{x \in [-1,1]^n} \|Ax - b\|_2^2 + \lambda \|x\|_1$, 
solved using FISTA \citep{beck2009fast}. Since both methods return solutions outside of $\X$, we round the solutions to the nearest vector in $\X$.
As a non-convex method, the performance of DCA-LS is highly dependent on its initialization. We thus consider two different intialization, one with $x^0 = 0$ (DCA-LS) and one with the solution from the box LASSO variant (DCA-LS-LASSO).
%For comparison, we also consider the convex relaxation of \eqref{eq: cs-objective}
 % \begin{equation}
 %     \min_{x \in [-1,1]^n} \|Ax - b\|_2^2 + \lambda \|x\|_1 \label{eq: cs-relaxed}
 % \end{equation}
 %which amounts to a box-constrained version of the LASSO (LASSO) \cite{tibshirani1996regression, atitallah2017box}. We solve the LASSO \eqref{eq: cs-relaxed} using FISTA \cite{beck2009fast, beck2019fom}. As an additional baseline, we also consider orthogonal matching pursuit (OMP) \cite{pati1993orthogonal, mallat1992adaptive}. 
We evaluate methods on three metrics; recovery probability,  support error $|\supp(\hat{x}) \Delta \supp(x^{\natural})|$, and  estimation error $\|\hat{x} - x^{\natural}\|_2/\|x^{\natural}\|_2$, where $\Delta$ is the symmetric difference. 
\cref{fig:results} (bottom) reports the best values, %for each of these metrics, 
as $\lambda$ is varied from $1$ to $10^{-5}$ in DCA-LS and FISTA, and the sparsity of the OMP solution is varied from 1 to $\lceil 1.5s \rceil$. 
%We solve \eqref{eq: cs-objective} and \eqref{eq: cs-relaxed} over an array of different $\lambda$ values $\lambda_1, \lambda_2, \ldots, \lambda_5$ using \cref{alg:DCALocalSearch} and FISTA, respectively. 
%We try two different intialization strategies for \cref{alg:DCALocalSearch}; initialization using $x^0 = 0$ and initialization using a solution from \eqref{eq: cs-relaxed}. We denote these approaches by DCA and DCA-LASSO, respectively. 
%We allow OMP to use a maximum of $\lceil 1.5s \rceil$ iterations. 

%To evaluate the recovered signal $\hat{x}$ from each method, we measure the signal recovery probability, estimation error ($\|\hat{x} - x^{\natural}\|_2/\|x^{\natural}\|_2$), and support error ($|\supp(\hat{x}) \triangle \supp(x^{\natural})|$, where $\triangle$ is the symmetric difference). For the estimation error and support error, we record only the lowest value across the $\lambda_i$'s, and for OMP the lowest values across the final $\lceil 0.5s \rceil + 1$ iterates. To ensure these metrics are compared fairly between methods, we round the solutions from LASSO and OMP to the nearest vector in $\X$.

% \begin{figure}[h]
% \includegraphics[width=1\linewidth]{figures/CS-10percent-measurements/success.pdf}\par 
% \includegraphics[width=.5\linewidth]{figures/CS-10percent-measurements/l2error.pdf}\hfill
% \includegraphics[width=.5\linewidth]{figures/CS-10percent-measurements/suppdiff.pdf}
% \caption{Varying $m$ with $s = 26$}
% \label{fig}
% \end{figure}

DCA-LS and DCA-LS-LASSO significantly outperform baselines in recovery probability. %, with DCA achieving 90\% and DCA-LASSO 92\% when $m/n \approx 0.7$, compared to around 15\% for LASSO and OMP. 
While DCA-LS outperforms OMP in all metrics, it performs worse than LASSO in estimation and support errors for $m/n < 0.6$, but overtakes it after. DCA-LS-LASSO outperforms baselines on all metrics, except for $m/n \approx (0.25,0.4)$, where it slightly lags behind LASSO in  estimation and support errors.  These correspond to cases where $x^\natural \not = x^*$. %stops being a solution to the $\ell_0$-recovery problem. 
Indeed, recall that DCA-LS is a descent method (up to $\epsilon$) so DCA-LS-LASSO is guaranteed to obtain an objective value at least as good as LASSO.
% We observe that DCA and DCA-LASSO both significantly outperform baselines in terms of recovery probability. For example, when $m/n \approx 0.7$, DCA and DCA-LASSO recover with probability $90\%$ and $92\%$, respectively, while LASSO and OMP are closer to $15\%$. 
%This suggests that DCA (both initializations) are able to effectively use the signal domain to boost the recovery probabilities. 
% DCA outperforms OMP in all metrics, but is worse than LASSO in terms of  estimation and support errors for $m/n < 0.6$. 
%For $m/n \geq 0.6$, DCA's performance quickly overtakes LASSO. This is not surprising, as the recovery probability of DCA improves rapidly after $m/n = 0.6$, while that of LASSO improves much slower. 
% Initializing DCA with the LASSO solution improves its performance across all metrics. 
% DCA-LASSO outperforms baselines on all metrics, except for some small $m$ values between $0.25$ and $0.4$ where it's slightly worse than LASSO. These correspond to cases where the true signal stops being a solution to the $\ell_0$-recovery problem. Indeed, recall DCA is a descent method (up to $\epsilon$) so DCA-LASSO is guaranteed to obtain an objective value at least as good as LASSO.
%Note that in terms of objective value $F$ DCA-LASSO is guaranteed to be at least as good as LASSO, as DCA is a descent method (up to $\epsilon$). 
%we see not only an improved performance in terms of recovery probability, but also in terms of estimation error and support error.
In \cref{sec:ICS-noisy},
we include results for $s=13 = \lceil 0.05n \rceil$, and for another setup where we fix $m/n=0.5$ and vary the $\snr$. We observe similar trends in these settings too.
%DCA outperforms baselines on all metrics in these settings too?

% \begin{figure}[h]
% \includegraphics[width=1\linewidth]{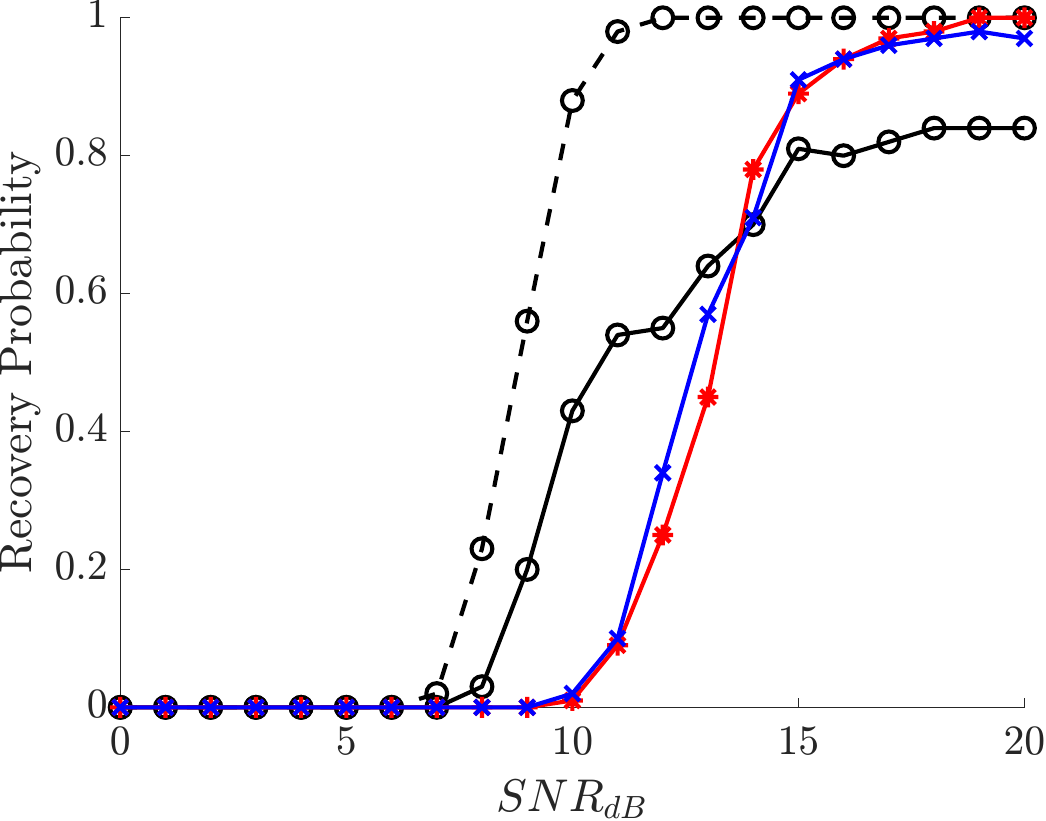}\par 
% \includegraphics[width=.5\linewidth]{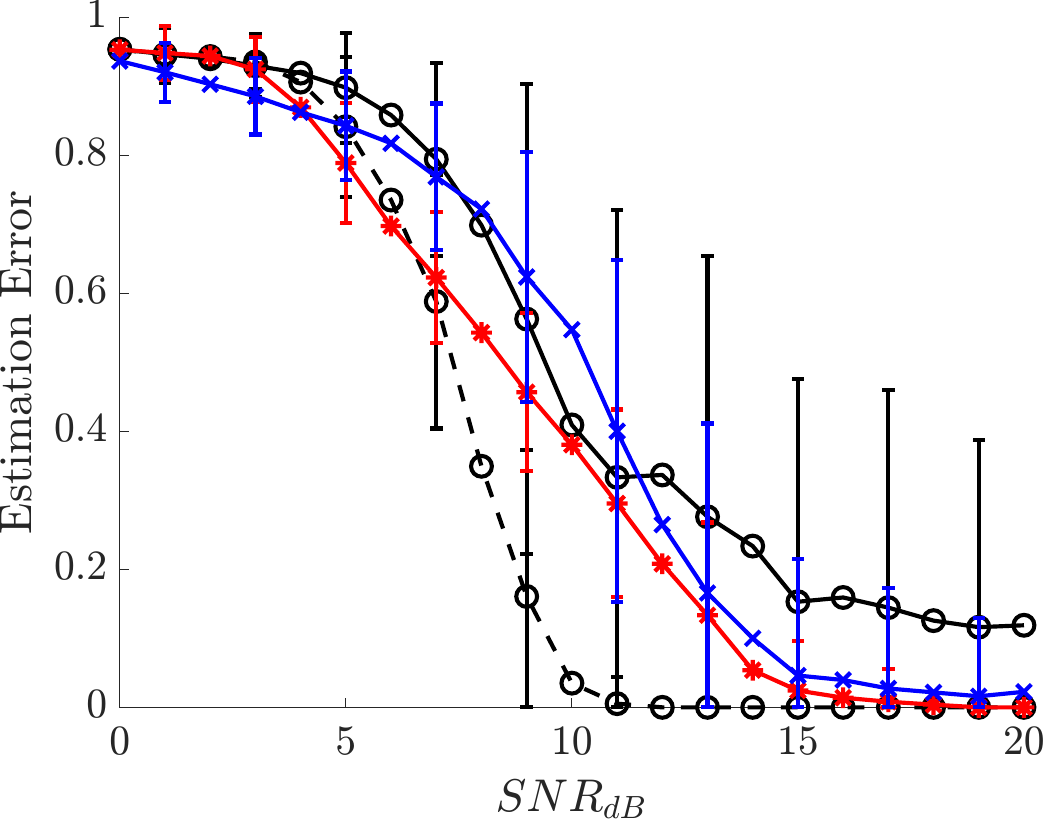}\hfill
% \includegraphics[width=.5\linewidth]{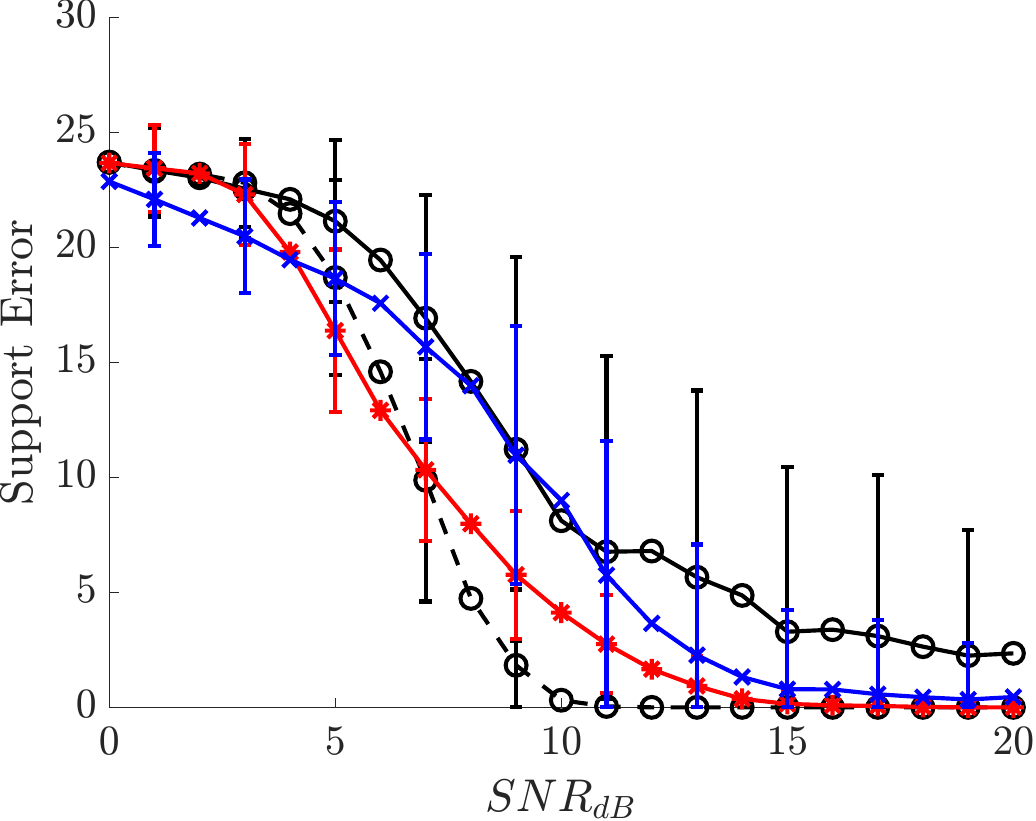}
% \caption{$s = 26$}
% \label{fig}
% \end{figure}

%\input{conclusion}

% Acknowledgements should only appear in the accepted version.
\section*{Acknowledgements}
We thank Xiao-Wen Chang for helpful discussions. George Orfanides was partially supported by NSERC CREATE INTER-MATH-AI and a Fonds de recherche du Québec Doctoral Research Scholarship B2X-326710. 

\section*{Impact Statement}

% Authors are \textbf{required} to include a statement of the potential 
% broader impact of their work, including its ethical aspects and future 
% societal consequences. This statement should be in an unnumbered 
% section at the end of the paper (co-located with Acknowledgements -- 
% the two may appear in either order, but both must be before References), 
% and does not count toward the paper page limit. In many cases, where 
% the ethical impacts and expected societal implications are those that 
% are well established when advancing the field of Machine Learning, 
% substantial discussion is not required, and a simple statement such 
% as the following will suffice:

This paper presents work whose goal is to advance the field of 
Machine Learning. There are many potential societal consequences 
of our work, none which we feel must be specifically highlighted here.

% The above statement can be used verbatim in such cases, but we 
% encourage authors to think about whether there is content which does 
% warrant further discussion, as this statement will be apparent if the 
% paper is later flagged for ethics review.

\bibliography{references}
\bibliographystyle{icml2025}

%%%%%%%%%%%%%%%%%%%%%%%%%%%%%%%%%%%%%%%%%%%%%%%%%%%%%%%%%%%%%%%%%%%%%%%%%%%%%%%
%%%%%%%%%%%%%%%%%%%%%%%%%%%%%%%%%%%%%%%%%%%%%%%%%%%%%%%%%%%%%%%%%%%%%%%%%%%%%%%
% APPENDIX
%%%%%%%%%%%%%%%%%%%%%%%%%%%%%%%%%%%%%%%%%%%%%%%%%%%%%%%%%%%%%%%%%%%%%%%%%%%%%%%
%%%%%%%%%%%%%%%%%%%%%%%%%%%%%%%%%%%%%%%%%%%%%%%%%%%%%%%%%%%%%%%%%%%%%%%%%%%%%%%
\newpage
\appendix
\onecolumn
\section{Effect of looser $\alpha$ and $\beta$ bounds on performance}\label{app:looseBds}

In this section, we discuss how the bounds $\alpha$ and $\beta$ in the generic decomposition given in \cref{prop:DiscreteDSDecomp} affect the runtime of \cref{alg:DCALocalSearch}.  

As discussed in \cref{sec:complexity}, the runtime of \cref{alg:DCALocalSearch}, particularly for solving the submodular minimization subproblem at each iteration $t$, depends on the Lipschitz constant $L_{f^t_\downarrow}$ of $f^t_\downarrow$. 
Recall that $f^t_\downarrow$ is the continuous extension of $F^t = G - H^t$, where $H^t$ is the modular function whose continuous extension is $\langle Y^t, X \rangle_F$. The decomposition in \cref{prop:DiscreteDSDecomp} defines $G = F + \frac{\alpha}{\beta} \tilde{H}$ and $H = \frac{\alpha}{\beta} \tilde{H}$, for some normalized strictly submodular function $\tilde{H}$. 
Let $\tilde{Y}^t = \tfrac{\beta}{\alpha} Y^t$, we can thus write $$f^t_\downarrow(X) = f_\downarrow(X) +  \tfrac{\alpha}{\beta} (\tilde{h}_\downarrow(X) - \langle \tilde{Y}^t, X \rangle_F).$$

Looser bounds $\alpha$ and $\beta$ result in a larger Lipschitz constant $L_{f^t_\downarrow}$, and thus a longer runtime. 
To show that, we provide upper and lower bounds on $L_{f^t_\downarrow}$ which grow with $\frac{|\alpha|}{\beta}$.
For any $X, X' \in \R_{\downarrow}^{n \times (k-1)}$, we have
%$\tilde{Y}^t \in \partial \tilde{h}_\downarrow(X^t)$ <- \tilde{h}_\downarrow is concave
\begin{align*}
    |f^t_\downarrow(X) - f^t_\downarrow(X')| &\leq  |f_\downarrow(X) - f_\downarrow(X')| + \tfrac{|\alpha|}{\beta} |\tilde{h}_\downarrow(X) - \tilde{h}_\downarrow(X')| + \tfrac{|\alpha|}{\beta}|\ip{\tilde{Y}^t}{X - X'}_F| \\
    &\leq (L_{f_\downarrow} + 2\tfrac{|\alpha|}{\beta} L_{\tilde{h}^t_\downarrow}) \|X - X'\|_{F} 
\end{align*}
Hence, $L_{f^t_\downarrow} \leq L_{f_\downarrow} + 2\tfrac{|\alpha|}{\beta} L_{\tilde{h}^t_\downarrow}$. Note that $L_{f_\downarrow}$ and $L_{\tilde{h}^t_\downarrow}$ are independent of $\alpha$ and $\beta$. This upper bound indeed grows with $\tfrac{|\alpha|}{\beta}$.

% Since $Y^t \in \partial h_\downarrow(X^t)$, we have for all $X \in  \R_{\downarrow}^{n \times (k-1)}$, 
% \begin{align*}
%     h_\downarrow(X) &\geq h_\downarrow(X^t) + \ip{Y^t}{X - X^t}.
% \end{align*}
Since $Y^t \in \partial h_\downarrow(X^t)$, we know from Proposition 7-b \citet{Bach2019} that $h_\downarrow(X) \geq \ip{Y^t}{X}$ for any $X \in \R_{\downarrow}^{n \times (k-1)}$
with equality if and only if $Y^t \in \partial h_\downarrow(X)$. 
%Also by Proposition 7-b in \citet{Bach2019}, 
We can thus write $h_\downarrow(X^t) = \ip{Y^t}{X^t}_F$, or equivalently $\tilde{h}_\downarrow(X^t) = \ip{\tilde{Y}^t}{X^t}_F$. %Hence, $h_\downarrow(X) \geq \ip{Y^t}{X}$. 
Taking $X' = X^t$, we get 
\begin{align*}
     |f^t_\downarrow(X) - f^t_\downarrow(X^t)|  &= |f_\downarrow(X) - f_\downarrow(X^t) + \tfrac{\alpha}{\beta} (\tilde{h}_\downarrow(X) - \langle \tilde{Y}^t, X \rangle_F)|\\
     &\geq  \tfrac{|\alpha|}{\beta} |\tilde{h}_\downarrow(X) - \langle \tilde{Y}^t, X \rangle_F| - |f_\downarrow(X) - f_\downarrow(X^t) |.
\end{align*}
Hence, for any $X \in \R_{\downarrow}^{n \times (k-1)}$, we have
\begin{align*}
     L_{f^t_\downarrow} &\geq \frac{|f^t_\downarrow(X) - f^t_\downarrow(X^t)|}{\|X - X^t\|_{F}}  \\
     &\geq \tfrac{|\alpha|}{\beta} \frac{|\tilde{h}_\downarrow(X) - \langle \tilde{Y}^t, X \rangle_F|}{\|X - X^t\|_{F}} - \frac{|f_\downarrow(X) - f_\downarrow(X^t) |}{\|X - X^t\|_{F}}.
\end{align*}

\iffalse
\begin{align*}
     |f^t_\downarrow(X) - f^t_\downarrow(X^t)|  &= |f_\downarrow(X) - f_\downarrow(X^t) + \tfrac{\alpha}{\beta} (\tilde{h}_\downarrow(X) - \langle \tilde{Y}^t, X \rangle_F)|\\
     &\leq |f_\downarrow(X) - f_\downarrow(X^t) |  + \tfrac{|\alpha|}{\beta} |\tilde{h}_\downarrow(X) - \langle \tilde{Y}^t, X \rangle_F|.
\end{align*}
Choosing $X \not = X^t$ such that $f_\downarrow(X) \geq f_\downarrow(X^t)$ and $Y^t \not \in \partial h_\downarrow(X)$, we get
\begin{align*}
    |f^t_\downarrow(X) - f^t_\downarrow(X^t)|  &\geq |f_\downarrow(X) - f_\downarrow(X^t) |  + \tfrac{|\alpha|}{\beta} |\tilde{h}_\downarrow(X) - \langle \tilde{Y}^t, X \rangle_F|.
\end{align*}
Hence, the inequality holds with equality, and 
\begin{align*}
  L_{f^t_\downarrow} \geq \frac{|f^t_\downarrow(X) - f^t_\downarrow(X^t)|}{\|X - X^t\|_{F}}  &= \frac{|f_\downarrow(X) - f_\downarrow(X^t) |}{\|X - X^t\|_{F}}  + \tfrac{|\alpha|}{\beta} \frac{|\tilde{h}_\downarrow(X) - \langle \tilde{Y}^t, X \rangle_F|}{\|X - X^t\|_{F}}\\
  &\geq \tfrac{|\alpha|}{\beta} \frac{|\tilde{h}_\downarrow(X) - \langle \tilde{Y}^t, X \rangle_F|}{\|X - X^t\|_{F}},
\end{align*}
\mtodo{To make this argument robust we need to show that such $X$ always exist. This should be the case as long as $h$ is not linear (which it's not), and $X^t$ is not the unique maximum of $f_\downarrow$. But this is good enough..}
\fi
Choosing any $X \not = X^t$ such that $Y^t \not \in \partial h_\downarrow(X)$, we get $\frac{|\tilde{h}_\downarrow(X) - \langle \tilde{Y}^t, X \rangle_F|}{\|X - X^t\|_{F}} > 0$. Such $X$ must exists, since otherwise $h_\downarrow(X) = \langle Y^t, X \rangle_F$ for all $X \in \R_{\downarrow}^{n \times (k-1)}$, and $\tilde{H}(X) = \sum_{i=1}^n \sum_{j=1}^{x_i} \tilde{Y}^t_{ij}$ by \cref{prop: modular-extension}. Since $\tilde{H}$ is strictly submodular, it can't be modular unless $n=1$, otherwise Ineq \eqref{eq:submod-antitone} will not hold strictly. 
Then, this lower bound indeed also grows with $\tfrac{|\alpha|}{\beta}$. We can thus conclude that $L_{f^t_\downarrow}$ itself grows with $\tfrac{|\alpha|}{\beta}$.

\section{Connection to Related Non-convex Classes} \label{sec:ClassesRelations}

In this section, we discuss the relation of DS functions to other related classes of non-convex functions, namely DC and discrete DC functions.

\mtodo{Decided not to include in this paper the result that not any DS is DR-DS if not finite domain, since we did not include the result that this is the case when the domain is finite}

\subsection{Relation to DC Functions}\label{sec:relationContDC}
We remark that the class of DS functions is not a subclass of DC functions. 
%and is equivalent to discrete DC functions considered in \cite{Maehara2015} for discrete domains.
Since DC functions have continuous domains, it is evident that DS functions defined on discrete domains are not DC. For continuous domains, any univariate function which is not DC can serve as a counter-example, since as mentioned earlier when $n=1$ any function is modular. The function $F: [0,1] \to \R$ given by $F(x) = 1 - \sqrt{|x - 1/2|}$ is one such example, given in \citet[Example 7]{de2020abc}.
\mtodo{edit the rest to discussing applications: We now discuss another example, which arise in sparse learning problems.. + maybe add a note to say that even in QP it is easier to get DS decomposition..}
We provide below another counter-example for any $n$.

%A function $F$ is locally Lipschitz continuous at $x \in \X$ if there exists a neighborhood $U$ of $x$ such that $F$ is Lipschitz continuous on $U \cap \X$.
\begin{restatable}{proposition}{lqnormNotDC}\label{prop:lqnormNotDC} 
    The function $F: \X \to \R$ defined as $F(x) = \|x\|_q^q$ with $q \in [0,1)$ ($F(x) = \|x\|_0$ if $q=0$) and where each $\X_i$ is a closed interval, is a modular function but not a DC function, whenever $0 \in \inter \X$.
\end{restatable}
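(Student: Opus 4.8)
The plan is to prove the two assertions separately: modularity is immediate from separability, while the failure to be DC rests on a regularity obstruction whose flavor differs slightly between $q \in (0,1)$ and $q = 0$. First I would note that $F(x) = \sum_{i=1}^n |x_i|^q$ (with the convention $|x_i|^0 = \mathds{1}[x_i \neq 0]$) is separable, hence modular by the characterization ``modular iff separable'' of \citet[Theorem 3.3]{Topkis1978} quoted after Eq. \eqref{eq:submod}. This already makes $F$ (strictly) submodular, so the substantive content is the negative claim, which I would establish by contradiction.

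The contradiction exploits the standard fact that a finite convex function on an open set is locally Lipschitz, and therefore a difference of two such functions is locally Lipschitz on the interior of its domain. Since $0 \in \inter \X$, there is an open neighborhood $U \subseteq \X$ of $0$. If $F = g - h$ were a DC decomposition, then $g$ and $h$ must both be finite on $U$ (otherwise $F \equiv +\infty$ there under the convention $+\infty - (+\infty) = +\infty$), so both are locally Lipschitz on $U$, making $F$ locally Lipschitz in a neighborhood of $0$. For $q=0$ I would instead use the weaker fact that finite convex functions are continuous on open sets, so a DC function is continuous on $\inter \X$. It then suffices to exhibit a failure of local Lipschitzness (resp. continuity) at $0$.

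To do so I would restrict $F$ to the $i$-th coordinate axis through the origin: for $t$ small enough that $t e_i \in U$, we have $F(t e_i) = |t|^q$. A locally Lipschitz function remains locally Lipschitz when restricted to a line. For $q \in (0,1)$, however, the difference quotient $\tfrac{|t|^q - 0}{|t|} = |t|^{q-1} \to +\infty$ as $t \to 0$, so $t \mapsto |t|^q$ is not locally Lipschitz at $0$, contradicting local Lipschitzness of $F$ near $0$. For $q = 0$, the restriction $t \mapsto \mathds{1}[t \neq 0]$ is discontinuous at $t=0$, so $F$ is discontinuous at the interior point $0$, contradicting continuity of a DC function on $\inter \X$. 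In both cases $F$ cannot be DC.

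The main obstacle is the extended-valued bookkeeping: one must argue carefully that any DC representation of $F$ forces $g$ and $h$ to be finite (and hence locally Lipschitz/continuous) on a genuine neighborhood of $0$, which is exactly where $0 \in \inter \X$ is used. Once that reduction to the univariate axis restriction is justified, the asymptotics of $|t|^{q-1}$ and the jump of the indicator make the regularity failure routine.
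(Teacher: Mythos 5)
Your proposal is correct and follows essentially the same route as the paper: modularity from separability, then the observation that a DC function must be locally Lipschitz on the interior of its domain (since finite convex functions are), contradicted by the blow-up of the difference quotient of $|t|^q$ along a coordinate axis through $0$. The only cosmetic differences are that the paper treats $q=0$ uniformly with $q\in(0,1)$ (the ratio $|F(e_1/k)|/\|e_1/k\|_2 = k^{1-q}$ already diverges for $q=0$) rather than via a separate continuity argument, and your parenthetical that $F$ is \emph{strictly} submodular is inaccurate for $n\geq 2$ (a modular function satisfies \eqref{eq:submod} with equality), though this aside plays no role in the argument.
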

% \begin{proof}[Proof Sketch]
%     Note that $F$ is separable, and thus modular. 
%     DC functions are locally Lipschitz on the interior of their domain, which is not the case for $F$ at $0 \in \inter \X$. So $F$ cannot be DC. 
% \end{proof}
\begin{proof}
We first note that $F$ is a separable function, hence it is modular. 
Next, we recall that convex functions are locally Lipschitz on the interior of their domain \citep[Corollary 2.27]{mordukhovich2023easy}, i.e., for all $x \in \inter  \X$ there exists a neighborhood $U$ of $x$ such that $F$ is Lipschitz continuous on $U \cap \inter \X$.
Since the difference of locally Lipschitz functions is also locally Lipschitz, DC functions are then also locally Lipschitz on the interior of their domain. 

We show that $F$ is not locally Lipschitz at $0 \in \inter \X$, and thus it cannot be DC. 
We show that there exists $\{x^k\} \subset \X$ such that $\{x^k\} \to 0$ and $\tfrac{|F(x^k)|}{\|x^k\|_2}$ is unbounded. In particular, we take $x^k = \tfrac{1}{k} e_1$. For any $q \in [0,1)$, we have
\[
      \frac{|F(x^k)|}{\|x^k\|_2} =\frac{(1/k)^{q}}{1/k} = k^{1 - q}.
\]
%\frac{|F(x^k) - F(0)|}{\|x^k - 0\|_2}
Taking the limit as $k \to \infty$, we see $\tfrac{|F(x^k) - F(0)|}{\|x^k - 0\|_2} = \tfrac{|F(x)|}{\|x\|_2} \to \infty$. 
\end{proof}

The function in \cref{prop:lqnormNotDC} is used as a sparsity regularizer in sparse learning problems, which we showed in \cref{sec:applications} are instances of Problem \eqref{eq:dsmin}.

\subsection{Relation to Discrete DC Functions}\label{sec:relationDiscreteDC}
%Mention relation to discrete convex functions refer to appendix - write results there
Next, we discuss the relation between DS functions on discrete domains and discrete DC functions considered in \citet{Maehara2015}. Therein, a function $F: \Z^n \to \Zbar$ is called \emph{discrete DC} if it can be written as $F = G - H$, where $G, H: \Z^n \to \Zbar$ are $L^\natural$-convex or $M^\natural$-convex functions. % (see definitions in \cref{ }).
%We start by recalling the definitions of $L^\natural$-convex and $M^{\natural}$-convex functions
We recall the definitions of both types of functions, see e.g., \citet[Chap. 1, Eq. (1.33) and (1.45)]{Murota1998}.
\mtodo{I'm giving the definitions for real valued functions even though \cite{Maehara2015} only consider integer valued functions, because I want to argue in \cref{sec:applications} that some of the applications of DSMin cannot be naturally be written as discrete DC functions, even real valued ones.}
\begin{definition}[$L^\natural$-convex]\label{def:Lconvex}
    A function $F: \Z^n \to \Rbar$ is called \emph{$L^\natural$-convex} if it satisfies the discrete midpoint convexity; that is, for all $x,y \in \Z^n$, we have
    \begin{equation}
        F(x) + F(y) \geq F \Bigg( \bigg \lfloor \frac{x+y}{2} \bigg \rfloor \Bigg) + F\Bigg(\bigg \lceil \frac{x+y}{2} \bigg \rceil \Bigg). \label{eq: lnatural}
    \end{equation}
\end{definition}

\begin{definition}[$M^{\natural}$-convex] \label{def:Mconvex}
A function $F: \Z^n \to \Rbar$ is called \emph{$M^\natural$-convex} if, for any $x,y \in \Z^n$ and $i \in \seq{n}$ such that $x_i > y_i$, we have %$i \in \supp^+(x-y)$ 
\begin{equation}
    F(x) + F(y) \geq \min\{ F(x - e_i) + F(y + e_i), \min_{j \in \seq{n}, y_i > x_i} F(x - e_i + e_j) + F(y + e_i - e_j)\} %j \in \supp^-(x - y)
\end{equation}
\end{definition}

$L^\natural$-convex functions are a proper subclass of submodular functions on $\Z^n$ \citep[Section 2.1]{Maehara2015}, while $M^\natural$-convex functions are a proper subclass of supermodular functions on $\Z^n$ \citep[Theorem 3.8, Example 3.10]{murota2001relationship}. It follows then that any discrete DC function is a DS function on $\Z^n$. % L - L is sub - sub, M - M is sup - sup = sub - sub, L - M is sub - sup = sub, M - L = sup - sub = - sub.
%$F$ can be of four types: $L^\natural$ - $L^\natural$ then $G'=G, H'=H$, $M^\natural$ - $M^\natural$ then $G' = -H, H' = -G$, $L^\natural$ - $M^\natural$ then $G' = G - H, H' = 0$, and $M^\natural$ - $L^\natural$ then $G' = 0, H' = H - G$.
This is of course not surprising in light of \cref{prop:DiscreteDSDecomp} (which still holds for functions defined on $\Z^n$). 
\Cref{corr:DiscreteDCDSEquiv} is the analogue of \cref{prop:DiscreteDSDecomp} for discrete DC functions, which was stated in \citet[Corollary 3.9-1]{Maehara2015} for functions defined on finite subsets of $\Z^n$.
%the result in \citep[Corollary 3.9-1]{Maehara2015} showing that any function $F: D \to \Z$, where $D \subset \Z^n$ is a finite set, is the restriction of an $L^\natural$ - $L^\natural$ discrete DC function to $D$, which can be extended to $D \subseteq \R^n$ using the reduction in \cref{ } 
\mtodo{Add a footnote that submodularity on $\Z^n$, is the same as in \eqref{eq:submod} but with $x, y \in \Z^n$?}

\begin{restatable}{corollary}{DiscreteDCDSEquiv}\label{corr:DiscreteDCDSEquiv}
Given $F: \X \to \Z$, where each $\X_i \subseteq \R$ is a finite set, there exists 
%$F': \Z^n \to \R$ with decomposition $F' = G' - H'$, where $G', H': \Z^n \to \Z$ are $L^\natural$-convex functions, such that $F(x) = F'(m(x))$ for all $x \in \X$, where $m: \X \to \prod_{i=1}^n \seq{0}[k_i-1]$ is a bijection with $k_i = |X_i|$. 
an $L^\natural$ - $L^\natural$ discrete DC function $F': \Z^n \to \Z$ such that $F(x) = F'(m^{-1}(x))$ for all $x \in \X$, where $m: \prod_{i=1}^n \seq{0}[k_i-1] \to  \X$ is a bijection with $k_i = |\X_i|$. 
%Restricting the decomposition to $\X$ also yields a DS decomposition.
\end{restatable}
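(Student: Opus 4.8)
The plan is to reduce the statement to the pure integer-lattice version already proved by \citet[Corollary 3.9-1]{Maehara2015}, using the coordinate reparametrization from \cref{corr:DSreductionDiscrete} to absorb the fact that the $\X_i$ are arbitrary finite real sets rather than blocks of consecutive integers. Concretely, I would first recall the bijection $m : \prod_{i=1}^n \seq{0}[k_i-1] \to \X$ from that corollary, where $m_i(j) = x^i_j$ is the $(j{+}1)$-th smallest element of $\X_i$ and $k_i = |\X_i|$. Writing $B := \prod_{i=1}^n \seq{0}[k_i-1]$, this is a bijection from the finite integer box $B \subset \Z^n$ onto $\X$, so $\tilde F := F \circ m : B \to \Z$ is a well-defined integer-valued function on a finite subset of $\Z^n$.

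Since $B$ is a finite subset of $\Z^n$, $\tilde F$ lies exactly in the scope of \citet[Corollary 3.9-1]{Maehara2015}, which furnishes an $L^\natural$-$L^\natural$ discrete DC function $F' : \Z^n \to \Z$ whose restriction to $B$ coincides with $\tilde F$. It then remains only to transport values back through $m$: for any $x \in \X$ we have $m^{-1}(x) \in B$, hence $F'(m^{-1}(x)) = \tilde F(m^{-1}(x)) = F\bigl(m(m^{-1}(x))\bigr) = F(x)$, which is precisely the asserted identity. This completes the argument.

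The point requiring care --- and the only way this differs from a verbatim citation --- is that we never claim $F$ itself is discrete DC on $\X$: since $L^\natural$-convexity is a lattice notion tied to $\Z^n$ and need not survive the nonlinear reparametrization $m$, the discrete DC structure must live entirely on the clean integer box $B \subset \Z^n$, with $m$ serving only to relabel points. If instead one wanted a self-contained proof, the substantive content is internal to Maehara's construction and parallels \cref{prop:DiscreteDSDecomp}: fix a strictly $L^\natural$-convex reference function on $\Z^n$ that is finite everywhere (for instance the positive-definite quadratic $R(x) = \tfrac12\sum_i x_i^2 + \tfrac12\sum_{i<j}(x_i - x_j)^2$, whose matrix has nonpositive off-diagonal entries and dominant diagonal), scale it by a factor $\lambda$ large enough to dominate the discrete-midpoint-convexity violations of a fixed finite extension of $\tilde F$ to $\Z^n$, and set $G = (\text{extension of } \tilde F) + \lambda R$ and $H = \lambda R$, both $L^\natural$-convex and finite. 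The genuinely delicate step there is extending $\tilde F$ off $B$ while keeping $G$ simultaneously finite-valued and $L^\natural$-convex on all of $\Z^n$; this is exactly what Maehara's argument handles, so I would cite it rather than reproduce it.
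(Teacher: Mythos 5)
Your proposal is correct and follows essentially the same route as the paper: pull $F$ back through the bijection $m$ from \cref{corr:DSreductionDiscrete} to get an integer-valued function on the finite integer box $\prod_{i=1}^n \seq{0}[k_i-1] \subset \Z^n$, invoke \citet[Corollary 3.9-1]{Maehara2015} to obtain the $L^\natural$-$L^\natural$ discrete DC function $F'$ agreeing with it there, and transport values back via $m^{-1}$. Your closing remark that $F$ itself need not be discrete DC because $L^\natural$-convexity does not survive the reparametrization is also the point the paper makes in the discussion immediately following the corollary.
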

\mtodo{using $m^{-1}$ to be consistent with how $m$ is defined in \cref{corr:DSreductionDiscrete}.}
\begin{proof}
Given $F: \X \to \R$, where each $\X_i \subseteq \R$ is a finite set, we  
define $\bar{F}: \prod_{i=1}^n \seq{0}[k_i-1] \to \R$ as $\bar{F}(x) = F(m(x))$ where $m: \prod_{i=1}^n \seq{0}[k_i-1] \to  \X$ is the bijection defined in \cref{corr:DSreductionDiscrete}. 
% such that $F(x) = \bar{F}(m(x))$ for all $x \in \X$. 
Since $\bar{F}$ is defined on %$D = \prod_{i=1}^n \seq{0}[k_i-1]$ which is 
a finite subset of $\Z^n$, by Corollary 3.9-1 in \citet{Maehara2015}, there exists $F': \Z^n \to \Z$ such that $F' = G' - H'$, with $L^\natural$-convex functions $G', H': \Z^n \to \Z$, and $F'(z) = \bar{F}(z)$ for all $z \in \prod_{i=1}^n \seq{0}[k_i-1]$ and $F'(z) = 0$ otherwise. 
We thus have for all  $x \in \X$, $F(x) = \bar{F}(m^{-1}(x)) = F'(m^{-1}(x))$.
\end{proof}

Since $L^\natural$-convex functions are submodular on $\Z^n$ and submodularity is preserved by restriction, then the restrictions $G'_{|\X}, H'_{|\X}: \X \to \Z$ of $G', H'$ to $\X$ are submodular, and the restriction $F'_{|\X}: \X \to \Z$ of $F'$ to $\X$ is a DS function with $F = F'_{|\X} = G'_{|\X} - H'_{|\X}$. 
Note however that $F$ itself is not necessarily an $L^\natural$ - $L^\natural$ discrete DC function, since $L^\natural$-convexity is not preserved by restriction.
Nevertheless, minimizing $F$ on $\X$ is equivalent to minimizing $F'$ on $\Z^n$ if $\min_{x \in \X} F(x) \leq 0$, which can be assumed wlog. % since otherwise we can use F'(x) = F(x) - F(x_0) for some x_0 in \X.
The class of discrete DC functions, in particular $L^\natural$ - $L^\natural$ functions, is thus essentially equivalent to the class of \emph{integer valued} DS functions on discrete domains.  

\paragraph{Applications which are DS but not discrete DC:} In \cref{sec:applications}, we outlined two classes of applications which have natural DS objectives. 
We now show  that  both types of applications do not have a natural discrete DC decomposition, for general discrete domains, even if we ignore the assumption in \citet{Maehara2015} that $F$ is integer valued.

For QPs of the form  $\min_{x \in \mathcal{X}} F(x) = \tfrac{1}{2} x^\top Q x + c^\top x,$
$F$ admits a natural discrete DC decomposition, when $\X_i$'s are \emph{uniform} finite sets. %, as we show in \cref{prop:}. 
However, this is not the case if $\X_i$ is a non-uniform finite set for some $i$, as in the integer least squares experiments we consider in \cref{sec:ILS}. Such cases also arise for example in non-uniform quantization of neural networks \citep{Kim2025}.
To see this, note that any $L^\natural$-convex function $F: \Z \to \Z \cup \{+ \infty\}$  should satisfy for all $x, y \in \dom F$, $\lceil \frac{x+y}{2} \rceil, \lfloor \frac{x+y}{2} \rfloor \in \dom F$. 
If we're minimizing a quadratic over a uniform grid, we can map from the uniform grid to $\prod_i [0, k_i-1]$ and the resulting function will still be a quadratic. We can then easily decompose the objective into  $L^\natural$ - $L^\natural$ DC function, as we show in \cref{ex:quadraticL-LDecomposition}. 
However, if the domain is a non-uniform grid, the resulting function is no longer a quadratic.

\begin{proposition}\label{ex:quadraticL-LDecomposition}
Any quadratic objective $F: \Z^n \to \Z$ defined as $F(x) = x^\top Q x$ can be written as the difference of two $L^\natural$ functions $G(x) = x^\top (Q^- + D) x$ and $H(x)= - x^\top (- Q^+ + D) x$, where $Q^- = \min\{Q, 0\}$, $Q^+ = \max\{Q, 0\}$, and $D$ is a diagonal matrix with $D_{ii} = \max\{ - \sum_{j} Q^-_{ij}, - \sum_{j} Q^-_{ji},  \sum_{j} Q^+_{ij}, \sum_{j} Q^+_{ji}\}$.
\end{proposition}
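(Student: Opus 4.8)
The plan is to verify directly that the two claimed functions $G$ and $H$ are each $L^\natural$-convex, and that their difference reconstructs $F$. The reconstruction is immediate: since $Q = Q^+ + Q^-$ (splitting $Q$ into its nonnegative and nonpositive parts entrywise), we have
\begin{equation*}
G(x) - H(x) = x^\top(Q^- + D)x + x^\top(-Q^+ + D)x = x^\top(Q^- - Q^+)x + 2x^\top D x.
\end{equation*}
This is not yet $F$, so I suspect the intended statement pairs the diagonal corrections so that the $D$-terms cancel; I would recheck the sign convention and write $H(x) = -x^\top(-Q^+ + D)x = x^\top(Q^+ - D)x$, giving $G(x) - H(x) = x^\top(Q^- + D)x - x^\top(Q^+ - D)x$, which still carries a spurious $2D$ unless $D$ enters $G$ and $H$ with matching signs. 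The correct bookkeeping is to define the diagonal shift to appear \emph{additively in both} so it telescopes; I would state it as $G = x^\top(Q^- + D)x$ and $H = x^\top(Q^+ - D)\cdot(-1)\dots$ and simply pin down the signs so that $G - H = x^\top Q x$ exactly. This is the routine part.

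The substance is showing $L^\natural$-convexity of each piece. By \Cref{def:Lconvex}, it suffices to verify discrete midpoint convexity, but the cleaner route is to use the known characterization that a quadratic $x^\top M x$ is $L^\natural$-convex iff $M$ is \emph{diagonally dominant with nonpositive off-diagonal entries}, i.e. $M_{ij} \le 0$ for $i \neq j$ and $\sum_j M_{ij} \ge 0$, $\sum_j M_{ji} \ge 0$ for every $i$ (this is the discrete analogue of the Hessian condition, and is exactly why $L^\natural$-convex is a strict subclass of submodular). First I would confirm the off-diagonal sign condition: the matrix defining $G$ is $Q^- + D$, whose off-diagonal entries equal those of $Q^-$, which are $\le 0$ by construction; similarly the matrix for $H$ has off-diagonals equal to $-Q^+ \le 0$. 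Second, I would check diagonal dominance using the definition of $D_{ii}$. For the $G$-matrix, row-dominance requires $D_{ii} + \sum_{j \neq i} Q^-_{ij} \ge 0$, i.e. $D_{ii} \ge -\sum_{j\neq i}Q^-_{ij}$; since $Q^-_{ii}\le 0$ this follows from the term $-\sum_j Q^-_{ij}$ in the max defining $D_{ii}$. Column dominance follows symmetrically from the $-\sum_j Q^-_{ji}$ term, and the $\sum_j Q^+_{ij}$, $\sum_j Q^+_{ji}$ terms handle the matrix for $H$ analogously.

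The main obstacle I anticipate is getting the diagonal corrections and signs consistent simultaneously across both requirements: $D$ must be \emph{large enough} to restore diagonal dominance for \emph{both} $G$ and $H$ at once (hence the four-way maximum in $D_{ii}$, covering row/column dominance for both the $Q^-$ and $Q^+$ matrices), while the splitting $G - H = x^\top Q x$ forces $D$ to cancel in the difference. Verifying that a single diagonal $D$ can serve all four inequalities is exactly what the max over the four sums accomplishes, and I would present the $L^\natural$-convexity check as a short lemma (quadratic $L^\natural$-convexity $\Leftrightarrow$ nonpositive off-diagonals plus row/column diagonal dominance) and then substitute. I would also double check that integrality of $Q$ makes $D$ integer-valued so that $G, H: \Z^n \to \Z$ as claimed, and note the result restricts to $\X$ via \Cref{corr:DiscreteDCDSEquiv} only when $\X_i$ is a uniform grid, consistent with the surrounding discussion.
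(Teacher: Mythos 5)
Your approach matches the paper's: both verify $L^\natural$-convexity of each quadratic piece via the discrete-Hessian criterion (nonpositive off-diagonal entries plus diagonal dominance of the symmetrized matrix, with the four-way maximum in $D_{ii}$ supplying every required inequality), and your dominance checks for the $Q^-+D$ and $-Q^++D$ matrices are correct. You are also right to flag the sign: as printed, $H(x)=-x^\top(-Q^++D)x$ gives $G(x)-H(x)=x^\top(Q^--Q^++2D)x\neq F(x)$; the paper's own proof computes the $L^\natural$ Hessian as $-Q^+-(Q^+)^\top+2D$, i.e., it implicitly takes $H(x)=x^\top(-Q^++D)x$, for which $G-H=x^\top(Q^-+Q^+)x=x^\top Qx$ and the $D$-terms cancel exactly as you anticipated.
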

\begin{proof}
    Let $\nabla^2 F$ be the $L^\natural$ Hessian of $F$ \citep[Section 3.2]{Maehara2015}, then $\nabla^2 F = Q + Q^\top$. To see this note that for any $x \in \Z^n, i \in [n]$, we have:
    \begin{align}
        F(x + e_i) - F(x) = e_i^\top Q x + x^\top Q e_i + Q_{ii}.
    \end{align}
    Then for any $j \not = i$, we get:
    \begin{align}
    \nabla^2_{ij} F(x) &=   F(x + e_j + e_i) - F(x + e_j) - (F(x + e_i) - F(x))\\
    &= e_i^\top Q (x + e_j) + (x + e_j)^\top Q e_i + Q_{ii} - (e_i^\top Q x + x^\top Q e_i + Q_{ii})\\
    &= Q_{ij} + Q_{ji}
    \end{align}
    and thus
    \begin{align}
    \nabla^2_{ii} F(x) &= F(x + \1 + e_i) - F(x + \1) - (F(x + e_i) - F(x)) - \sum_{j \not = i} \nabla^2_{ij} F(x)\\
    &= e_i^\top Q (x + \1) + (x + \1)^\top Q e_i + Q_{ii} - (e_i^\top Q x + x^\top Q e_i + Q_{ii}) - \sum_{j \not = i} \nabla^2_{ij} F(x)\\
    &= e_i^\top Q \1 + \1^\top Q e_i - \sum_{j \not = i} (Q_{ij} + Q_{ji})\\
    &= 2 Q_{ii}.
    \end{align}
Hence, $\nabla^2 F = Q + Q^\top$.

Similarly the $L^\natural$ Hessian of $G$ is  $\nabla^2 G(x) = Q^- + (Q^-)^\top + 2 D$ and of $H$ os  $\nabla^2 H(x) = -Q^+ - (Q^+)^\top + 2 D$. Note that for any $j \not = i$ we have $\nabla^2_{ij} G(x) \leq 0$ and $\nabla^2_{ij} H(x) \leq 0$. We also have
\[
    \sum_{j=1}^n \nabla^2_{ij} G(x) = \sum_{j=1}^n (Q^-_{ij} + Q^-_{ji}) + 2 D_{ii} \geq 0.
\]
Similarly, 
\[
    \sum_{j=1}^n \nabla^2_{ij} H(x) = - \sum_{j=1}^n (Q^+_{ij} + Q^+_{ji}) + 2 D_{ii} \geq 0.
\]
Hence, by the Hessian characterization of $L^\natural$-convexity \citep[Theorem 3.7]{Maehara2015}, $G$ and $H$ are $L^\natural$-convex.
\end{proof}

For sparse learning problems  $\min_{x \in \mathcal{X}} \ell(x) + \lambda \|x\|_q^q$, where $q \in [0, 1)$, $\lambda \geq 0$, and $\ell$ is a smooth function, we show below that $\|x\|_q^q$ is not a discrete convex function.

\begin{proposition}
    The function $f(x) = \|x\|_q^q$, $q \in [0,1)$ (when $q=0$, we let $f = \|\cdot\|_0$), is not $L^\natural$-convex nor $M^\natural$-convex over $\Z^n$
\end{proposition}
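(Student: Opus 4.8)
The plan is to exploit that both assertions are negative: to show $f$ is \emph{not} $L^\natural$-convex (resp. not $M^\natural$-convex) it suffices to exhibit a single pair of points at which the defining inequality fails. Since $f(x)=\|x\|_q^q=\sum_{i=1}^n \phi(x_i)$ is separable, with $\phi(t)=|t|^q$ (and $\phi(0)=0$, $\phi(t)=1$ for $t\neq 0$, when $q=0$), the failure should already be visible along a single coordinate, driven by the fact that the one-dimensional map $t\mapsto\phi(t)$ is not discretely convex: indeed $\phi(0)+\phi(2)=2^q<2=2\phi(1)$ for $q\in(0,1)$, and $\phi(0)+\phi(2)=1<2=2\phi(1)$ for $q=0$. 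I would therefore use the same pair, namely $\0$ and $2e_1$, to refute both properties.

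For $L^\natural$-convexity, I would substitute $x=\0$ and $y=2e_1$ into \eqref{eq: lnatural}. The midpoint $\tfrac{x+y}{2}=e_1$ is integral, so $\lfloor\tfrac{x+y}{2}\rfloor=\lceil\tfrac{x+y}{2}\rceil=e_1$ and the right-hand side equals $2f(e_1)=2$. The left-hand side is $f(\0)+f(2e_1)=2^q<2$ (and $=1<2$ when $q=0$), so \eqref{eq: lnatural} is violated and $f$ is not $L^\natural$-convex.

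For $M^\natural$-convexity, I would apply \cref{def:Mconvex} to $x=2e_1$, $y=\0$, and $i=1$ (admissible since $x_1=2>0=y_1$). The key observation is that the inner minimization ranges over $\{j:y_j>x_j\}$, which is empty here because $y=\0\le x$; hence that term is $+\infty$ and the right-hand side reduces to $f(x-e_1)+f(y+e_1)=f(e_1)+f(e_1)=2$. The left-hand side is $f(2e_1)+f(\0)=2^q<2$ (resp. $1<2$ for $q=0$), violating the exchange inequality, so $f$ is not $M^\natural$-convex.

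I do not expect a substantial obstacle: once separability reduces both properties to the single-coordinate convexity failure, each verification is a direct computation. The only points that require care are (i) checking that the midpoint is integral so the floor and ceiling coincide, and (ii) confirming that the index set in the $M^\natural$ exchange term is genuinely empty for the chosen pair, so that the exchange option cannot rescue the inequality; the $q=0$ case is then handled identically by replacing $2^q$ with $1$ throughout.
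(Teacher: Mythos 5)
Your proof is correct and follows essentially the same route as the paper's: both refute each property with the single pair $\0$ and $2e_1$ (the paper phrases it as the $n=1$ instance $x=2$, $y=0$), exploiting that $t\mapsto|t|^q$ fails discrete midpoint convexity since $2^q<2$. If anything your $M^\natural$ computation is the cleaner one: you correctly reduce the right-hand side to $f(x-e_1)+f(y+e_1)=2f(e_1)=2$ after noting the exchange index set is empty, whereas the paper's displayed simplification writes $f(x-1)+f(x+1)$ in place of $f(x-1)+f(y+1)$ (a typo that happens not to affect the conclusion).
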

\begin{proof}
    As a simple counterexample for the case $q \in (0,1)$, take $n=1$ so that $f: \Z \to \R_+$ is given as $f(x) = |x|^q$. Taking $x=0$ and $y=2$, we have
\begin{align}
    f(x) + f(y) &\geq f \Bigg( \bigg \lfloor \frac{x+y}{2} \bigg \rfloor \Bigg) + f\Bigg(\bigg \lceil \frac{x+y}{2} \bigg \rceil \Bigg) \\
    \iff 2^q + 0 &\geq 2
\end{align}
which, for any $q \in [0,1)$, is a contradiction. 
%For the case $q = 0$, it is easy to obtain a contradiction using $x=0, y=2$. 
%\mtodo{It's not obvious to me why $1 + 3^q \geq 2^{(1+q)}$ doesn't hold for for any $q \in (0,1)$. But we can actually use the counterexample $x=0, y=2$ for $q \in (0,1)$, since $2^q + 0 < 2 = 2 \times 1^q$ in this case.}

For $n=1$, $M^\natural$-convexity definition simplifies to (the minimum over empty set is $+\infty$) 
\[f(x) + f(y) \geq f(x - 1) + f(x + 1),\]
for all $x > y$. We again consider the counterexample $x = 2, y=0$. When $q = 0$, we have 
$f(x) + f(y) = 1 + 0 < 2 = f(x-1) + f(x + 1)$ which contradicts the $M^\natural$-convexity definition.

Similarly, for $q \in (0,1)$, we have:
$f(x) + f(y) = 2^q + 0 < 1 + 3^q = f(x-1) + f(x + 1)$, where the inequality follows from the fact that $|x + y|^q < |x|^q + |y|^q$, which implies that $2^q = |3 - 1|^q < 3^q + 1^q$.
\end{proof}

Of course, in both cases, we can still use the generic decomposition given in  \citet[Corollary 3.9-1]{Maehara2015}, this would yield $G$ and $H$ with large Lipschitz constants, which would lead to slow optimization.

\mtodo{add example 7.6 in appendix + show how to handle a uniform grid and why this breaks for non-uniform grid}
%Any $L^\natural$-convex function $F: \Z \to \Z \cup \{+ \infty\}$  should satisfy for all $x, y \in \dom F$, $\lceil \frac{x+y}{2} \rceil, \lfloor \frac{x+y}{2} \rfloor \in \dom F$. If we're minimizing a quadratic over a uniform grid, we can map from the uniform grid to $\prod_i [0, k_i-1]$ and the resulting function is still a quadratic. We can then easily decompose the objective into  $L^\natural$ - $L^\natural$ DC function as we show in \cref{ex:quadraticL-LDecomposition}. However, if the domain is a non-uniform grid, the resulting function is no longer a quadratic. Of course, we can still write the function as a $L^\natural$ - $L^\natural$ DC function on $\X$ using the general decomposition from \cite[Corollary 3.9-1]{Maehara2015}. However this would yield $G$ and $H$ with large Lipschitz constants which would lead to slow optimization.
\section{Reduction to set function case} \label{app:SetReduction}

In this section, we discuss the reduction of a submodular minimization problem $\min_{x \in \X} F(x)$ over the integer lattice $\X = \seq{0}[k-1]^n$ to the minimization of a submodular set function over $\{0,1\}^{n \times (k-1)}$, given in \citet[Section 4.4]{Bach2019}. 
As mentioned in \cref{sec:DSMinIntegerLattice}, the same reduction can be used to reduce Problem \eqref{eq:dsmin} to DS set function minimization. 
%We compare this approach theoretically and empirically to the more direct approach employed by Bach and us.

We define the map $\pi:\{0,1\}^{n \times (k-1)} \to \{0,1\}_{\downarrow}^{n \times (k-1)}$ as
\begin{equation}
    \pi(X)_{i,j} = \begin{cases}
                        1 & \textnormal{if there exists } j' \geq j, \ X_{i,j'} = 1 , \\
                        0 & \textnormal{otherwise},
                    \end{cases}
\end{equation}
for all $i \in \seq{n}, j \in \seq{k-1}$, i.e., $\pi(X)$ is the smallest row non-increasing matrix such that $X \leq \pi(X)$.
\mtodo{Include what is the continuous extension of $\tilde{F}$ in terms of $\fext$ (the expression in Bach is wrong) if we want to express $L_{\tilde{f}_\downarrow}$ in terms of $L_{\fext}$. Or work with the upper bound on $L_{\tilde{f}_\downarrow}$ given in Bach.}
Given $B_i \geq 0$ for all $i \in \seq{n}$, define the set-function  $\tilde{F}: \{0,1\}^{n \times (k-1)} \to \R$ as
\begin{equation}
    \tilde{F}(X) = F(\Theta(\pi(X))) + \sum_{i=1}^n B_i \|\pi(X)_i - X_i\|_1 \label{eq:reduction}
\end{equation}
where $\pi(X)_i, X_i$ are the $i^{\text{th}}$ rows of $\pi(X), X$, respectively. Then minimizing $\tilde{F}$ is equivalent to minimizing $F$; $\min_{X \in \{0,1\}^{n \times (k-1)}} \tilde{F}(X) = \min_{x \in \seq{0}[k-1]^n} F(x)$.  
%and $B_i > 0$ for all $i$ then the minimizer of $\tilde{F}$ will be row non-increasing.  
%Recall that, by \cref{prop:LEproperties}-\ref{itm:extension}, $\fext(\pi(X)) = F(\Theta(\pi(X)))$ as $\pi(X) \in \{0,1\}_{\downarrow}^{n \times (k-1)}$. 
To ensure $\tilde{F}$ is submodular, we choose $B_i>0$
%In particular, if we set $B_i=0$ for all $i$, $\tilde{F}$ will still be a valid reduction for $F$ in the sense that $\min_{\{0,1\}^{n \times (k-1)}} \tilde{F}(x) = \min_{x \in \seq{0}[k-1]^n} F(x)$, but $\tilde{F}$ will not be submodular in general. 
%A sufficient choice is to take $B_i>0$ 
such that 
\begin{equation}
     \ |F(y) - F(x)| \leq \sum_{i=1}^n B_i |x_i - y_i|, \label{eq: reduction-condition}
\end{equation} 
for all $x\leq y \in \X$ \citep[Section 4.4]{Bach2019}. We thus reduced minimizing $F$ to minimizing a submodular set function. %which can be solved using any existing  
Similarly, we can reduce Problem \eqref{eq:dsmin} with $\X = \seq{0}[k-1]^n$ to the following DS set function minimization:
\[ \min_{X \in \{0,1\}^{n \times (k-1)}} \tilde{G}(X) - \tilde{H}(X),\]
where $\tilde{G}, \tilde{H}$ are submodular set functions defined as in \eqref{eq:reduction}.

However, as discussed in \citet[Section 4.4]{Bach2019}, this strategy adds extra parameters $B_i$ which are often unkown, and leads to slower optimization due to the larger Lipschitz constant $L_{\tilde{f}_\downarrow}$ of the continuous extension $\tilde{f}_\downarrow$  of $\tilde{F}$ (which reduces to Lov\'asz extension in this case). In particular, while we can bound $L_{f_\downarrow} \leq \sqrt{(k-1) n} \max_i B_i$ (\cref{prop:LEproperties}-\ref{itm:Lip}), the bound for $\tilde{f}_\downarrow$ is $(2 k - 3)$ times larger, i.e., $L_{\tilde{f}_\downarrow} \leq (2 k - 3)\sqrt{(k-1) n} \max_i B_i$. 
\mtodo{We get this bound from \cref{prop:LEproperties}-\ref{itm:Lip}: $L_{\tilde{f}_\downarrow} \leq \sqrt{\tilde{n} (\tilde{k}-1)} \max_{x, i} |\tilde{F}(x + e_i) - \tilde{F}(x)|$, where $\tilde{n} = n(k-1)$, $\tilde{k} = 2$, and $|\tilde{F}(x + e_i) - \tilde{F}(x)| \leq B_i (k-1) + B_i (k-2) = B_i (2 k - 3)$}
The time complexity of both submodular minimization algorithms in \citet[Section 5]{Bach2019} and the one in \citet{Axelrod2020} (the fastest known inexact submodular set function minimization algorithm) scales quadratically with the Lipschitz constant of the continuous extension: $\tilde{O}((\frac{n k L_{f_\downarrow}}{\epsilon})^2 \textnormal{EO}_F)$ and $\tilde{O}( n (k L_{\fext}/\epsilon)^2 ~\EO_F)$, respectively. Using the reduction $\tilde{F}$ increases their complexity by at least a factor of $O(k^2)$. Moreover, the cost $\textnormal{EO}_{\tilde{F}}$ of evaluating $\tilde{F}$ can also be larger; if computed via Eq. \eqref{eq:reduction} it incurs an additional $O(n k)$ time, i.e.,  $\textnormal{EO}_{\tilde{F}} = \textnormal{EO}_{{F}} + O(n k)$.
%For example, Bach's algorithms have complexity $\tilde{O}((\frac{n k L_{f_\downarrow}}{\epsilon})^2 \textnormal{EO}_F)$ when applied directly to $F$. Using the reduction to $\tilde{F}$, this complexity increases by a factor of $O(k^2)$. Similarly, \citet{Axelrod2020}'s algorithm has complexity $\tilde{O}( n (k L_{\fext}/\epsilon)^2 ~\EO_F)$ with $F$, which increases by a factor of $O()$ with $\tilde{F}$.
%The function evaluation $\textnormal{EO}_{\tilde{F}}$ can also be larger
This also applies in our setting, where $F$ is a DS function, since DCA-LS requires solving a submodular minimization at each iteration, which will be similarly slower using the reduction.

%this reduction approach to the more direct approach employed in \citet{Bach2019}, which we also adopted in this work. 
%We consider a submodular quadratic minimization problem 
To validate this empirically, we compare the minimization of a submodular quadratic function $F$ with that of its reduction $\tilde{F}$:
\begin{equation}
    \min_{x \in [-1,1]^n} F(x) = \frac{1}{2}x^{\top}Qx, \label{eq: reduction-objective}
\end{equation}
%where $\mathcal{D} = \prod_{i=1}^{n} \mathcal{D}_i$ and each $\mathcal{D}_i$ is a discretization of the interval $[-1,1]$ into $k$ evenly spaced points. 
We set $n=50$ and construct a symmetric matrix $Q \in \R^{n \times n}$ as follows: for $i<j$, draw $Q_{i,j} \sim \Uc_{[-1/n, 0]}$ where $\Uc$ denotes the uniform distribution; and draw diagonal elements as $Q_{i,i} \sim \Uc_{[0,1]}$. The resulting $Q$ has non-positive off-diagonal entries, making $F$ submodular by \cref{prop:1st2ndOrderSub}-\ref{itm:1stOrder}.
This construction also helps ensure the minimizer is randomly located within the lattice $\mathcal{D}$.
%Choosing $Q$ in this way helps to move the minimizer randomly around the lattice $\mathcal{D}$. 

As discussed in \cref{sec:ContCase}, we can convert Problem \eqref{eq: reduction-objective} to a submodular minimization problem over $\X = \seq{0}[k-1]^n$ by discretization. In particular,
%we can rewrite \eqref{eq: reduction-objective} as an equivalent minimization problem of %$F': \X \to \R$ defined as $F'(x) = F(m(x))$, with $\X = \seq{0}[k-1]^n$, and $m$ can be written here as $\phi_i(x) = \frac{2}{k-1} \cdot x_i - 1$.
%according to \cref{prop:submod-composition}. 
%For example, 
we define $F': \X \to \R$ as $F'(x) = F(m(x))$ with $m: \X \to [-1, 1]^n$, $m_i(x) = \frac{2}{k-1} \cdot x_i - 1$. Then $F'$ is submodular by  \cref{prop:submod-composition}.
%Indeed, the problem $\min_{x \in \X} F'(x)$ is then equivalent to \eqref{eq: reduction-objective}. 

To apply the reduction, we must find positive $B_i$'s which satisfy \eqref{eq: reduction-condition} for the function $F'$. One valid choice is to set $B_i = B$ for all $i \in [n]$, where  $B$ is the Lipschitz constant of $F'$ with respect to the $\ell_1$-norm. We start by bounding the Lipschitz constant of $F$ over $[-1,1]^n$ with respect to the $\ell_1$-norm. For all $x \in [-1,1]^n$, we have
%A bound on $B$ can be obtained via
    \begin{equation}
   \|\nabla F(x)\|_\infty=  \| Qx\|_\infty \leq \| Q \|_{\infty \to \infty} \|x\|_\infty
    \leq \| Q \|_{\infty \to \infty},
    \end{equation}
    Hence, $F$ is $L$-Lipschitz continuous with respect to the $\ell_1$-norm with $L = \| Q \|_{\infty \to \infty}$, which is equal to the maximum $\ell_1$-norm of the rows of $Q$. For any $x,y \in \X$, we have
    \begin{align}
        |F'(x) - F'(y)| &= |F(m(x)) - F(m(y))| \\
        &\leq L \|m(x) - m(y)\|_1 \\
        &= \frac{2L}{k-1} \cdot \|x - y\|_1.
    \end{align}
    Hence, $F'$ is Lipschitz continuous in the $\ell_1$-norm over $[-1,1]^n$ with constant $B = \frac{2L}{k-1}$. We can now define the reduction $\tilde{F}$ of $F'$ as in \eqref{eq:reduction}. Then Problem \eqref{eq: reduction-objective} reduces to a submodular set-function minimization problem: 
    \begin{equation}
        \min_{X \in \{0,1\}^{n \times (k-1)}} \tilde{F}(X). \label{eq:reduction-example}
    \end{equation}
    A minimizer $X^*$ of \eqref{eq:reduction-example} yields a minimizer $x^* \in \X$ of $F'$ by setting $x^* = \Theta(X^*)$.

    We numerically compare minimizing $F'$ and $\tilde{F}$ over their respective domains to get a minimizer of Problem \eqref{eq: reduction-objective}. %In each experiment, 
    We minimize $F'$ and $\tilde{F}$ using pairwise FW, which we run for 200 iterations. Let $\hat{x}, \tilde{x} \in \X$ denote the resulting solutions, where $\tilde{x}$ is obtained by applying $\Theta$ to the output of pairwise FW with $\tilde{F}$. We test different discretization levels $k=100,200, \ldots, 500$ and initialize both methods at zero (i.e. $0 \in \R^n$ for $F'$ and $0 \in \R^{n \times (k-1)}$ for $\tilde{F}$). For each value of $k$, we run 50 random trials. 
    We evaluate the two approaches on three metrics: the relative objective gap between $\hat{x}$ and $\tilde{x}$, the duality gap given in \citet[Section 5.2]{Bach2019}, and the running time.  
    Average results are shown in  \cref{fig:reduction-test}. We observe that $\tilde{x}$ has a worse duality gap and objective value than $\hat{x}$ and a longer running time, after the same number of iterations. This confirms that
    minimizing $\tilde{F}$ is significantly slower than minimizing $F'$, both in terms of convergence speed (due to the larger Lipschitz constant $L_{\tilde{f}_\downarrow}$) and per iteration runtime (due to the higher evaluation cost of $\tilde{F}$).  
    %We apply 200 iterations of pairwise Frank-Wolfe to $F'$ and $\tilde{F}$ to obtain $\hat{x}, \tilde{x} \in \X$, respectively. 
 
    %we report three metrics related to the performance of both approaches. In \cref{fig:opt-gap}, we measure the optimality gap between $\hat{x}$ and $\tilde{x}$. In \cref{fig:dual-gap}, we record the duality gap given in \citet[Section 5.2]{Bach2019}. Finally, in \cref{fig:reduction-time}, we compare the time required to complete 200 iterations of pairwise Frank-Wolfe using both objectives. 

\begin{figure}%[h]
% \captionsetup[subfigure]{margin={1.7cm,1cm}}
\includegraphics[scale=0.3]{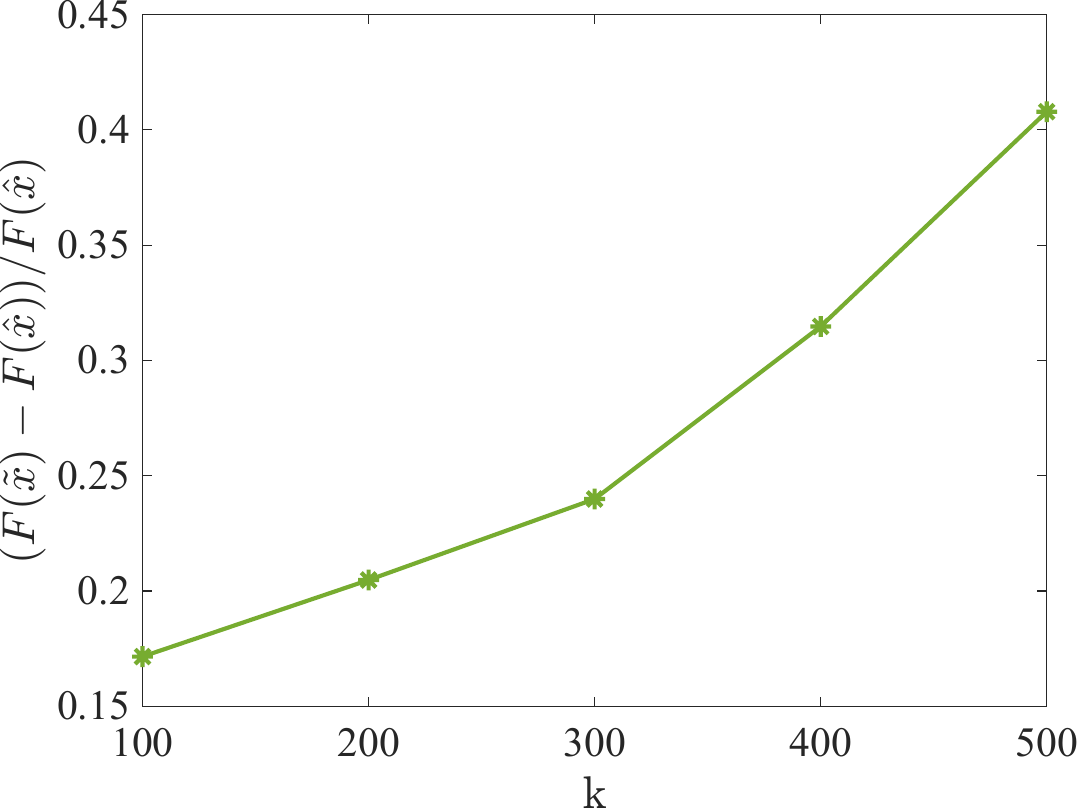}\label{fig:opt-gap}\hfill
{\includegraphics[scale=0.3]{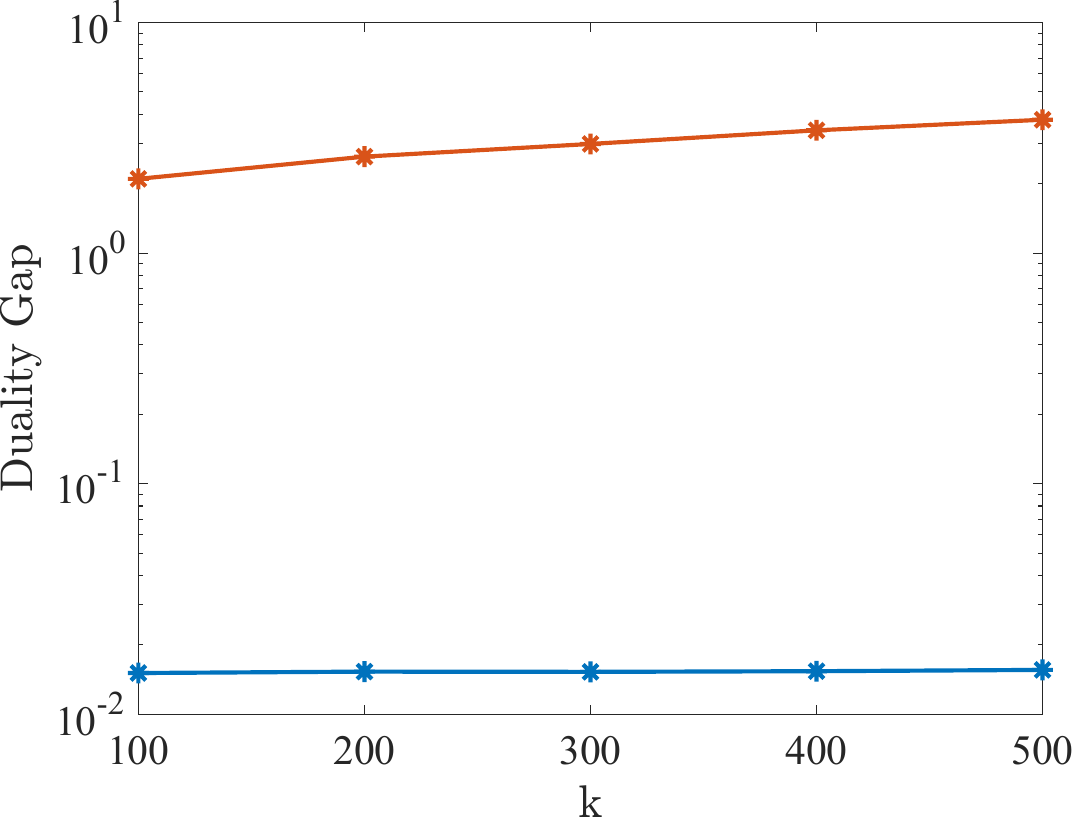}\label{fig:dual-gap}}\hfill
{\includegraphics[scale=0.3]{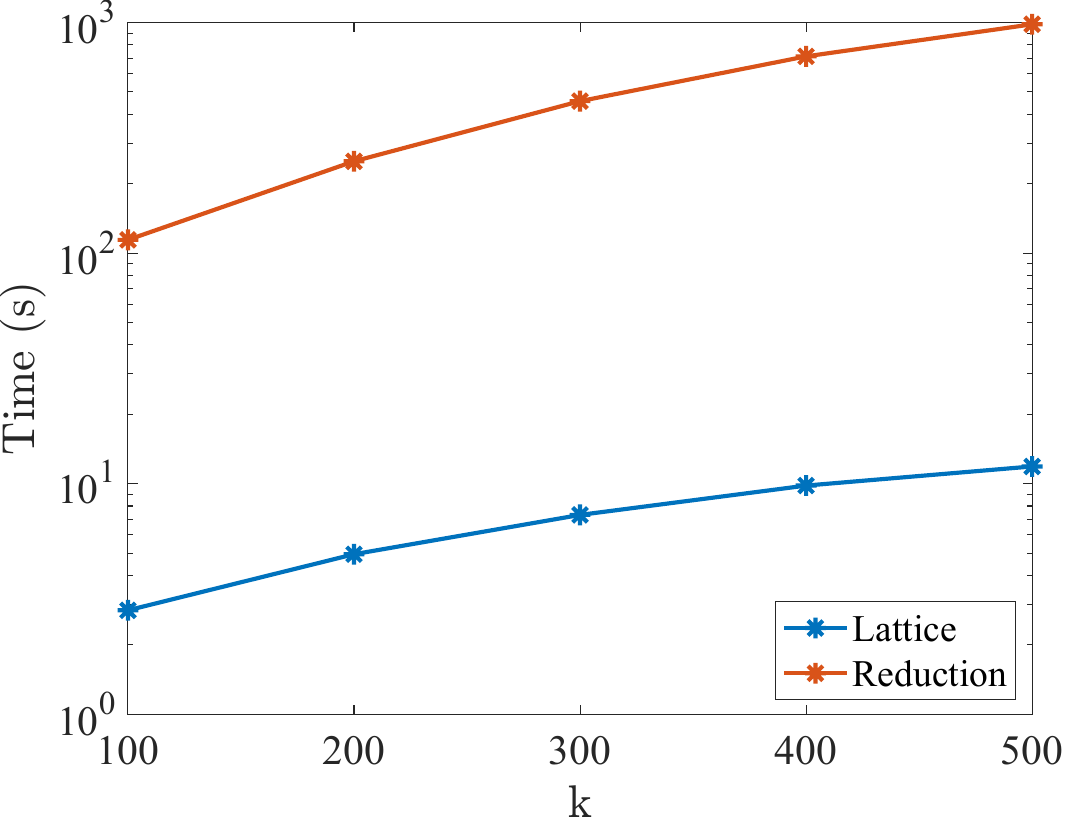}\label{fig:reduction-time}}
\caption{Results averaged over 50 runs comparing the direct minimization of $F$ on the lattice $\X$ (Lattice) and the minimization of its reduction $\tilde{F}$ on $\{0,1\}^{n \times (k-1)}$ (Reduction) on Problem \eqref{eq: reduction-objective}.}
%Minimizing $F'$ over $\X$ (Lattice) and $\tilde{F}$ over $\{0,1\}^{n \times (k-1)}$ (Reduction). \vspace{5pt} $\hat{x}$ and $\tilde{x}$ are the solutions obtained after applying 200 iterations of pairwise Frank-Wolfe to $F'$ and $\tilde{F}$, respectively.}
\label{fig:reduction-test}
\end{figure}

\section{Proofs of \cref{sec:prelim}}\label{app:prelim-proofs}

\subsection{Proof of \cref{prop:LEproperties}-\ref{itm:Lip}} % Lipschitz constant of the continuous extension of a submodular function.

We prove here the bounds on the Lipschitz constant of the continuous extension of a submodular function given in \cref{prop:LEproperties}-\ref{itm:Lip}. The general bound was stated in \citet[Section 5.1]{Bach2019}, and follows directly from the definition of the subgradients (see \cref{prop:LEproperties}-\ref{itm:subgradient}). 

\begin{lemma}\label{lem:SubgradBd}
    Given a normalized submodular function $F: \seq{0}[k-1]^n \to \R$, its continuous extension $\fext$ is $L_{\fext}$-Lipschitz continuous with respect to the Frobenius norm,  
with $L_{\fext} \leq \sqrt{(k-1) n} B$ where $B = \max_{x, x+ e_i \in \seq{0}[k-1]^n} |F(x + e_i) - F(x)|$. If $F$ is also non-decreasing, then $L_{\fext} \leq F((k-1)\1)$. 
\end{lemma}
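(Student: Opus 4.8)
The plan is to bound the Frobenius norm of the subgradients of $\fext$ uniformly and then invoke the standard fact from convex analysis that a convex function is $L$-Lipschitz on a convex set whenever every subgradient at every point of that set has norm at most $L$. Since $F$ is submodular, $\fext$ is convex by \cref{prop:LEproperties}-\ref{itm:conv} and finite on $\R_{\downarrow}^{n \times (k-1)}$, and \cref{prop:LEproperties}-\ref{itm:subgradient} furnishes, for every $X \in \R_{\downarrow}^{n \times (k-1)}$ and every non-increasing permutation $(p,q)$ of $X$, an explicit subgradient $Y$ of $\fext$ at $X$ with entries $Y_{p_i,q_i} = F(y^i) - F(y^{i-1})$, where $y^i = y^{i-1} + e_{p_i}$. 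It therefore suffices to bound $\|Y\|_F$ uniformly over all such $Y$.

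For the general bound, I would observe that each entry $Y_{p_i,q_i} = F(y^{i-1} + e_{p_i}) - F(y^{i-1})$ is a single-coordinate unit increment whose base point $y^{i-1}$ and successor $y^{i-1}+e_{p_i}$ both lie in $\seq{0}[k-1]^n$; hence $|Y_{p_i,q_i}| \leq B$ by the definition of $B$. Since $Y \in \R^{n \times (k-1)}$ has exactly $(k-1)n$ entries, this gives $\|Y\|_F \leq \sqrt{(k-1)n}\,B$, and the first bound follows.

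For the non-decreasing case, I would first note that $y^i = y^{i-1} + e_{p_i} \geq y^{i-1}$, so monotonicity of $F$ gives $Y_{p_i,q_i} = F(y^i) - F(y^{i-1}) \geq 0$; thus all entries of $Y$ are non-negative. Next, since each coordinate index appears exactly $k-1$ times among $p_1, \ldots, p_{(k-1)n}$, we have $y^{(k-1)n} = (k-1)\1$, and the sum of entries telescopes:
\begin{equation*}
\sum_{i=1}^{(k-1)n} Y_{p_i,q_i} = \sum_{i=1}^{(k-1)n} \big(F(y^i) - F(y^{i-1})\big) = F((k-1)\1) - F(\0) = F((k-1)\1),
\end{equation*}
using the normalization $F(\0) = 0$. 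Finally, for non-negative reals one has $\|Y\|_F = \sqrt{\sum_i Y_{p_i,q_i}^2} \leq \sum_i Y_{p_i,q_i}$ (since $(\sum_i a_i)^2 \geq \sum_i a_i^2$ when $a_i \geq 0$), which yields $\|Y\|_F \leq F((k-1)\1)$.

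The two norm estimates are routine; the only point requiring care is the reduction from Lipschitz continuity to a uniform subgradient bound, namely verifying that the explicit subgradient from \cref{prop:LEproperties}-\ref{itm:subgradient} is available at every point where we need it and that the convex-analysis fact applies on the relevant domain $[0,1]_{\downarrow}^{n \times (k-1)}$ (equivalently $\R_{\downarrow}^{n \times (k-1)}$), where $\fext$ is convex and finite.
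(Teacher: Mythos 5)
Your proposal is correct and follows essentially the same route as the paper: bound the Frobenius norm of the explicit subgradient from \cref{prop:LEproperties}-\ref{itm:subgradient} entrywise by $B$ for the general case, and use non-negativity plus telescoping to get $\|Y\|_F \leq \|Y\|_{1,1} = F((k-1)\1)$ in the non-decreasing case. The only difference is that you spell out the convex-analysis step (bounded subgradients imply Lipschitz continuity) that the paper leaves implicit.
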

\begin{proof}
    For any $X \in  \R_{\downarrow}^{n \times (k-1)}$, let $(p,q)$ be a non-increasing permutation of $X$ and $Y$ the corresponding subgradient of $\fext$ at $X$, defined as in \cref{prop:LEproperties}-\ref{itm:subgradient}. The bound for the general case follows directly by bounding the Frobenius norm of $Y$.
     \begin{align*}
        \| Y \|_F^2   &= \sum_{i = 1}^{n(k-1)} Y_{p_i, q_i}^2 \\
       &= \sum_{i = 1}^{n(k-1)} (F(y^i) - F(y^{i-1}))^2 \\
       &\leq n(k-1) B^2
    \end{align*}
Moreover, if  $F$ is non-decreasing, then $Y_{i,j} \geq 0$ for all $i, j$. By telescoping sums,  we get:
    \begin{align*}
        \| Y \|_F \leq \| Y \|_{1,1}  &= \sum_{i = 1}^{n(k-1)} Y_{p_i, q_i} \\
       &= \sum_{i = 1}^{n(k-1)} F(y^i) - F(y^{i-1}) \\
       &= F(y^{n(k-1)}) - F(y^0) = F((k-1)\1) - F(0).
    \end{align*}
  Since $F$ is normalized, then we have $ \| Y \|_F \leq F((k-1)\1)$.
\end{proof}

\section{Proofs of \cref{sec:DSfcts}}\label{app:DSfcts-proofs}
%\subsection{Proofs of \cref{sec:representability}} \label{app:repres-proofs}

\subsection{Proofs of \cref{prop:DiscreteDSDecomp} and \cref{ex:strictDRsub}} \label{app:DiscreteDSDecompProof}
\DiscreteDSDecomp*
\begin{proof}
We give a constructive proof, similar to the one provided in \citet[Lemma 3.1]{Iyer2012a} for set functions. 

Let $\alpha \leq 0$ be any lower bound on how much $F$ violates Eq. \eqref{eq:submod-antitone}, i.e. for any $x \in \X$, $i \neq j \in \seq{n}$, and $a_i, a_j > 0$ such that $x + a_ie_i, x + a_je_j \in \X$, we have
\begin{equation}\label{eq:FweakDR}
    F(x + a_ie_i) - F(x) - F(x + a_ie_i + a_je_j) + F(x + a_je_j) \geq \alpha.
\end{equation} 
Since $F$ is finitely valued and defined on finitely many points, such a lower bound must exist. For example, we can use $\alpha = - 4 \max_{x \in \X} |F(x)|$. Note that $\alpha < 0$ unless $F$ is already submodular, in which case we can take $G = F$ and $H = 0$.

We now choose any normalized \emph{strictly} %DR-submodular 
submodular function $\tilde{H}: \X \to \R$. We provide an example in \cref{ex:strictDRsub}. 
$\tilde{H}$ satisfies Eq. \eqref{eq:submod-antitone} with strict inequality. As each $\X_i$ is finite, there exists $\beta > 0$ such that
    \begin{equation}
        \tilde{H}(x + a_ie_i) - \tilde{H}(x) - \tilde{H}(x + a_ie_i + a_je_j) + \tilde{H}(x + a_je_j) \geq \beta.
    \end{equation}
    whenever $x \in \X$, $i \neq j \in \seq{n}$, and $a_i, a_j > 0$ are such that $x + a_ie_i, x + a_je_j \in \X$.
    
    We next construct $G$ and $H$ based on $\tilde{H}$. In particular, 
    we define $G = F + \frac{|\alpha|}{\beta}\tilde{H}$ and $H = \frac{|\alpha|}{\beta}\tilde{H}$. Since $F$ and $\tilde{H}$ are normalized, $G$ and $H$ are also normalized. 
    Also, $H$ is strictly submodular, since multiplying by a positive scalar preserves strict submodularity. 
    We check that $G$ is also  %strictly DR-submodular. 
    submodular. For any $x \in \X$, $i \neq j \in \seq{n}$, and $a_i, a_j > 0$ such that $x + a_ie_i, x + a_je_j \in \X$, we have 
    \begin{align} \label{eq:GweakDR}
        G(x + a_ie_i) - G(x) - G(x + a_ie_i + a_je_j) + G(x + a_je_j) &= F(x + a_ie_i) - F(x) - F(x + a_ie_i + a_je_j)  \nonumber \\
        &\quad + F(x + a_je_j) + \frac{|\alpha|}{\beta} \bigl( \tilde{H}(x + a_ie_i) - \tilde{H}(x)  \nonumber \\
        &\quad - \tilde{H}(x + a_ie_i + a_je_j) + \tilde{H}(x + a_je_j) \bigr) \nonumber \\
        &\geq \alpha + \frac{|\alpha|}{\beta} \cdot \beta = 0.
    \end{align}
    Hence, $G$ is submodular by \cref{eq:submod-antitone}.
    This yields the desired decomposition $F = G - H$, with $G$ and $H$ normalized submodular functions. 
    Choosing $\alpha < 0$ which strictly satisfies the inequality \eqref{eq:FweakDR} (which is again always possible), we can further guarantee that $G$ is strictly submodular, since the inequality \eqref{eq:GweakDR} will become strict too.
\end{proof}

\StrictDRSubEx*
\begin{proof}%[Proof Sketch]
Since each $\X_i$ is a compact subset of $\R$, there exists an interval $[a_i, b_i]$ such that $\X_i \subseteq [a_i, b_i]$. The function $H$ is twice-differentiable on $\prod_{i=1}^n [a_i, b_i]$, and the Hessian of $H$ has negative entries. Hence, it is strictly submodular on $\prod_{i=1}^n [a_i, b_i]$ by \cref{prop:1st2ndOrderSub}-\ref{itm:2ndOrder}. Since strict submodularity is preserved by restricting $H$ to $\X$, $\tilde{H}$ is also strictly submodular. It is also normalized by definition.
\end{proof}

\subsection{Proof of \cref{prop:LsmoothDSDecomp}}\label{app:LsmoothDSDecompProof}
\LsmoothDSDecomp*
\begin{proof}
The proof is again constructive. Note that $L_F > 0$ unless $F$ is already submodular, in which case we can take $G = F$ and $H = 0$.

We now choose any normalized \emph{strictly} %DR-submodular 
submodular function $\tilde{H}: \X \to \R$, which is differentiable and there exists $L_{\tilde{H}} > 0$ such that $$ \frac{\partial \tilde{H}}{\partial x_i} (x) - \frac{\partial \tilde{H}}{\partial x_i} (x + a e_j) \geq L_{\tilde{H}} a,$$ for all $x \in \X$, $i \not= j$, $a > 0$ such that $x_j + a \in \X_j$. 
We provide an example in \cref{ex:strictDRsub}. 

We next construct $G$ and $H$ based on $\tilde{H}$. In particular, 
we define $G = F + \tfrac{L_F}{L_{\tilde{H}}}\tilde{H}$ and $H = \tfrac{L_F}{L_{\tilde{H}}}\tilde{H}$. 
Since $F$ and $\tilde{H}$ are normalized, $G$ and $H$ are also normalized. 
    Also, $H$ is strictly submodular, since multiplying by a positive scalar preserves strict submodularity. 
We check that $G$ is also  %strictly DR-submodular. 
submodular.   For any $x \in \X$, $j \in [n]$, $a > 0$ such that $i \not= j$ and $x_j + a \in \X_j$, we have
   \begin{align}\label{eq:G1stOrder}
     \frac{\partial G}{\partial x_i}(x) - \frac{\partial G}{\partial x_i} (x + ae_j) &= 
       \frac{\partial F}{\partial x_i} (x) - \frac{\partial F}{\partial x_i} (x + ae_j) + \frac{L_F}{L_{\tilde{H}}}\bigl( \frac{\partial \tilde{H}}{\partial x_i}(x) - \frac{\partial \tilde{H}}{\partial x_i} (x + ae_j) \bigr) \nonumber \\
       &\geq - L_F a + \frac{L_F}{L_{\tilde{H}}} L_{\tilde{H}} a  = 0.
    \end{align}
  Hence, $G$ is submodular by \cref{prop:1st2ndOrderSub}-\ref{itm:1stOrder}.
    This yields the desired decomposition $F = G - H$, with $G$ and $H$ normalized submodular functions. 
    Choosing $L_F > 0$ which satisfies the strict inequality $ \frac{\partial F}{\partial x_i} (x) - \frac{\partial F}{\partial x_i} (x + a e_j) > -L_F a$, we can further guarantee that $G$ is strictly submodular, since the inequality \eqref{eq:G1stOrder} will become strict too.
\end{proof}

\section{Proofs of \cref{sec:DSMin}}
%\subsection{Proofs of \cref{sec:reduction}}

\subsection{Proof of \cref{prop:submod-composition}}\label{app:SubCompositionProof}

\SubmodComposition*
\begin{proof}
    For any monotone $m$, if $F$ is submodular, then for any $x,y \in \mathcal{X}$ we have
    \begin{align*}
        F'( \min\{x , y\}) + F'( \max\{x , y\}) &= F \big (m(\min\{x, y \}) \big) + F\big(m(\min\{x , y\}) \big) & \\
        &=  F \big(\min\{m(x), m(y) \} \big) + F \big( \max \{m(x), m(y) \} \big)  \\
        &\leq F \big(m(x)\big) + F \big(m(y)\big) &\textnormal{($F$ is submodular)}\\
        &= F'(x) + F'(y).
    \end{align*}
    Hence, $F'$ is submodular.   If $F$ is modular, the above inequality becomes an equality, implying that $F'$ is modular too.
    
    Moreover if $m$ is strictly monotone,  $m$ is invertible, and we may write $F(y) = F'(m^{-1}(y))$. We show that the converse claim is also true in this case. Indeed, since the inverse of a strictly monotone function is again strictly monotone, we have by the first claim that if $F'$ is (sub)modular then $F$ is also (sub)modular. 
\end{proof}

% \DSreductionDiscrete*
% \begin{proof}
% For each $i \in \seq{n}$, let $\X_i = \{x^i_0, \ldots, x^i_{k_i-1}\}$ where elements are ordered in non-decreasing order, i.e., $x^i_{j} < x^i_{j+1}$ for all $j \in \seq{0}[k_i - 2]$. 
% Define the map $m_i : \seq{0}[k_i-1] \to \X_i$ as $m_i(j) = x^i_j$. Then $m_i$ is clearly a strictly increasing bijection. Hence, %the result follows from \cref{prop:submod-composition}.
% the function $F': \prod_{i=1}^n \seq{0}[k_i-1]$ defined as $F'(x) = F(m(x))$ with $[m(x)]_i = m_i$ is a DS function. 
% \end{proof}

\subsection{Discretization of a non-Lipschitz function example}\label{app:NonLipDiscretization}
\mtodo{Note that to ensure that the difference between the minimum of the discrete function $F': \seq{0}[k-1]^n \to \R$ and $F: [0,1]^1 \to \R$ is at most $\epsilon$, we only need that for any $x \in [0,1]^n$ there exists $\pi(x) \in \seq{0}[k-1]^n$ such that $F'(\pi(x)) \leq F(x) + \epsilon$, and that $F'(x) = F(\phi(x))$ for some map $\phi: \seq{0}[k-1]^n \to [0,1]^n$ which preserves the submodularity of $F$. 

Note also that when $F = F_1 + F_2$, if we have two functions $F_1', F_2'$ that satisfy the above, with $\phi_1 = \phi_2$ and $\pi_1 = \pi_2$, then the resulting discritization error is $\epsilon_1 + \epsilon_2$.}

In \cref{sec:reduction}, we assumed that the function $F: [0,1]^n \to \R$ is Lipschitz continuous to bound its discretization error. However, Lipschitz continuity is not always necessary.
We show below how to handle an example of a non-Lipschitz continuous function. In particular, we consider a function $F: [-B, B]^n \to \R$ given by $F(x) = \ell(x) +\|x\|_q^q$, with $q \in [0,1)$, where $\ell': [0,1]^n \to \R$ is a DS function $L$-Lipschitz continuous w.r.t the $\ell_\infty$-norm. 
%an example which does not satisfy the Lipschitz continuity assumption: 
The modular function $\|x\|_q^q$ is not Lipschitz continuous on the domain $[-B, B]^n$. 
%Recall that $\|x\|_q^q$ is modular, since it is separable.
%Minimizing $\|x\|_q^q$ alone is trivial ($x^* = 0$). 

We first observe that the function $|x|^q$ satisfies a notion of Lipschitz continuity, for $x \not = 0$.
\mtodo{Note that we want to have the upper bound depend on $\max\{|y|, |x|\}$, because when we apply this in \cref{prop:lqnormDiscretization}, only the bigger element is guaranteed to be away from zero by  $\tfrac{1}{k-1}$.}
\begin{lemma}\label{lem:lqnormLipchitz}
The function $F: \R \to \R$ defined as $F(x) = |x|^q$ with $q \in (0,1)$ satisfies
\[ ||y|^q - |x|^q|  \leq {2 \max\{|y|, |x|\}^{q - 1} |y - x|}, \]
for every two scalars $x, y \not = 0$ of same sign.
\end{lemma}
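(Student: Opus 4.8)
The plan is to collapse the two-variable inequality into a single-variable one by normalizing, and then to settle the single-variable inequality by an elementary concavity argument.

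First I would exploit the symmetries of the claim to reduce to a canonical case. Since $|{-x}|^q = |x|^q$, and since $|y - x|$ and $\max\{|x|,|y|\}$ are both unchanged when the common sign of $x,y$ is flipped, I may assume $x, y > 0$. The inequality is moreover symmetric under swapping $x$ and $y$ (both sides are), so I may further assume $0 < x \le y$; in this case $\max\{|x|,|y|\} = y$ and $\bigl||y|^q - |x|^q\bigr| = y^q - x^q$, so the target reduces to $y^q - x^q \le 2\,y^{q-1}(y - x)$.

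Next I would normalize. Dividing this through by $y^q > 0$ and setting $t = x/y \in (0,1]$ turns it into
\[
  1 - t^q \le 2(1 - t).
\]
The key step is then to prove this one-variable bound, and the crucial fact is that $t^q \ge t$ for $t \in (0,1]$ and $q \in (0,1)$: writing $t^q = e^{q \ln t}$ and using $\ln t \le 0$ together with $q < 1$ gives $q \ln t \ge \ln t$, hence $t^q \ge t$. Consequently $1 - t^q \le 1 - t \le 2(1 - t)$, where the last inequality merely uses $1 - t \ge 0$. This closes the argument, and in passing it reveals that the constant $2$ is not tight (the bound already holds with $1$).

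The one subtlety worth flagging is that the naive mean value theorem route is counterproductive. Applying it on $[x,y]$ gives $y^q - x^q = q\,\xi^{q-1}(y - x)$ for some $\xi \in (x,y)$, but because $q - 1 < 0$ the factor $\xi^{q-1}$ is bounded \emph{below}, not above, by $y^{q-1} = \max\{|x|,|y|\}^{q-1}$, so one cannot directly extract a bound governed by the larger endpoint. Normalizing first and then comparing the concave map $t \mapsto t^q$ with the identity sidesteps this obstacle entirely, which is why I would route the proof through the inequality $1 - t^q \le 2(1 - t)$ rather than through a derivative estimate.
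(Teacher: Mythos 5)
Your proof is correct, and it takes a genuinely different and simpler route than the paper's. The paper confronts the same obstruction you flag at the end (the mean value theorem applied directly to $t \mapsto t^q$ bounds $\xi^{q-1}$ from below by $\max\{|x|,|y|\}^{q-1}$, not from above) and circumvents it by repeatedly factoring $|y|^q - |x|^q = \bigl(|y|^{2q} - |x|^{2q}\bigr)/\bigl(|y|^q + |x|^q\bigr)$ until the exponent $2^t q$ exceeds $1$, at which point the mean value theorem does give an upper bound at the larger endpoint; the accumulated factor $2^t q < 2$ is exactly where the constant $2$ in the statement comes from. Your argument instead normalizes by $y^q$ and reduces everything to the elementary inequality $t^q \geq t$ on $(0,1]$, which yields $y^q - x^q \leq y^{q-1}(y-x)$ directly. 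This is shorter, avoids calculus entirely, and shows the paper's constant $2$ can be replaced by $1$ (which is tight as $x \to 0^+$); since the lemma is only used downstream to control a discretization error, the weaker constant in the paper is harmless, but your bound is strictly better. Both reductions to the case $0 < x \leq y$ are handled identically and correctly.
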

\begin{proof}
We assume without loss of generality that $|y| \geq |x| > 0$.
Let $t>0$ be the smallest integer such that $2^t q \geq 1$.  Note that
$$ |y|^q - |x|^q  = \frac{|y|^{2q} - |x|^{2q} }{|y|^q + |x|^q} < \frac{|y|^{2q} - |x|^{2q} }{|y|^q }$$
Applying this relation recursively yields
\begin{align}
    |y|^q - |x|^q  = \frac{|y|^{2^t q} - |x|^{2^t q} }{|y|^{q \sum_{i=0}^{t-1} 2^i}} \leq \frac{2^t q |z|^{2^t q - 1} |y - x|}{|y|^{q (2^t -1)}} \leq {2 |y|^{q - 1} |y - x|},
\end{align}
where the first inequality follows by the mean value theorem with $z$ a scalar between $x$ and $y$, %satisfying $|x| \leq |z|  \leq |y|$, 
and the 2nd inequality holds since by definition of $t$, we have $2^{t-1} q < 1 \leq 2^t q$ and thus $|z|^{2^t q - 1} \leq |y|^{2^t q - 1} $.
\end{proof}

We shift and scale the function $F$ to obtain a function $F': [0,1]^n \to \R$ defined as $F'(x) =  F(B(2 x - \1))$, which remains DS by \cref{prop:submod-composition}. 
\mtodo{add the intuitive idea of what we're doing in the proposition.}

\begin{proposition}\label{prop:lqnormDiscretization} 
Given $q \in (0,1), B>0$ and $F: [0,1]^n \to \R$ defined as $F(x) =  \ell'(x) + \|B(2 x - \1)\|_q^q$, where $\ell': [0,1]^n \to \R$ is a DS function $L$-Lipschitz continuous w.r.t the $\ell_\infty$-norm, let $k = 2 \max\{ \lceil 2L/\epsilon \rceil, \lceil B (8 n/\epsilon)^{1/q} \rceil\}   + 1$ and define the function $F': \seq{0}[k-1]^n \to \R$ as $F'(x) = F(x/(k-1))$. Then $F'$ is DS and we have
\begin{equation}\label{eq:discretizationError}
    \min_{x’ \in \seq{0}[k-1]^n} F'(x’) - \epsilon / 2 \leq \min_{x \in [0,1]^n} F(x) \leq \min_{x \in \seq{0}[k-1]^n} F'(x).
\end{equation}
The same also holds if $q = 0$, with $k=  2\lceil L/\epsilon \rceil + 1$.
%Moreover, given any minimizer $x^*$ of $F'$, $x^*/(k-1)$ is an $\tfrac{\epsilon}{2}$-minimizer of $F$
\end{proposition}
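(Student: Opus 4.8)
The plan is to reduce the whole statement to a careful coordinatewise rounding argument that isolates the only source of non-Lipschitzness, namely the behaviour of $t \mapsto |B(2t-1)|^q$ at its zero $t = 1/2$. First I would dispatch the easy facts. Since $\ell'$ is DS and the separable map $x \mapsto \|B(2x - \1)\|_q^q$ is modular, $F$ is itself DS; as $x \mapsto x/(k-1)$ is a strictly increasing separable reparametrisation, \cref{prop:submod-composition} then gives that $F'$ is DS. The right inequality in \eqref{eq:discretizationError} is immediate, because every grid point $x' \in \seq{0}[k-1]^n$ satisfies $x'/(k-1) \in [0,1]^n$ and $F'(x') = F(x'/(k-1))$, so $\min_{x \in [0,1]^n} F(x) \le F'(x')$ for all $x'$.

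The substance is the left inequality. Let $x^*$ be a minimizer of $F$ over $[0,1]^n$, which exists by compactness (using lower semicontinuity when $q=0$). The key structural point is that $k-1 = 2M$ is \emph{even}, so $1/2 = M/(k-1)$ lies on the grid, exactly at the index where each coordinate of the regularizer vanishes. I would round $x^*$ to a grid point $x'$ coordinatewise in two regimes. For a coordinate with $|x_i^* - 1/2| \le 1/(k-1)$ (``close''), I set $x_i' = 1/2$: then the regularizer contribution $|B(2x_i'-1)|^q = 0 \le |B(2x_i^*-1)|^q$, so $g$ does not increase on this coordinate, while $|x_i' - x_i^*| \le 1/(k-1)$. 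For a coordinate with $|x_i^* - 1/2| > 1/(k-1)$ (``far''), I round $x_i^*$ to its nearest grid point, so $|x_i' - x_i^*| \le 1/(2(k-1))$; since $x_i^*$ is more than one full grid step from $1/2$, the nearest grid point cannot be $1/2$, whence $|x_i' - 1/2| \ge 1/(k-1)$, and $u_i^* := B(2x_i^*-1)$, $u_i' := B(2x_i'-1)$ are nonzero, of the same sign, with $\max\{|u_i^*|,|u_i'|\} \ge |u_i'| \ge 2B/(k-1)$ and $|u_i^*-u_i'| \le B/(k-1)$.

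On each far coordinate I would invoke \cref{lem:lqnormLipchitz}, using $q-1 < 0$ on the bounded-below maximum, to obtain $\bigl| |u_i'|^q - |u_i^*|^q \bigr| \le 2\max\{|u_i^*|,|u_i'|\}^{q-1}|u_i^*-u_i'| \le 2(B/(k-1))^q$. Summing over all $n$ coordinates bounds the increase of the regularizer by $2n(B/(k-1))^q$, and the choice $k-1 \ge B(8n/\epsilon)^{1/q}$ forces $(B/(k-1))^q \le \epsilon/(8n)$, so this term is at most $\epsilon/4$. Simultaneously $\|x'-x^*\|_\infty \le 1/(k-1)$ by construction, so $L$-Lipschitzness of $\ell'$ in the $\ell_\infty$-norm together with $k-1 \ge 4L/\epsilon$ gives $|\ell'(x')-\ell'(x^*)| \le L/(k-1) \le \epsilon/4$. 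Adding the two estimates yields $F(x') \le F(x^*) + \epsilon/2$, and since $F(x') = F'((k-1)x')$ with $(k-1)x' \in \seq{0}[k-1]^n$, this proves $\min_{x' \in \seq{0}[k-1]^n} F'(x') \le \min_{x \in [0,1]^n} F(x) + \epsilon/2$. The case $q=0$ is analogous but simpler: rounding every coordinate to its nearest grid point never increases $\#\{i : x_i \ne 1/2\}$ (coordinates equal to $1/2$ round to $1/2$, and the rest contribute at most their original count), so the regularizer does not increase at all, and $k-1 \ge 2L/\epsilon$ alone bounds the $\ell'$-term by $\epsilon/2$.

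The main obstacle is the far/close dichotomy: I must ensure that ``far'' coordinates are rounded so the scaled argument stays bounded away from $0$ by a full grid step, so that the blow-up of $\max\{\cdots\}^{q-1}$ as the argument approaches $0$ is exactly cancelled by the matching factor $|u_i^*-u_i'|$ supplied by \cref{lem:lqnormLipchitz}. This is precisely why $1/2$ must be a grid point (forcing $k$ odd) and why the ``close'' threshold is one grid spacing rather than a half-spacing. Balancing the regularizer error against the Lipschitz error then pins down the two arguments of the maximum in the definition of $k$.
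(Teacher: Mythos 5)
Your proof is correct and follows essentially the same route as the paper: make $1/2$ a grid point by taking $k-1$ even, treat coordinates near the singularity of $t\mapsto|B(2t-1)|^q$ separately, apply \cref{lem:lqnormLipchitz} on coordinates whose image stays at least one grid step $\tfrac{2B}{k-1}$ away from zero, and balance the resulting $\epsilon/4 + \epsilon/4$ budget. The only (immaterial) difference is the rounding rule near $1/2$: the paper sends every $x_i\neq 1/2$ to the nearest grid point \emph{excluding} $1/2$ and gets a two-sided bound $|F(x)-F'(\pi(x))|\le\epsilon/2$ for all $x$, whereas you snap coordinates within one grid step of $1/2$ onto $1/2$ and use the one-sided fact that the regularizer can only decrease there — which suffices for the stated inequality.
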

\begin{proof}
    Let $\phi: \seq{0}[k-1]^n \to [0,1]^n$ denote the map $\phi(x) = x/(k-1)$, so $F'(x) = F(\phi(x))$. Since $F$ is DS and $\phi$ is monotone, then $F'$ is also DS by \cref{prop:submod-composition}.
    
    Let $\pi: [0,1]^n \to \seq{0}[k-1]^n$ %be a function which maps $x \in [0,1]^n$ to $\pi(x) \in \seq{0}[k-1]^n$ 
    be defined for all $x \in [0,1]^n$ such that for each $i \in \seq{n}$, $[\pi(x)]_i = \tfrac{k-1}{2}$ if  $x_i = 1/2$, and $[\pi(x)]_i = t$ where $\phi(t)$ is the nearest point to $x_i$ for $t$ in $\seq{0}[k-1] \setminus \tfrac{k-1}{2}$, if $x_i \not = 1/2$.  Note that $k-1$ is an even number hence $\tfrac{k-1}{2} \in \seq{0}[k-1]$.

    For any $x \in [0,1]^n$, let $x' = \pi(x)$ be the vector obtained from the above map. Note that for any $i$, if $x_i = \tfrac{1}{2}$ then $\phi(x'_i) = \tfrac{1}{2}$, otherwise we have $|x_i - \phi(x'_i)| \leq \frac{1}{k-1}$ and $|\phi(x'_i)- \tfrac{1}{2}|\geq \tfrac{1}{k-1}$. 
    Let $G_i(x_i) = |B(2x_i - 1)|^q$, we show that for all $i$ where $x_i \not = 1/2$, we have 
$$| G_i(x_i) - G_i(\phi(x'_i))| \leq 4B(\tfrac{2B}{k-1})^{q-1} | x_i - \phi(x'_i)|.$$

Note that the nearest points $\phi(t)$ %on the grid 
to $x_i = 1/2$, for $t$ in $\seq{0}[k-1] \setminus \tfrac{k-1}{2}$, are $\phi(t) = 1/2 \pm 1/(k-1)$ which are equidistant from $1/2$. Hence $B(2x_i - 1)$ and $B(2\phi(x'_i)-1)$ will always have the same sign, and $\max\{B(2x_i - 1), B(2\phi(x'_i)-1)\} \geq \tfrac{2 B}{k-1}$.
By \cref{lem:lqnormLipchitz}, we thus have 
$$| G_i(x_i) - G_i(\phi(x'_i))| \leq 2(\tfrac{2B}{k-1})^{q-1} |B(2x_i - 1) -  B(2\phi(x'_i)-1)| \leq 4B(\tfrac{2B}{k-1})^{q-1} | x_i - \phi(x'_i)|.$$

Putting everything together we get:
\begin{align*}
    |F(x) - F'(x')| &= |F(x) - F(\phi(x'))|\\
    &= |\ell'(x)  - \ell'(\phi(x'))| + \sum_{x_i \not = 1/2} | G_i(x_i) - G_i(\phi(x'_i))|\\
    &\leq L \| x  - \phi(x')\|_\infty + 4B(\tfrac{2B}{k-1})^{q-1} \sum_{x_i \not = 1/2}  | x_i - \phi(x'_i)|\\
    &\leq  \frac{L}{k-1} + \frac{4B(\tfrac{2B}{k-1})^{q-1} n}{k-1}\\
    &\leq \frac{L}{k-1} + 2 n (\frac{2B}{k-1})^{q}\\
    &\leq \frac{\epsilon}{4} + \frac{\epsilon}{4} = \frac{\epsilon}{2}.
\end{align*}
The same bound holds for $q=0$ by noting that $B(2x_i - 1)$ and $B(2\phi(x_i')-1)$ have the same support, hence $\|B(2x_i - 1)\|_0 = \|B(2\phi(x_i')-1)\|_0$ (no discretization error).
\end{proof}

\subsection{Proof of \cref{prop:equivContRel}}\label{app:equivContRelProof}

\EquivContRel*
\begin{proof}
    Let $x^*$ be a minimizer of $F$.  By \cref{prop:LEproperties}-\ref{itm:extension}, we have $F(x^*) = f_{\downarrow}(\Theta^{-1}(x^*))$. Since $\Theta^{-1}(x^*) \in [0,1]_{\downarrow}^{n \times (k-1)}$, then $$\min_{x \in \seq{0}[k-1]^n} F(x) \geq  \min_{X \in [0,1]_{\downarrow}^{n \times (k-1)}} \fext(X).$$ 
    Let $X^*$ is a minimizer of $\fext$. %which exists since \fext is continuous and $ [0,1]_{\downarrow}^{n \times (k-1)}$ is compact
    By \cref{prop:LEproperties}-\ref{itm:round}, we have $F(\round(X^*)) \leq \fext(X^*)$. Since $\round(X^*) \in \seq{0}[k-1]^n$, we have
    $$\min_{x \in \seq{0}[k-1]^n} F(x) \leq  \min_{X \in [0,1]_{\downarrow}^{n \times (k-1)}} \fext(X).$$
\end{proof}

\subsection{Continuous extension of a normalized modular function}\label{app:ContExtModProof}

In this section, we derive the form of a normalized modular function and its continuous extension. 

\begin{lemma}\label{lem:modularFct}
      A function $F: \seq{0}[k-1]^n \to \R$ is modular and normalized iff there exists $W \in \R^{n \times (k-1)}$ such that
    \begin{equation}
        F(x) = \sum_{i=1}^n \sum_{j=1}^{x_i} W_{i,j} \ \textnormal{for all } x \in \seq{0}[k-1]^n.
    \end{equation}
\end{lemma}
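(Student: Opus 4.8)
The plan is to prove both implications of the equivalence. The reverse direction is the routine one: given the formula $F(x) = \sum_{i=1}^n \sum_{j=1}^{x_i} W_{i,j}$, I would first observe that $F$ is separable, $F(x) = \sum_{i=1}^n g_i(x_i)$ with $g_i(t) = \sum_{j=1}^{t} W_{i,j}$, and is therefore modular by the separability characterization of modularity (Theorem 3.3 of \citet{Topkis1978}, recalled in \cref{sec:prelim}). Normalization is immediate, since the inner sum is empty at $x = \0 = x^{\min}$, giving $F(\0) = 0$.

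For the forward direction, the crux is to show that a normalized modular $F$ admits the additive decomposition $F(x) = \sum_{i=1}^n F(x_i e_i)$. Once this is in hand, I would define $W_{i,j} := F(j e_i) - F((j-1) e_i)$ for $i \in \seq{n}, j \in \seq{k-1}$, so that the telescoping sum $\sum_{j=1}^{x_i} W_{i,j} = F(x_i e_i) - F(\0) = F(x_i e_i)$, where the last equality uses normalization, reproduces the claimed formula.

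To establish the additive decomposition I would induct on the number of ``filled'' coordinates. Writing $x^{(m)} = (x_1, \ldots, x_m, 0, \ldots, 0)$, the claim $F(x^{(m)}) = \sum_{i=1}^{m} F(x_i e_i)$ holds trivially at $m = 0$ by normalization. For the inductive step I would apply the modular identity \eqref{eq:submod} (with equality) to $u = x^{(m-1)}$ and $v = x_m e_m$; since these two vectors have disjoint supports and all entries are nonnegative, $\min\{u,v\} = \0$ and $\max\{u,v\} = x^{(m)}$, yielding $F(x^{(m-1)}) + F(x_m e_m) = F(\0) + F(x^{(m)})$. Combining this with the induction hypothesis and $F(\0) = 0$ closes the step.

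The main obstacle is really just this additive decomposition; everything else is bookkeeping, and I expect no genuine difficulty since the disjoint-support choice of $u, v$ makes the lattice operations collapse to exactly $\0$ and $x^{(m)}$. As an alternative to the explicit induction, one could shortcut directly via separability (Theorem 3.3 of \citet{Topkis1978}) to obtain $F(x) = \sum_i g_i(x_i)$ and then pin down the per-coordinate constants using $F(\0) = 0$; however, the self-contained induction has the advantage of producing the decomposition in precisely the form needed for the telescoping definition of $W$.
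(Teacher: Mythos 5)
Your proposal is correct, and both directions reach the same matrix $W$ as the paper (your $W_{i,j} = F(je_i) - F((j-1)e_i)$ coincides with the paper's $W_{i,j} = F_i(j) - F_i(j-1)$ once $F$ is written separably). The only real difference is in the forward direction: the paper invokes the separability characterization of modularity (Theorem 3.3 of \citet{Topkis1978}) to get $F(x) = \sum_i F_i(x_i)$ and then telescopes, whereas you reprove the needed instance of separability from scratch by inducting on the number of filled coordinates and applying the modular equality to the disjointly supported pair $x^{(m-1)}$ and $x_m e_m$ (for which $\min$ and $\max$ do collapse to $\0$ and $x^{(m)}$, since all entries are nonnegative). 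Your induction is a correct, self-contained substitute that avoids citing Topkis for that direction, at the cost of a few extra lines; the paper's route is shorter but leans on the external separability theorem, which it in any case also needs (as do you) for the reverse direction. Either is acceptable; there is no gap.
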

\begin{proof}
We first recall that a function $F$ is modular iff it is separable under addition \citep[Theorem 3.3]{Topkis1978}.

$(\implies)$ If $F$ is modular, then it is separable under addition, i.e. there exist $F_i: \seq{0}[k-1] \to \R$, $i \in [n]$, such that $F(x) = \sum_{i=1}^n F_i(x_i)$ for any $x \in \seq{0}[k-1]^n$. As $F$ is normalized, we also have that $F(0) = \sum_{i=1}^n F_i(0) = 0$. 

Now, define $W \in \R^{n \times (k-1)}$ such that $W_{i,j} = F_i(j) - F_i(j-1)$, for $i \in [n]$, $j \in [k-1]$. We thus obtain 
\begin{align*}
     F(x) &= \sum_{i=1}^n F_i(x_i) - F(0) \\
          &=\sum_{i=1}^n F_i(x_i) - F_i(0) \\
          &= \sum_{i=1}^n \sum_{j=1}^{x_i} F_i(j) - F_i(j-1) &\text{(by telescoping sums)}\\
          &= \sum_{i=1}^n \sum_{j=1}^{x_i} W_{i,j}.
\end{align*}
 % \begin{equation}
 %        F(x) = \sum_{i=1}^n F_i(x_i) - F(0)= \sum_{i=1}^n F_i(x_i) - F_i(0) = \sum_{i=1}^n \sum_{j=1}^{x_i} F_i(j) - F_i(j-1). by telescoping sums
 %    \end{equation}
$(\impliedby)$ If there exists a matrix $W \in \R^{n \times (k-1)}$ such that $F(x) = \sum_{i=1}^n \sum_{j=1}^{x_i} W_{ij}$, then $F$ is separable under addition, and hence modular. Indeed, we can define the functions $F_i : \seq{0}[k-1] \to \R$, $i \in [n]$, by $F_i(x) = \sum_{j=1}^{x_i} W_{i,j}$. Then $F(x) = \sum_{i=1}^n F_i(x_i)$ for any $x \in \seq{0}[k-1]^n$. $F$ is also normalized, since $F(0) = 0$.
\end{proof}

\begin{restatable}{proposition}{ModularExtension}\label{prop: modular-extension}
    A function $F: \seq{0}[k-1]^n \to \R$ is modular and normalized, with $F(x) = \sum_{i=1}^n \sum_{j=1}^{x_i} W_{ij}$ for all $x \in \seq{0}[k-1]^n$ iff its continuous extension $\fext$ is linear on its domain, with $\fext(X) = \langle W, X \rangle_F$ for all $X \in \R_{\downarrow}^{n \times (k-1)}$. %for some $W \in \R^{n \times (k-1)}$.
\end{restatable}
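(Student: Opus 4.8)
The plan is to reduce the claim to \cref{lem:modularFct} and then verify the linear form of $\fext$ by a direct computation using a row-stable permutation. By \cref{lem:modularFct}, the hypothesis that $F$ is modular and normalized is equivalent to the existence of $W \in \R^{n \times (k-1)}$ with $F(x) = \sum_{i=1}^n\sum_{j=1}^{x_i} W_{ij}$. It therefore suffices to prove that, for a fixed $W$, this explicit modular form of $F$ is equivalent to $\fext(X) = \langle W, X\rangle_F$ for all $X \in \R_{\downarrow}^{n \times (k-1)}$.

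For the forward direction, I would fix $X \in \R_{\downarrow}^{n \times (k-1)}$ and choose a \emph{row-stable} non-increasing permutation $(p,q)$, which always exists. The key combinatorial step is to show that $y^i_{p_i} = q_i$ for every $i$, where $\{y^i\}$ are the points defined in \cref{def: cts-extension}. This holds because $y^i_{p_i}$ counts the number of indices $\ell \le i$ with $p_\ell = p_i$, while row-stability forces the column indices within a fixed row to appear in strictly increasing order; since each row contributes all columns $1,\dots,k-1$ exactly once, the $m$-th visit to a given row must use column $m$. Granting this, note that $y^i$ and $y^{i-1}$ differ only in coordinate $p_i$, whose value increases by one to $y^i_{p_i} = q_i$, so the modular form gives $F(y^i) - F(y^{i-1}) = W_{p_i,\,y^i_{p_i}} = W_{p_i, q_i}$, the two cumulative sums differing by exactly this single term. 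Substituting into \cref{def: cts-extension} yields $\fext(X) = \sum_{i=1}^{(k-1)n} X_{p_i,q_i} W_{p_i,q_i} = \langle W, X\rangle_F$, the last equality because $(p,q)$ enumerates every index pair exactly once. As $\fext(X)$ is independent of the permutation choice, the identity holds for this $X$, hence for all $X \in \R_{\downarrow}^{n \times (k-1)}$.

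For the converse, I would restrict the assumed identity to integral matrices. By \cref{prop:LEproperties}-\ref{itm:extension}, for any $x \in \seq{0}[k-1]^n$ we have $\fext(\Theta^{-1}(x)) = F(x)$, while by \eqref{eq:ThetaInv} the entries of $\Theta^{-1}(x)$ satisfy $[\Theta^{-1}(x)]_{ij} = 1$ exactly when $j \le x_i$. Evaluating the Frobenius inner product then gives $F(x) = \langle W, \Theta^{-1}(x)\rangle_F = \sum_{i=1}^n\sum_{j=1}^{x_i} W_{ij}$, which is precisely the modular form; \cref{lem:modularFct} then also certifies that $F$ is modular and normalized.

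The main obstacle is the combinatorial identity $y^i_{p_i} = q_i$: one must argue carefully that row-stability, together with the fact that every column of a row is used exactly once, pins down the column index of the $m$-th occurrence of a row to be exactly $m$. Once this bookkeeping is established, the remainder is a telescoping computation together with an appeal to the permutation-invariance of $\fext$.
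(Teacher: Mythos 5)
Your proof is correct, and your backward direction (from the linear extension to the modular form) is essentially identical to the paper's: both restrict to $X = \Theta^{-1}(x)$, invoke \cref{prop:LEproperties}-\ref{itm:extension} and Eq.~\eqref{eq:ThetaInv}, and read off the cumulative-sum form. The forward direction, however, is where you genuinely diverge. The paper argues abstractly: since $F$ modular means both $F$ and $-F$ are submodular, \cref{prop:LEproperties}-\ref{itm:conv} makes both $\fext$ and $-\fext$ convex, hence $\fext$ is affine on $\R_{\downarrow}^{n \times (k-1)}$; normalization ($\fext(0)=0$) upgrades affine to linear, and the already-proved converse identifies the linear coefficient matrix as $W$. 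You instead compute $\fext(X)$ directly from \cref{def: cts-extension} using a row-stable non-increasing permutation, establishing the bookkeeping identity $y^i_{p_i} = q_i$ so that each increment $F(y^i) - F(y^{i-1})$ collapses to the single term $W_{p_i,q_i}$, and then invoke permutation-invariance of $\fext$. Your combinatorial step is sound: row-stability forces the columns of each row to be visited in increasing order, and since all $k-1$ columns of each row appear exactly once, the $m$-th visit to a row indeed lands in column $m$; such a permutation always exists because rows of $X$ are non-increasing, so ties within a row can be broken by column without violating the global order. The trade-off is that the paper's route is shorter but leans on the nontrivial convexity characterization (\cref{prop:LEproperties}-\ref{itm:conv}) and on the converse direction to pin down $W$, whereas yours is longer but self-contained, identifies $W$ explicitly in one pass, and as a by-product makes transparent why the subgradient of \cref{prop:LEproperties}-\ref{itm:subgradient} is constant for modular $F$. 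Both are valid proofs.
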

\begin{proof}
Recall from \cref{lem:modularFct}, that $F$ is modular and normalized iff it can be written as $F(x) = \sum_{i=1}^n \sum_{j=1}^{x_i} W_{ij}$ for some $W \in \R^{n \times (k-1)}$.

$(\impliedby)$ If $F$ has a linear continuous extension such that $\fext(X) = \langle W, X \rangle_F$ for all $X \in  \R_{\downarrow}^{n \times (k-1)}$ for some $W \in \R^{n \times (k-1)}$. Then, for any $x \in \seq{0}[k-1]^n$, let $X = \Theta^{-1}(x)$. By \cref{prop:LEproperties}-\ref{itm:extension} and the definition of the $\Theta^{-1}$ \eqref{eq:ThetaInv}, we have that
    \begin{equation}
        F(x) = \fext(X) = \langle W, X \rangle_F = \sum_{i=1}^n \sum_{j=1}^{k-1} W_{i,j} X_{i, j}= \sum_{i=1}^n \sum_{j=1}^{x_i} W_{ij}.
    \end{equation}

$(\implies)$ Suppose $F$ is modular and normalized. By \cref{def: cts-extension}, the continuous extension of $-F$ is equal to $-\fext$ on its domain. Since $F$ and $-F$ are submodular, then both $\fext$ and $-\fext$ are convex by \cref{prop:LEproperties}-\ref{itm:conv}, which implies that $\fext$ is affine on its domain. Since $F$ is normalized, we have $\fext(0)=F(0)=0$ by \cref{prop:LEproperties}-\ref{itm:extension}, hence $\fext$ is linear on its domain, i.e,. $\fext(X) = \langle W', X \rangle_F$ for all $X \in  \R_{\downarrow}^{n \times (k-1)}$ for some $W' \in \R^{n \times (k-1)}$. By the above argument for the reverse direction, we must also have that $W' = W$.

% $(\implies)$  For any $X \in \R_{\downarrow}^{n \times (k-1)}$, let $(p, q)$ be a row-stable non-increasing permutation of $X$. Then by definition of the continuous extension (\cref{def: cts-extension}), we have 
% \[
%     \fext(X) %&= X_{p_{(k-1)n},q_{(k-1)n}} F(y^{(k-1)n}) + \sum_{i=1}^{(k-1)n - 1} (X_{p_{i},q_{i}} - X_{p_{i+1},q_{i+1}}) F(y^i) \\
%     = \sum_{i=1}^{(k-1)n} X_{p_{i},q_{i}} \big ( F(y^i) - F(y^{i-1}) \big )
% \]
%     where $y^i = y^{i-1} + e_{p_i}$. Note that $F(y^i) - F(y^{i-1}) = W_{p_i, y^i_{p_i}}$ where $y^i_{p_i}  = \sum_{j = 1}^i [e_{p_{j}}]_{p_i} = |\{j \in [i] : p_{j} = p_i\}|$, i.e., it's the number of times $p_i$ appeared so far in the permutation up to $i$. Since the permutation $(p,q)$ is row-stable, we have $y^i_{p_i} = q_i$. Therefore,
% \[
%         \fext(X) = \sum_{i=1}^{(k-1)n} X_{p_{i},q_{i}} W_{p_{i},q_{i}} = \langle W, X \rangle_F.
% \]

\end{proof}

\subsection{Proof of \cref{theorem:DCALocalSearch}}\label{app:DCALocalSearchProof}

Before proving \cref{theorem:DCALocalSearch}, we recall some properties of DCA that apply to iterates of \cref{alg:DCALocalSearch}.

\begin{proposition}\label{prop:DCALocalSeachProperties} 
Let $\epsilon, \epsilon_x  \geq 0$, and $\{X^t\}$, $\{\tilde{X}^{t}\}$, $\{Y^t\}$ be generated by \cref{alg:DCALocalSearch}, where the subproblem at line 6 is solved up to accuracy $\epsilon_x$. Then for all $t \in \N$, we have:
\begin{enumerate}[label=\alph*), ref=\alph*]
    \item \label{itm:DCALS-descent} $f(\tilde{X}^{t+1}) \leq f(X^{t}) + \epsilon_x$.  
    \item \label{itm:DCALS-criticalPt} If $f(X^{t}) - f(\tilde{X}^{t+1})  \leq \epsilon$, then $X^t$ is an $\epsilon + \epsilon_x$-critical point of $g-h$ with $Y^t \in \partial_{\epsilon + \epsilon_x } g(X^t) \cap \partial h(X^t)$.
    % \item Suppose \cref{alg:DCALocalSearch} generates two iterates $x^t, x^{t+1}$ with $F(x^t) - F(x^{t+1}) \leq \epsilon$, then $Y^t \in \partial_{\epsilon} g(X^t) \cap \partial h(X^t)$. \label{itm: DCALocalSearch-epsilonCritical}
    % \item After $T$ iterations, we have 
    % \begin{equation}
    %     \min_{t \in [T-1]} f(X^t) - f(\tilde{X}^{t+1}) \leq \min_{t \in [T-1]} F(x^t) - F(x^{t+1}) \leq \frac{F(x^0) - F^*}{T}
    % \end{equation}
    % where $F^* = \min_{x \in [0:k-1]^n} F(x)$. \label{itm: DCALocalSearch-convergence}
\end{enumerate}
\end{proposition}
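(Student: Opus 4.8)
The plan is to observe that the triple $(X^t, Y^t, \tilde{X}^{t+1})$ generated at iteration $t$ of \cref{alg:DCALocalSearch} constitutes exactly one step of standard DCA \eqref{eq:DCA} applied to the DC program \eqref{eq:dsminCont}, so that both claims follow immediately from \cref{prop:DCAproperties}-\ref{itm: DCA-descent},\ref{itm:DCA-epsilon-critical}. To invoke that proposition, I would first recall that $g = \gext + \delta_{[0,1]_{\downarrow}^{n \times (k-1)}}$ and $h = \hext$ belong to $\Gamma_0(\R^{n \times (k-1)})$ --- this was already noted below \eqref{eq:dsminCont} using \cref{prop:LEproperties}-\ref{itm:conv},\ref{itm:Lip} together with the closedness and convexity of $[0,1]_{\downarrow}^{n \times (k-1)}$ --- and that the DC minimum equals the finite value $F^*$ by \cref{prop:equivContRel}, so the hypotheses of \cref{prop:DCAproperties} are met.

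Next I would verify the two DCA update relations for this step. Line 5 selects $Y^t$ as the subgradient of $\hext$ at $X^t$ associated with the permutation $(p,q)$, which is a genuine element of $\partial h(X^t)$ by \cref{prop:LEproperties}-\ref{itm:subgradient}; this matches \eqref{eq:DCA-y}. Line 6 returns $\tilde{X}^{t+1}$ as an $\epsilon_x$-accurate minimizer of $\gext(X) - \ip{Y^t}{X}_F$ over $[0,1]_{\downarrow}^{n \times (k-1)}$, and since the feasibility constraint is absorbed into $g$ through its indicator, this is precisely an $\epsilon_x$-accurate solve of $\min_{X} g(X) - \ip{Y^t}{X}_F$, matching \eqref{eq:DCA-x}. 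Setting $x^t = X^t$, $y^t = Y^t$, and $x^{t+1} = \tilde{X}^{t+1}$ in \cref{prop:DCAproperties}, part \ref{itm: DCA-descent} gives \ref{itm:DCALS-descent} and part \ref{itm:DCA-epsilon-critical} gives \ref{itm:DCALS-criticalPt}.

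The only point requiring care --- and the one I expect to be the single subtlety --- is that the subproblem output $\tilde{X}^{t+1}$ is in general \emph{not} the next iterate $X^{t+1}$, which \cref{alg:DCALocalSearch} forms by rounding $\tilde{X}^{t+1}$ (or by applying $\Theta$ when it is already integral). This breaks the sequential chaining $x^t \to y^t \to x^{t+1} \to y^{t+1}$ underlying a full DCA run, so one cannot simply declare the sequences ``DCA iterates'' without comment. However, parts \ref{itm: DCA-descent} and \ref{itm:DCA-epsilon-critical} of \cref{prop:DCAproperties} are \emph{single-step} statements: their proofs use only that $Y^t \in \partial h(X^t)$ and that $\tilde{X}^{t+1}$ solves the inner convex subproblem up to accuracy $\epsilon_x$, and never that $\tilde{X}^{t+1}$ becomes the starting point of iteration $t+1$. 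Consequently they apply verbatim to $(X^t, Y^t, \tilde{X}^{t+1})$ irrespective of how $X^{t+1}$ is subsequently produced, which is exactly what \ref{itm:DCALS-descent} and \ref{itm:DCALS-criticalPt} assert.
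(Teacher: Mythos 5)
Your proposal is correct and follows essentially the same route as the paper, which likewise observes that $(X^t, Y^t, \tilde{X}^{t+1})$ constitutes a single DCA step from $X^t$ with the subproblem solved to accuracy $\epsilon_x$ and then invokes \cref{prop:DCAproperties}-\ref{itm: DCA-descent},\ref{itm:DCA-epsilon-critical}. Your explicit remark that these are single-step statements unaffected by the subsequent rounding of $\tilde{X}^{t+1}$ is a useful clarification of the same argument, not a departure from it.
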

\begin{proof}
    Note that the iterates $\tilde{X}^{t+1}$, $Y^t$ are generated by a DCA step from $X^t$, with subproblem \eqref{eq:DCA-x} solved up to accuracy $\epsilon_x$. The proposition  follows then directly from \cref{prop:DCAproperties}-\ref{itm: DCA-descent},\ref{itm:DCA-epsilon-critical}.
\end{proof}
%\begin{proof}
%    \begin{enumerate}[label=\alph*), ref=\alph*]
%    \item Suppose $F(x^t) - F(x^{t+1}) \leq \epsilon$. We have $F(x^t) = f(X^t)$ and $F(x^{t+1}) \leq f(\tilde{X}^{t+1})$. Hence, $\epsilon \geq F(x^t) - F(x^{t+1}) \geq f(X^t) - f(\tilde{X}^{t+1})$. As $\tilde{X}^{t+1}$ is a (standard) DCA iterate, we have $Y^t \in \partial_{\epsilon} g(X^t) \cap \partial h(X^t)$ by \cref{prop:DCproperties}-\ref{itm:DCA-epsilon-critical}.
%    \item By telescoping series, we have $\sum_{t=0}^{T-1} F(x^t) - F(x^{t+1}) = F(x^0) - F(x^T) \leq F(x^0) - F^*$, and therefore $\min_{t \in [T-1]} F(x^t) - F(x^{t+1}) \leq (F(x^0) - F^*)/T$. Moreover, since $ f(X^t) - f(\tilde{X}^{t+1}) \leq F(x^t) - F(x^{t+1})$ for all $t \in [T-1]$, we have $\min_{t \in [T-1]} f(X^t) - f(\tilde{X}^{t+1}) \leq \min_{t \in [T-1]} F(x^t) - F(x^{t+1})$.
%\end{enumerate}
%\end{proof}

\DCALocalSearch*
\begin{proof}
We first observe that for all $t \in \seq{T}$, we have:
\begin{equation}\label{eq:ContDiscreteRelation}
    F(x^t) - F(x^{t+1})  = f(X^t) - f(X^{t+1}) \geq f(X^t) - f(\tilde{X}^{t+1}).
\end{equation}
Since by the extension property (\cref{prop:LEproperties}-\ref{itm:extension}), we have $f(X^{t+1}) = F(x^{t+1})$ and $f(X^{t}) = F(x^t)$. 
%The inequality \eqref{eq:ContDiscreteDiff} then follows by noting that 
And by rounding (\cref{prop:LEproperties}-\ref{itm:round}), we have
$f(X^{t+1}) \leq f(\tilde{X}^{t+1})$.  If $\tilde{X}^{t+1}$ is integral and rounding is skipped, this still holds trivially since $X^{t+1} = \tilde{X}^{t+1}$.
    \begin{enumerate}[label=(\alph*)]
        \item This follows from \cref{prop:DCALocalSeachProperties}-\ref{itm:DCALS-descent} and Eq. \eqref{eq:ContDiscreteRelation} 
       
        %This is a simple consequence of \cref{prop:DCproperties}-\ref{itm: DCA-descent}. We have $F(x^{t+1}) = f(X^{t+1}) \leq f(\tilde{X}^{t+1}) \leq f(X^{t}) = F(x^t)$.
        \item If $F(x^t) - F(x^{t+1}) \leq \epsilon$, then $f(X^t) - f(\tilde{X}^{t+1}) \leq \epsilon$ too, by  Eq. \eqref{eq:ContDiscreteRelation}. By \cref{prop:DCALocalSeachProperties}-\ref{itm:DCALS-criticalPt}, we thus have $Y^t \in \partial_{\epsilon + \epsilon_x} g(X^t) \cap \partial h(X^t)$. 
        We observe that by definition of $y^i$, if $(p,q)$ is a row-stable non-increasing permutation of $X^t$, then it is also a  
        non-increasing permutation of $\Theta^{-1}(y^i)$,
        %then it is a common row-stable non-increasing permutation for $X^t$ and $\Theta^{-1}(y^i)$,
        %then $(p,q)$ is also a row-stable permutation consistent with $J_{(p,q)[i]}$ 
        for all $i \in \seq{0}[(k-1)n]$. Hence, $Y^t \in \partial h(\Theta^{-1}(y^i))$ by \cref{prop:LEproperties}-\ref{itm:subgradient}, and $Y^t \in  \partial_{\epsilon  + \epsilon_x} g(X^t) \cap \partial h(\Theta^{-1}(y^i))$. Therefore, by \cref{prop:criticality} and \cref{prop:LEproperties}-\ref{itm:extension},
        \begin{equation}\label{eq:localminInChain}
             F(x^t) = f(X^t) \leq f(\Theta^{-1}(y^i)) + \epsilon + \epsilon_x = F(y^i) +  \epsilon + \epsilon_x,
        \end{equation}
        for any $i \in \seq{(k-1)n}$.  
        %Note that the particular choice of $(p,q)$ on line 4 ensures that $\bar{x}^t$ is equal to either $y^{d+1}$ or $y^{d-1}$, where $d = \|x^t \|_0$. Thus, Eq \eqref{eq:localminInChain} applies to it, i.e., $F(x^t) \leq F(\bar{x}^t) +  \epsilon + \epsilon_x.$ Since $\bar{x}^t$ is the best neighboring point of $x^t$, this implies that $x^t$ is an $(\epsilon + \epsilon_x)$-local minimum of $F$.

        \item We first argue that \cref{alg:DCALocalSearch} converges ($F(x^t) - F(x^{t+1}) \leq \epsilon$) after at most $(F(x^0) - F^*) / \epsilon$ iterations.
        By telescoping sums, we have 
        $\sum_{t=0}^{T-1} F(x^t) - F(x^{t+1}) = F(x^0) - F(x^T) \leq F(x^0) - F^*$. Hence, 
        \begin{equation}\label{eq:convergence}
            \min_{t \in [T-1]} F(x^t) - F(x^{t+1}) \leq \frac{F(x^0) - F^*}{T}. 
        \end{equation}
        %\cref{alg:DCALocalSearch} stops whenever $F(x^t) - F(x^{t+1}) \leq \epsilon$.
        %By Eq. \eqref{eq:convergence}, 
        Taking $T = (F(x^0) - F^*)/ \epsilon$, we get that
        there exists some $t \in [T-1]$ such that $$F(x^{t}) - F(x^{t+1}) \leq (F(x^0) - F^*)/T = \epsilon.$$ 
        Since $(p,q)$ is chosen on line 4 to be a common permutation of $X^t$ and $\Theta^{-1}(\bar{x}^t)$, then by the same argument as in \cref{itm: permutation-min}, we have at convergence
        $F(x^t) \leq F(\bar{x}^t) +  \epsilon + \epsilon_x.$ Since $\bar{x}^t$ is the best neighboring point of $x^t$, this implies that $x^t$ is an $(\epsilon + \epsilon_x)$-local minimum of $F$.
        
        %Supposing $(p,q)$ is the row-stable non-increasing permutation of $X^t$ used to compute $Y^t$, by \cref{itm: permutation-min} we have $F(x^{t}) \leq F(\Phi((p, q)[i])) + \epsilon$ for any $i \in \seq{(k-1)n}$. Recalling that $\bar{x}^t \in \argmin_{N(x^{t})} F(x)$, this gives
        % \begin{align*}
        %     F(x^{t}) &\leq F(\Phi((p, q)[i])) + \epsilon \quad \forall i \in \seq{(k-1)n} \\
        %     &\leq F(\bar{x}^t) + \epsilon \\
        %     &\leq F(x) + \epsilon \quad \forall x \in N(x^t).
        % \end{align*}
    \end{enumerate}
\end{proof}
\section{Experimental Setup Additional Details}\label{app:exps-setup}
We provide here additional details on the experimental setups used in \cref{sec:exps}.  
All methods were implemented in MATLAB.
To solve the submodular minimization subproblem in DCA-LS (\cref{alg:DCALocalSearch}-line 6),  we use the pairwise FW algorithm from \citet{Bach2019}, available at \url{http://www.di.ens.fr/~fbach/submodular_multi_online.zip}. In both experiments, during each DCA iteration, we run pairwise FW for a maximum of 400 iterations, stopping earlier if the duality gap reaches $10^{-4}$. We warm start pairwise FW with the subproblem's solution from the previous DCA iteration.

To achieve a desired target $(\snr)$ of $\alpha \geq 0$, we choose the noise variance $\sigma^2$ using the following formula, 
\begin{equation}
    \sigma = \sqrt{10^{-\frac{\alpha}{10}} \frac{\|b\|_2^2}{\|\xi'\|_2^2}},
\end{equation}
where $\xi' \sim \Nc(\bold{0}, I_m)$, then set the noise vector to $\xi = \sigma \xi'$.

\subsection{Additional Details for \cref{sec:ILS}}\label{subseq:ilsq-details}
% In this experiment, we take $\X = \{-1, 0, 2, 3\}^n$. 
The RAR solution is obtained by first solving the relaxed least squares problem
\begin{equation}
    x^{LS} \in \argmin_{x \in [-1, 3]^n} F(x) = \|Ax - b\|_2^2
\end{equation}
then rounding $x^{LS}$ to the nearest point in $\X$. 
We implement OBQ according to the pseudocode provided in \citet[Algorithm 3]{Frantar2022}. But since in our setting we assume no access to $x^\natural$, we initialize OBQ with $x^{LS}$. For the ADMM approach of \citet{takapoui2020simple}, we use the code from \url{https://github.com/cvxgrp/miqp_admm?tab=readme-ov-file}. 
In DCA-LS, we use a maximum of $T = 50$ outer iterations and set $\epsilon = 10^{-5}$. 

% We apply \cref{alg:DCALocalSearch} to the problem
% \begin{equation}
%     \min_{x \in \X} F(x) = \|Ax - b\|_2^2 \label{eq: ilsq-2}
% \end{equation}
% with DS components $G(x) = x^{\top} Q^- x + c^{\top}x + b^{\top}b$ and $H(x) = x^{\top} (-Q^+) x$. 

%We solve \cref{alg:DCALocalSearch}-line 7 using the pairwise Frank-Wolfe algorithm \cite{lacoste2015global}. This approach is described in greater detail in \citep[Section 5.2]{Bach2019}, with code available at \url{http://www.di.ens.fr/~fbach/submodular_multi_online.zip}. We allow pairwise Frank-Wolfe to run for a maximum of 400 iterations.

For $n=100$, recall that we obtain an optimal solution using Gurobi 10.0.1 \citep{gurobi}. But since the domain $\X = \{-1, 0, 2, 3\}^n$ has unevenly spaced integers, Gurobi cannot solve it directly. 
Instead, we reformulate the problem into an equivalent unconstrained binary quadratic program (UBQP)
\begin{equation}
    \min_{x \in \{0,1\}^{2n}} x^{\top} U x + d^{\top}x \label{eq: ubqp}
\end{equation}
where $U = M^{\top} A^{\top} A M$, $d^{\top} = -2( \1^{\top} A^{\top} A M + b^{\top}AM)$ and $M \in \R^{n \times 2n}$ is defined as $M = I_n \otimes [3,1]$, with $\otimes$ denoting the Kronecker product. To see that the two problems are equivalent, the map $Mx - \1$ defines a bijection between $\{0,1\}^{2n}$ and $\X$, and that the objective in \eqref{eq: ubqp} is equal to $F(Mx - \1)$, without the constant terms.

\subsection{Additional Details for \cref{sec:ICS}}
% We apply \cref{alg:DCALocalSearch} to the problem
% \begin{equation}
%     \min_{x \in \{-1, 0, 1\}^n} \|Ax - b\|_2^2 + \lambda \|x\|_0. \label{eq: cs-objective-repeat}
% \end{equation}
% with DS components $G(x) = x^{\top} Q^- x + c^{\top}x + b^{\top}b + \lambda\|x\|_0$ and $H(x) = x^{\top} (-Q^+) x$. 
%while all other details/hyperparameters are the same as in \cref{subseq:ilsq-details}. 
% We try two different initialization strategies for \cref{alg:DCALocalSearch}; first setting $x^0 = 0$ and then setting $x^0$ as a solution of the convex relaxation of \eqref{eq: cs-objective-repeat}

We solve the box constrained variant of Lasso 
\begin{equation}
    \min_{x \in [-1,1]^n} \|Ax - b\|_2^2 + \lambda \|x\|_1, \label{eq: cs-objective-relaxed-repeat}
\end{equation}
using the FISTA algorithm from \citet{beck2019fom} with default parameter settings, i.e., $1000$ maximum iterations and $\|x^{k+1} - x^k\| \leq 10^{-5}$ stopping criterion (for other parameters see \url{https://www.tau.ac.il/~becka/solvers/fista}). 
In DCA-LS, we use a maximum of $T=25$ outer iterations, and set $\epsilon = 10^{-5}$. 
In DCA-LS, DCA-LS-LASSO, and FISTA, we use $\lambda = 10^{-i}$ for $i \in \seq{0}[5]$, and warm start with the solution of the previous $\lambda$ value (in decreasing order). 
\mtodo{In DCA-LS-LASSO, we warm start FISTA with the solution returned by DCA-LS-LASSO for the previous $\lambda$. I think it might have been better if we actually initialized with the solution of standalone FISTA for the previous $\lambda$.}
Our implementation of OMP is adapted from the one provided in \citet{Becker}.
We run OMP for a maximum of $\lceil 1.5s \rceil$ iterations, stopping earlier if the residual reaches $\|Ax - b\|_2 \leq \|\xi \|_2$
\section{Additional Experiments}

We present here additional experimental results for the applications presented in \cref{sec:exps}. 

\subsection{Additional Integer Least Squares Experiments} 
\label{sec:ILS-noise}

\begin{figure}[h]
\includegraphics[scale=0.3]{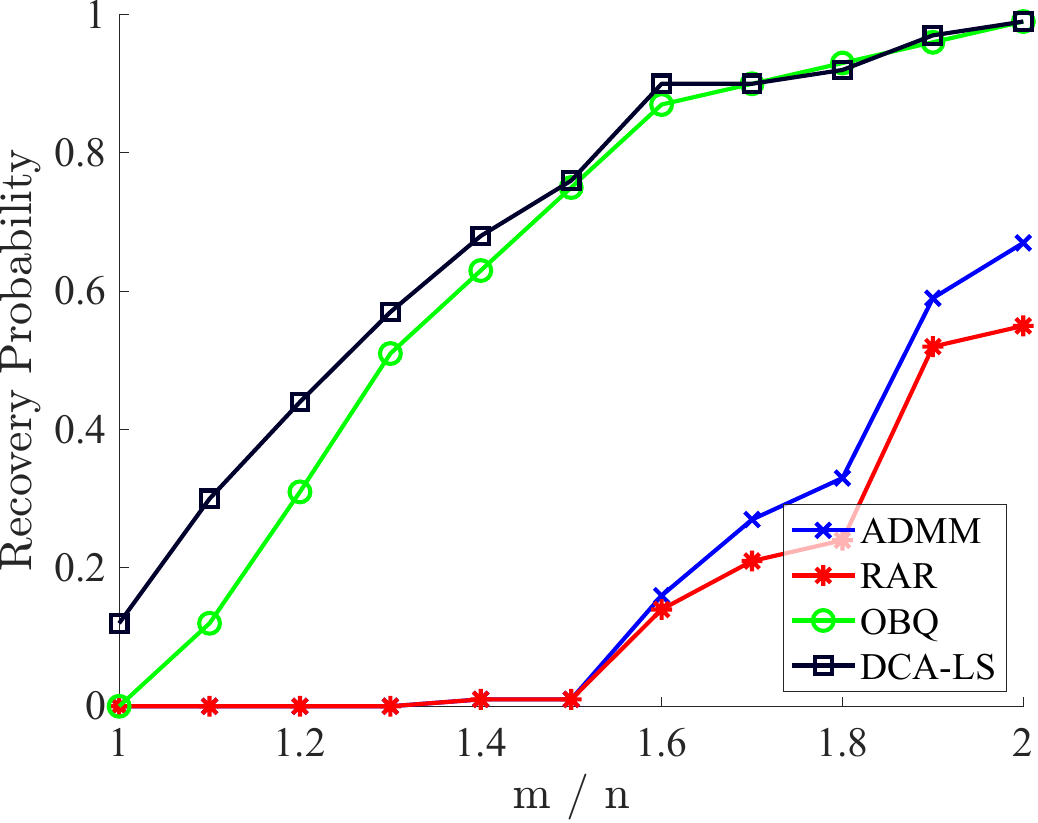}\hfill
\includegraphics[scale=0.3]{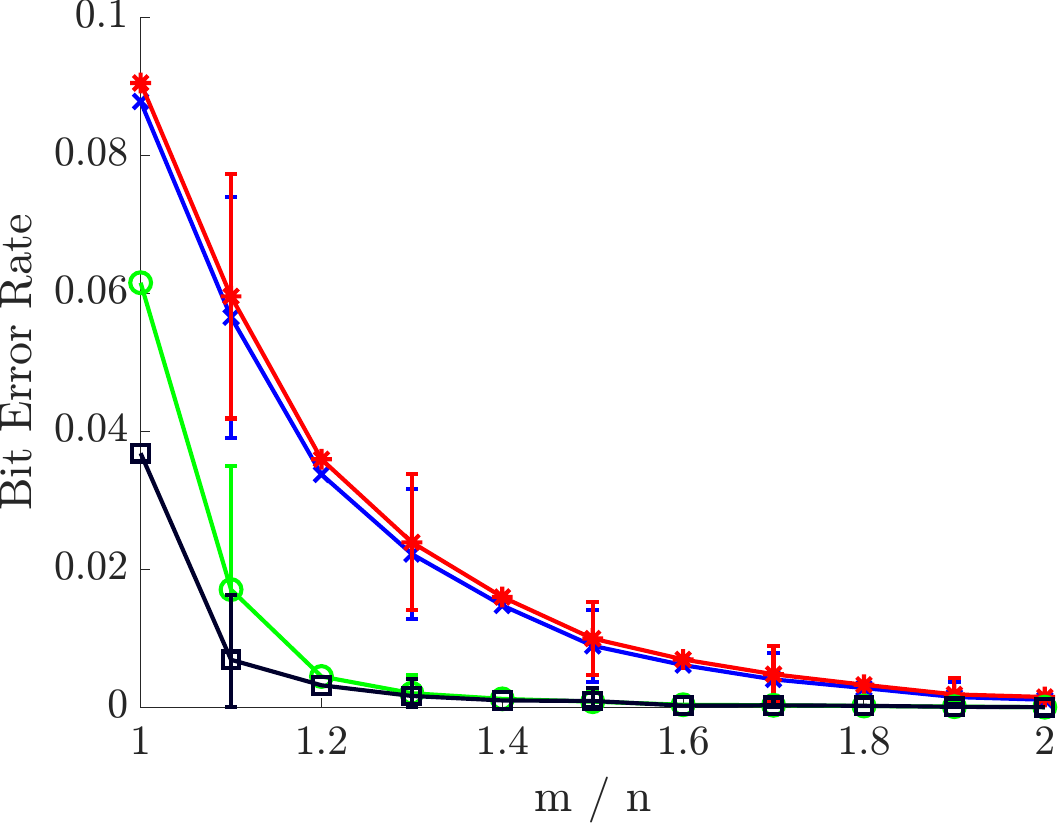}\hfill
\includegraphics[scale=0.3]{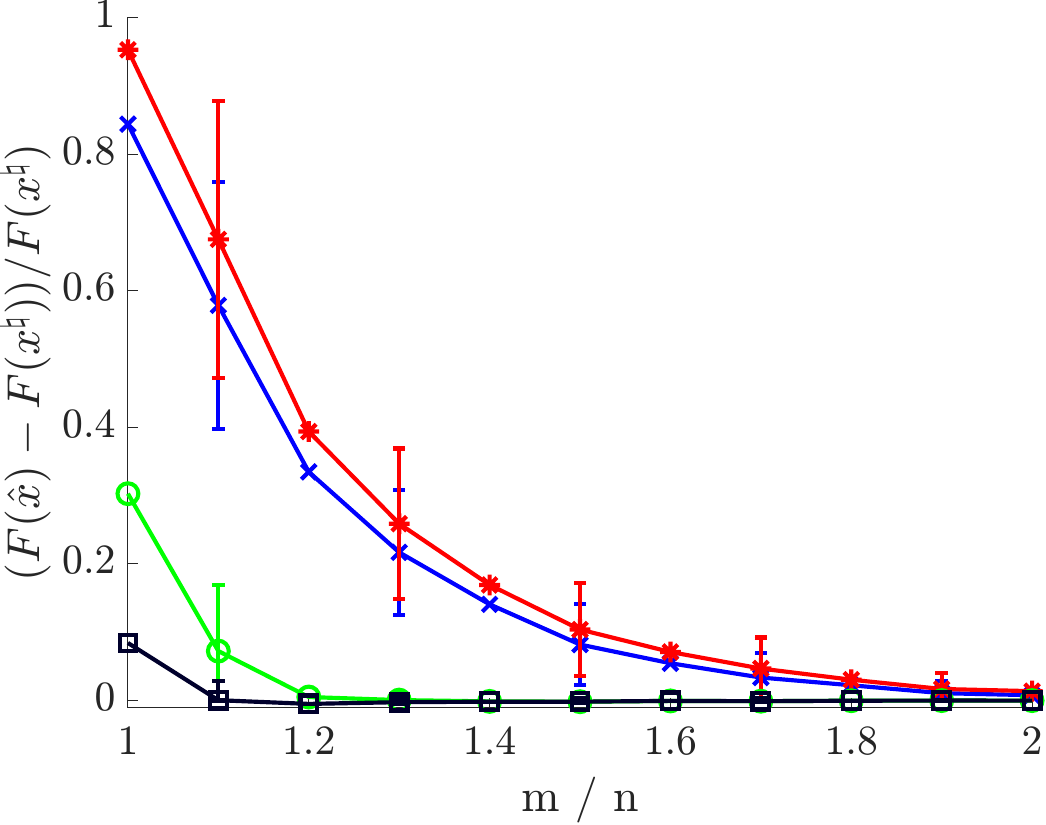}\par
\includegraphics[scale=0.3]{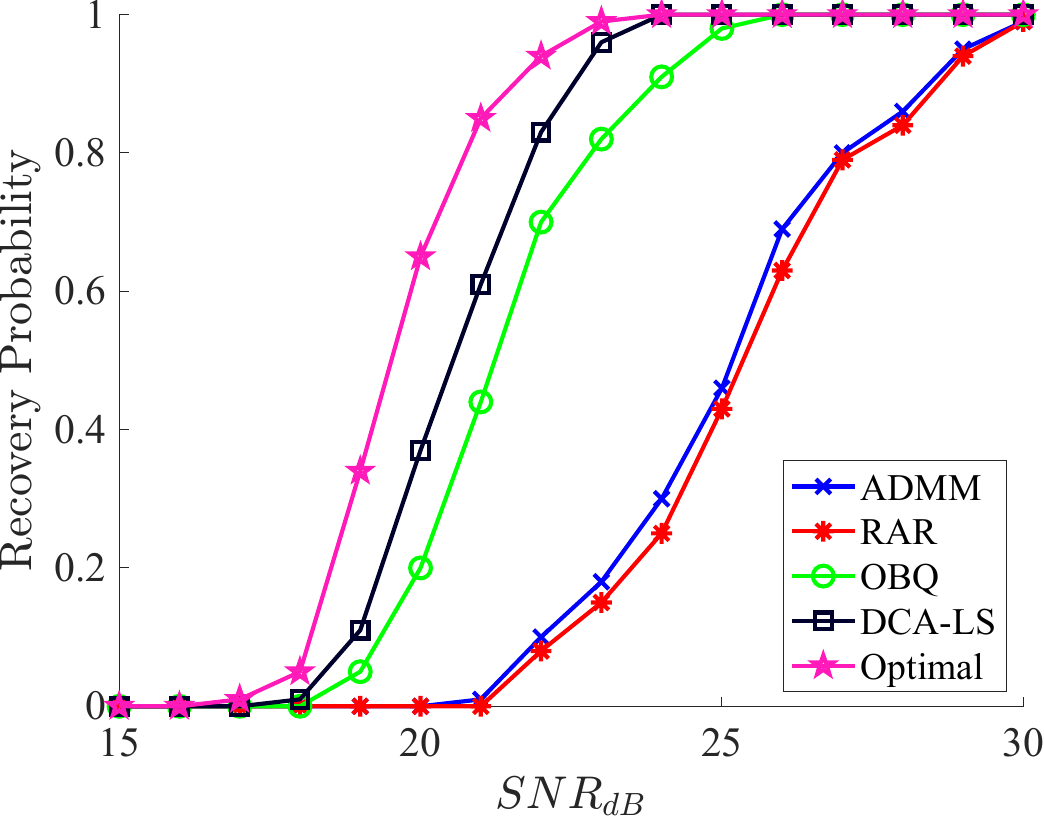}\hfill 
\includegraphics[scale=0.3]{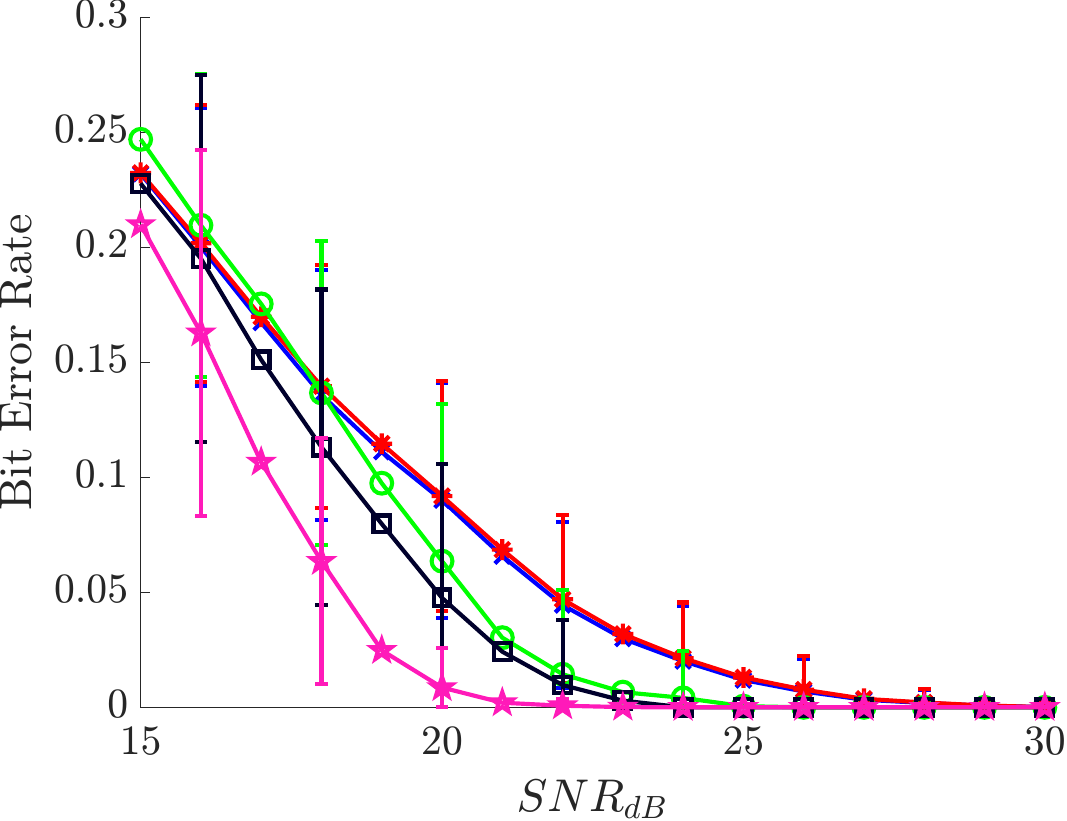}\hfill
\includegraphics[scale=0.3]{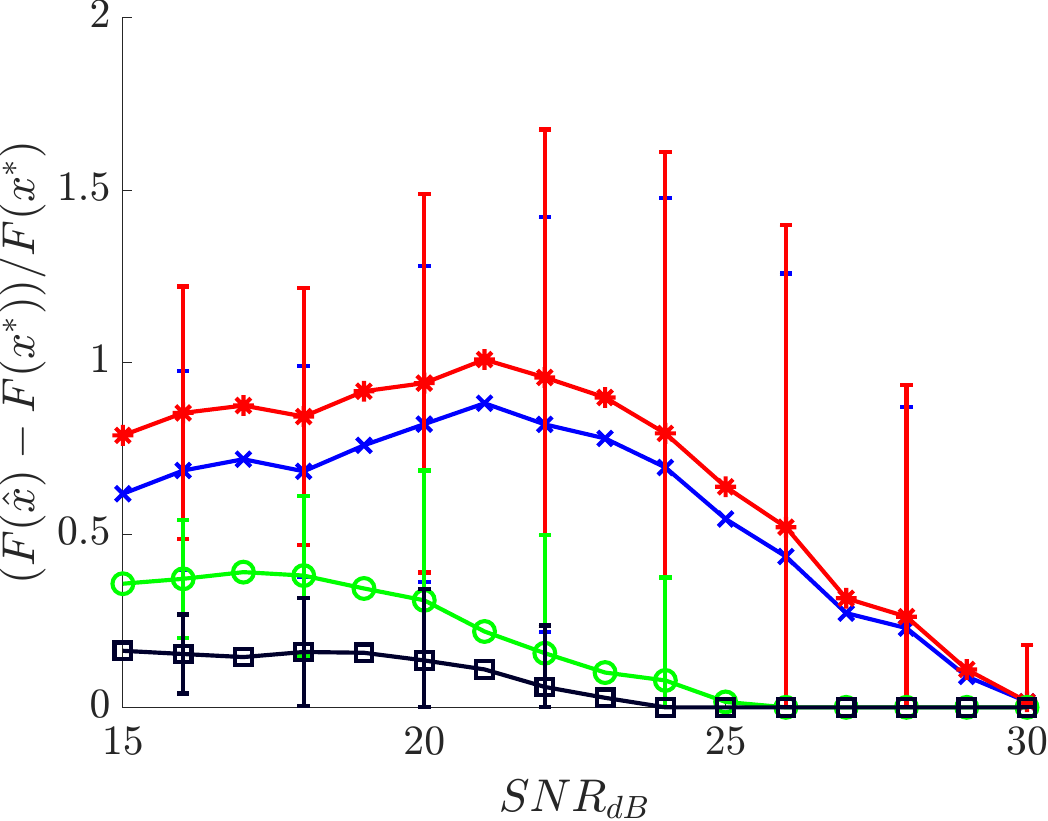}\par
\includegraphics[scale=0.3]{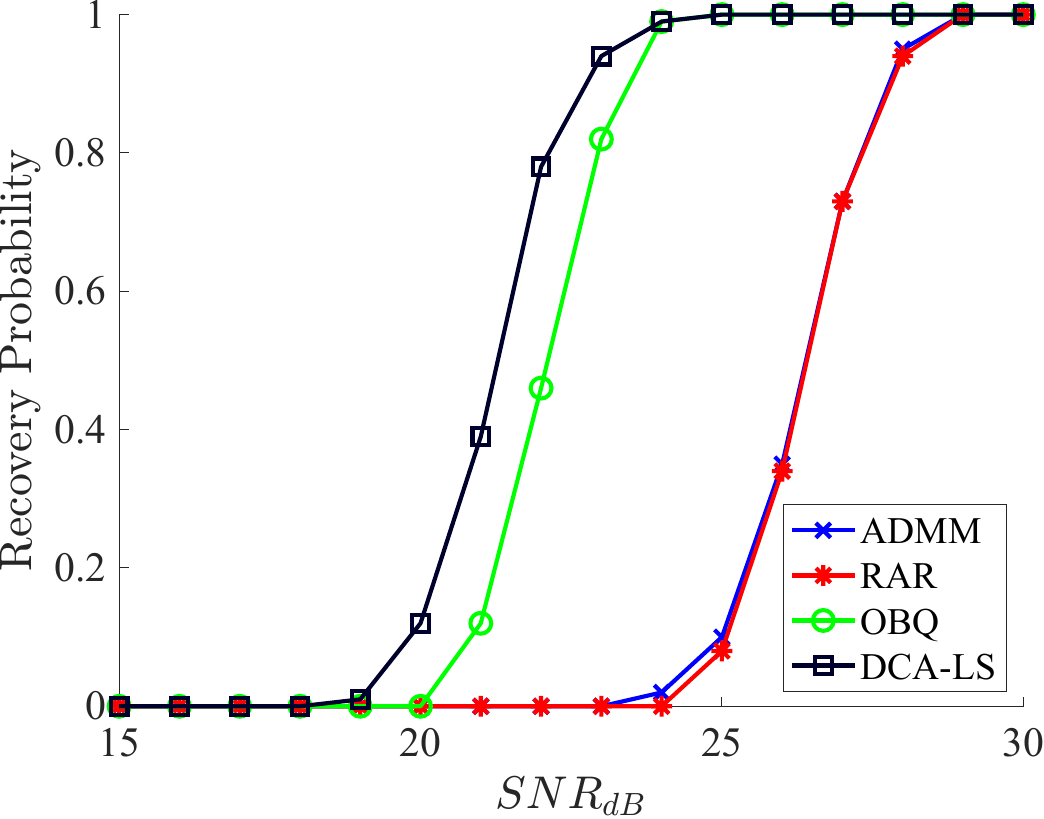}\hfill 
\includegraphics[scale=0.3]{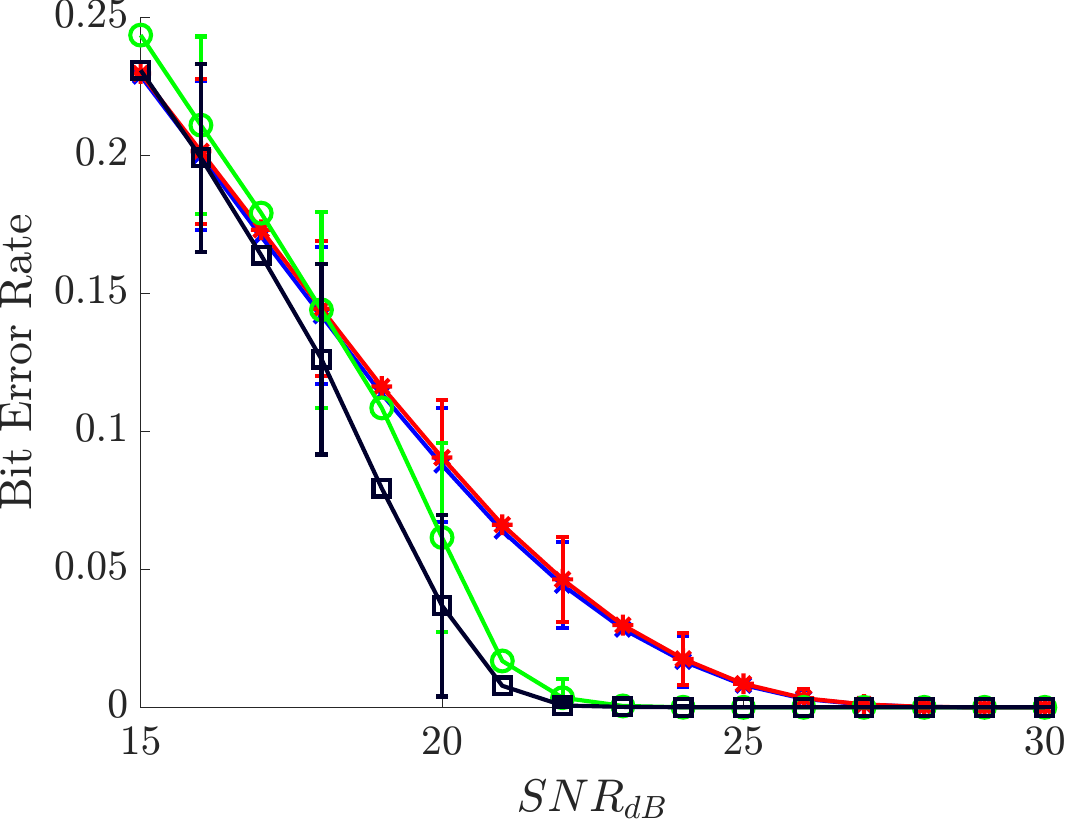}\hfill
\includegraphics[scale=0.3]{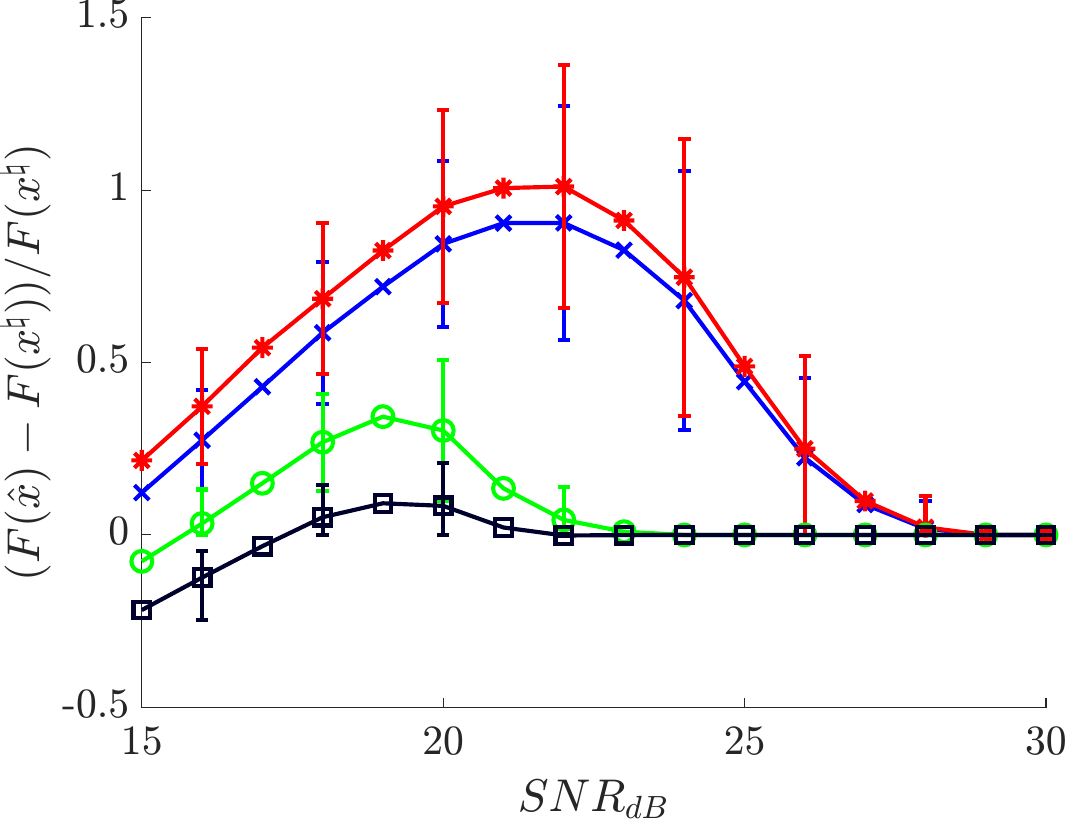}
\caption{Performance results, averaged over 100 runs, for integer least squares experiments: Varying $m$ with $n=400$ and $\snr=20$ (top), varying $\snr$ with $m=n=100$ (middle), and varying $\snr$ with $m=n=400$ (bottom).}
\label{fig:ILS-snr-n100} 
\end{figure}

We repeat the integer least squares experiment from \cref{sec:ILS} with $n=400$. We also consider another setup where we fix $m=n$ and vary the $\snr$. Performance results for both are reported in \cref{fig:ILS-snr-n100}.
When using a signal of length $n=400$, we compute the relative objective gap with respect to $x^{\natural}$, as it is very time consuming to obtain $x^*$ using Gurobi. We again observe that DCA-LS outperforms baselines on all metrics. 
For $n=100$, DCA-LS performs on par with the optimal solution $x^*$ starting from $\snr = 24$ dB. OBQ matches the performance of DCA-LS from $\snr = 26$ dB onward.
%in terms of recovery probability and objective gap from $\snr = 25$ dB and in terms of BER from $\snr = 23?$ dB. 
For $n=400$ OBQ matches the performance of DCA-LS from $\snr = 24$ dB.
%in terms of recovery probability from $\snr = 24?$ dB and in terms of BER and objective gap from $\snr = 23?$ dB.
Note that the relative objective gap in this case can be negative, since the true signal $x^\natural$ is not necessarily the optimal solution for the integer least squares problem. Indeed, we see that this is the case when the noise is high; 
DCA-LS and OBQ obtain a better solution for $\snr \leq 18$ dB and 15 dB, respectively.
\subsection{Times for Integer Least Squares Experiments}
\label{sec:ILS-times}

We report the average running time of the compared methods on the integer least squares experiments from \cref{sec:ILS} and \cref{sec:ILS-noise} in \cref{fig:ILS-times}. The reported times for DCA-LS and ADMM do not include the time for the RAR initialization, and for OBQ do not include the time for the relaxed solution initialization (which has similar time as RAR). We observe that DCA-LS is significantly faster that the optimal Gurobi solver, even for the small problem instance ($n=100$
). For the larger instance ($n=400$), Gurobi is already too slow to be practical. While our algorithm is slower than heuristic baselines, it remains efficient, with a maximum runtime of 100 seconds for $m=n=400$.

\begin{figure}[h]
\centering
\subfloat[]{\includegraphics[scale=0.3]{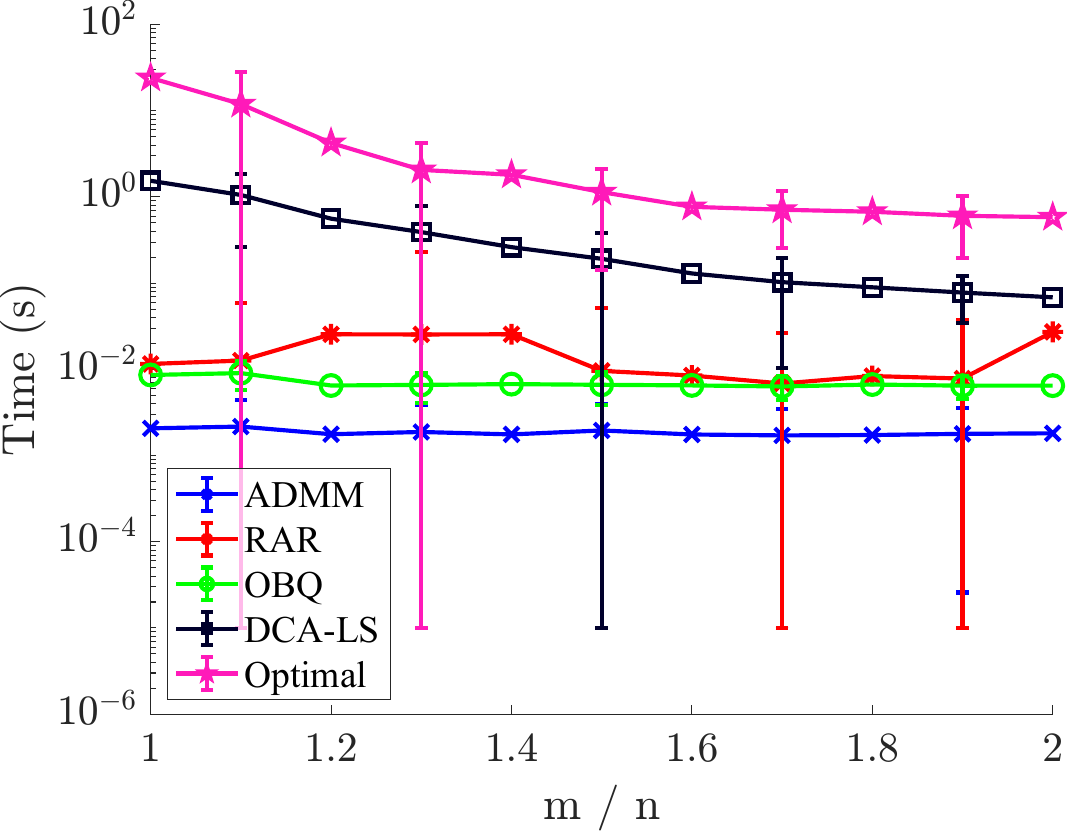}}\hspace{5pt} 
\subfloat[]{\includegraphics[scale=0.3]{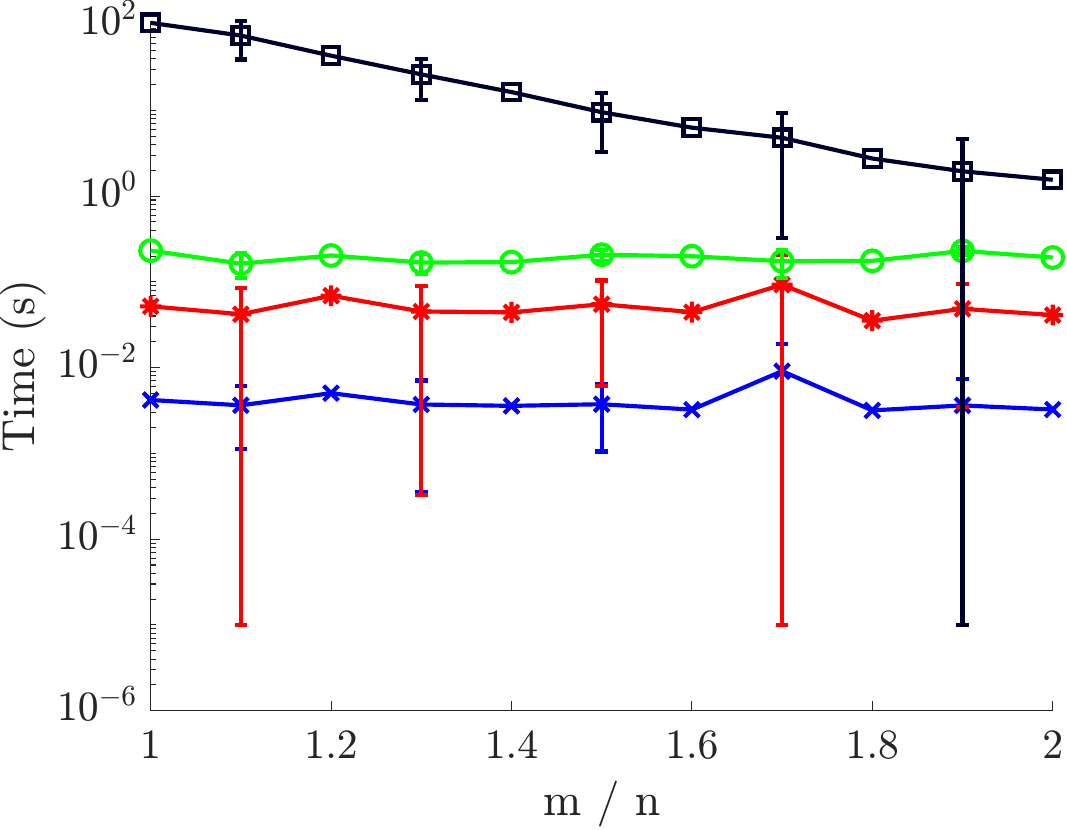}}\par
\subfloat[]{\includegraphics[scale=0.3]{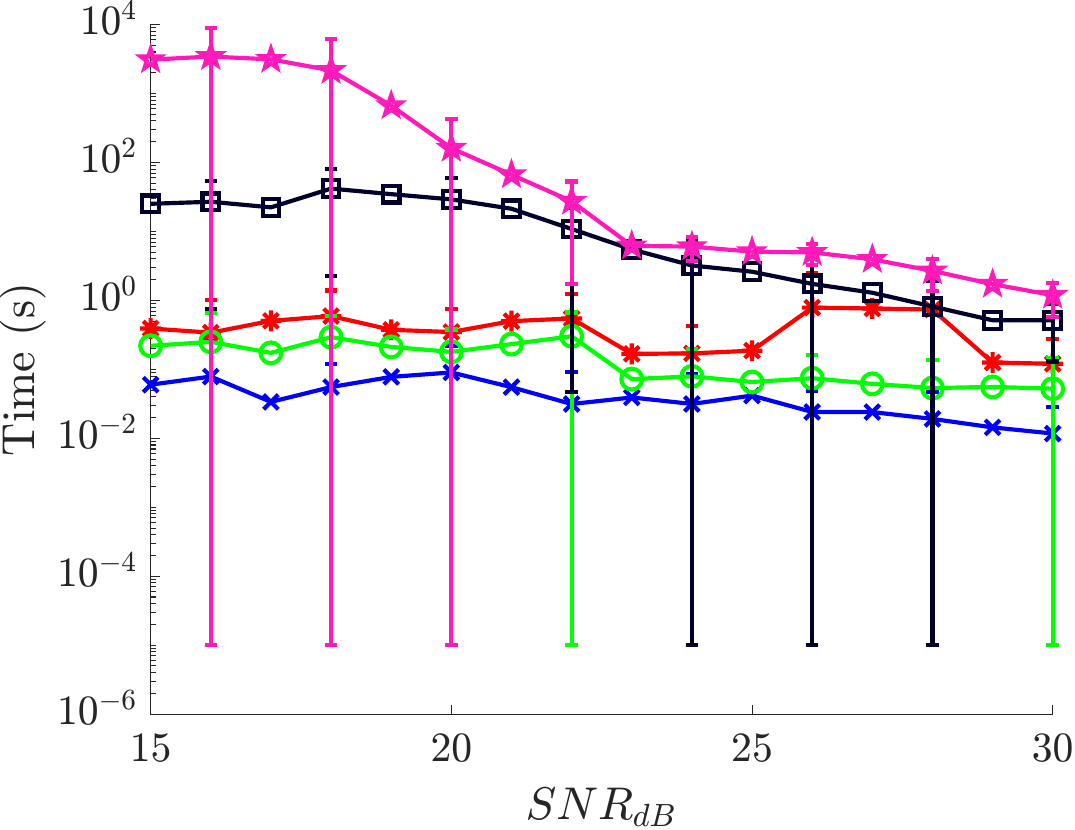}} \hspace{5pt} 
\subfloat[]{\includegraphics[scale=0.3]{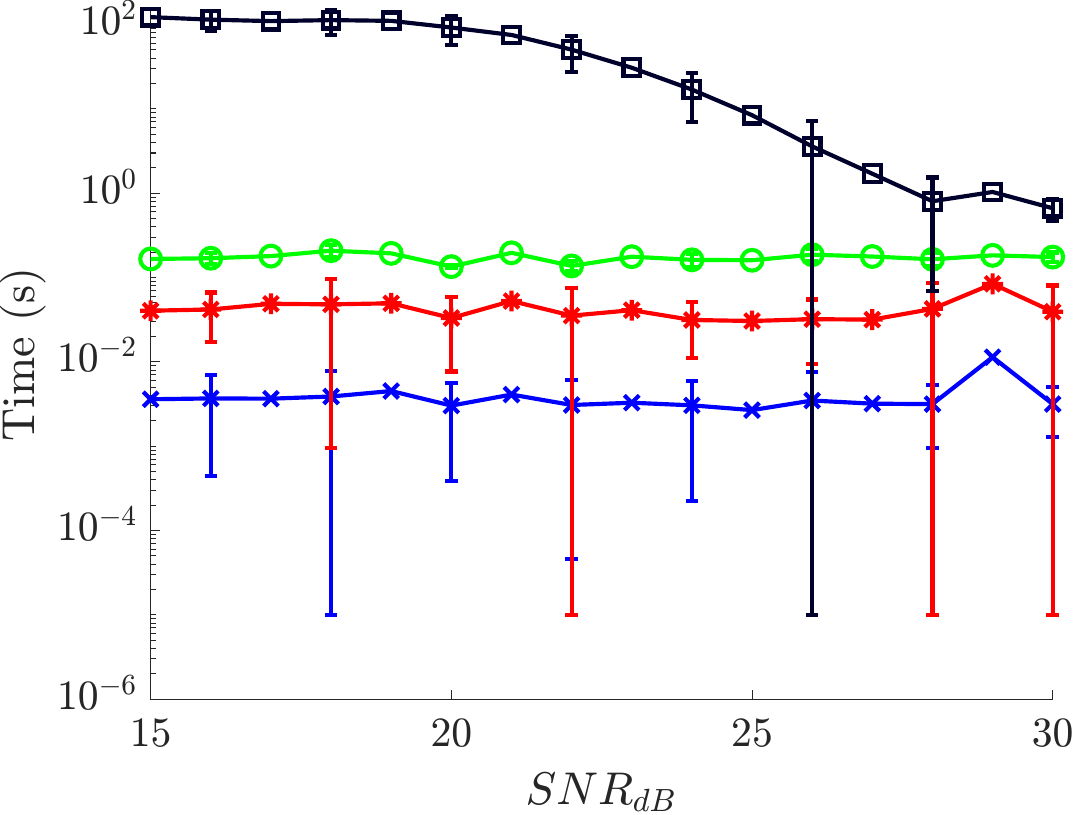}}
\caption{Running times (log-scale), averaged over 100 runs, for integer least squares experiments: (a) Varying $m$ with $n=100$ and $\snr=20$, (b) varying $m$ with $n=400$ and $\snr=20$, (c) varying $\snr$ and $m=n=100$, and (d) varying $\snr$ and $m=n=400$. Optimal solution is computed using Gurobi.}
\label{fig:ILS-times}
\end{figure}

\subsection{Additional Integer Compressed Sensing experiments}
\label{sec:ICS-noisy}

\begin{figure}[h]
\includegraphics[scale=0.3]{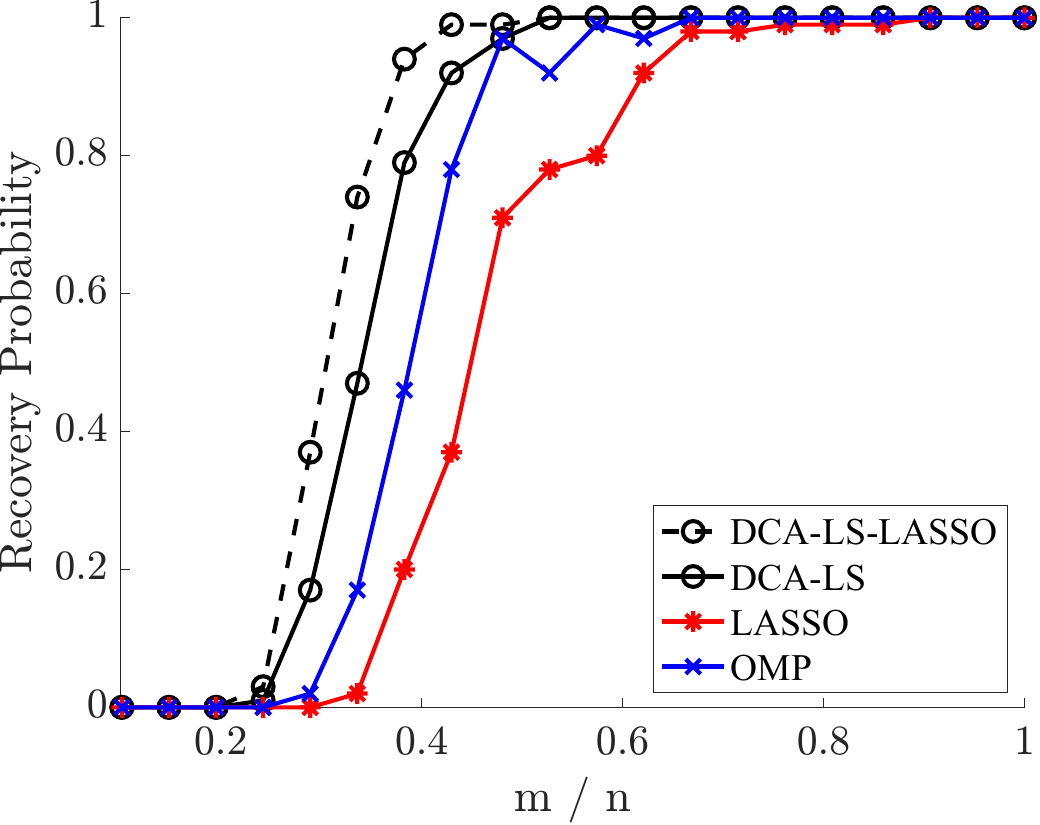}\hfill
\includegraphics[scale=0.3]{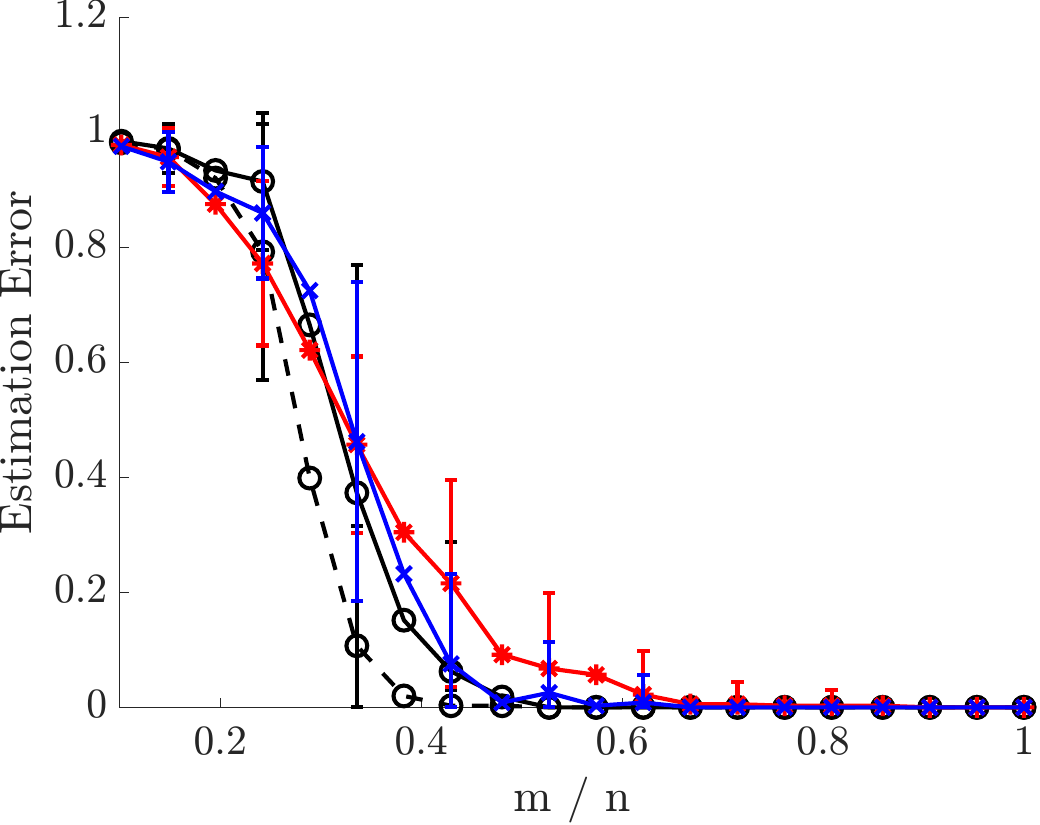}\hfill
\includegraphics[scale=0.3]{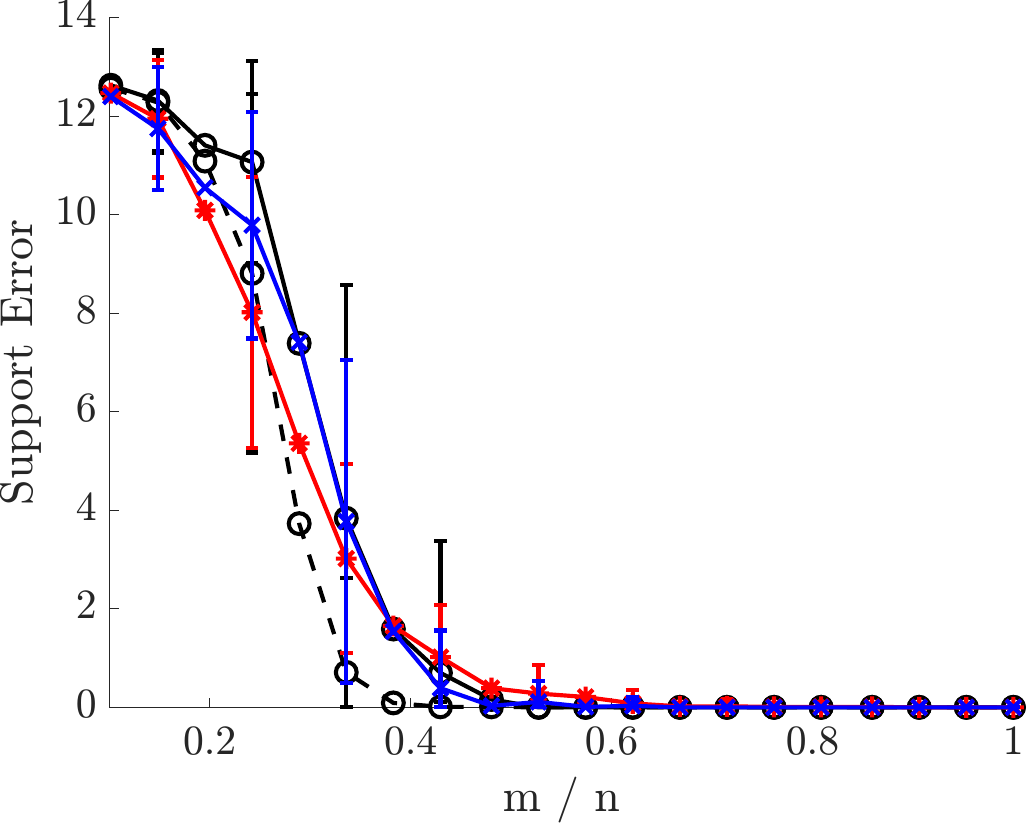}\par
\includegraphics[scale=0.3]{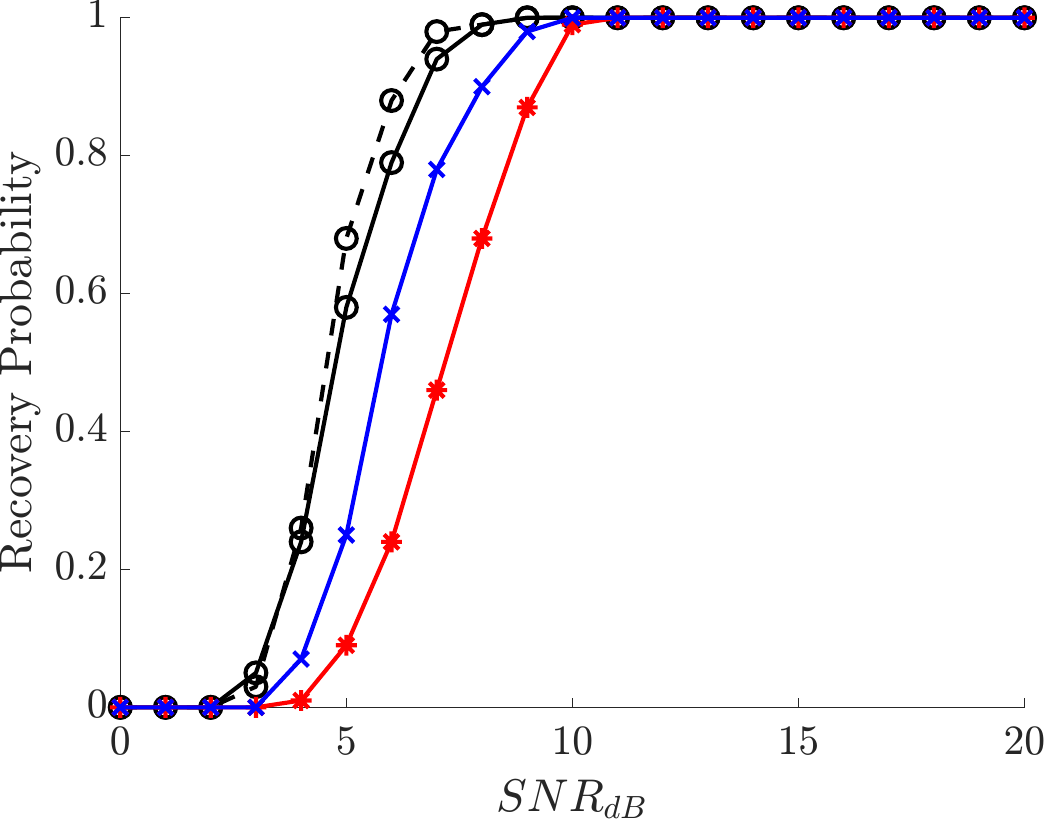}\hfill 
\includegraphics[scale=0.3]{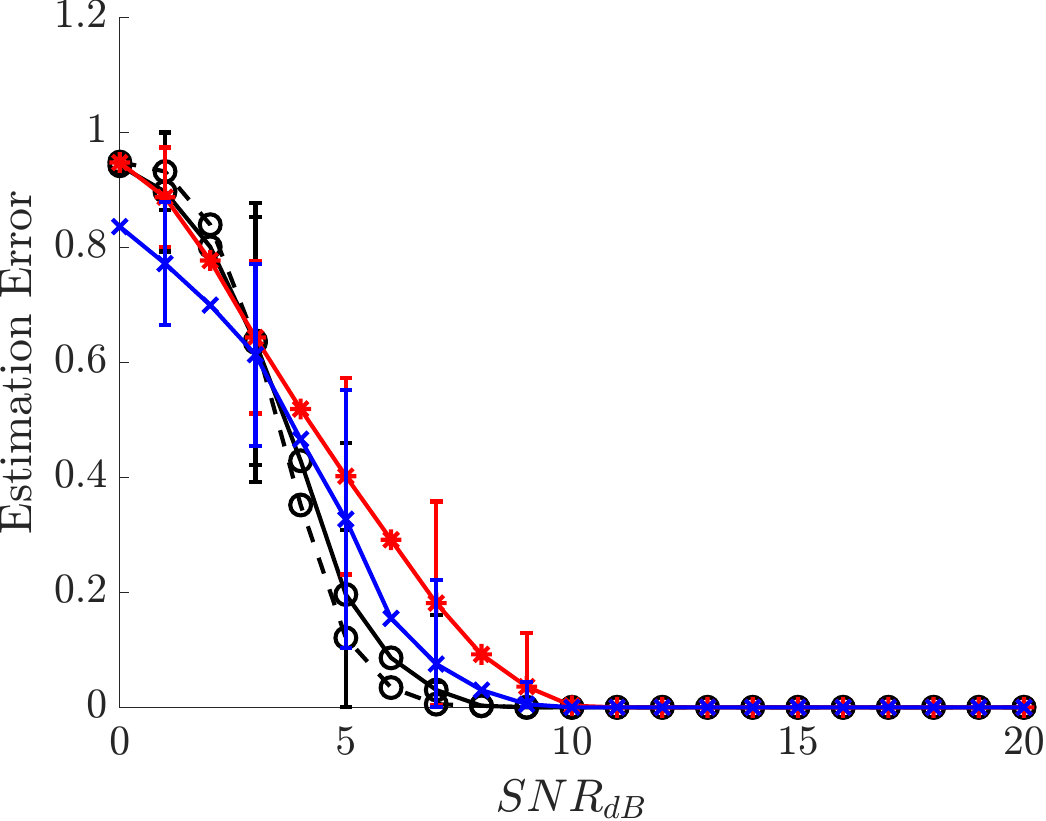}\hfill
\includegraphics[scale=0.3]{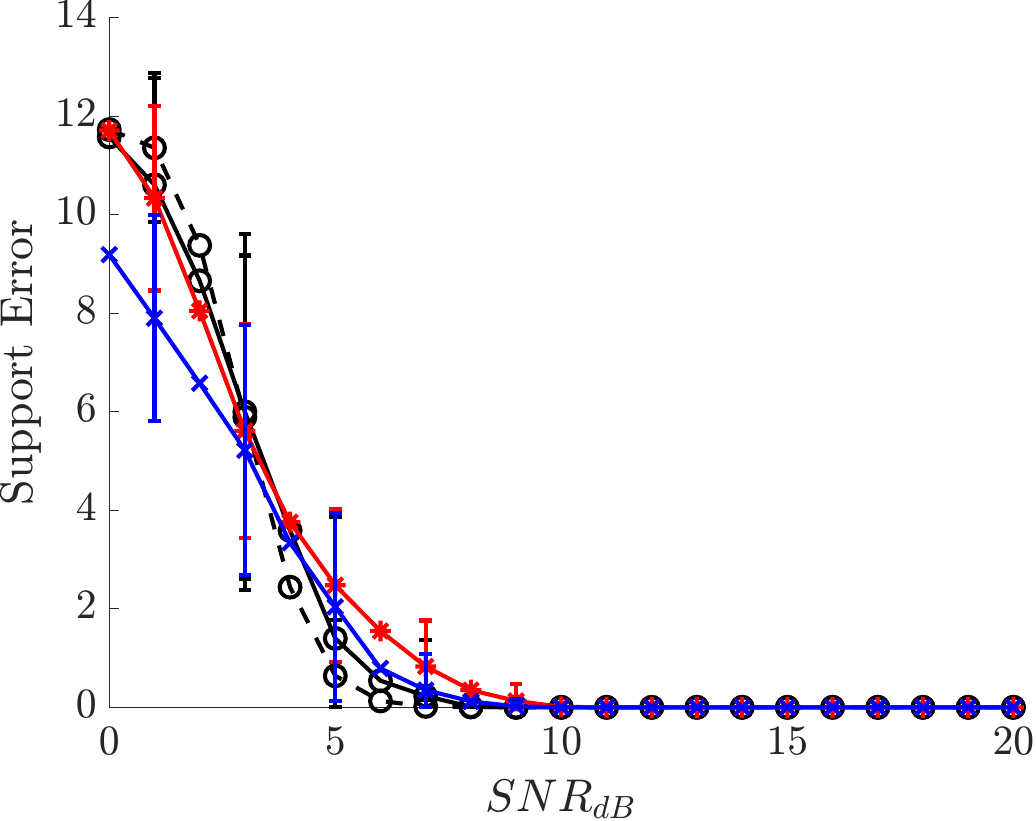}\par
\includegraphics[scale=0.3]{figures/CS-10percent-SNR/success.pdf}\hfill 
\includegraphics[scale=0.3]{figures/CS-10percent-SNR/l2error.pdf}\hfill
\includegraphics[scale=0.3]{figures/CS-10percent-SNR/suppdiff.pdf}
\caption{Performance results, averaged over 100 runs, for integer compressed sensing experiments with $n=256$: Varying $m$ with $s=13 = \lceil 0.05n \rceil$ and $\snr = 8$ dB (top), varying $\snr$ with $s=13 = \lceil 0.05n \rceil$ and $m/n = 0.5$ (middle), and varying $\snr$ with $s=26 = \lceil 0.1n \rceil$ and $m/n = 0.5$ (bottom).}
\label{fig:ICS-m-s13}
\end{figure}

We repeat the integer compressed sensing experiment from \cref{sec:ICS}, with another sparsity level $s=13 = \lceil 0.05n \rceil$. We also consider another setup where we fix $m/n=0.5$ and vary the $\snr$. 
Performance results for both are reported in \cref{fig:ICS-m-s13}.
When varying the number of measurements for $s=13$, we again see that DCA-LS-LASSO outperforms all baselines and DCA-LS across all metrics. DCA-LS also outperforms the baselines in terms of recovery probability, but lags slightly behind in estimation error and support error when $m/n\approx0.4$. Similar trends emerge in the experiments where $s=13$ and the amount of additive noise is varied. Again, we see that DCA-LS-LASSO and DCA-LS outperform in terms of recovery probability, and when $\snr \geq 3$, outperform in estimation error and support error. Interestingly, when we decrease the sparsity to $s=26$, we see that DCA-LS recovers signals earlier than the baselines, but does not reach a 100\% recovery rate once $\snr=20$. Similar to the experiment in \cref{sec:ICS}, we see that DCA-LS-LASSO significantly outperforms all other methods, especially in terms of recovery probability.

\subsection{Times for Integer Compressed Sensing Experiments}
\label{sec:ICS-times}

We report the average running time of the compared methods on the integer compressed sensing experiments from \cref{sec:ICS} and \cref{sec:ICS-noisy} in \cref{fig:ICS-times}. For LASSO and DCA-LS, the reported times correspond to the sum of running times for the 6 $\lambda$ values tried, $\lambda =1,0.1,\ldots,10^{-5}$. The reported time for DCA-LS-LASSO include the time for the LASSO initialization.
We also report the running time of LASSO and DCA-LS for each $\lambda$ separately at $m/n \approx0.2$ and $0.5$ ($m = 50$ and $m=123$, respectively) with $\snr = 8$ in \cref{fig:ICS-times-perlbd}. 

We observe that DCA-LS and DCA-LS-LASSO are slower than baselines, but they remain efficient, with a maximum total runtime of $\sim 37$ seconds for DCA-LS and $\sim 71$ seconds for DCA-LS-LASSO, for solving the problem for all 6 $\lambda$ values. 
%In figure 6a) it's 32.0381s for dca-ls at m/n=0.1953, and 62.6495s for dca-ls-lasso at m/m=0.9531. In figure 6b) it's 37.1316 for dca-ls at m/n=0.2891,  and 71.4414s for dca-ls-lasso at m/m=0.9062.
%In figure 8a) it's 29.34s for dca-ls at m/n=0.1484, and 26.6614s for dca-ls-lasso at m/n=0.1953. In figure 8b) it's 39.2984 for dca-ls at m/n=0.2891, and 24.2275s for dca-ls-lasso at m/n=0.2891. 
In terms of total runtime (\cref{fig:ICS-times}), DCA-LS-LASSO is faster than DCA-LS in some regimes: at high $\snr$ with $m/n=0.5$(for $\snr \geq 6$ with $s = 13$, and $\snr \geq 9$ with $s=26$), and for $m/n \in (0.35, 0.75)$ with $\snr = 8$, but slower in others. 
%Indeed, in \cref{fig:ICS-times-perlbd}, we observe that 
In terms of individual runtime per $\lambda$ (\cref{fig:ICS-times-perlbd}), DCA-LS-LASSO is faster than DCA-LS for $\lambda\geq 0.01$. % at $m/n \approx0.2$ and $0.5$ with $\snr = 8$. %We confirm that this holds more generally for any $m/n$

We also note that DCA-LS-LASSO's total runtime increases significantly when $m/n \gtrsim 0.75$. This is likely because for small $\lambda$ values, the influence of the $\ell_1$-norm regularizer in LASSO is minimal, so DCA-LS-LASSO is effectively  initialized with a least-squares solution, which can be worse than the $x^0 = 0$ initialization used in DCA-LS.
Inspecting the $\lambda$ values that yielded the best performance for both DCA-LS-LASSO and DCA-LS, we found that they were always $\lambda \geq 0.01$. So restricting $\lambda$ to this range would not impact performance. 
In \cref{fig:ics-times-fewer-lambdas}, we plot the sum of running times for only $\lambda=1,0.1,0.01$ with $\snr = 8$.
With this restriction, DCA-LS-LASSO's total runtime no longer increases at $m/n \gtrsim 0.75$, and is actually lower than DCA-LS for all $m/n$. 

\begin{figure}[h]
\centering
\subfloat[]{\includegraphics[scale=0.3]{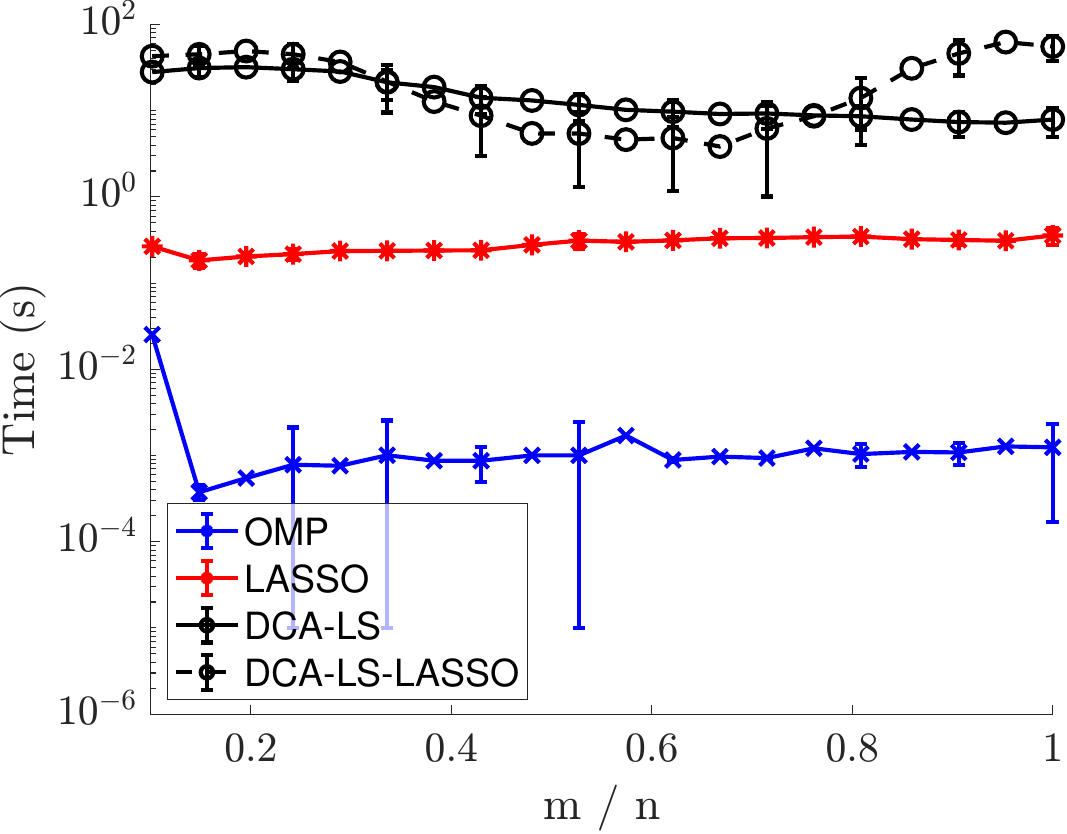}\label{fig:times=13}}\hspace{5pt}
\subfloat[]{\includegraphics[scale=0.3]{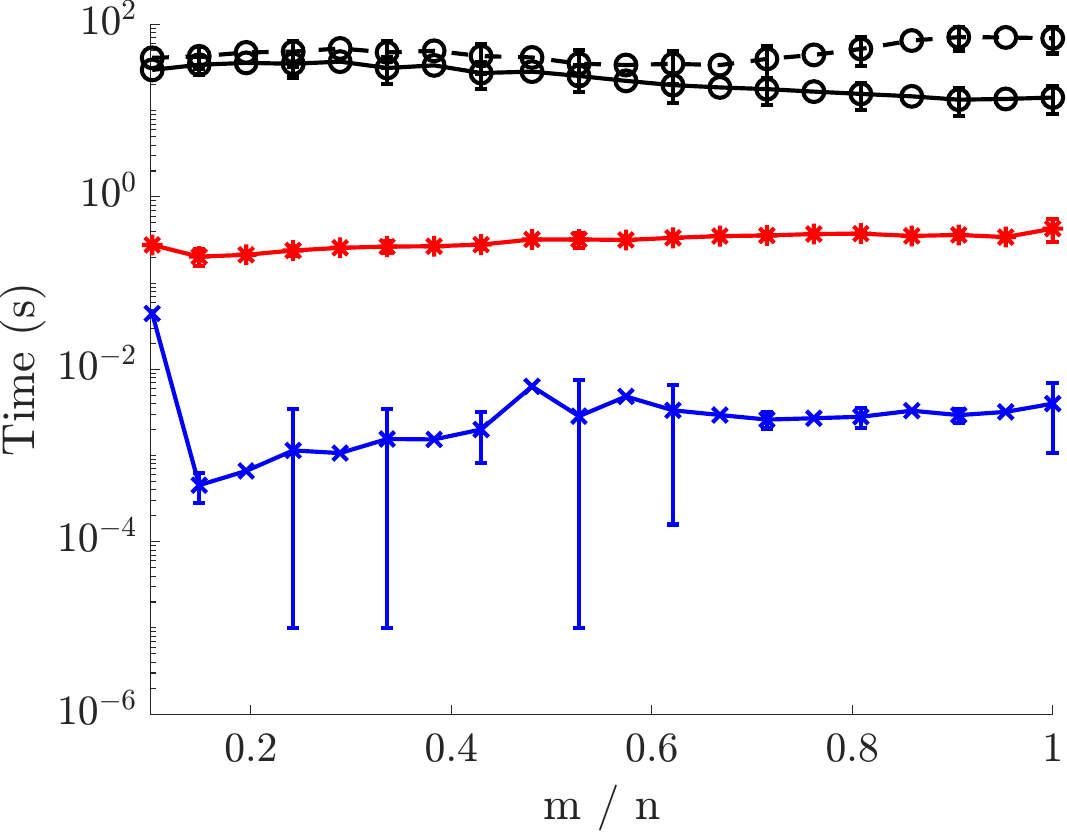}\label{fig:times=26}}\par
\subfloat[]{\includegraphics[scale=0.3]{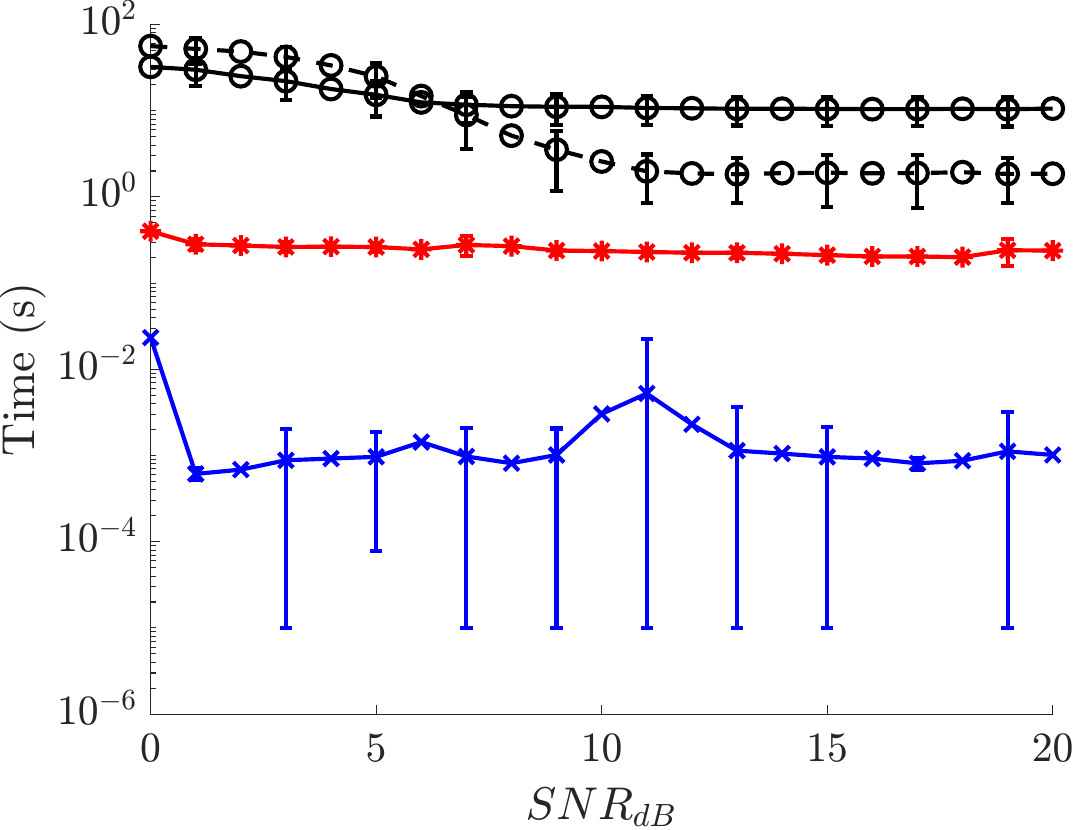}}\hspace{5pt}
\subfloat[]{\includegraphics[scale=0.3]{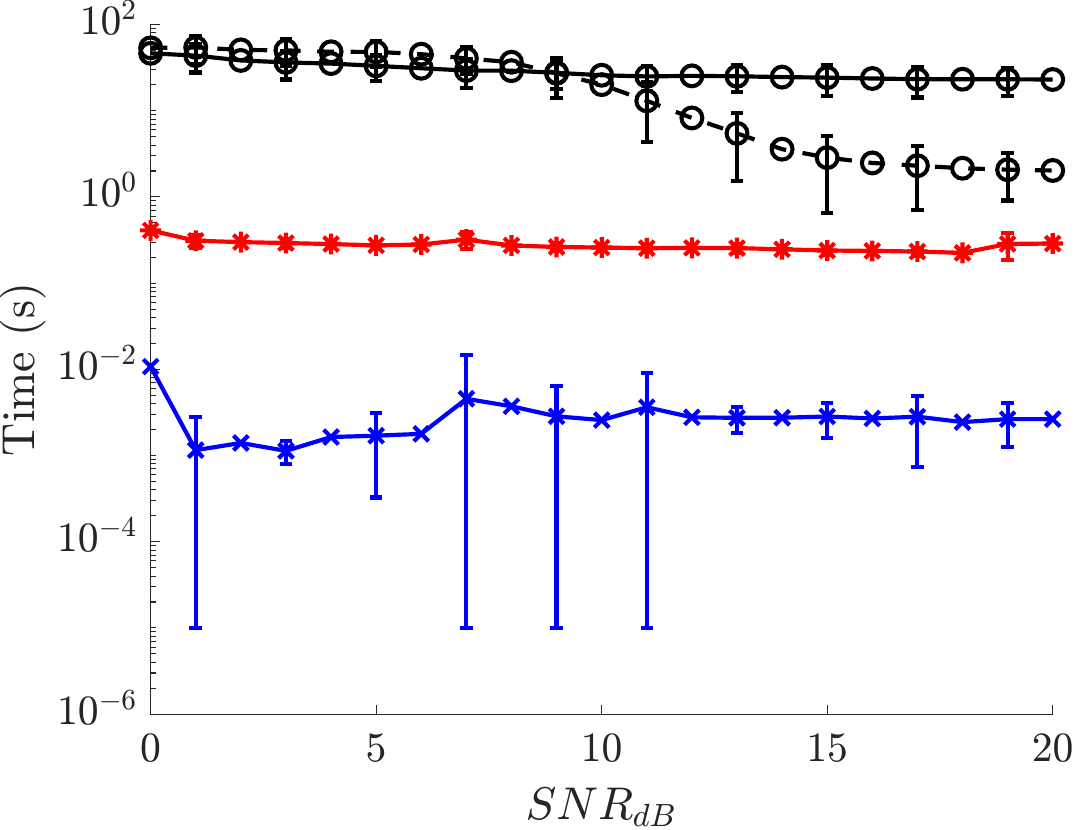}}
\caption{Running times (log-scale), summed over all $\lambda$ values and averaged over 100 runs, for integer compressed sensing experiments with $n=256$: (a) Varying $m$ with $s=13$ and $\snr=8$, (b) varying $m$ with $s=26$ and $\snr=8$, (c) varying $\snr$ with $s=13$ and $m/n=0.5$, and (d) varying $\snr$ with $s=26$ and $m/n=0.5$.}
\label{fig:ICS-times}
\end{figure}

\begin{figure}[h]
\centering
\subfloat[]{\includegraphics[scale=0.3]{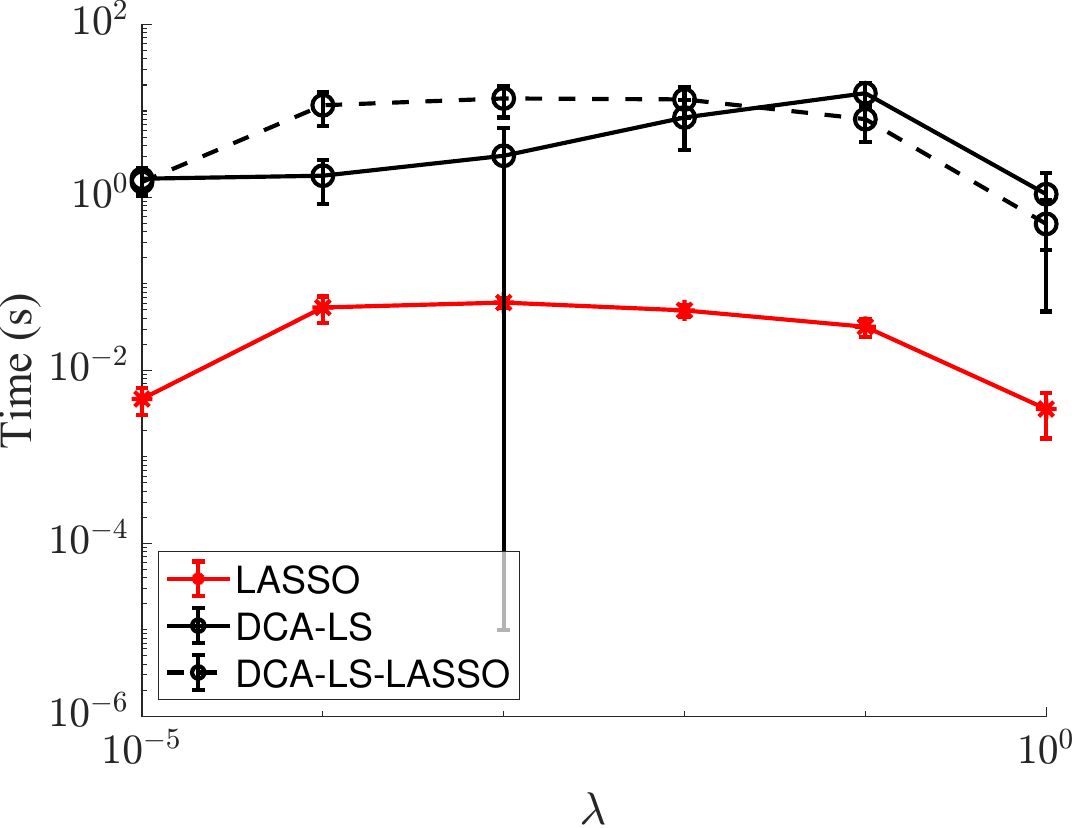}}\hspace{5pt}
\subfloat[]{\includegraphics[scale=0.3]{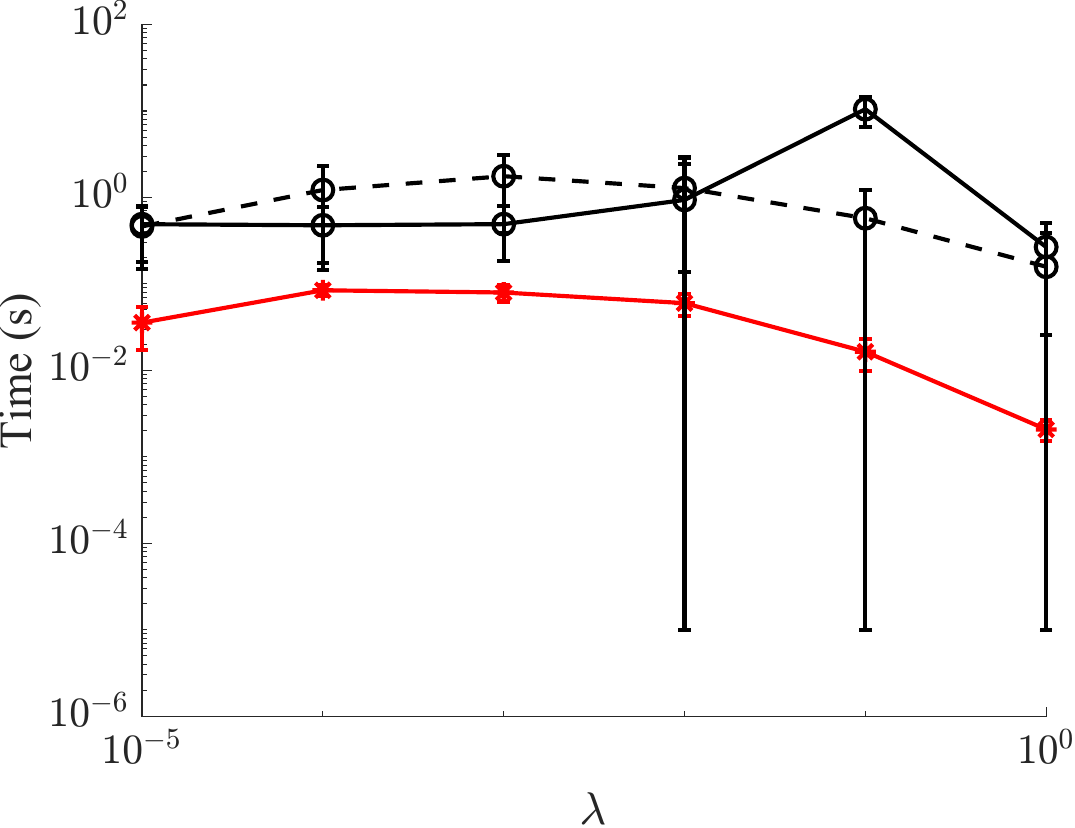}}\par
\subfloat[]{\includegraphics[scale=0.3]{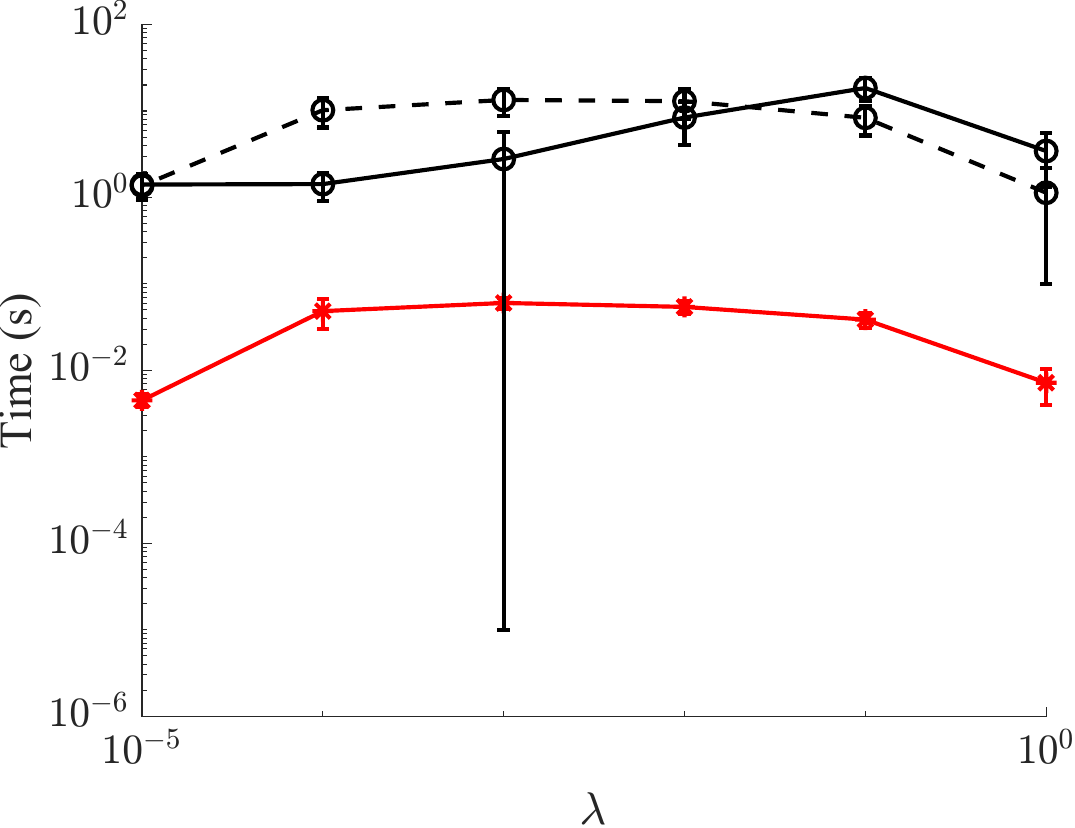}}\hspace{5pt}
\subfloat[]{\includegraphics[scale=0.3]{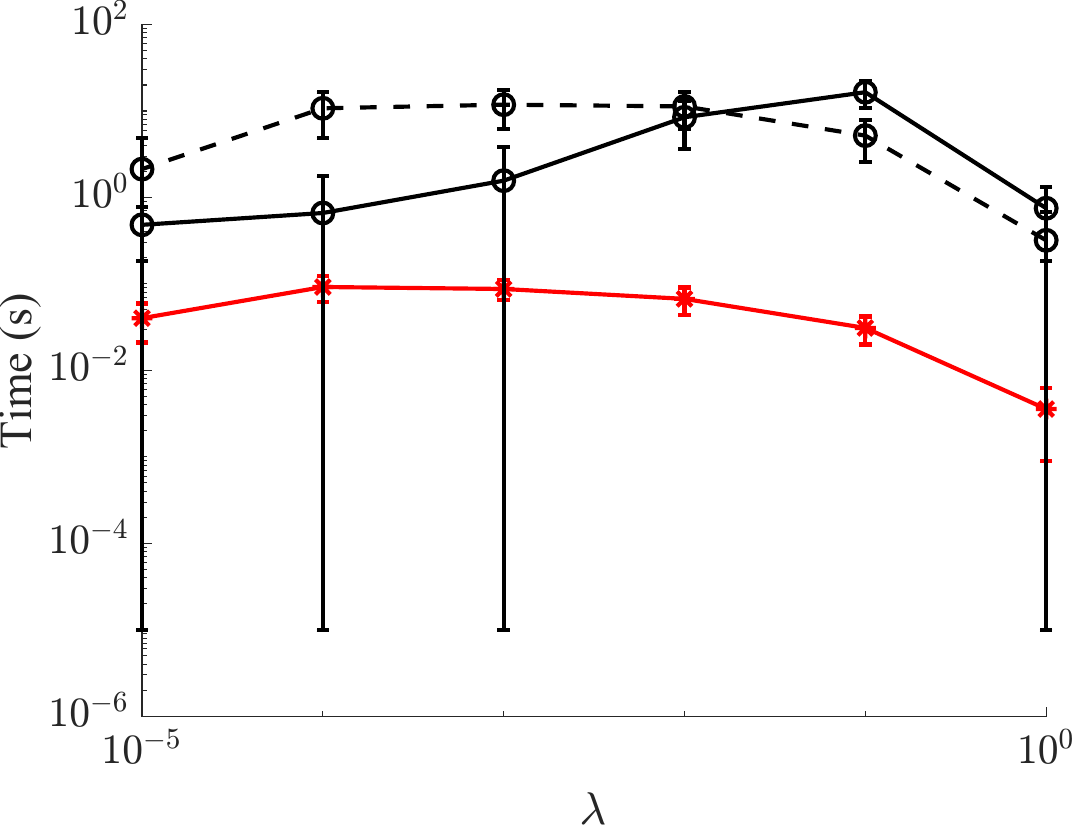}}
\caption{Running times (log-scale) for each $\lambda$, averaged over 100 runs, for integer compressed sensing experiments with $n=256$ and $\snr=8$: (a) $m/n\approx0.2$ and $s=13$, (b) $m/n\approx0.5$ and $s=13$, (c) $m/n\approx0.2$ and $s=26$, and (d) $m/n\approx0.5$ and $s=26$.}
\label{fig:ICS-times-perlbd}
\end{figure}

\begin{figure}[h]
\centering
\subfloat[]{\includegraphics[scale=0.3]{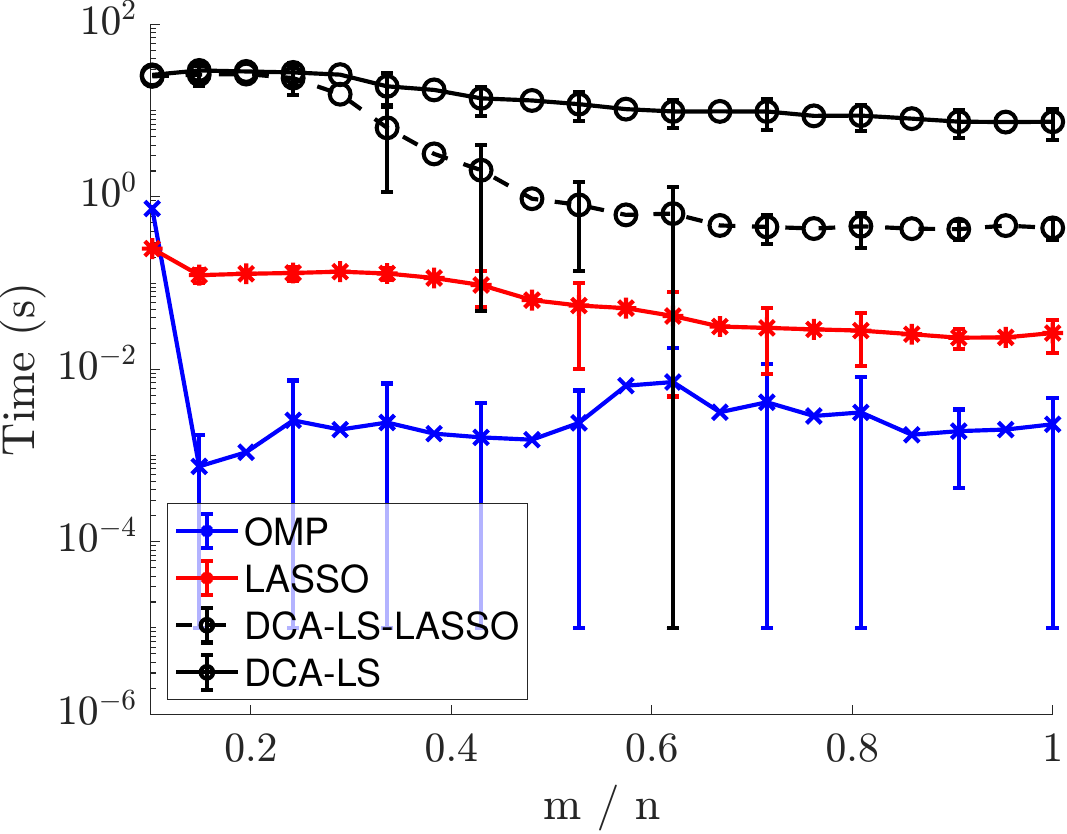}}\hspace{5pt}
\subfloat[]{\includegraphics[scale=0.3]{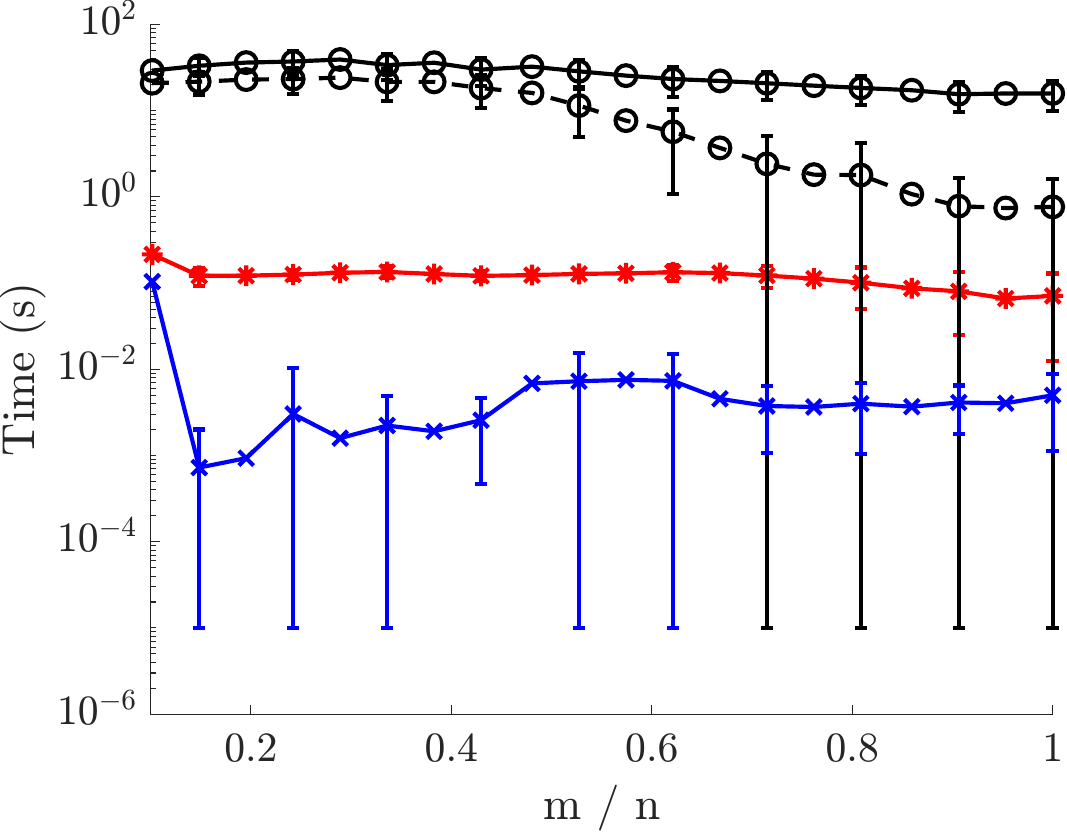}}
\caption{Running times (log-scale), summed over $\lambda=1,0.1,0.01$ and averaged over 100 runs, for integer compressed sensing experiments with $n=256$: (a) Varying $m$ with $s=13$ and $\snr=8$, and (b) varying $m$ with $s=26$ and $\snr=8$.}
\label{fig:ics-times-fewer-lambdas}
\end{figure}
\clearpage
\section{DCA Variants Comparison}\label{sec:dca-comparison}\label{app:DCAVariants}

%\citet{ElHalabi2023} study a special case of our DS minimization problem where $F$ is a set function ($\mathcal{X} = \{0,1\}^n$).
As discussed in \cref{sec:algorithm},  \citet{ElHalabi2023} proposed two  variants of standard DCA \eqref{eq:DCA} for the special case of Problem \eqref{eq:dsmin}, where $F$ is a set function ($\mathcal{X} = \{0,1\}^n$). Both monotonically decrease the objective $F$ (up to $\epsilon_x$) and converge to a local minimum.
In this section, we extend the more efficient of the two variants (Algorithm 2 therein) to general discrete domains and compare it with our proposed DCA variant (DCA-LS; \cref{alg:DCALocalSearch}).
We refer to this extension as DCA-Restart, and present it in \cref{alg:DCARestart}.

\begin{algorithm}
\caption{DCA with restart \label{alg:DCARestart}} 
\begin{algorithmic}[1]
 \STATE  $\epsilon \geq 0, \epsilon' \geq \epsilon$, $T \in \N$, $X^{0} \in [0,1]_{\downarrow}^{n \times (k-1)}$
 \FOR{$t = 1, \ldots, T$}
 \STATE Choose $Y^t \in \p h(X^t)$ (preferably corresponding to a row-stable permutation)
 \vspace{5pt}
 \STATE  \label{line: submod-min}  
 ${X}^{t+1}\in \argmin_{X \in [0,1]_{\downarrow}^{n \times (k-1)}} \gext(X) - \ip{Y^t}{X}_F$
  \vspace{5pt}
\IF{$f(X^t) - f(X^{t+1}) \leq \epsilon$}
 \IF{${X}^{t} \in \{0,1\}_{\downarrow}^{n \times (k-1)}$}
\STATE $x^{t} = \Theta({X}^{t})$
\ELSE
\STATE $x^{t} = \round({X}^{t})$
\ENDIF 
\STATE $\bar{x}^{t} \in \argmin_{x \in N_{\X}(x^{t})} F(x)$
\IF{$F(x^{t}) \leq F(\bar{x}^{t}) + \epsilon'$}
\STATE Stop.
\ELSE
\STATE $X^{t+1} = \Theta^{-1}(\bar{x}^{t})$
\ENDIF
\ENDIF 
 \ENDFOR
\end{algorithmic}
\end{algorithm}

\paragraph{Theoretical comparison} Similar to the set function variant, 
DCA-Restart runs standard DCA \eqref{eq:DCA} and, at convergence, checks if rounding the current iterate yields an approximate local minimum of $F$. If not, it restarts from the best neighboring point. 
We show in \cref{theorem:DCARestart} that DCA-Restart satisfies the same theoretical guarantees as DCA-LS (see \cref{theorem:DCALocalSearch}). % only difference is that item a and c use f instead of F, just because DCA-Restart doesn't maintain feasible solution for the original problem in general (it only round at convergence), though if X^{t+1} is binary then there's no difference at all since f and F are equal in this case.
%The difference is that DCA-Restart does not require the permutation $(p,q)$ used to compute $Y^t$ to be row-stable to guarantee convergence to an approximate local minimum. Though, this is still needed to satisfy the property in \cref{itm: permutation-min-rst}. <- actually this is not needed for DCA-LS either. Modified theorem 4.5 accordingly
%\cref{theorem:DCARestart} 
In particular, it also recovers the guarantees of \citet{ElHalabi2023} in the special case of set functions.

\begin{theorem}\label{theorem:DCARestart}
    Let %$\{X^t\}$, $\{x^t\}$, and $\{Y^t\}$ 
    $\{X^t\}, \{x^t\}$ be generated by \cref{alg:DCARestart}, where the subproblem on line 4 is solved up to accuracy $\epsilon_x \geq 0$. For all $t \in \seq{T}, \epsilon \geq 0, \epsilon' \geq \epsilon$, we have:
    \begin{enumerate}[label=\alph*), ref=\alph*, leftmargin=1em]
        \item \label{itm: iterate-monotonicity-rst} %The iterates $\{x^t\}$ satisfy
        $f(X^{t+1}) \leq f(X^t) + \epsilon_x$. 
        \item \label{itm: permutation-min-rst} Let $(p,q)$ be the permutation used  to compute $Y^t$ in \cref{prop:LEproperties}-\ref{itm:subgradient}, and $\{y^i\}_{i=0}^{(k-1)n}$ the vectors corresponding to $(p,q)$, defined as in \cref{def: cts-extension}. If $(p,q)$ is row-stable and $f(X^t) - f(X^{t+1}) \leq \epsilon$, then
        \begin{equation*}
            F(x^t) \leq F(y^i) + \epsilon + \epsilon_x \ \textnormal{for all } i \in \seq{0}[(k-1)n].  % F \big ( \Phi((p,q)[i]) \big )
        \end{equation*}
        \item \cref{alg:DCARestart} converges to an $\epsilon'$-local minimum of $F$ after at most $(f(X^0) - f^*) / \epsilon$ iterations. 
        \label{itm: convergence-rst}
    \end{enumerate}
\end{theorem}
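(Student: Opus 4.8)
The plan is to mirror the proof of \cref{theorem:DCALocalSearch}, exploiting the fact that the iterates produced on lines 3--4 of \cref{alg:DCARestart} are exactly standard DCA iterates (with the subproblem solved to accuracy $\epsilon_x$), so that \cref{prop:DCAproperties} applies to them directly. Throughout, I would carefully distinguish the subproblem solution $X^{t+1}$ computed on line 4 from the value it may be overwritten with on line 15 at a restart, namely $\Theta^{-1}(\bar{x}^t)$. I would also record the two facts used repeatedly: $F(x^t) \leq f(X^t)$, which holds with equality when $X^t$ is integral and by the rounding property (\cref{prop:LEproperties}-\ref{itm:round}) otherwise; and $f(\Theta^{-1}(\bar{x}^t)) = F(\bar{x}^t)$ by the extension property (\cref{prop:LEproperties}-\ref{itm:extension}).

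For \cref{itm: iterate-monotonicity-rst}, I would split into two cases. If no restart occurs at iteration $t$, then $X^{t+1}$ is the DCA subproblem solution and the bound $f(X^{t+1}) \leq f(X^t) + \epsilon_x$ is immediate from \cref{prop:DCAproperties}-\ref{itm: DCA-descent}. If a restart occurs, then $X^{t+1} = \Theta^{-1}(\bar{x}^t)$ and the failed test on line 12 gives $f(X^{t+1}) = F(\bar{x}^t) < F(x^t) - \epsilon' \leq f(X^t)$, which is even stronger. For \cref{itm: permutation-min-rst}, I would reuse the argument of \cref{theorem:DCALocalSearch}-\ref{itm: permutation-min} almost verbatim: under the convergence condition, $X^t$ is $(\epsilon+\epsilon_x)$-critical with $Y^t \in \partial_{\epsilon + \epsilon_x} g(X^t) \cap \partial h(X^t)$ by \cref{prop:DCAproperties}-\ref{itm:DCA-epsilon-critical}; since $(p,q)$ is row-stable it is also a non-increasing permutation of each $\Theta^{-1}(y^i)$, so $Y^t \in \partial h(\Theta^{-1}(y^i))$ by \cref{prop:LEproperties}-\ref{itm:subgradient}; and \cref{prop:criticality} together with $F(x^t) \leq f(X^t)$ yields $F(x^t) \leq f(\Theta^{-1}(y^i)) + \epsilon + \epsilon_x = F(y^i) + \epsilon + \epsilon_x$.

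For \cref{itm: convergence-rst}, the core step is to show that every iteration which does not trigger the stopping rule strictly decreases $f$ by more than $\epsilon$. Two sub-cases arise: if the convergence test on line 5 fails, then by definition $f(X^t) - f(X^{t+1}) > \epsilon$; if it passes but the restart branch is taken, then as in \cref{itm: iterate-monotonicity-rst} we get $f(X^t) - f(X^{t+1}) > \epsilon' \geq \epsilon$, where the input hypothesis $\epsilon' \geq \epsilon$ is exactly what lets the restart descent feed the telescoping bound. Telescoping over any run of $T$ non-terminating iterations and using $f(X^T) \geq f^*$ then gives $T\epsilon < f(X^0) - f^*$, so the stopping rule must be reached within $(f(X^0) - f^*)/\epsilon$ iterations. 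Finally, at termination the test on line 12 reads $F(x^t) \leq F(\bar{x}^t) + \epsilon'$ with $\bar{x}^t \in \argmin_{x \in N_{\X}(x^t)} F(x)$, hence $F(x^t) \leq F(x') + \epsilon'$ for every $x' \in N_{\X}(x^t)$, i.e. $x^t$ is an $\epsilon'$-local minimum by \cref{def:localmin}. Note that, unlike DCA-LS, the local-minimality guarantee here is supplied by this explicit check rather than by the permutation/criticality argument.

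The main obstacle is the bookkeeping around the restart step: unlike DCA-LS, the sequence $\{X^t\}$ is no longer a pure DCA trajectory, so I must verify that both the descent (part a) and the telescoping (part c) survive the line-15 overwrite. The key insight making this work is that a restart replaces $X^{t+1}$ by an integral point whose objective value $F(\bar{x}^t)$ lies strictly below $F(x^t) - \epsilon' \leq f(X^t) - \epsilon'$, so restarts never increase $f$ and in fact contribute at least $\epsilon'$ of descent; combined with $\epsilon' \geq \epsilon$, this preserves the same $O(1/\epsilon)$ iteration bound as in the non-restart case.
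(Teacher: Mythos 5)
Your proposal is correct and follows essentially the same route as the paper's proof: part (a) via the DCA descent property plus the strict decrease $f(X^{t+1}) = F(\bar{x}^t) < F(x^t) - \epsilon' \leq f(X^t)$ at a restart, part (b) by transplanting the row-stable-permutation/criticality argument from \cref{theorem:DCALocalSearch}, and part (c) by showing every non-terminating iteration decreases $f$ by more than $\epsilon$ (using $\epsilon' \geq \epsilon$ for the restart branch) and telescoping. Your closing observation that the local-minimality guarantee comes from the explicit line-12 check rather than from the criticality argument matches the paper's reasoning as well.
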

\begin{proof}
Note that between each restart (line 15), \cref{alg:DCARestart} is simply running standard DCA \eqref{eq:DCA}, so \cref{prop:DCAproperties} applies.

    \begin{enumerate}[label=\alph*), ref=\alph*, leftmargin=1em]
        \item This holds between each restart by \cref{prop:DCAproperties}-\ref{itm: DCA-descent}. Whenever the algorithm restarts, we have $F(\bar{x}^{t}) < F(x^{t}) - \epsilon'$, i.e., $x^t$ is not an $\epsilon'$-local minimum. In this case, we have 
        \begin{align}\label{eq:restart}
        f(X^{t+1}) &= F(\bar{x}^t) &\text{(by \cref{prop:LEproperties}-\ref{itm:extension})} \nonumber\\
         &< F(x^{t}) - \epsilon' \nonumber\\
         &\leq f(X^t) - \epsilon' &\text{(by \cref{prop:LEproperties}-\ref{itm:extension},\ref{itm:round})}
        \end{align}
        \item If $f(X^t) - f({X}^{t+1}) \leq \epsilon$, then by \cref{prop:DCAproperties}-\ref{itm:DCA-epsilon-critical}, we have $Y^t \in \partial_{\epsilon + \epsilon_x} g(X^t) \cap \partial h(X^t)$.
        If $(p,q)$ is row-stable, then by definition of $y^i$, $(p,q)$ is a common non-increasing permutation for $X^t$ and $\Theta^{-1}(y^i)$,
        %then $(p,q)$ is also a row-stable permutation consistent with $J_{(p,q)[i]}$ 
        for all $i \in  \seq{0}[(k-1)n]$. Hence, $Y^t \in \partial h(\Theta^{-1}(y^i))$ by \cref{prop:LEproperties}-\ref{itm:subgradient}, and thus $Y^t \in  \partial_{\epsilon  + \epsilon_x} g(X^t) \cap \partial h(\Theta^{-1}(y^i))$. Therefore, 
        \begin{align*}
            F(x^t) &\leq f(X^t) &\text{(by \cref{prop:LEproperties}-\ref{itm:extension},\ref{itm:round})}\\
            &\leq f(\Theta^{-1}(y^i)) + \epsilon + \epsilon_x &\text{(by \cref{prop:criticality})}\\
            &= F(y^i) +  \epsilon + \epsilon_x, &\text{(by \cref{prop:LEproperties}-\ref{itm:extension})}
        \end{align*}
        for any $i \in \seq{(k-1)n}$. 
        \item  For any iteration $t \in [T]$, if the algorithm did not terminate, then either $f(X^t) - f(X^{t+1}) > \epsilon$ or $F(\bar{x}^{t}) < F(x^{t}) - \epsilon'$. In the second case, the algorithm restarts, then by \cref{eq:restart} we have $f(X^t) - f(X^{t+1}) \geq \epsilon$, since $\epsilon' \geq \epsilon$.
        %After $T$ iterations of \cref{alg:DCARestart}, 
        We therefore have $F(x^{0}) - F(x^{t}) \geq t \epsilon$, hence $t < (f(X^0) - f^*) / \epsilon$. If the algorithm did terminate, then $x^t$ must be an $\epsilon'$-local minimum of $F$.
  \end{enumerate}
\end{proof}

DCA-Restart has a similar computational cost per iteration as DCA-LS. The only difference is that DCA-LS has an additional cost per iteration of $O(n \EO_F)$ for the local search step (line 3) and $O(n k)$ to map $\tilde{X}^{t+1}$ to $x^{t+1}$ (line 8). In DCA-Restart, these operations are only done at convergence ($f(X^t) - f(X^{t+1}) \leq \epsilon$), which in the worst case can occur at every iteration. 
%Another minor difference is that obtaining a valid permutation $(p,q)$ in DCA-Restart is slightly more expensive; $O(n k \log (n k)$ instead of $O(n k)$ in DCA-LS, since $X^t$ is not necessarily binary. % <-- removed this because if we're using submodular min to solve the subproblem $X^t$ will be binary and our discussion of computation complexity is assuming that (we're not including cost of rounding).
However, these differences have little impact on the overall per-iteration cost, which is dominated by the cost of solving the subproblem. Indeed, the total cost per iteration in DCA-Restart is $\tilde{O}( n (k L_{f^t_\downarrow}/\epsilon)^2 ~\EO_{F^t} + n k ~\EO_H)$; the same as in DCA-LS. Moreover, the number of iterations for both variants is at most $(f(X^0) - f^*) / \epsilon$.
So, theoretically, both variants have very similar theoretical guarantees and runtime.
%Intuitively, however, we expect each step of DCA-LS to decrease the objective more than in DCA-Restart, because of its more careful permutation choice, and thus to converge faster.  

\paragraph{Empirical comparison} We empirically compare DCA-LS to DCA-Restart on all experiments included in the paper. 
We use the same parameters $T$ and $\epsilon$, and the same subproblem solver (pairwise-FW) in DCA-Restart as in DCA-LS; see \cref{app:exps-setup} for how these are set in each experiment. We choose $\epsilon'=0$, i.e., $x^t$ should be an exact local minimum if DCA-Restart stops before reaching the maximum number of iterations. We also choose a row-stable permutation $(p,q)$ when computing $Y^t$.
\mtodo{We use the default order in Matlab's sort to break ties. It would be good to check performance with random permutations.}

We report their performance on integer least squares (ILS) in \cref{fig:ils-restart} and corresponding running times in \cref{fig:ils-restart-time}. Similarly, \cref{fig:ics-restart} and \cref{fig:ics-restart-time} show their performance and running times on integer compressed sensing (ICS). We also plot the number of DCA outer and inner iterations for ILS in \cref{fig:ils-iterations,fig:ils-inner-iterations} and for ICS in \cref{fig:ics-iterations,fig:ics-inner-iterations}. 
The reported numbers of DCA inner iterations are the total number of inner iterations, i.e., iterations of pairwise-FW, summed over all outer iterations $t$.
For ICS, the reported numbers of iterations and running times correspond to the sum over the 6 values of $\lambda$ tried, $\lambda =1,0.1,\ldots,10^{-5}$. We also report the running time for each $\lambda$ separately at $m/n \approx 0.2$ and $0.5$ ($m = 50$ and $m=123$, respectively) with $\snr = 8$ in \cref{fig:ics-restart-time-perlbd}. All results are again averaged over 100 runs, with error bars for standard deviations. 

We observe that DCA-LS matches or outperforms DCA-Restart on all experiments. The two variants perform similarly when initialized with a good solution (LASSO in ICS, RAR in ILS); otherwise, DCA-LS performs better, sometimes by a large margin (see 4th row in \cref{fig:ics-restart}). In terms of runtime, DCA-Restart is generally faster on both ILS and ICS. 
For both applications, DCA-Restart takes slightly more outer iterations to converge, but has a lower per-iteration runtime, as evidenced by its smaller number of inner iterations.
Intuitively, we expected each step of DCA-LS to decrease the objective more than in DCA-Restart, because of its more careful permutation choice, and thus to converge faster.  
In practice, this choice seems to lead to better solutions in some cases, but not significantly faster convergence, and comes at the cost of increased subproblem complexity. The slower convergence of pairwise-FW in DCA-LS may be due to the subgradients $Y^t$ changing more between consecutive iterations, causing the subproblem's solution to vary more, 
and thus take longer to solve, given that we warm-start pairwise-FW with the previous iteration’s solution. 
Overall, the choice between the two variants is problem dependent; for example, on whether a good initialization is available.

%Examining runtimes for individual instances, we found that DCA-Restart typically takes more iterations to converge, as expected, but has a lower per-iteration runtime, particularly due to the subproblem being solved faster than in DCA-LS. The more careful choice of permutation in DCA-LS seems to enable faster progress per iteration and can lead to better solutions in some cases, but at the cost of increased subproblem complexity. 

\begin{figure}[h]
\includegraphics[scale=0.3]{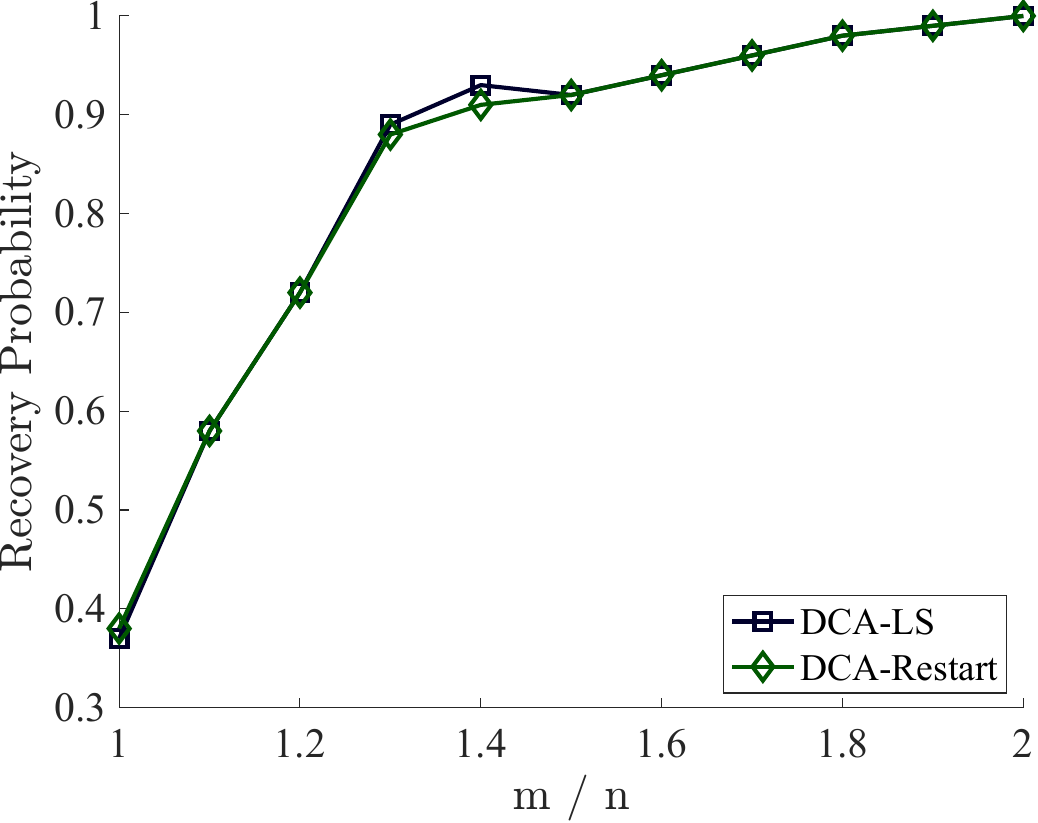}\hfill
\includegraphics[scale=0.3]{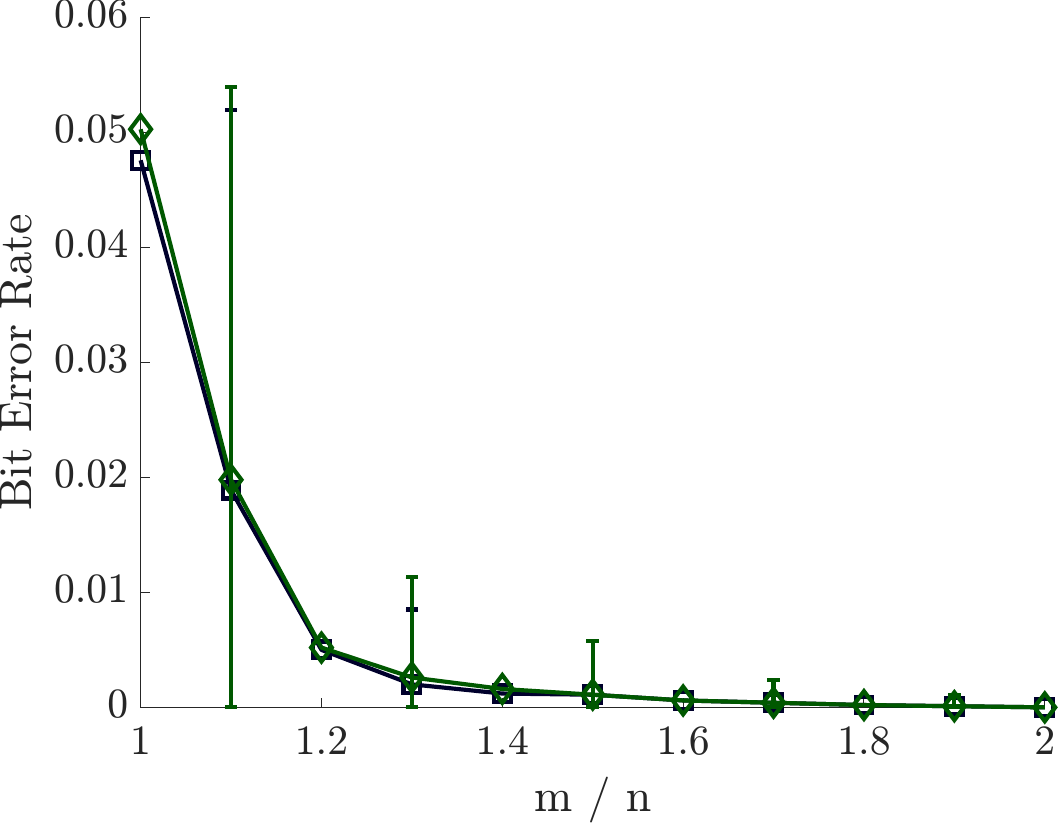}\hfill
\includegraphics[scale=0.3]{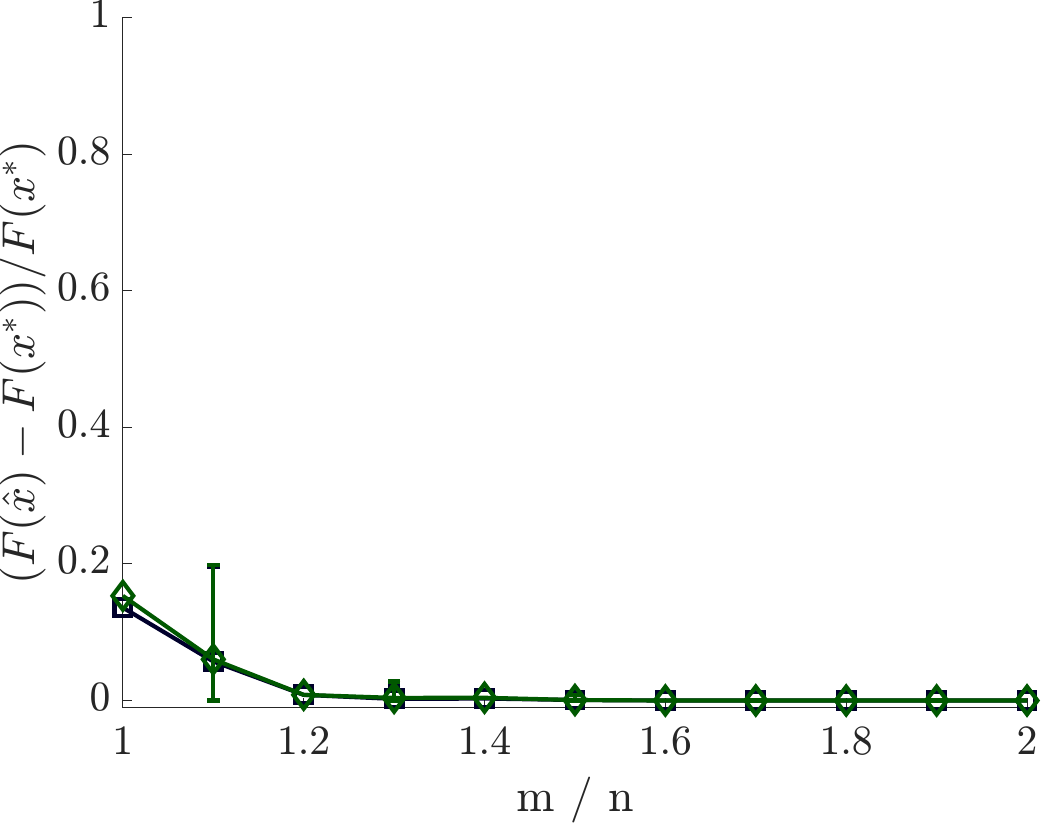}\par
\includegraphics[scale=0.3]{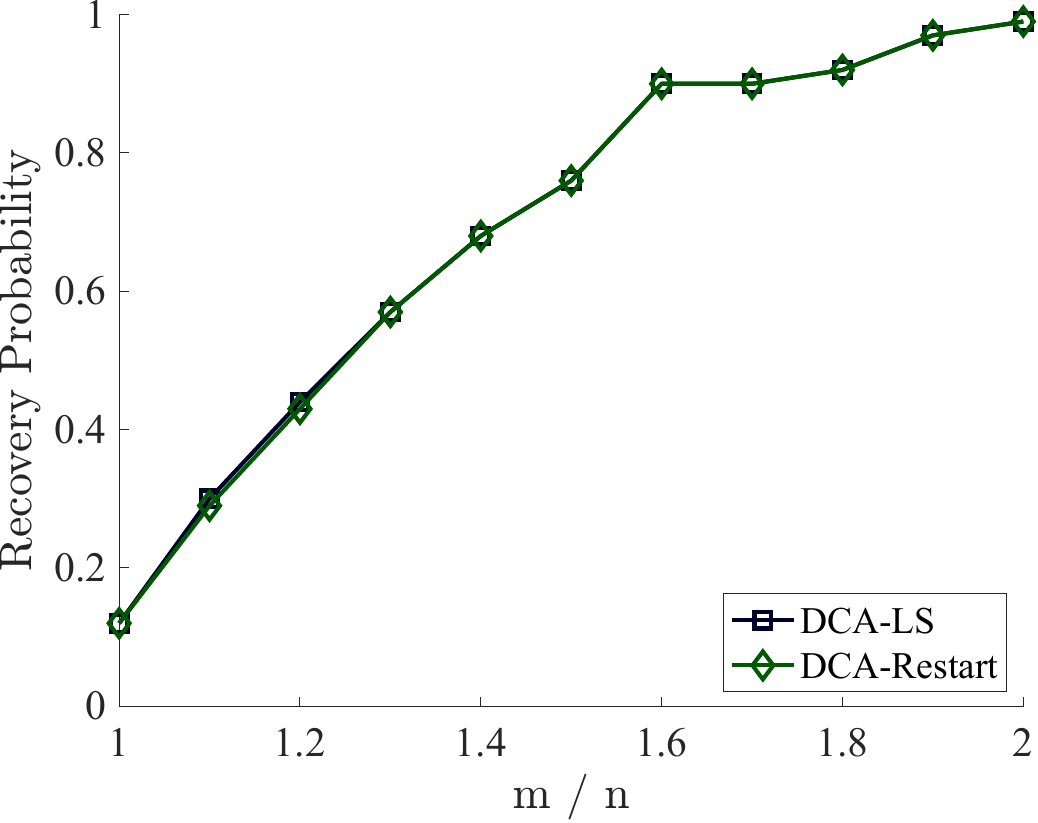}\hfill 
\includegraphics[scale=0.3]{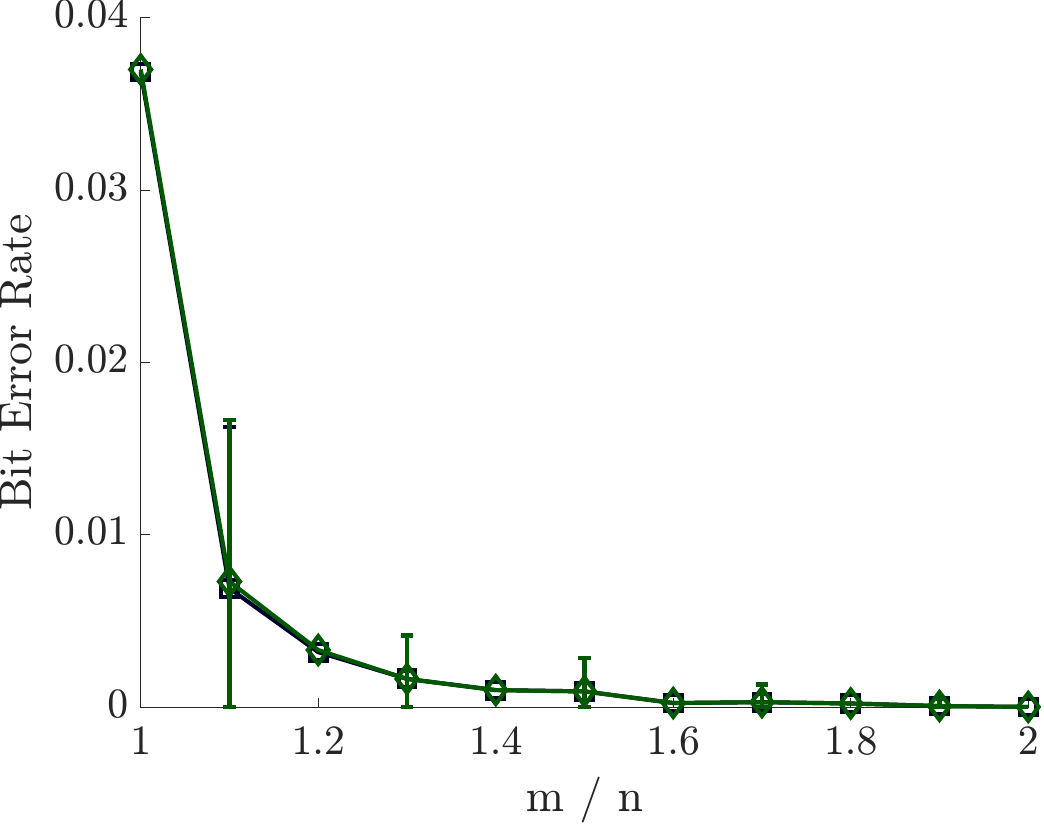}\hfill
\includegraphics[scale=0.3]{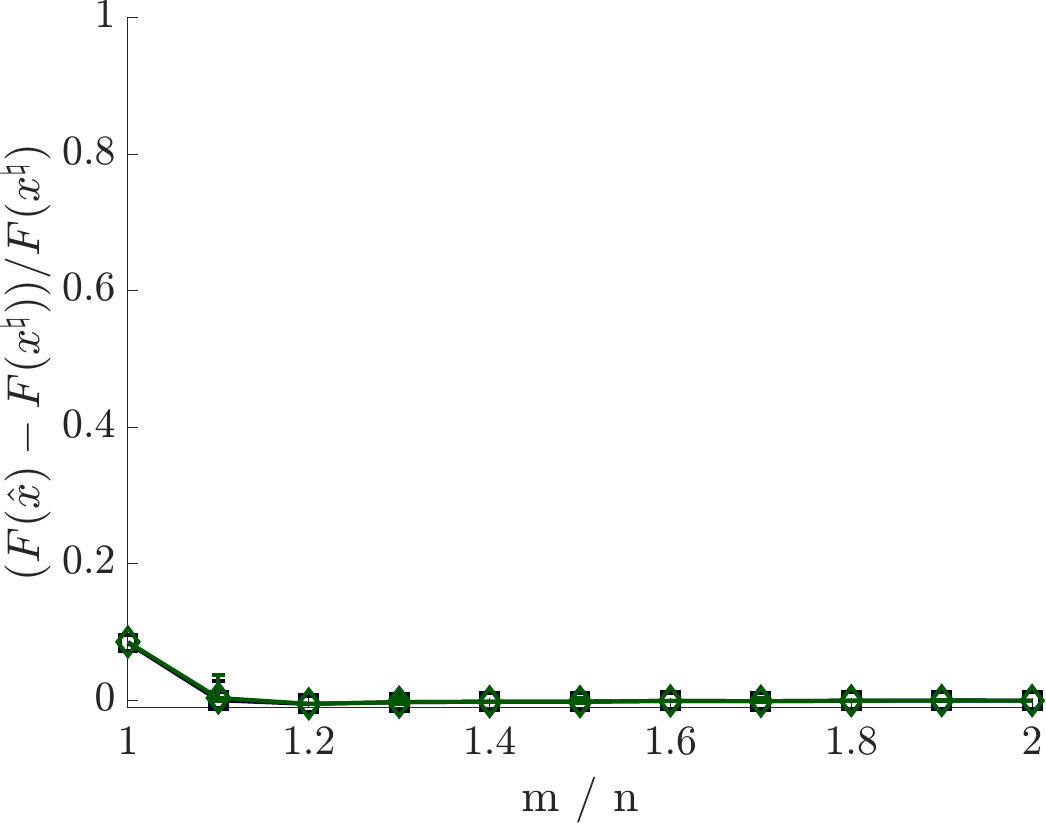}\par
\includegraphics[scale=0.3]{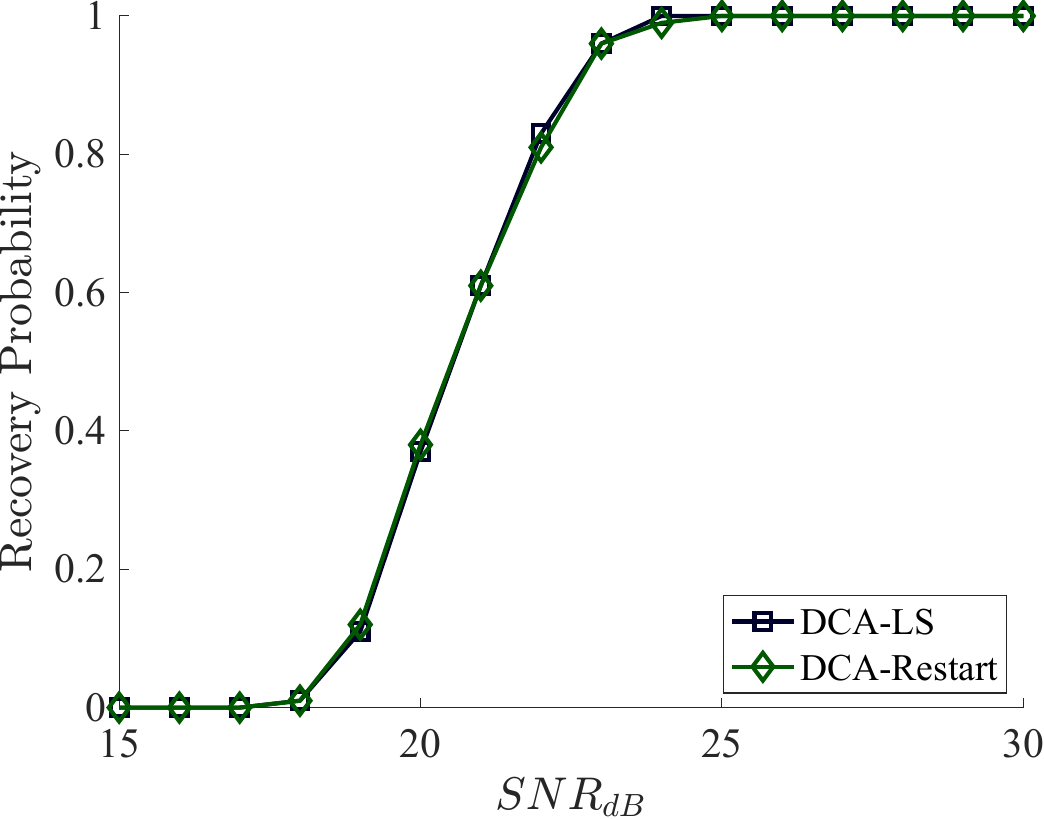}\hfill 
\includegraphics[scale=0.3]{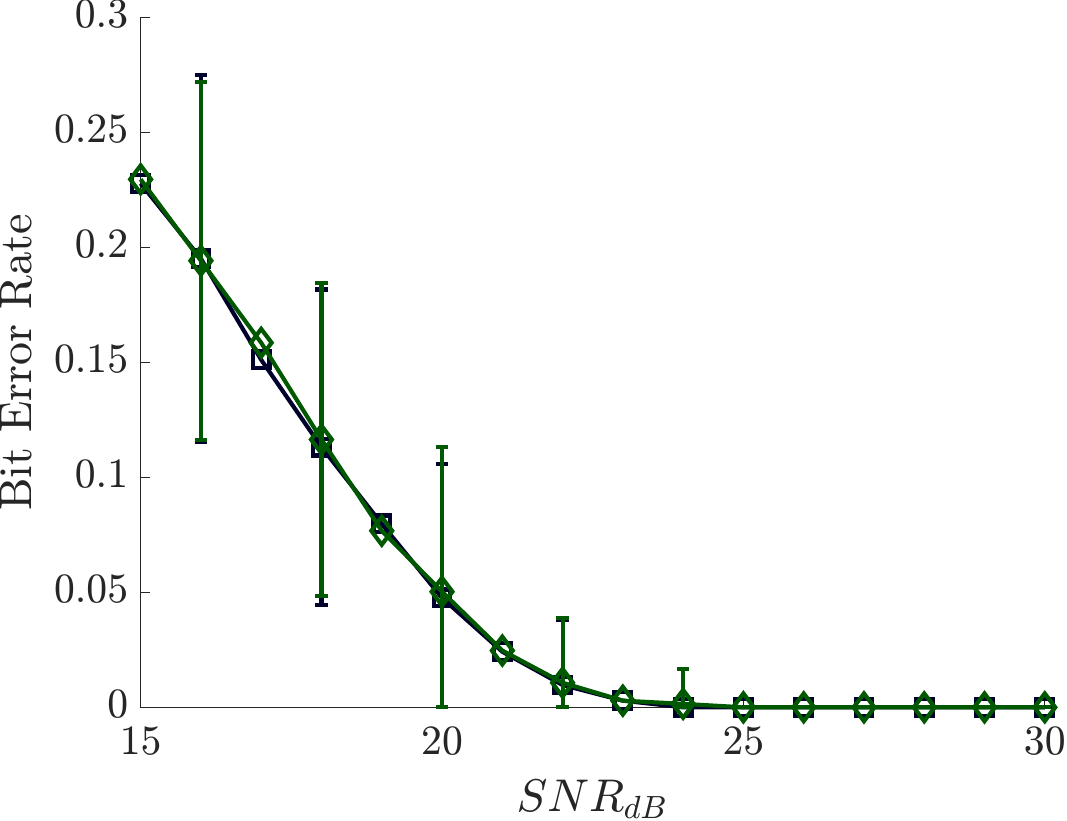}\hfill
\includegraphics[scale=0.3]{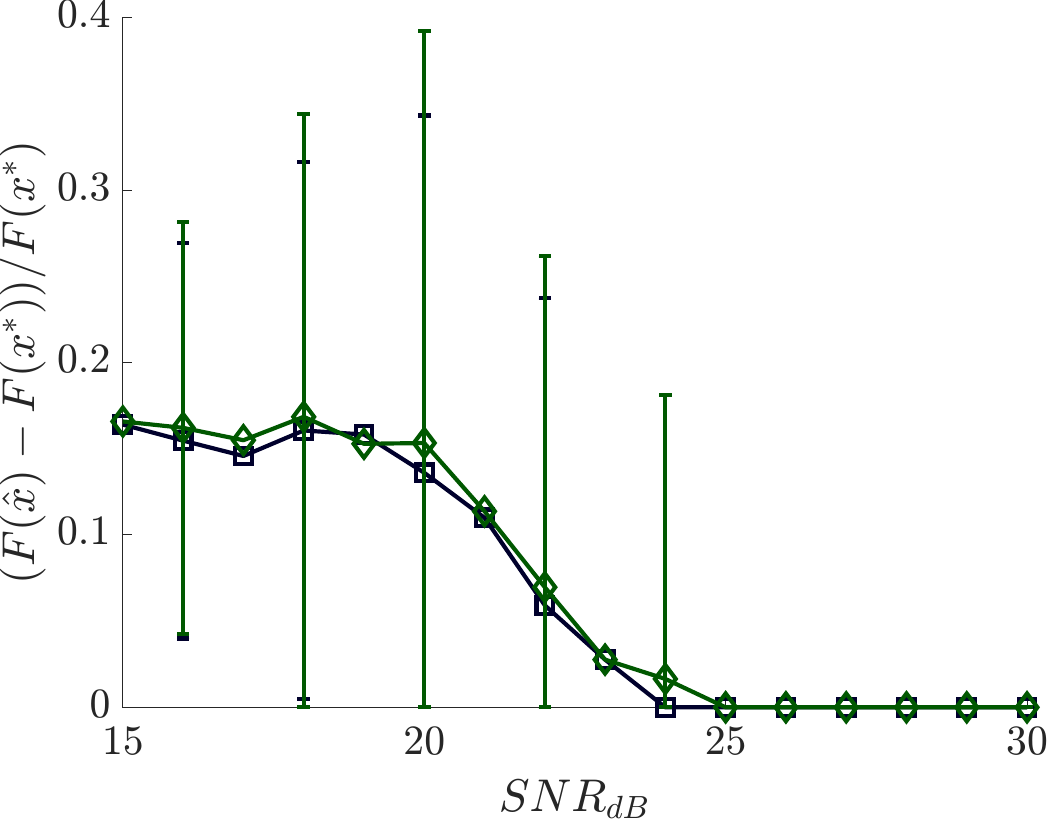}\par
\includegraphics[scale=0.3]{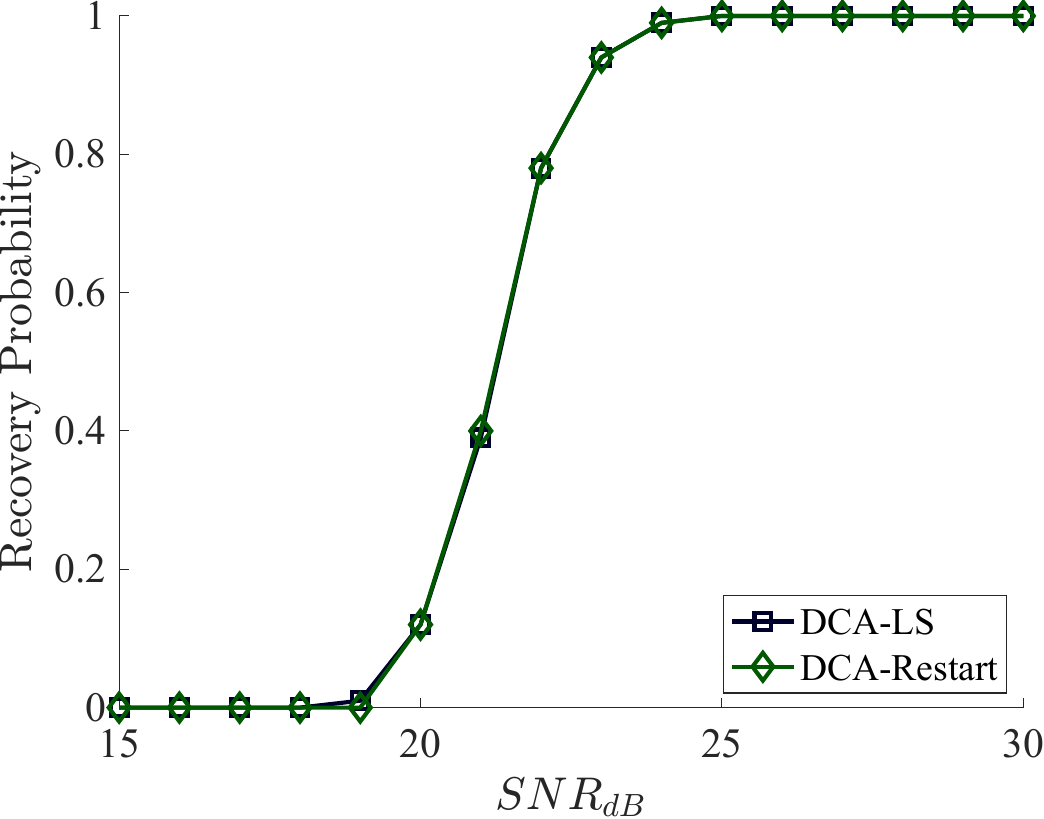}\hfill 
\includegraphics[scale=0.3]{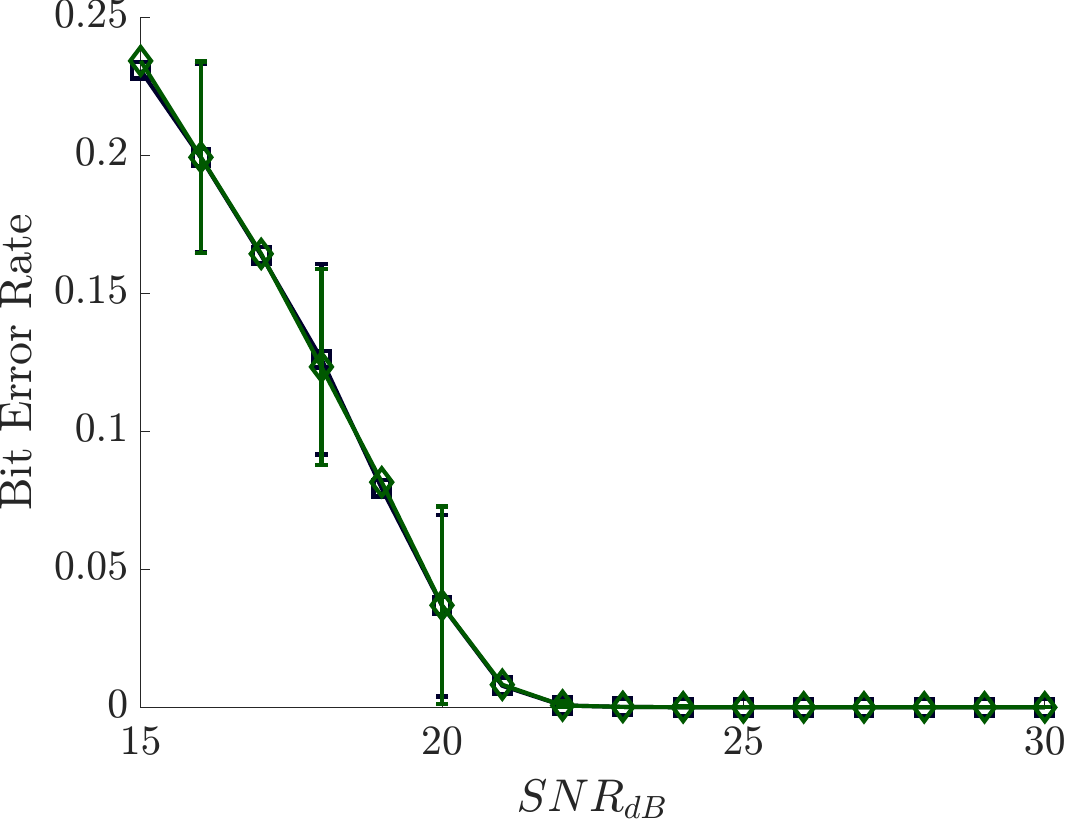}\hfill
\includegraphics[scale=0.3]{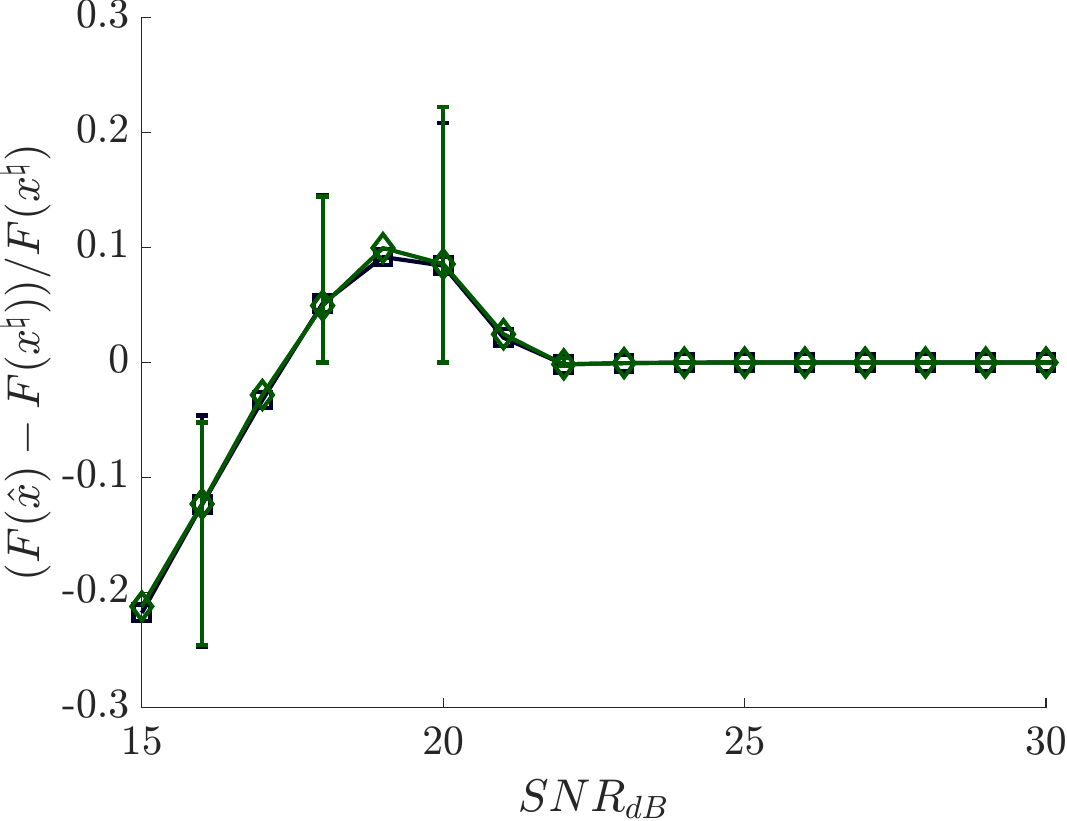}
\caption{Performance results, averaged over 100 runs, for integer least squares experiments comparing two DCA variants: Varying $m$ with $n=100$ and $\snr=8$ (first row), varying $m$ with $n=400$ and $\snr=8$ (second row), varying $\snr$ with $m=n=100$ (third row), and varying $\snr$ with $m=n=400$ (fourth row).}
\label{fig:ils-restart}
\end{figure}

\begin{figure}[h]
\centering
\subfloat[]{\includegraphics[scale=0.3]{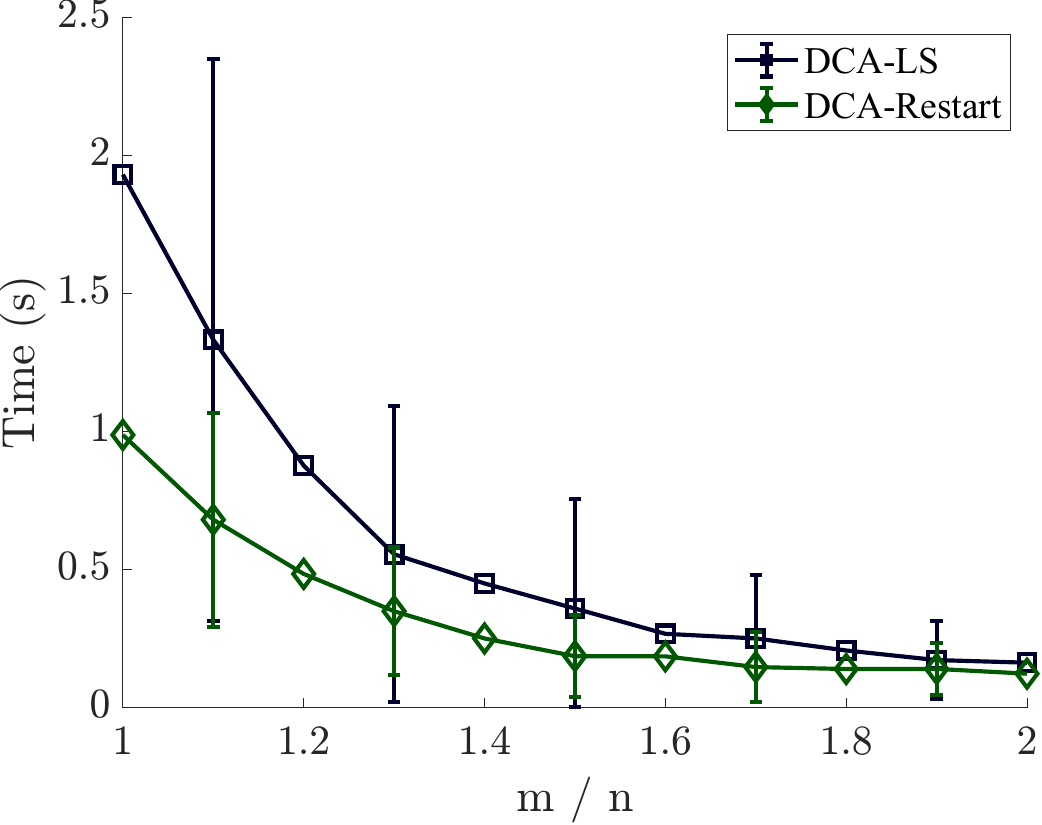}}
\subfloat[]{\includegraphics[scale=0.3]{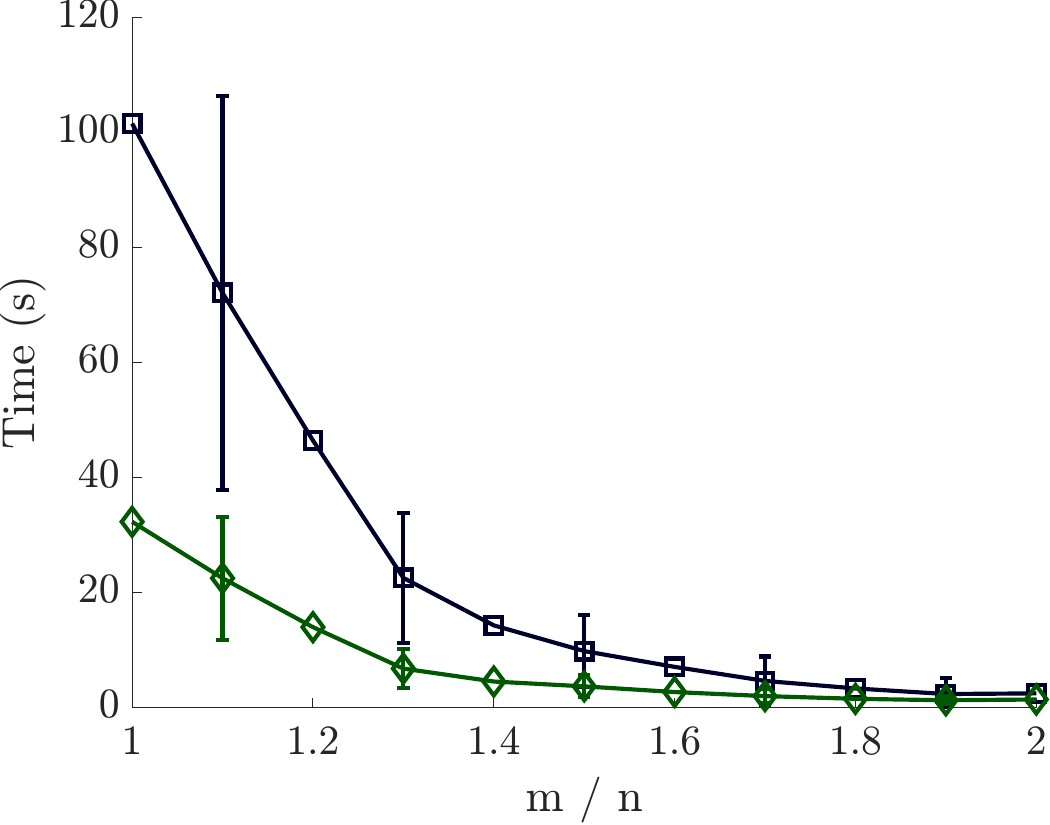}}\par
\subfloat[]{\includegraphics[scale=0.3]{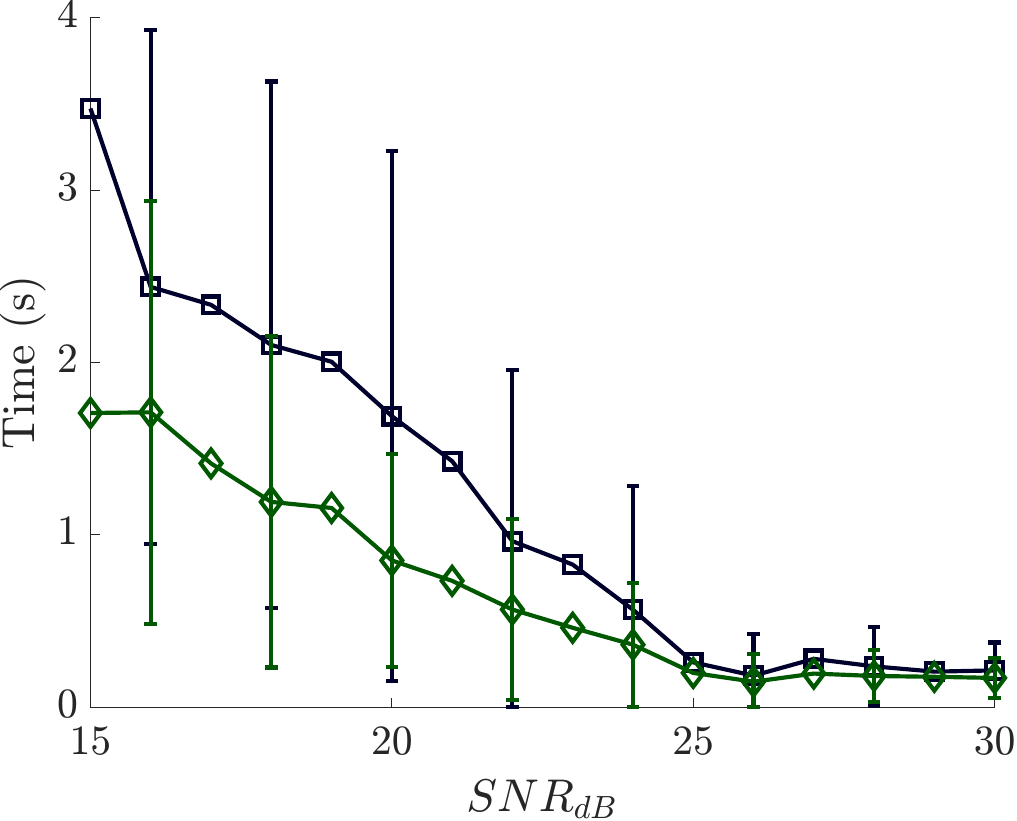}}
\subfloat[]{\includegraphics[scale=0.3]{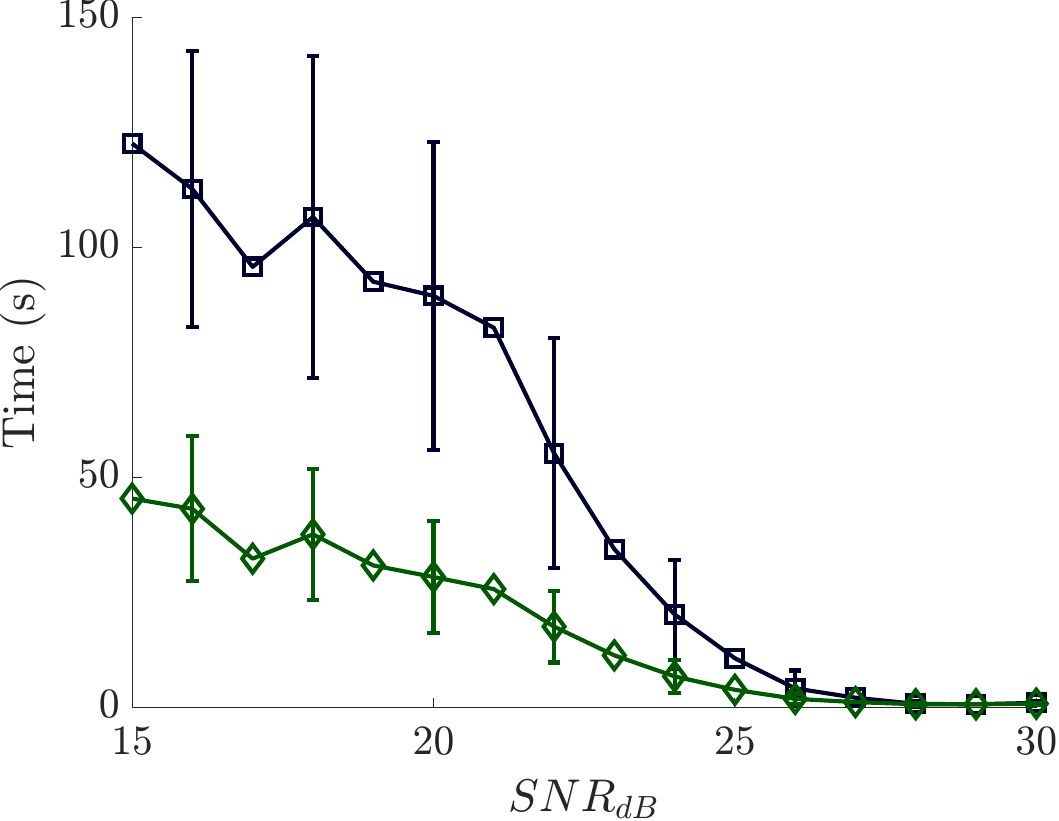}}
\caption{Running times, averaged over 100 runs, for integer least squares experiments comparing two DCA variants: (a) Varying $m$ with $n=100$ and $\snr=8$, (b) varying $m$ with $n=400$ and $\snr=8$, (c) varying $\snr$ with $m=n=100$, and (d) varying $\snr$ with $m=n=400$.}
\label{fig:ils-restart-time}
\end{figure}

\begin{figure}[h]
\centering
\subfloat[]{\includegraphics[scale=0.3]{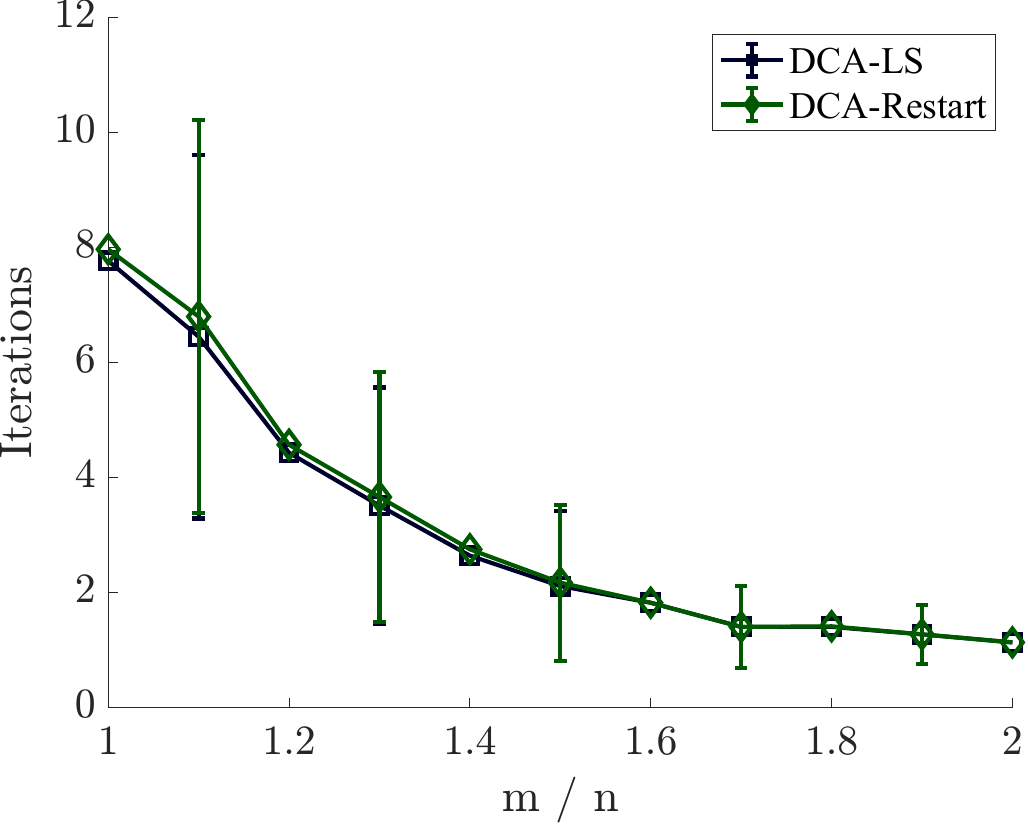}}\hspace{5pt}
\subfloat[]{\includegraphics[scale=0.3]{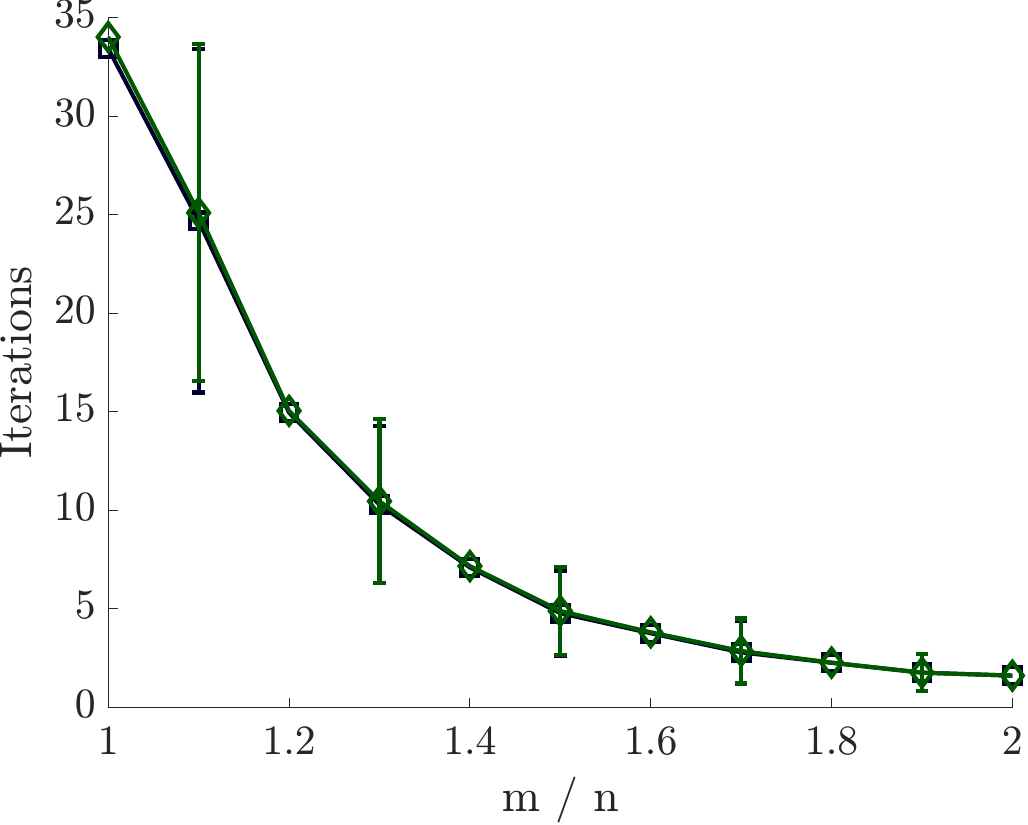}}\par
\subfloat[]{\includegraphics[scale=0.3]{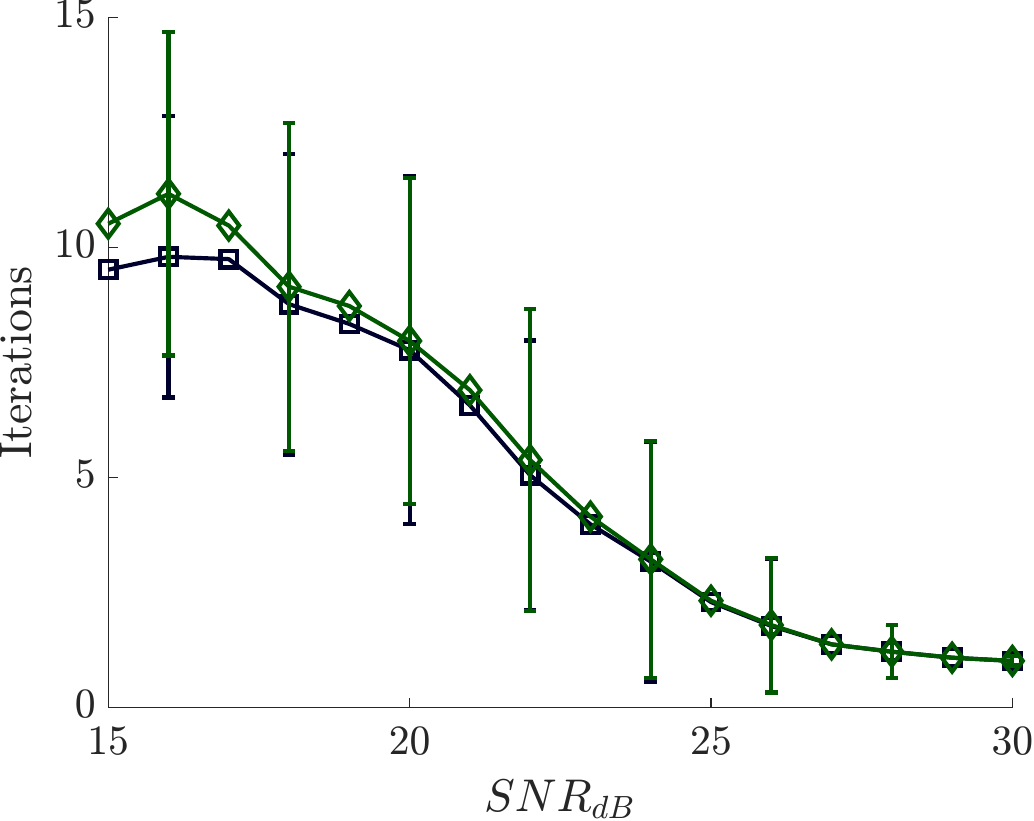}}\hspace{5pt}
\subfloat[]{\includegraphics[scale=0.3]{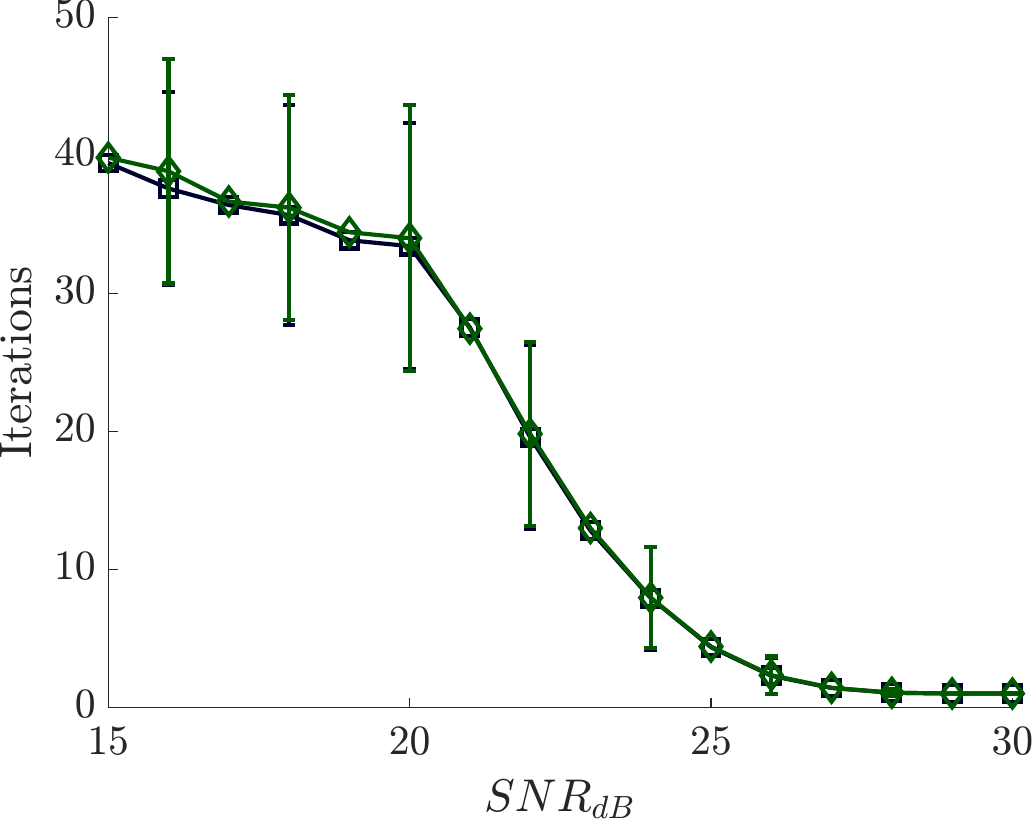}}
\caption{Number of DCA iterations, averaged over 100 runs, for integer least squares experiments: (a) Varying $m$ with $n=100$ and $\snr=8$, (b) varying $m$ with $n=400$ and $\snr=8$, (c) varying $\snr$ with $m=n=100$, and (d) varying $\snr$ with $m=n=400$.}
\label{fig:ils-iterations}
\end{figure}

\begin{figure}[h]
\centering
\subfloat[]{\includegraphics[scale=0.3]{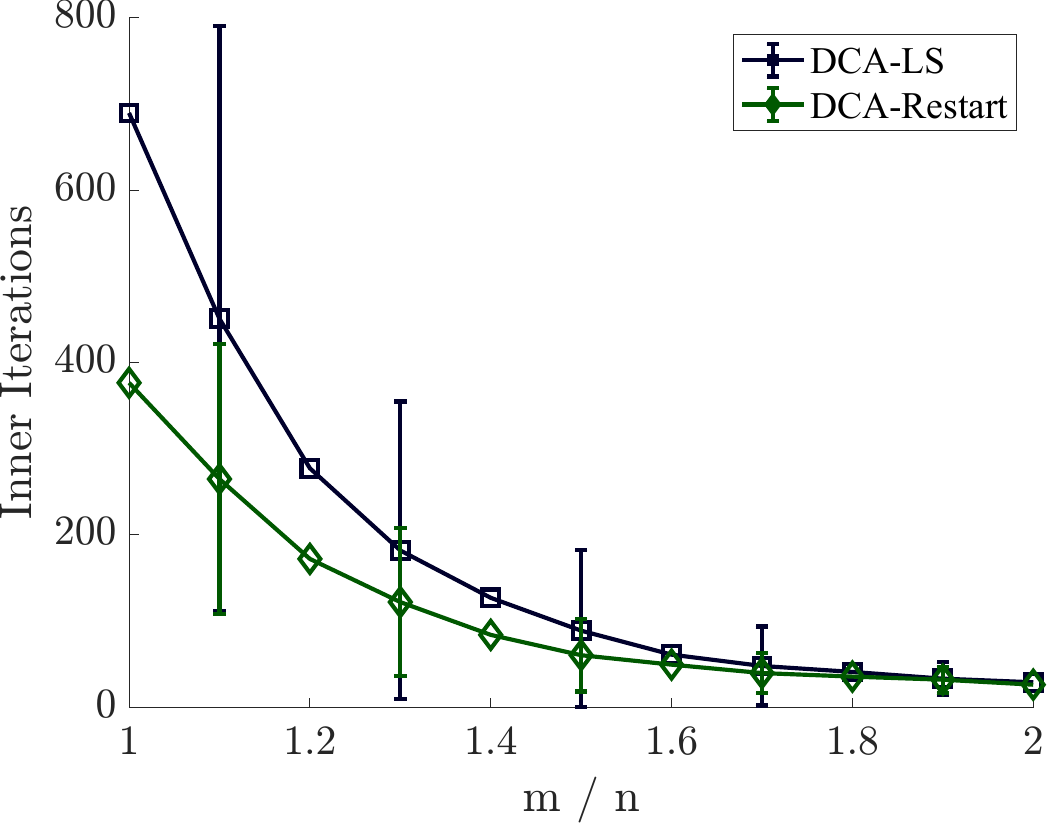}}\hspace{5pt}
\subfloat[]{\includegraphics[scale=0.3]{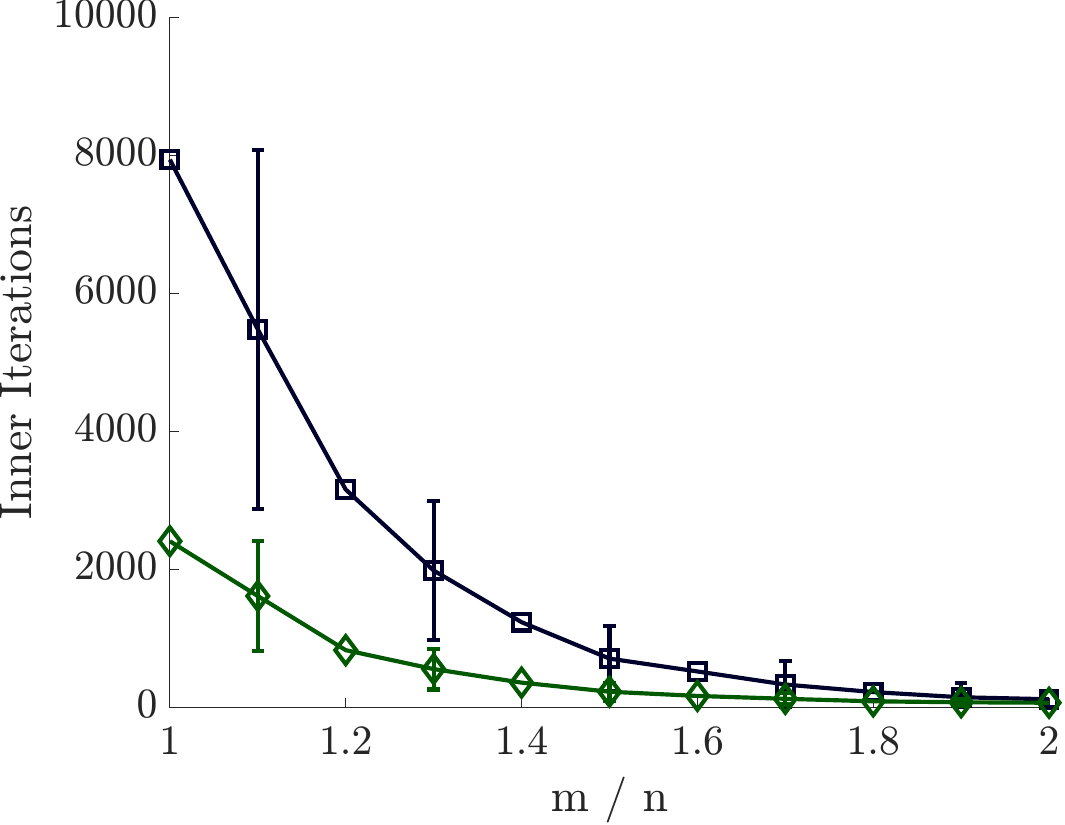}}\par
\subfloat[]{\includegraphics[scale=0.3]{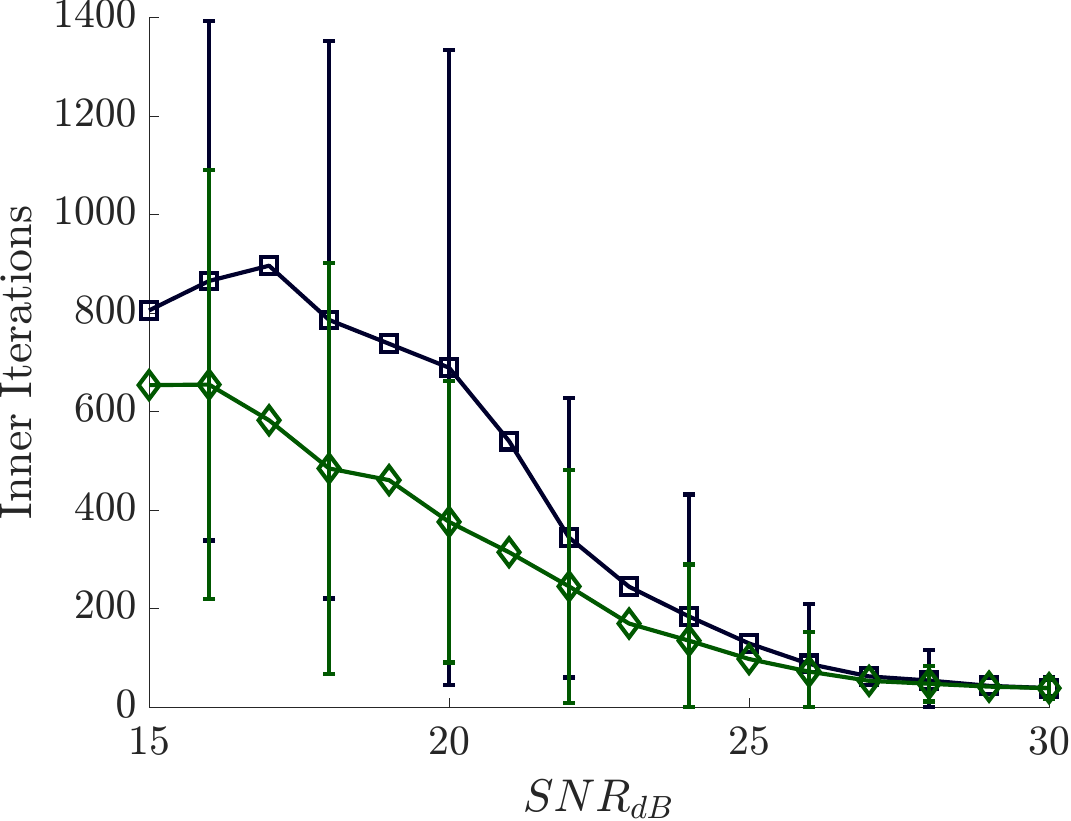}}\hspace{5pt}
\subfloat[]{\includegraphics[scale=0.3]{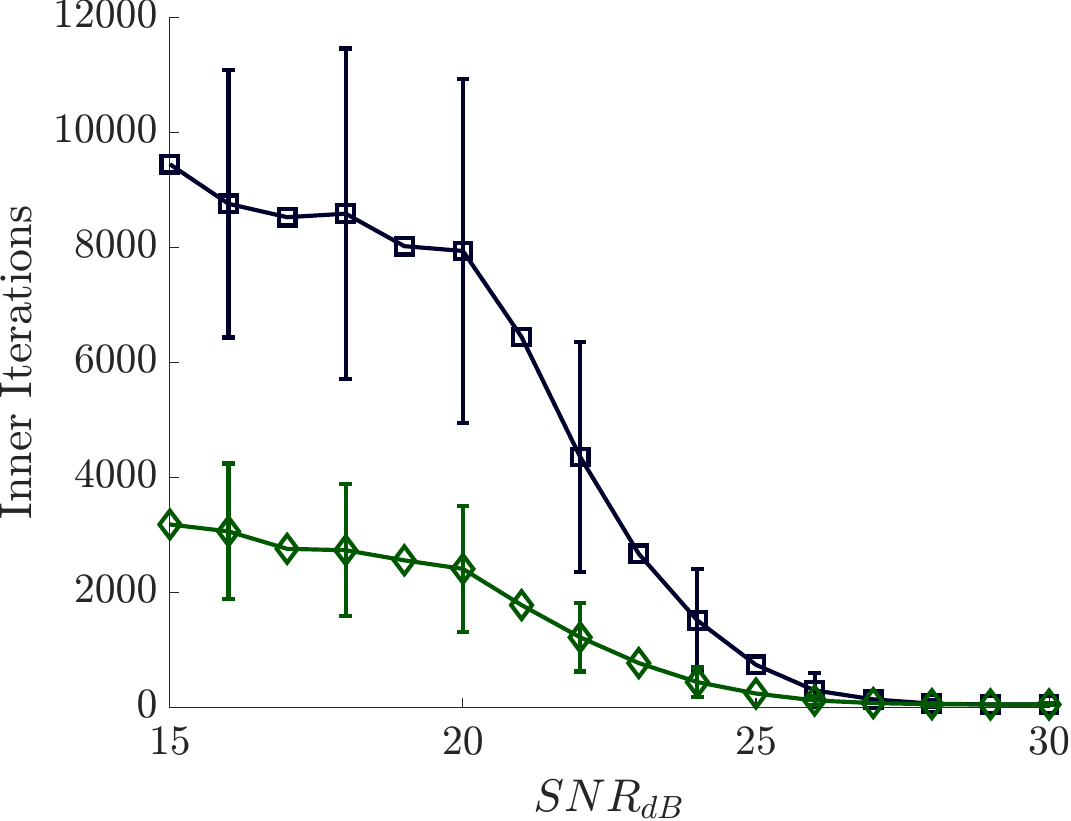}}
\caption{Number of DCA inner iterations, summed over all outer iterations $t$, averaged over 100 runs, for integer least squares experiments: (a) Varying $m$ with $n=100$ and $\snr=8$, (b) varying $m$ with $n=400$ and $\snr=8$, (c) varying $\snr$ with $m=n=100$, and (d) varying $\snr$ with $m=n=400$.}
\label{fig:ils-inner-iterations}
\end{figure}

\begin{figure}[h]
\includegraphics[scale=0.3]{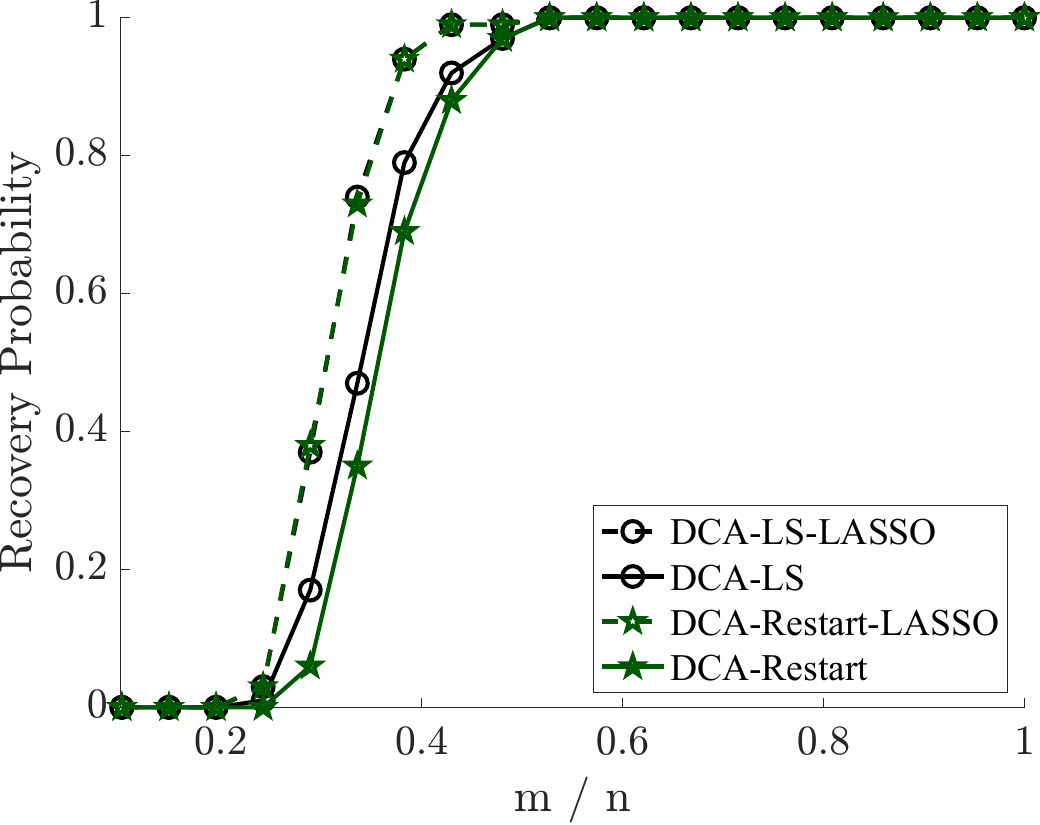}\hfill
\includegraphics[scale=0.3]{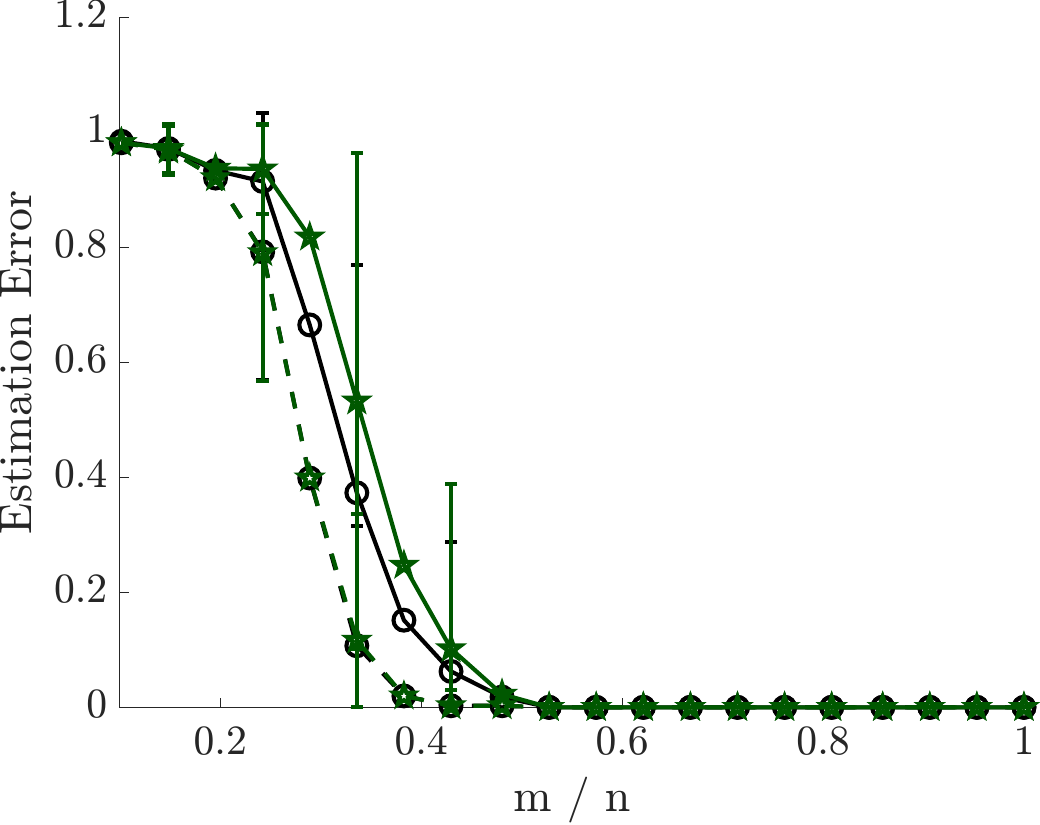}\hfill
\includegraphics[scale=0.3]{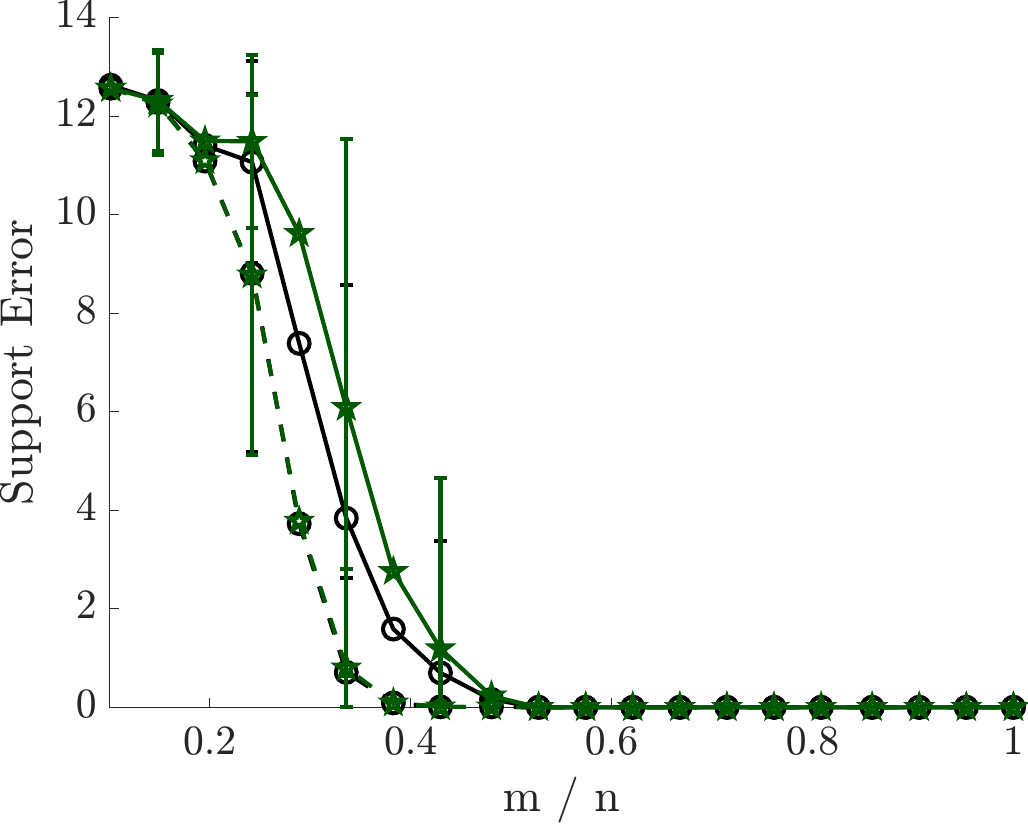}\par
\includegraphics[scale=0.3]{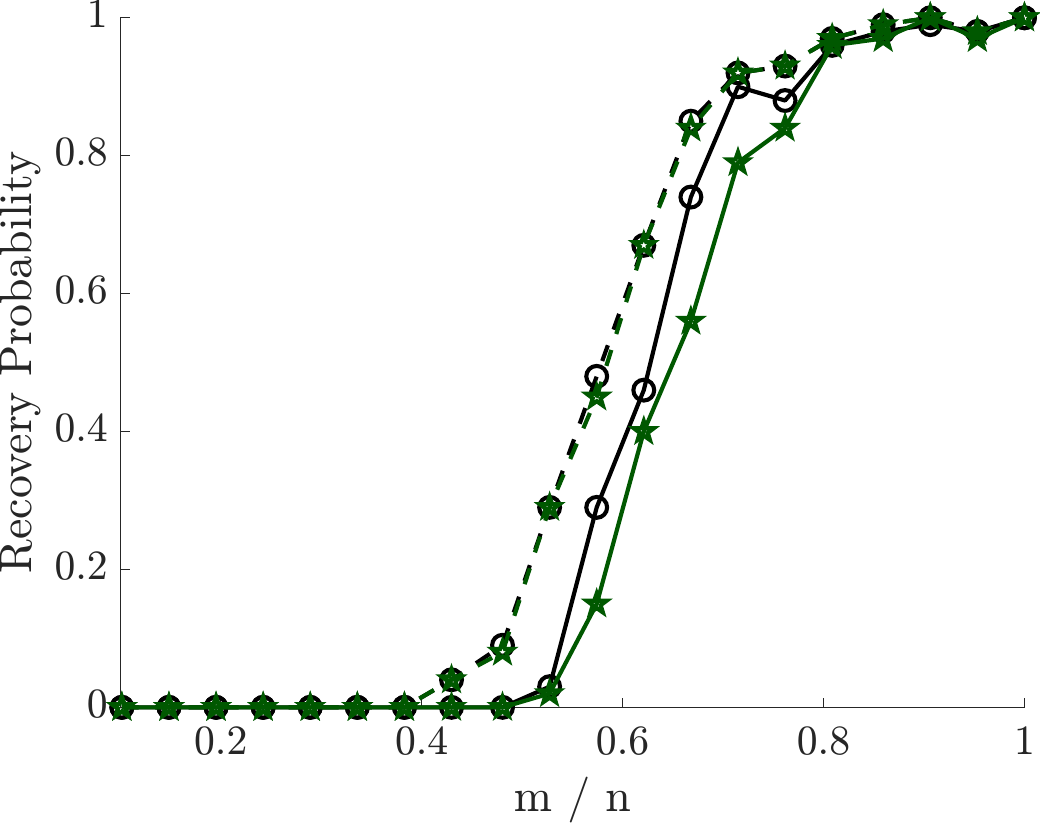}\hfill 
\includegraphics[scale=0.3]{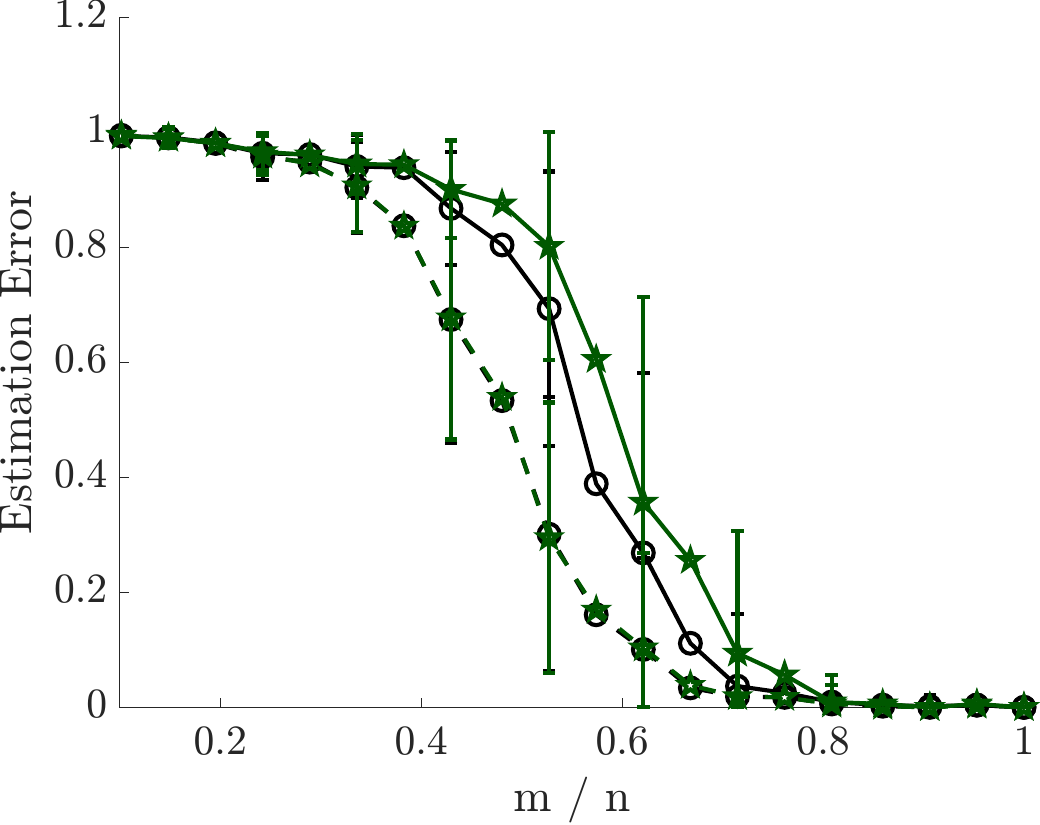}\hfill
\includegraphics[scale=0.3]{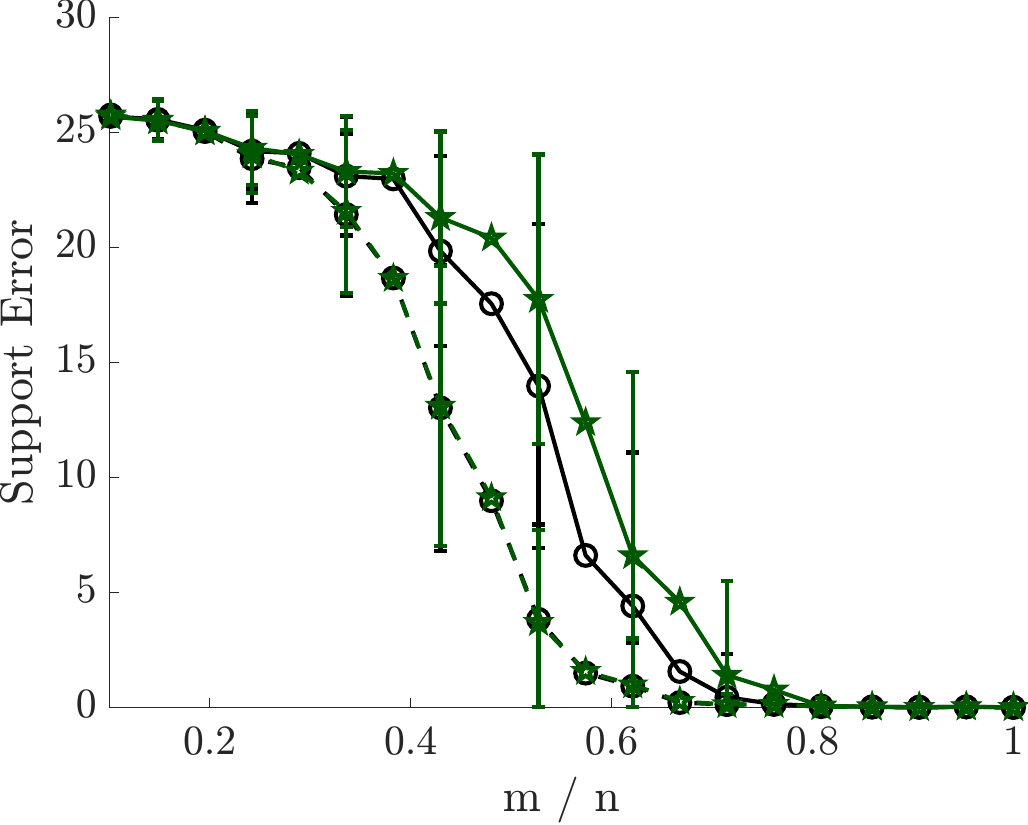}\par
\includegraphics[scale=0.3]{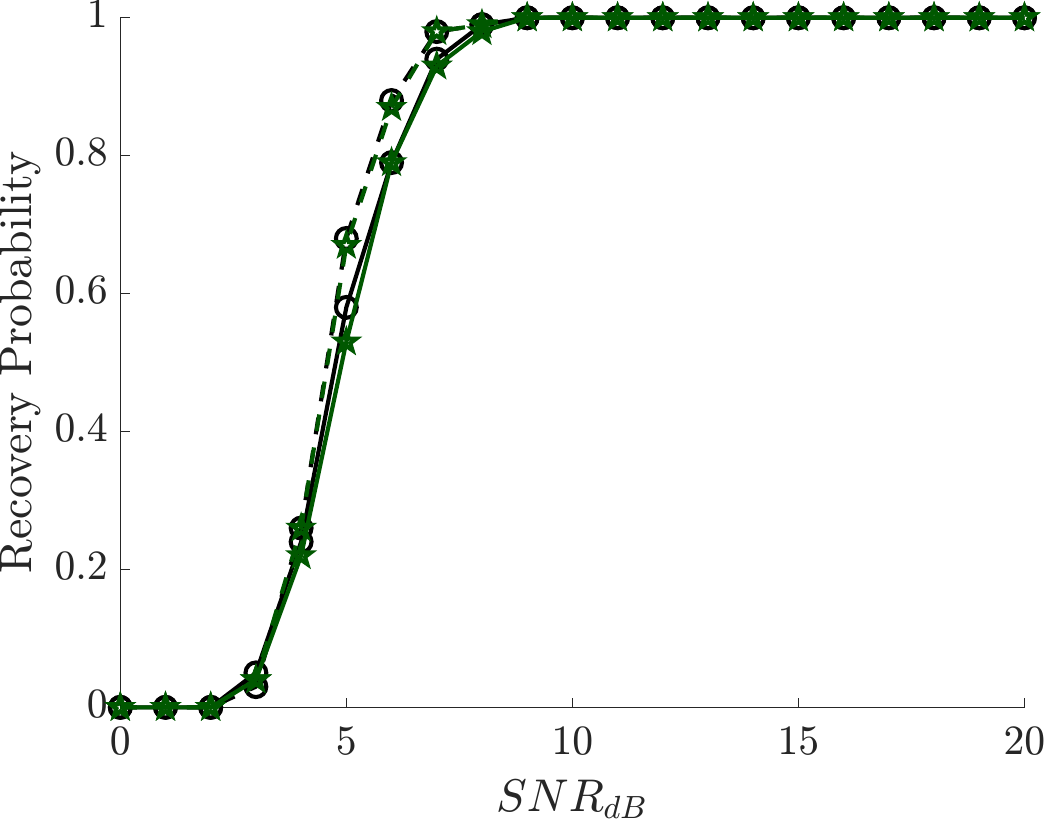}\hfill 
\includegraphics[scale=0.3]{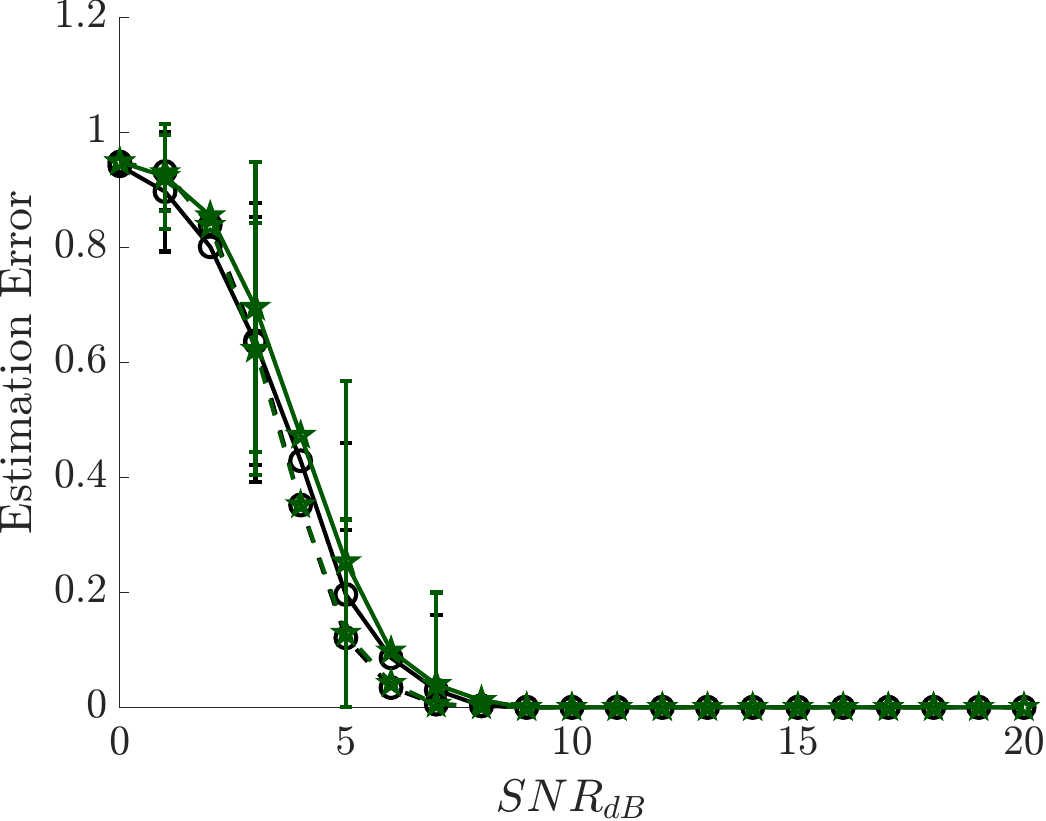}\hfill
\includegraphics[scale=0.3]{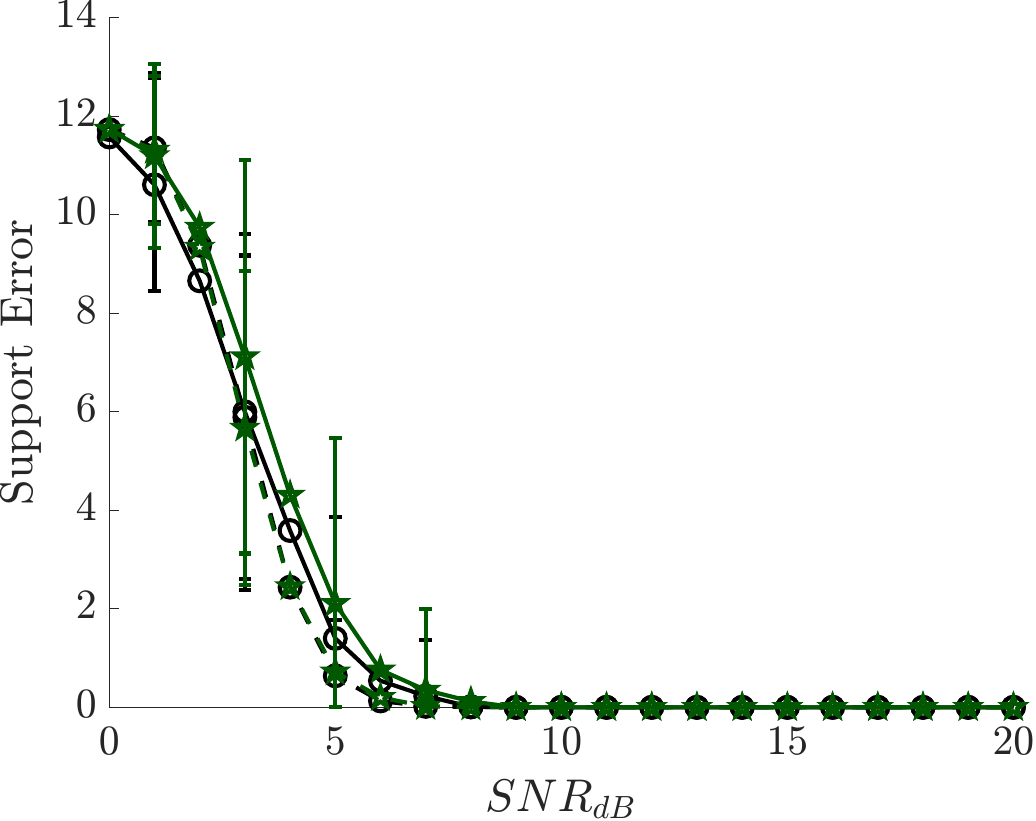}\par
\includegraphics[scale=0.3]{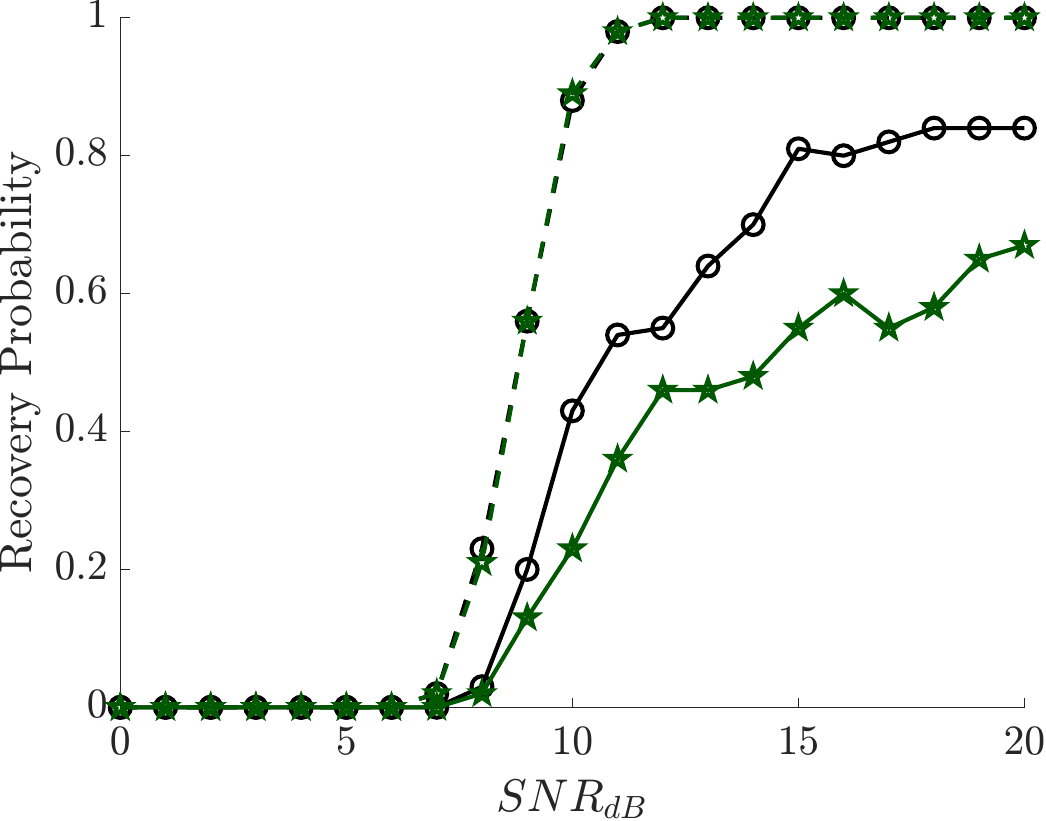}\hfill 
\includegraphics[scale=0.3]{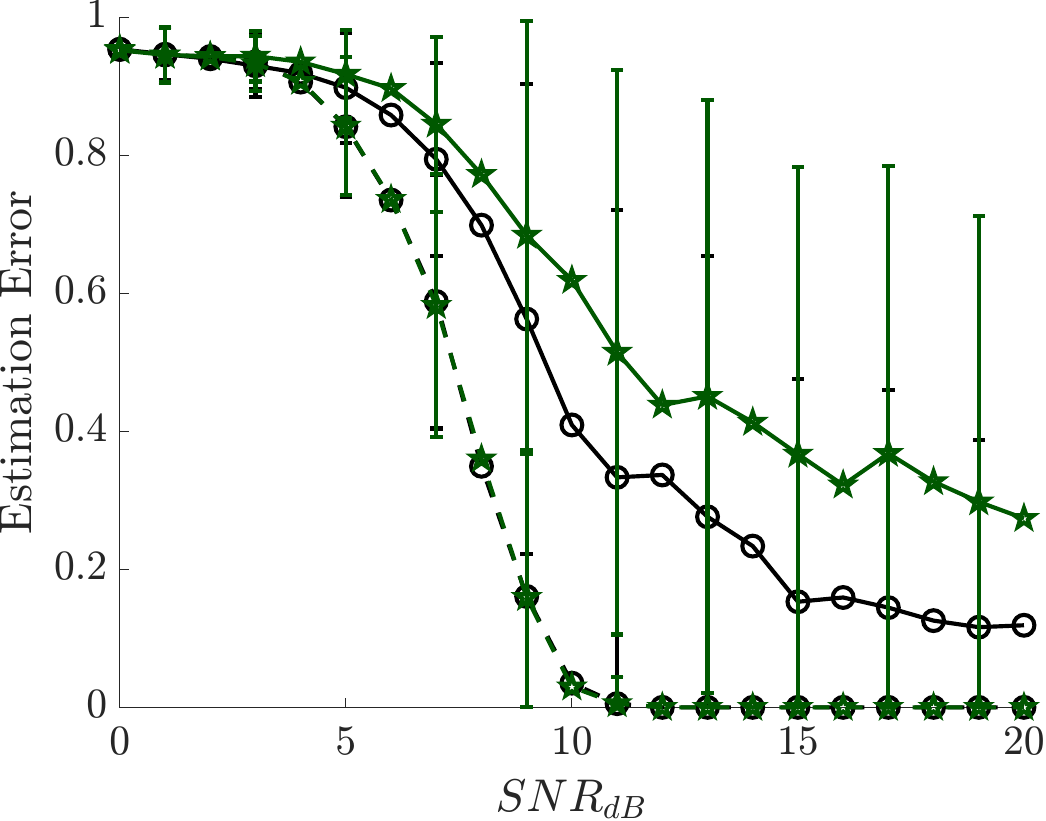}\hfill
\includegraphics[scale=0.3]{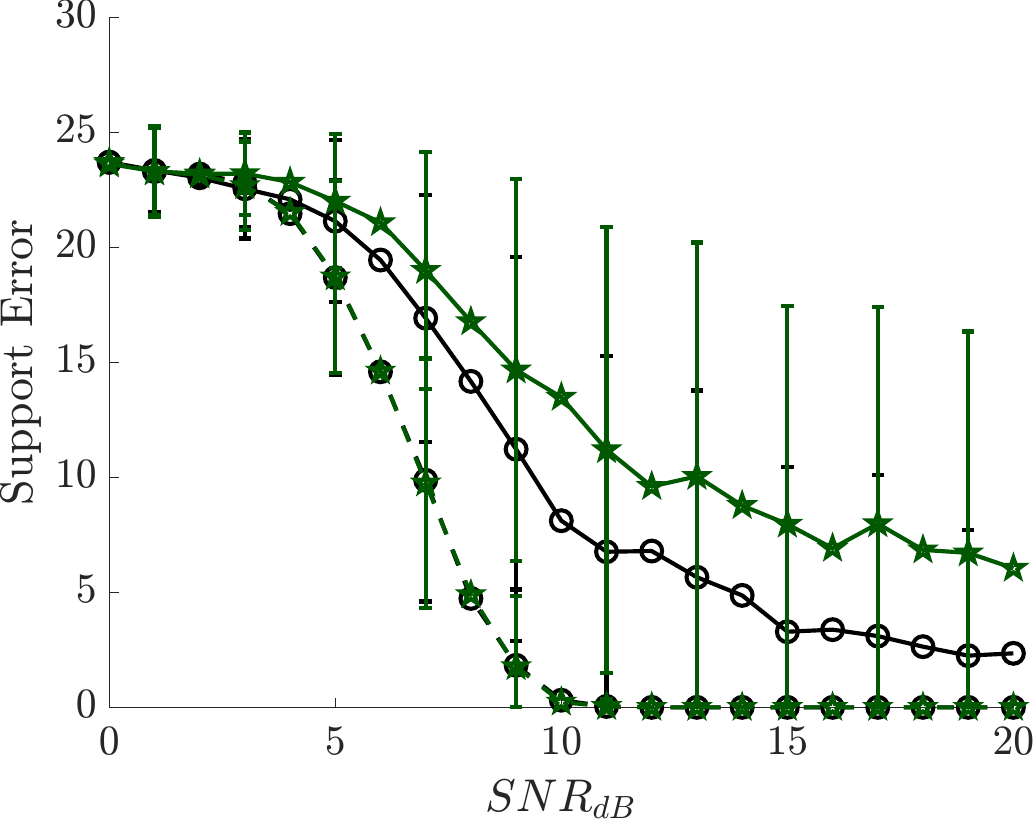}
\caption{Performance results, averaged over 100 runs, for integer compressed sensing experiments comparing two DCA variants with $n=256$: Varying $m$ with $s=13$ and $\snr=8$ (first row), varying $m$ with $s=26$ and $\snr=8$ (second row), varying $\snr$ with $m/n=0.5$ and $s=13$ (third row), and varying $\snr$ with $m/n=0.5$ and $s=26$ (fourth row).}
\label{fig:ics-restart}
\end{figure}

\begin{figure}[h]
\centering
\subfloat[]{\includegraphics[scale=0.3]{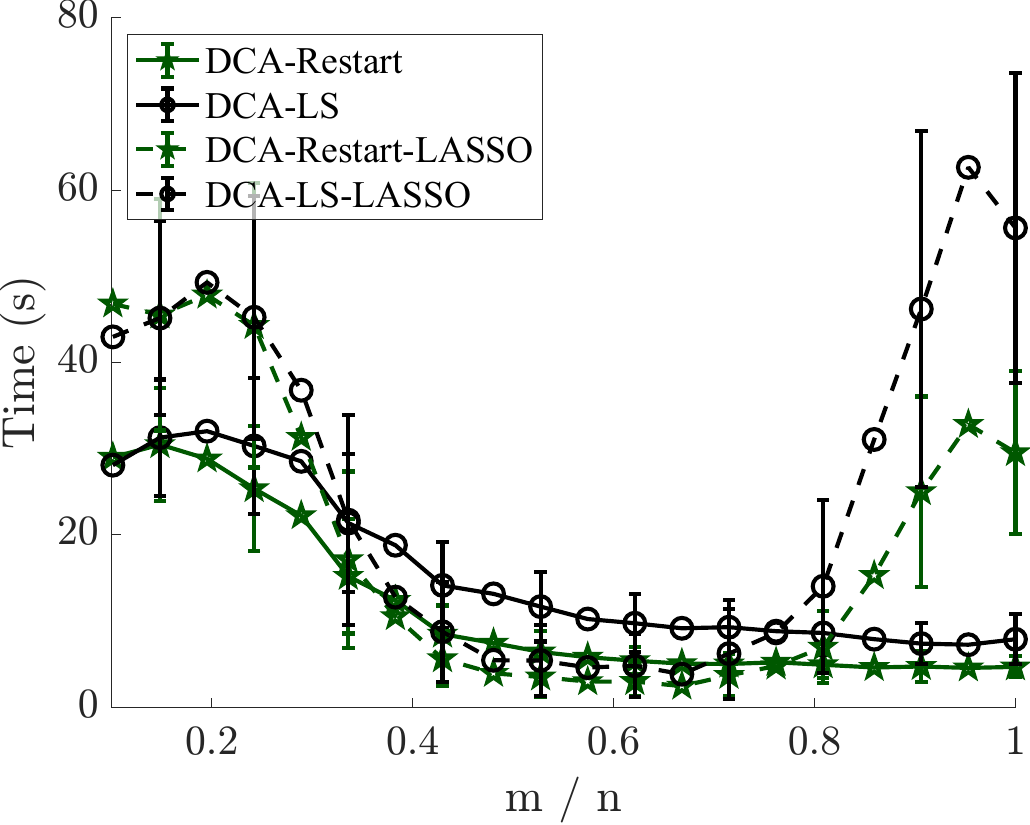}}\hspace{5pt}
\subfloat[]{\includegraphics[scale=0.3]{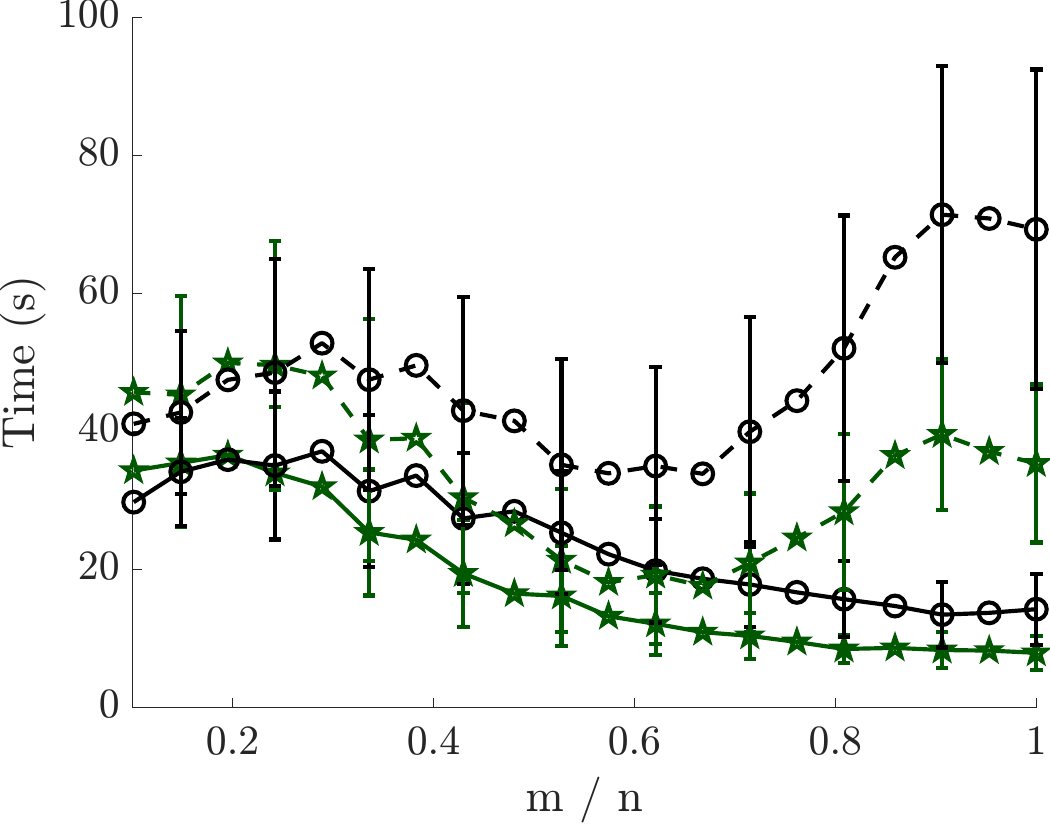}}\par
\subfloat[]{\includegraphics[scale=0.3]{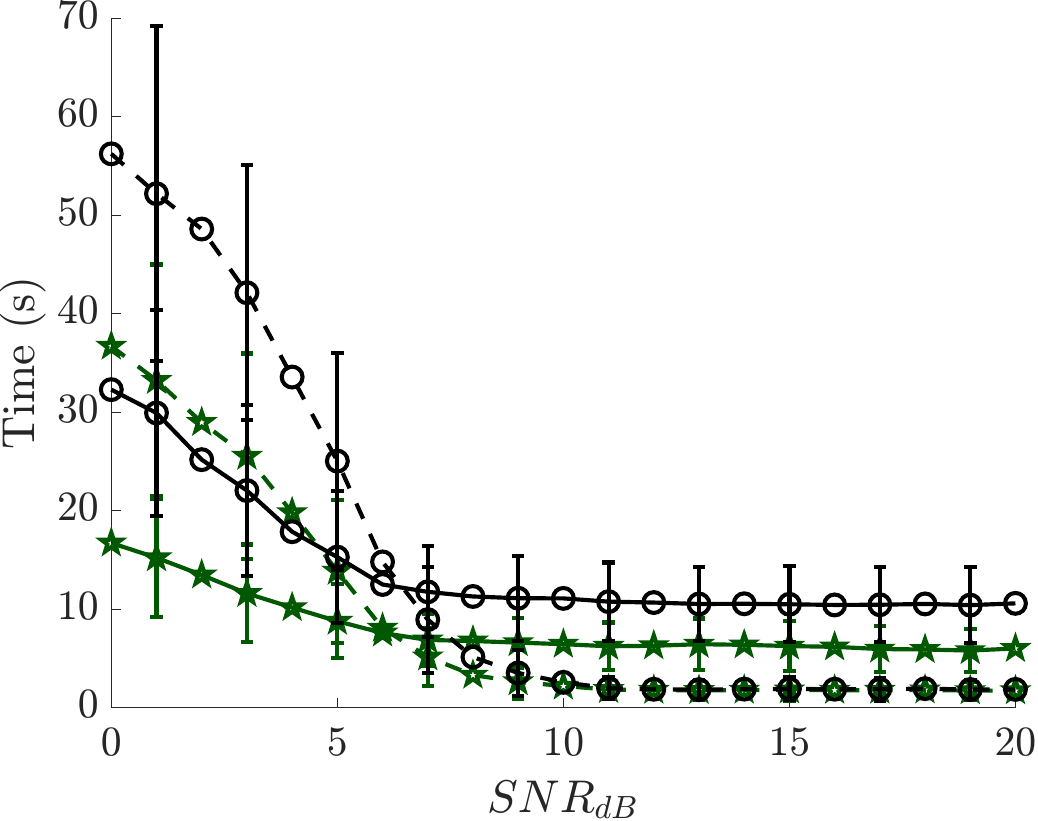}}\hspace{5pt}
\subfloat[]{\includegraphics[scale=0.3]{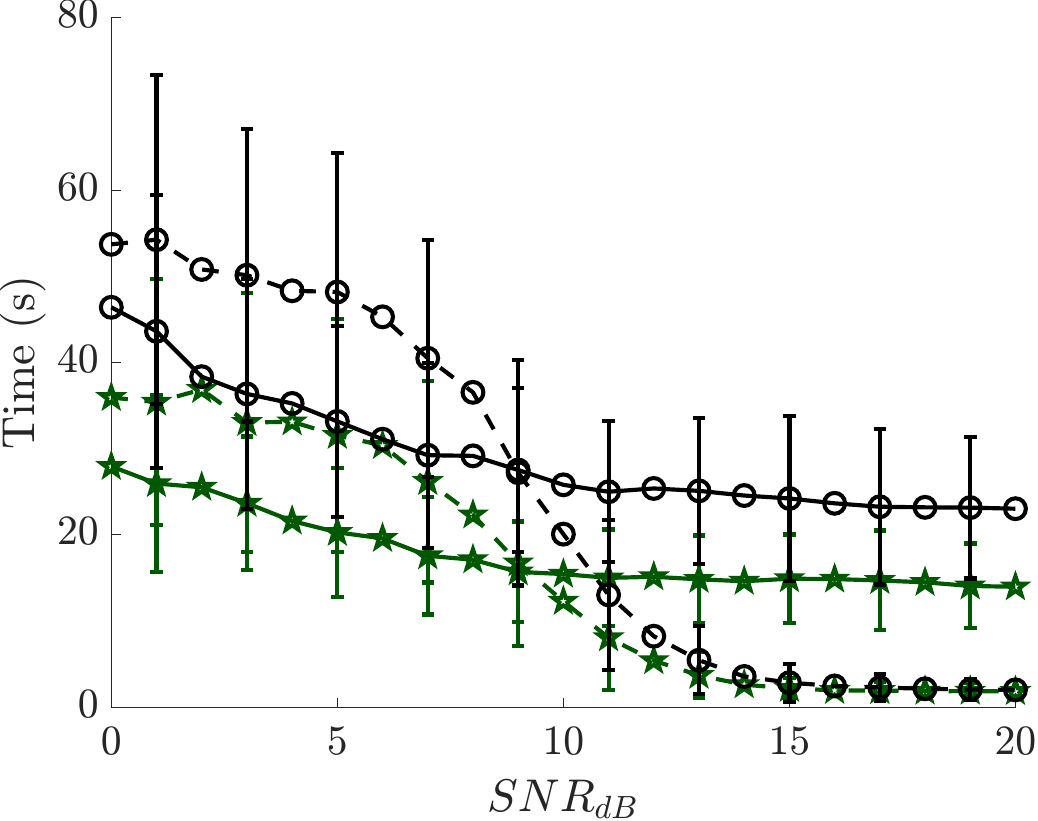}}
\caption{Running times, summed over all $\lambda$ values and averaged over 100 runs, for integer compressed sensing experiments comparing two DCA variants with $n=256$: (a) Varying $m$ with $s=13$ and $\snr=8$, (b) varying $m$ with $s=26$ and $\snr=8$, (c) varying $\snr$ with $s=13$ and $m/n=0.5$, and (d) varying $\snr$ with $s=26$ and $m/n=0.5$.}
\label{fig:ics-restart-time}
\end{figure}

\begin{figure}[h]
\centering
\subfloat[]{\includegraphics[scale=0.3]{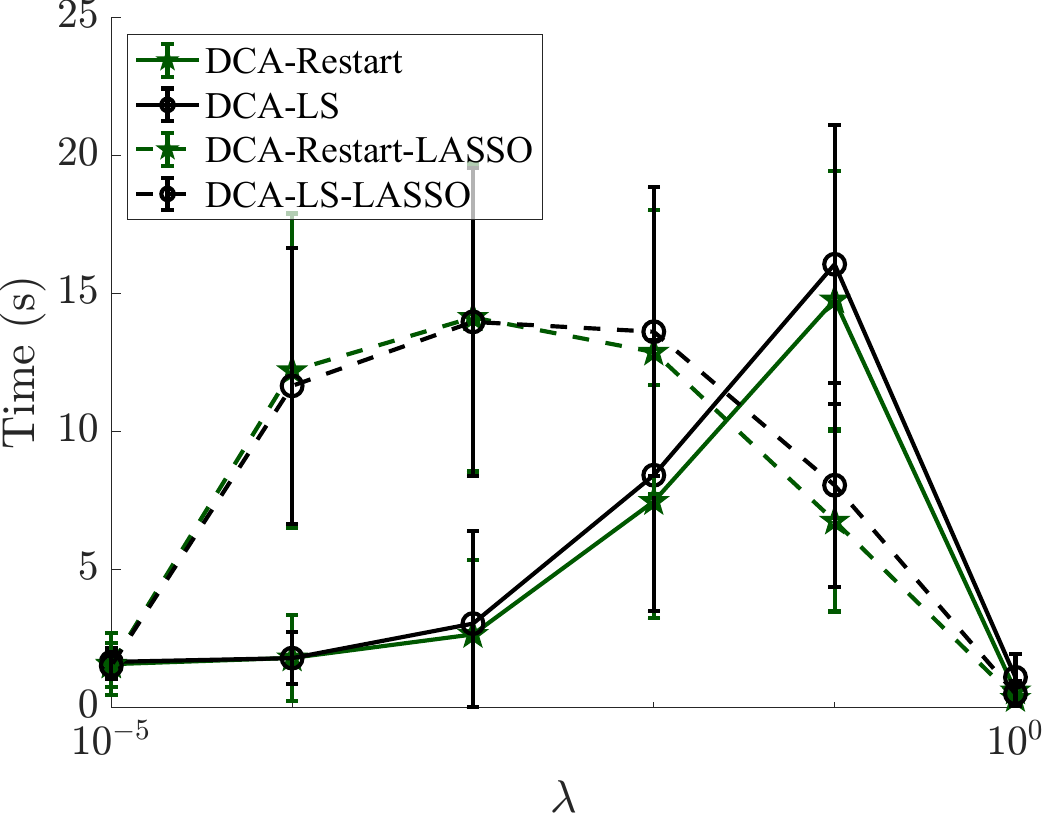}}\hspace{5pt}
\subfloat[]{\includegraphics[scale=0.3]{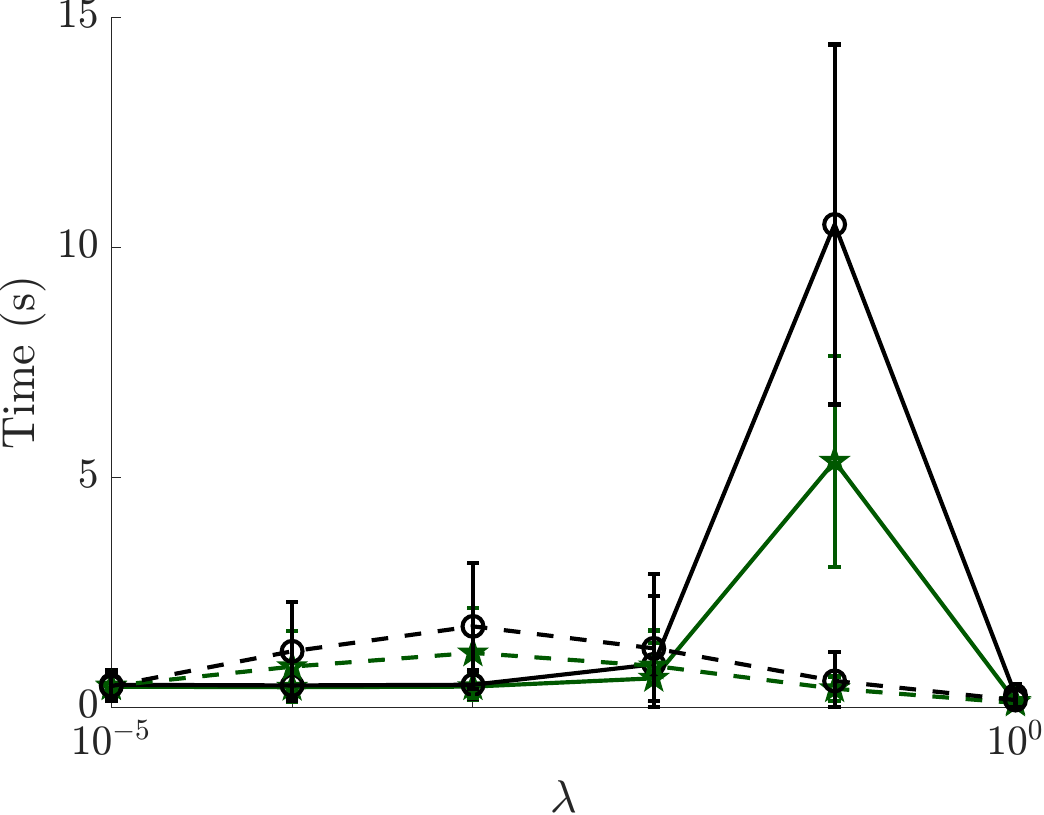}}\par
\subfloat[]{\includegraphics[scale=0.3]{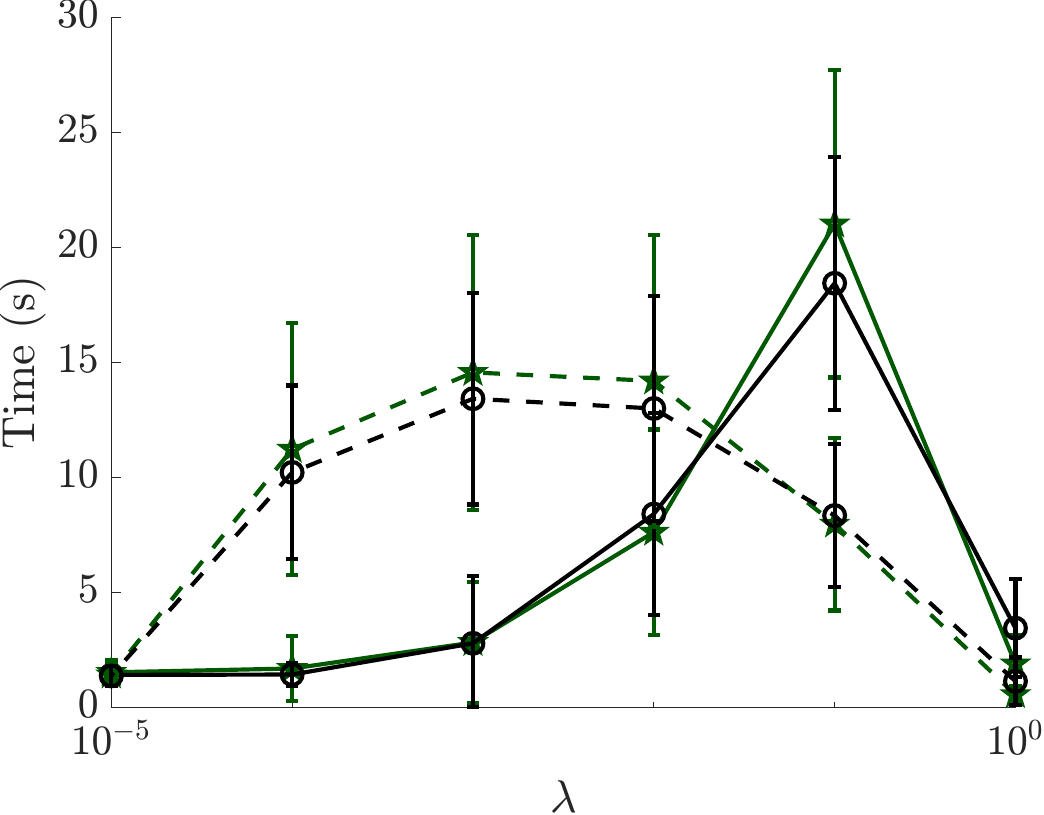}}\hspace{5pt}
\subfloat[]{\includegraphics[scale=0.3]{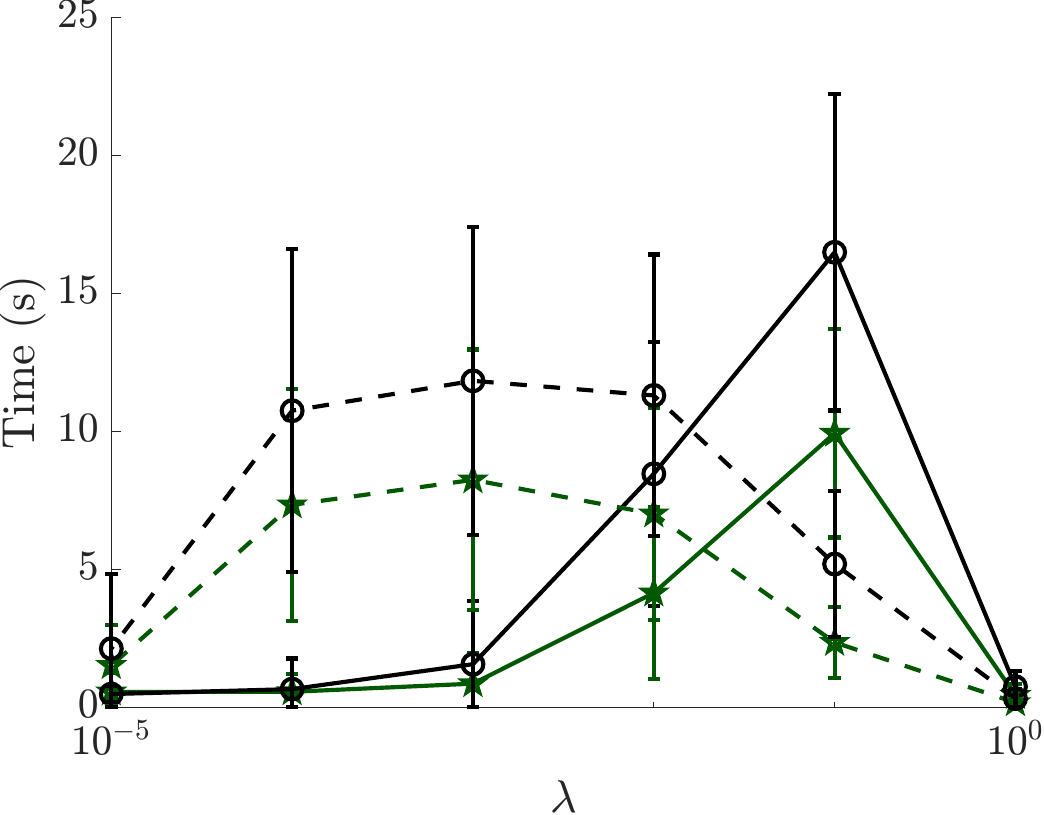}}
\caption{Running times for each $\lambda$, averaged over 100 runs, for integer compressed sensing experiments comparing two DCA variants with $n=256$ and $\snr=8$: (a) $m/n\approx0.2$ and $s=13$, (b) $m/n\approx0.5$ and $s=13$, (c) $m/n\approx0.2$ and $s=26$, and (d) $m/n\approx0.5$ and $s=26$.}
\label{fig:ics-restart-time-perlbd}
\end{figure}

\begin{figure}[h]
\centering
\subfloat[]{\includegraphics[scale=0.3]{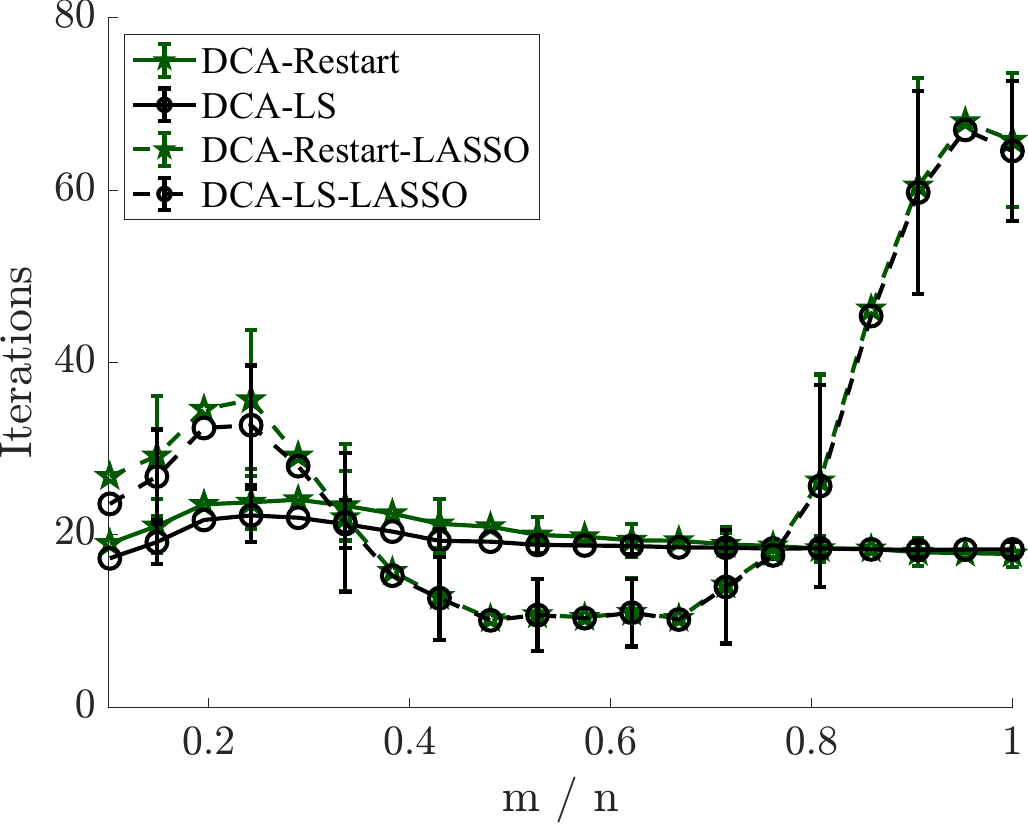}}\hspace{5pt}
\subfloat[]{\includegraphics[scale=0.3]{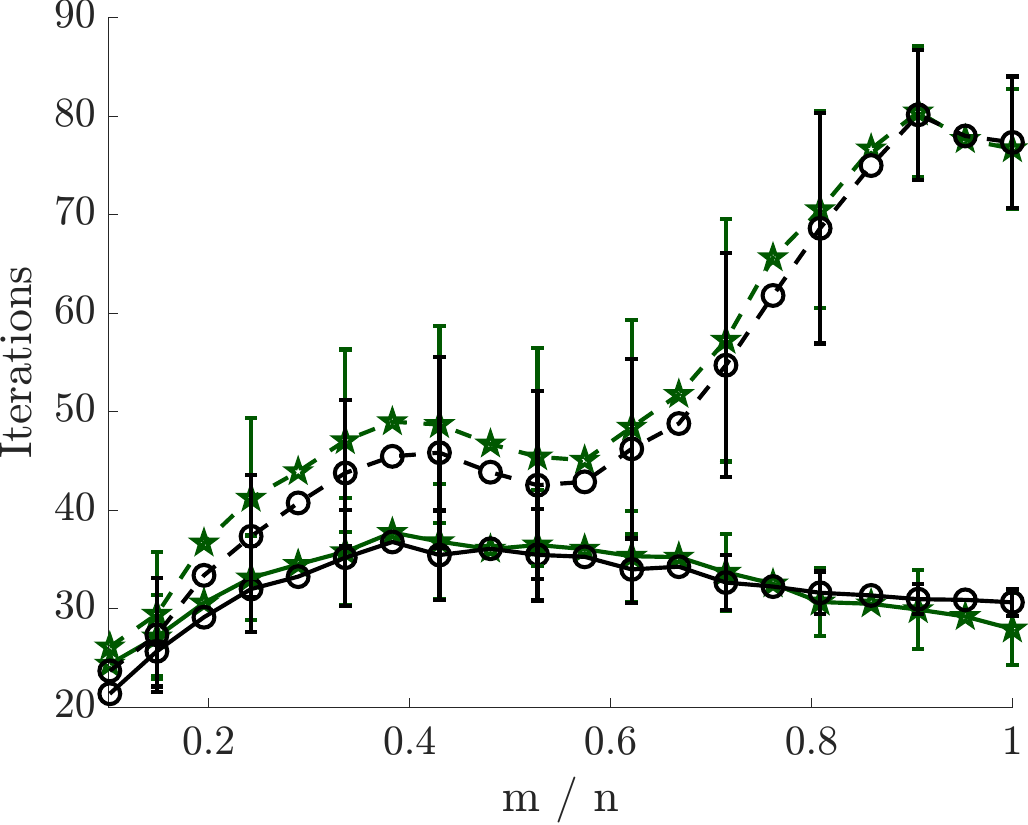}}\par
\subfloat[]{\includegraphics[scale=0.3]{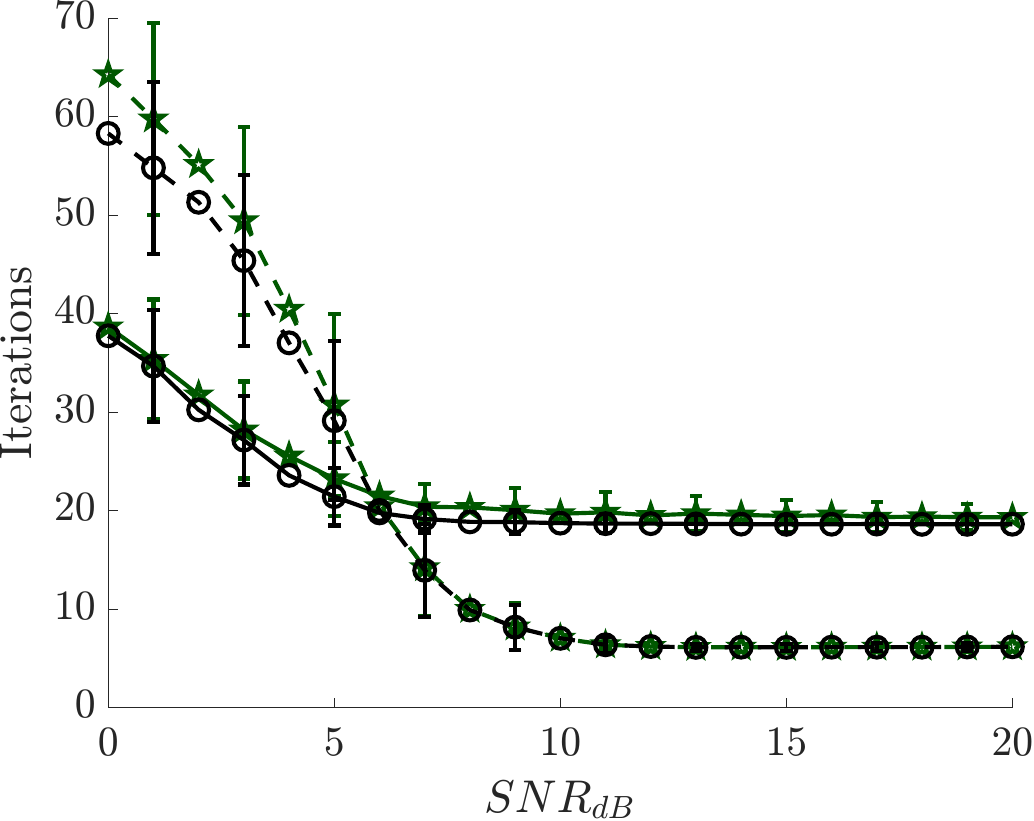}}\hspace{5pt}
\subfloat[]{\includegraphics[scale=0.3]{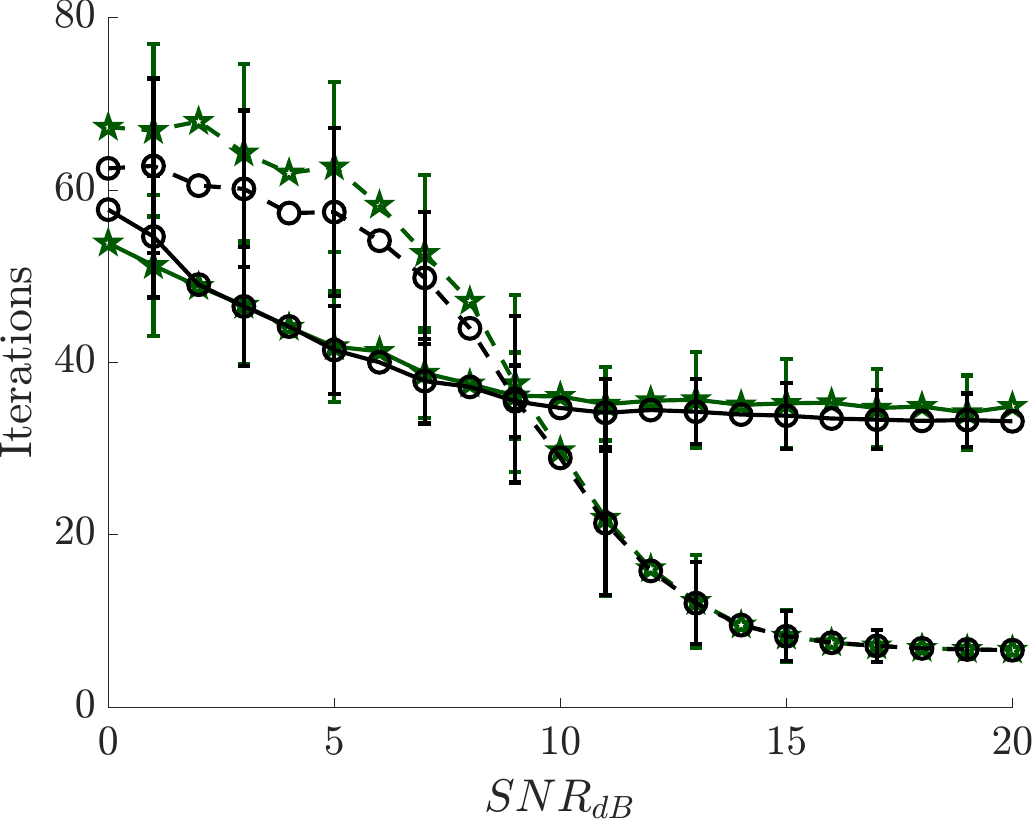}}
\caption{Number of DCA iterations, summed over all $\lambda$ values and averaged over 100 runs, for integer compressed sensing experiments with $n=256$: (a) Varying $m$ with $s=13$ and $\snr=8$, and (b) varying $m$ with $s=26$ and $\snr=8$, (c) varying $\snr$ with $s=13$ and $m/n=0.5$, and (d) varying $\snr$ with $s=26$ and $m/n=0.5$}
\label{fig:ics-iterations}
\end{figure}

\begin{figure}[h]
\centering
\subfloat[]{\includegraphics[scale=0.3]{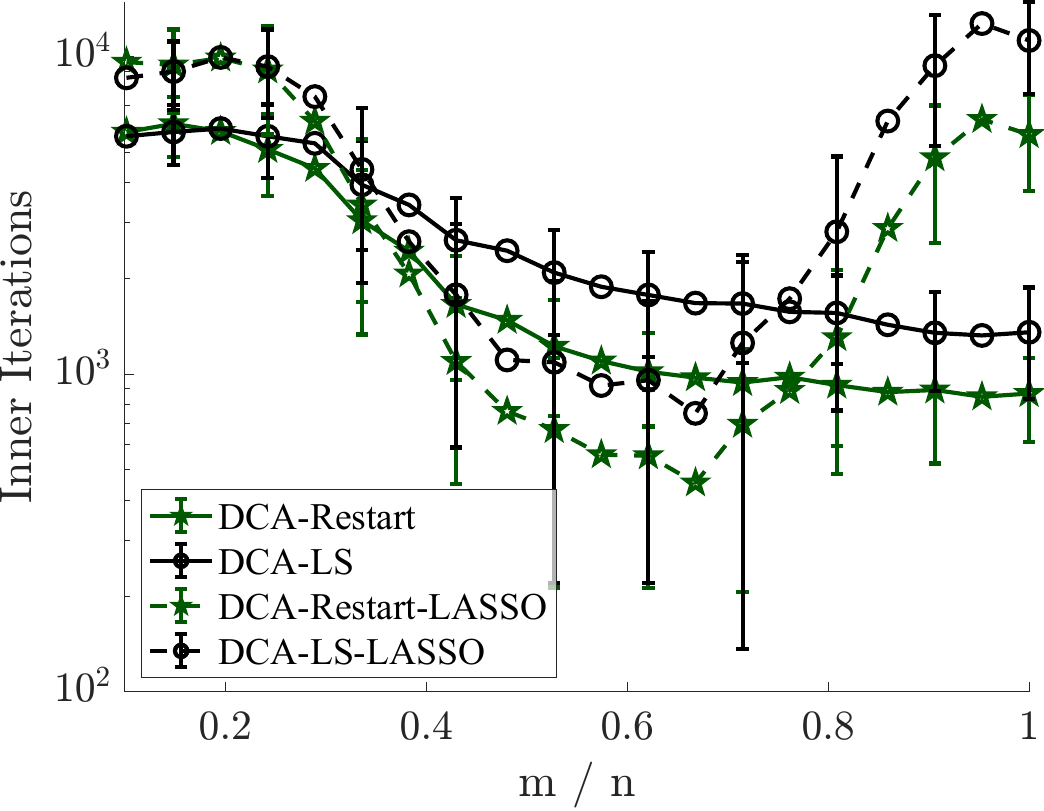}}\hspace{5pt}
\subfloat[]{\includegraphics[scale=0.3]{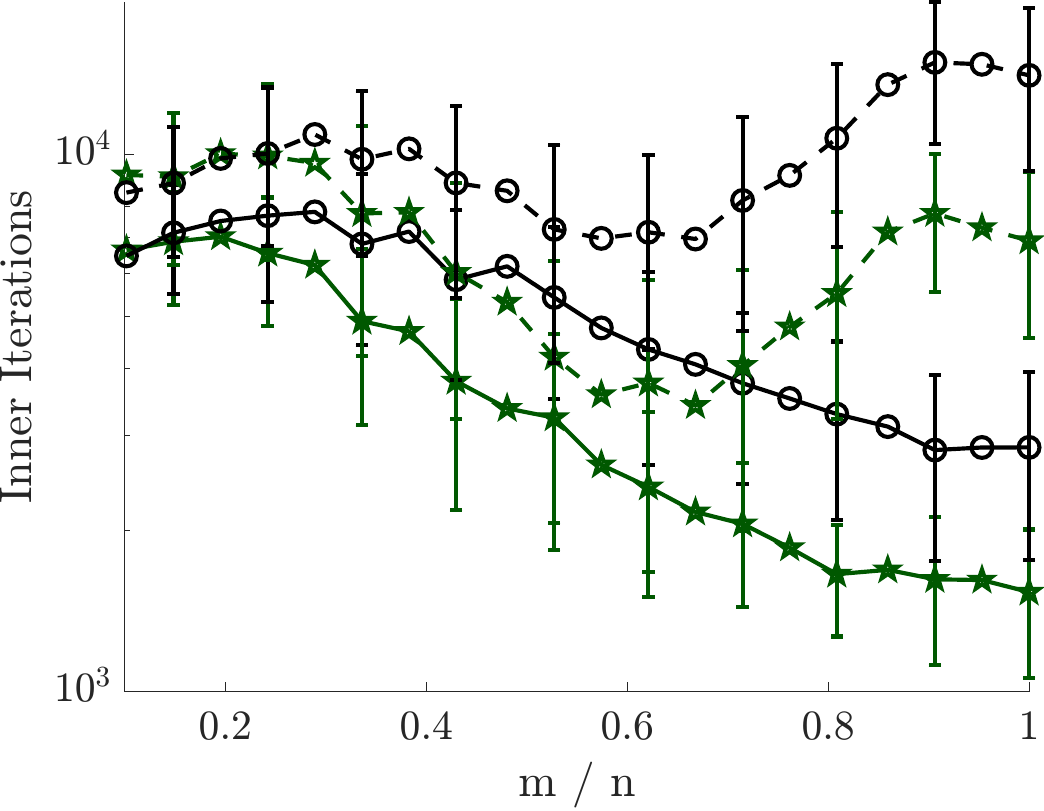}}\par
\subfloat[]{\includegraphics[scale=0.3]{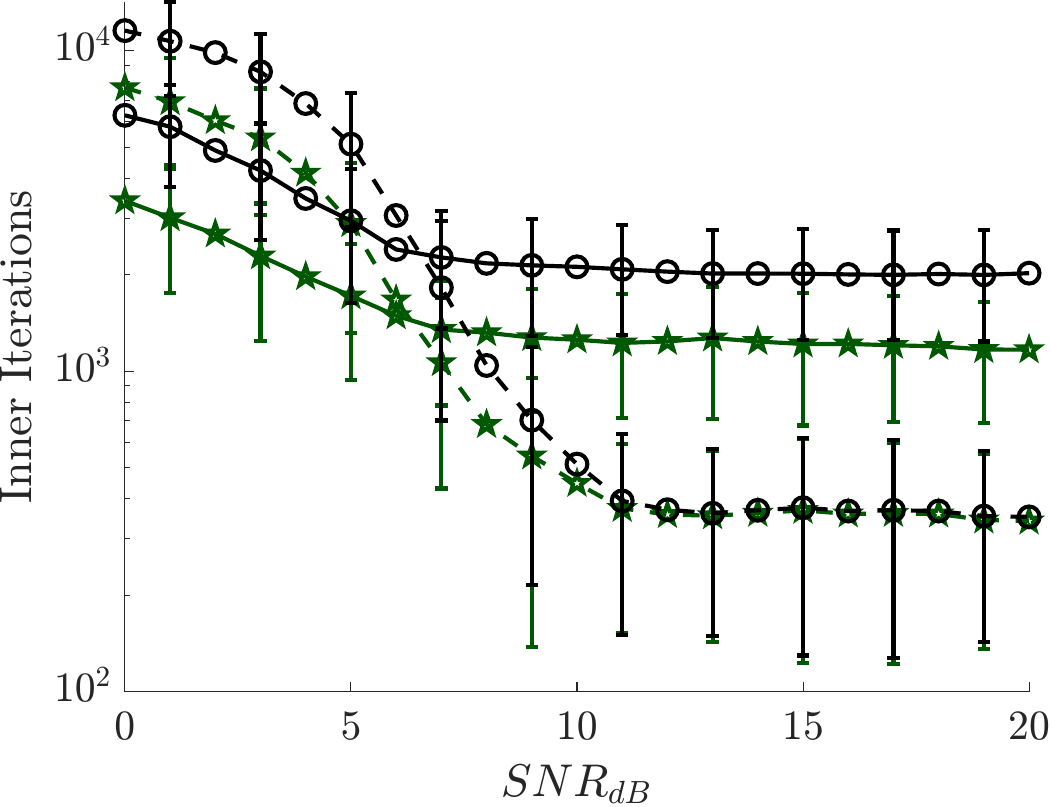}}\hspace{5pt}
\subfloat[]{\includegraphics[scale=0.3]{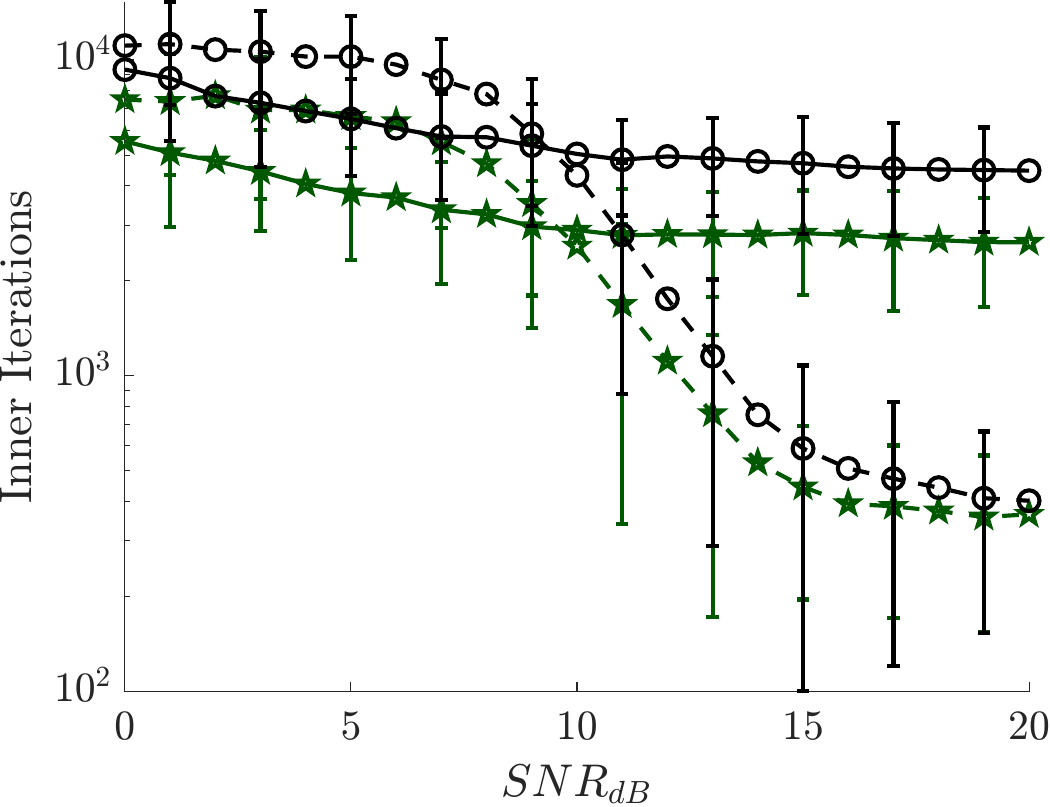}}
\caption{Number of DCA inner iterations (log-scale), summed over all outer iterations $t$ and all $\lambda$ values, averaged over 100 runs, for integer compressed sensing experiments with $n=256$: (a) Varying $m$ with $s=13$ and $\snr=8$, and (b) varying $m$ with $s=26$ and $\snr=8$, (c) varying $\snr$ with $s=13$ and $m/n=0.5$, and (d) varying $\snr$ with $s=26$ and $m/n=0.5$.}
\label{fig:ics-inner-iterations}
\end{figure}

\end{document}